\definecolor{lgray}{RGB}{240,240,240}
\definecolor{webgreen}{rgb}{0,.5,0}
\definecolor{webbrown}{rgb}{.6,0,0}
\definecolor{RoyalBlue}{cmyk}{1, 0.50, 0, 0}
\renewcommand{\tocsection}[3]{%
  \indentlabel{\@ifnotempty{#2}{\bfseries\ignorespaces#1 #2\quad}}\bfseries#3}
\renewcommand{\tocsubsection}[3]{%
  \indentlabel{\@ifnotempty{#2}{\ignorespaces#1 #2\quad}}#3}
\newcommand\@dotsep{4.5}
\def\@tocline#1#2#3#4#5#6#7{\relax
  \ifnum #1>\c@tocdepth 
  \else
    \par \addpenalty\@secpenalty\addvspace{#2}%
    \begingroup \hyphenpenalty\@M
    \@ifempty{#4}{%
      \@tempdima\csname r@tocindent\number#1\endcsname\relax
    }{%
      \@tempdima#4\relax
    }%
    \parindent\z@ \leftskip#3\relax \advance\leftskip\@tempdima\relax
    \rightskip\@pnumwidth plus1em \parfillskip-\@pnumwidth
    #5\leavevmode\hskip-\@tempdima{#6}\nobreak
    \leaders\hbox{$\m@th\mkern \@dotsep mu\hbox{.}\mkern \@dotsep mu$}\hfill
    \nobreak
    \hbox to\@pnumwidth{\@tocpagenum{\ifnum#1=1\bfseries\fi#7}}\par
    \nobreak
    \endgroup
  \fi}
\renewcommand\csname r@tocindent0\endcsname{0pt}
\def\l@subsection{\@tocline{2}{0pt}{2.5pc}{5pc}{}}
\newtheorem{theorem}{Theorem}[section]
\newtheorem{proposition}[theorem]{Proposition}
\newtheorem{lemma}[theorem]{Lemma}
\newcommand{\T}		{\mathbb{T}}
\newcommand{\D}		{\mathbb{D}}
\newcommand{\R}		{\mathbb{R}}
\newcommand{\C}		{\mathbb{C}}
\newcommand{\N}		{\mathbb{N}}
\newcommand{\Z}		{\mathbb{Z}}
\newcommand{\n}{{\vec n}}
\newcommand{\dist}{\mathrm{dist}}
\newcommand{\supp}{\mathrm{supp}\,}
\newcommand{\re}{\mathrm{Re}}
\newcommand{\im}{\mathrm{Im}}
\newcommand{\out}{\Omega}
\newcommand{\qasq}{\quad \text{as} \quad}
\newcommand{\qandq}{\quad \text{and} \quad}
\newcommand{\ic}{\mathrm{i}}
\newcommand{\RS}{{\mathfrak S}}
\newcommand{\z}	{{\boldsymbol z}}
\newcommand{\h}	{{\boldsymbol h}}
\numberwithin{equation}{section}
\begin{document}

\title[Strong Asymptotics of Angelesco MOPs: Non-Marginal Directions]{Strong Asymptotics of Multiple Orthogonal Polynomials for Angelesco Systems. Part I: Non-Marginal Directions.}

\author[A.I. Aptekarev]{Alexander I. Aptekarev}
\address{Keldysh Institute of Applied Mathematics, Russian Academy of Science, Miusskaya Pl. 4, Moscow, 125047 Russian Federation}
\email{\href{mailto:aptekaa@keldysh.ru}{aptekaa@keldysh.ru}}

\author[S. Denisov]{Sergey A. Denisov}
\address{Department of Mathematics, University of Wisconsin-Madison, 480n Lincoln Dr., Madison, WI 53706, USA}
\email{\href{mailto:denissov@math.wisc.edu}{denissov@math.wisc.edu}}

\author[M. Yattselev]{Maxim L. Yattselev}
\address{Department of Mathematical Sciences, Indiana University Indianapolis, 402~North Blackford Street, Indianapolis, IN 46202. USA}

\email{\href{mailto:maxyatts@iu.edu}{maxyatts@iu.edu}}

\thanks{The research of MY was supported in part by a grant from the Simons Foundation, CGM-706591. The research of SD was supported by NSF Grant DMS-2054465 and by Van Vleck Professorship Research Award.}

\subjclass[2010]{47B36, 47A10, 42C05}

\begin{abstract}
In this work, we establish strong asymptotics of multiple orthogonal polynomials of the second type for Angelesco systems with measures that satisfy Szeg\H{o} conditions. We consider multi-indices that converge to infinity in the non-marginal directions.
\end{abstract}

\maketitle

\tableofcontents

\section{Introduction}

\subsection{Orthogonal Polynomials}

Let \( \mu \) be a compactly supported Borel measure on the real line with infinitely many points in \( \supp\mu \), its support. The \emph{\( n \)-th monic orthogonal polynomial} with respect to \( \mu \) is a monic polynomial \( P_n(x) \) of degree \( n \) such that
\[
\int x^kP_n(x)d\mu(x) = 0, \quad k\in\{0,1,\ldots,n-1\}.
\]
It is known that \( P_n(x) \) is unique and that all of its zeros are simple and belong to \( \Delta(\mu)=[\alpha(\mu),\beta(\mu)] \), the \emph{convex hull} of \( \supp\mu \), i.e., the smallest interval containing the support of \( \mu \). One of the central questions of the analytic theory of orthogonal polynomials is to identify their asymptotic behavior in the complex plane as the degree $n\to\infty$. There are three well-established ways to study such a behavior: obtain weak, ratio, and strong asymptotics. The case of orthogonal polynomials on a segment has been thoroughly investigated \cite{Szego,Nevai,StahlTotik,Ismail,Simon,Simon2} and  we outline some of these results for weak and strong asymptotics before discussing multiple orthogonality.

\subsection{Weak Asymptotics}

Weak convergence is intimately related to the logarithmic potential theory in the complex plane. Recall that the \emph{logarithmic potential} of a compactly supported positive Borel measure \( \omega \) is given by
\[
V^\omega(z) = -\int\log|z-t|d\omega(t).
\]
It is a superharmonic function in the complex plane \( \C \),  harmonic away from \( \supp\omega \), and it behaves like \( -|\omega| \log |z| + o(1) \) as \( z\to\infty \), where \( |\omega| \) is the mass of \( \omega \). One can readily notice that
\[
\frac1n \log|P_n(z)| = -V^{\mu_n}(z), \quad \mu_n := \frac1n\sum_{x:P_n(x)=0} \delta_x,
\]
where \( \delta_x \) is the Dirac's delta distribution centered at \( x \) and \( \mu_n \) is the \emph{normalized (probability) counting measure} of the zeros of \( P_n(x) \). Denote by \( I[\omega] \) the logarithmic energy of \( \omega \), that is, \( I[\omega] :=\int V^\omega d\omega \). Every compact set \( K\subset \C \) is either ``small enough'' so that \( I[\omega] = +\infty \) for every probability Borel measure supported on \( K \), in which case \( K \) is called {\it polar}, or there exists a unique minimizer of the logarithmic energy among all probability Borel measures supported on \( K \), say \( \omega_K \), called the \emph{logarithmic equilibrium distribution} on \( K \). The measure \( \mu \) is called \emph{UST-regular} (Ullman-Stahl-Totik) precisely when its support is non-polar and
\[
\frac1n \log|P_n(z)| + V^{\omega_{\supp\mu}}(z) = o(1) \quad \text{in} \quad D_{\Delta(\mu)} := \overline\C\setminus\Delta(\mu),\quad n\to\infty,
\]
where the error term is locally uniform in \( D_{\Delta(\mu)} \), see \cite[Chapter~3, p.61]{StahlTotik}, and \( \overline\C \) is the extended complex plane. In this case the normalized counting measures of zeros \( \mu_n \) converge weak$^*$ to \( \omega_{\supp\mu} \), that is, \( \int fd\mu_n \to \int f d\omega_{\supp\mu} \) for any function \( f \) continuous on \( \Delta(\mu) \). There is a number of  criteria that ensure the UST-regularity of a measure; see \cite[Chapter~4]{StahlTotik}.  For example, write
\begin{equation}
\label{Lebesgue}
d\mu(x) = \mu^\prime(x)dx + d\mu^s(x),
\end{equation}
where \( \mu^s\) is singular to the Lebesgue measure. If \( \supp\mu=\Delta(\mu) \) and $\mu^\prime>0$ a.e. on $\Delta(\mu)$ then  \( \mu \) is UST-regular, see \cite[Chapter~4, p.101]{StahlTotik}, or more generally,  if \( \supp\mu=\Delta(\mu) \) and
\[
\liminf_{r\downarrow 0} r\log\mu([x-r,x+r]) \geq 0
\]
for almost every \( x\in\Delta(\mu) \), then \( \mu \) is UST-regular, see \cite[Chapter~4, p.110]{StahlTotik}.

In the case of measures supported on an interval, i.e., when \( \supp\mu=\Delta=[\alpha,\beta] \), the above asymptotic formulae can be made very explicit: it holds that
\begin{equation}
\label{arcsine}
d\omega_\Delta(x) = \frac{dx}{\pi\sqrt{(x-\alpha)(\beta-x)}} \qandq V^{\omega_\Delta}(z) = \log|\phi_\Delta(z)| - \log\frac4{\beta-\alpha},
\end{equation}
where \( \phi_\Delta(z) \) is the conformal map of \( D_\Delta \) onto $\D$ such that \( \phi_\Delta(\infty) =0 \) and \( \phi_\Delta(\beta)=1 \). That is, the logarithmic equilibrium distribution on \( \Delta \) is simply the arcsine distribution on \( \Delta \). One can also readily verify that
\begin{equation}
\label{phi-w}
\phi_\Delta(z) = \frac2{\beta-\alpha}\left(z - \frac{\beta+\alpha}{2} - w_\Delta(z) \right) \qandq w_\Delta(z) := \sqrt{(z-\alpha)(z-\beta)},
\end{equation}
where the branches are holomorphic off \( \Delta \) and \( w_\Delta(z) = z+\mathcal O(1) \) as \( z\to\infty \).

\subsection{Strong Asymptotics}

Strong asymptotics of orthogonal polynomials is related to the function theory of Hardy spaces. Given a closed interval \( \Delta \), we denote by \( L^p(\omega_\Delta) \) the space of real-valued functions whose moduli are \( p \)-summable with respect to \(\omega_\Delta \). We further denote by \( H^2(D_\Delta) \) the Hardy space of functions holomorphic in \( D_\Delta \) whose squared moduli possess harmonic majorants in \( D_\Delta \), see \cite[Chapter~10, p.168]{Duren}. This definition is conformally invariant meaning that \( f \in H^2(\D) \), the standard Hardy space on the unit disk, if and only if \( f\circ\phi_\Delta\in H^2(D_\Delta) \).  In particular, any \( g\in H^2(D_\Delta) \) possesses non-tangential limits from above and below \( \Delta \), say \( g_\pm \), and  \( g_\pm \in L^2(\omega_\Delta) \) while \( \log |g_\pm| \in L^1(\omega_\Delta) \). We shall say that \( G \) is an \emph{outer function} in \( H^2(D_\Delta) \) if \( G\circ \phi_\Delta^{-1} \) is an outer function in \( H^2(\D) \), see \cite[Section~2.4]{Duren}. For instance, given a non-negative function \( f \in L^2(\omega_\Delta) \) such that \( \log f \in L^1(\omega_\Delta) \), the function
\begin{equation}
\label{outer}
\out_\Delta(f,z) := \exp\left( w_\Delta(z) \int_\Delta \log f(x) \frac{d\omega_\Delta(x)}{z-x} \right)
\end{equation}
is an outer function in \( H^2(D_\Delta) \), $\out_\Delta(f,\infty)>0$, and it  is also conjugate-symmetric (this formula is obtained through conformal equivalence with \( H^2(\D) \) and the known integral representation of outer functions in that space). For such functions it holds that
\begin{equation}
\label{outer-b}
|\out_{\Delta\pm}(f,x)| = f(x) \quad \text{for almost every} \quad x\in\Delta.
\end{equation}
In particular, any conjugate-symmetric outer function in \( H^2(D_\Delta) \) that is positive at infinity can be recovered through the modulus of its boundary values via \eqref{outer}--\eqref{outer-b}. Notice also that
\begin{equation}
\label{geom-mean}
\log \out_\Delta(f,\infty) = \int_\Delta \log fd\omega_\Delta.
\end{equation}

Given a compactly supported Borel measure \( \mu \), it is said that \( \mu \) is a \emph{Szeg\H{o}  measure} on an interval \( \Delta\subseteq\Delta(\mu) \), which we denote by \( \mu\in\mathrm{Sz}(\Delta) \), if \( \log\mu^\prime \in L^1(\omega_\Delta) \), see \eqref{Lebesgue}. In this case it also holds that \( \log v_\Delta \in L^1(\omega_\Delta) \), where
\begin{equation}
\label{szego}
d\mu_{|\Delta}(x) = v_\Delta(x)d\omega_\Delta(x) + d\mu_{|\Delta}^s(x)
\end{equation}
(that is, \( v_\Delta(x) = \pi\mu^\prime(x)\sqrt{(x-\alpha)(\beta-x)} \) for \(x\in\Delta=[\alpha,\beta] \)). When studying strong asymptotics of polynomials orthogonal on the real line, one usually assumes  that \( \Delta=\Delta(\mu) \) in \eqref{szego}. However, in the case of multiple orthogonality, which is the main subject of this work, it will be important for us to take restrictions of \( \mu \) onto proper subintervals of \( \Delta(\mu) \), and this is the reason why we write \( \mu_{|\Delta} \) in \eqref{szego}. When \( \mu\in\mathrm{Sz}(\Delta) \) one can define the so-called \emph{Szeg\H{o} function} of \( \mu_{|\Delta} \), which depends only on the absolutely continuous part of \( \mu \), by setting
\begin{equation}
\label{SzegoFun}
G(\mu_{|\Delta},z) := \out_\Delta\big(\sqrt{v_\Delta},z\big), \quad z\in D_\Delta.
\end{equation}
By its very definition, $G$ is an outer function in \( H^2(D_\Delta) \) whose traces from above and below of \( \Delta \) satisfy \( |G_\pm(\mu_{|\Delta},x)|^2 = v_\Delta(x) \) for almost every \( x\in \Delta \). In what follows, we remove the subscript \( |\Delta \) from \( \mu \) in \eqref{SzegoFun} if \( \Delta=\Delta(\mu) \).  It is known, see \cite[Chapter~XII]{Szego} and \cite[Section~13.3]{Simon}, that if \( \mu \) is a Szeg\H{o} measure on \( \Delta(\mu) \), then
\[
P_n(z) = (1+o(1))\left( \frac{4}{\beta-\alpha}\phi_{\Delta(\mu)}^{-1}(z) \right)^n \frac{G(\mu,\infty)}{G(\mu,z)}
\] 
locally uniformly in \( D_{\Delta(\mu)} \). 

\subsection{Multiple Orthogonal Polynomials}

Our primary goal is an extension of the above results to multiple orthogonal polynomials that can be defined as follows. Let \( \mu_i \), \( i\in I_d:=\{1,2,\ldots,d\} \), \( d\geq2\), be positive compactly supported Borel measures on the real line. Given a multi-index \( \n=(n_1,n_2,\ldots,n_d) \in\Z_+^d \), we denote by \( P_\n(x) \) a non-identically zero monic polynomial of minimal degree such that 
\begin{equation}
\label{ortho}
\int x^kP_\n(x)d\mu_i(x) = 0, \quad k\in\{0,1,\ldots,n_i-1\}, \quad i\in I_d.
\end{equation}
Such a polynomial always exists and is unique. We say that a multi-index  \( \n \) is \emph{normal} if \( \deg P_\n = |\n| \), where  \( |\n| = n_1+n_2+\cdots+n_d \). Moreover, the system \( \vec\mu = (\mu_1,\mu_2,\ldots,\mu_d) \) is called \emph{perfect} if all the multi-indices are normal. The notion of a MOP was elaborated in the constructive Diophantine approximations, see, e.g., \cite{ap2}, and it goes back to the famous proof by Hermite that the number $e$ is transcendental \cite{ap1}.

The questions of the asymptotic behavior of MOPs for an arbitrary system \( \vec\mu \) are hard, especially if such a system is not perfect. Below, we restrict ourselves to the so-called \emph{Angelesco systems}. These are systems of measures \( \vec\mu \) that satisfy conditions
\begin{equation}
\label{Angelesco}
\Delta(\mu_i)\cap \Delta(\mu_j) = \varnothing, \quad i,j\in I_d, \quad i\neq j.
\end{equation}
It is customary to label measures \( \mu_i \) so that \( \beta(\mu_i)<\alpha(\mu_j) \) when \( i<j \). It was shown by Angelesco \cite{Ang19} that such systems are always perfect (this system was later rediscovered in \cite{ap3}).  Moreover, each \( P_\n(x) \) has precisely \( n_i \) zeros on \( \Delta(\mu_i) \), \( i\in I_d \). Hence, for Angelesco systems we can always write
\begin{equation}
\label{partition}
 P_\n(x) = P_{\n,1}(x)P_{\n,2}(x)\cdots P_{\n,d}(x),
\end{equation}
where each \( P_{\n,i}(x) \) is monic and has all its zeros on \( \Delta(\mu_i) \). The existence of various asymptotic limits of MOPs depends on the way multi-index \( |\n| \) approaches infinity. Therefore, given a vector \( \vec c=(c_1,c_2,\ldots,c_d)\in(0,1)^d \) such that \( |\vec c| := c_1+c_2+\cdots+c_d =1 \), we restrict our attention to \emph{ray sequence} of multi-indices defined by
\begin{equation}
\label{Nc}
\mathcal N(\vec c)= \big\{\n:n_i/|\n|\to c_i \text{ as } |\n|\to\infty,~~i\in I_d \big\}.
\end{equation}
Of course, there are many ray sequences corresponding to a given vector \( \vec c \).  In our analysis it will sometimes be important to distinguish \emph{non-marginal} ray sequences, i.e., ray sequences corresponding to \( \vec c\in(0,1)^d \), and \emph{marginal} ones, i.e., those for which at least one coordinate \( c_i \) vanishes. In the current paper, we handle non-marginal sequences only and the marginal sequences will be studied in the forthcoming work.

  There is a large body of literature on asymptotics of MOPs. We shall provide some relevant references further below in Section~\ref{sec:sa} related to Angelesco systems. Besides them another well-studied class of vector-measures is known as Nikishin systems, see \cite{ap5,ap6,ap0}. For Nikishin systems and their generalizations,  weak, ratio, and strong asymptotics were obtained in \cite{ap9,ApLLRocha05,ap10,ap11,FidLopezLLSor10,GonLL24} (also see the references therein).  Certain extensions of Totik's results \cite{Totik} on asymptotics of orthogonal polynomials with varying weights are essential to our approach. We discuss these extensions in Section~\ref{sec:vw}. In turn, the material in Section~\ref{sec:vw} relies on extensions of pioneering results in \cite{ap7,ap8} on ratio asymptotics and the work by de~la Calle Ysern and L\'opez Lagomasino \cite{CYLL98}, see also Stahl's paper \cite{St00} on strong asymptotics of orthogonal polynomials with respect to reciprocal polynomial weights, which we derive in Section~\ref{sec:pw}.

\section{Weak Asymptotics of MOPs}
\label{sec:wa}

Given \( \vec c\in(0,1)^d \), \( |\vec c|=1 \), it was shown by Gonchar and Rakhmanov \cite{Grakh81} that there exists a unique vector of positive Borel measures \( \vec\omega_{\vec c} = (\omega_{\vec c,1}, \omega_{\vec c,2}, \ldots, \omega_{\vec c,d}) \) such that
\begin{equation}
\label{vec_eq1}
\supp\omega_{\vec c,i}=:\Delta_{\vec c,i}\subseteq \Delta(\mu_i), \quad \Delta_{\vec c,i} = [\alpha_{\vec c,i},\beta_{\vec c,i}], \quad |\omega_{\vec c,i}|=c_i, \quad i\in I_d,
\end{equation}
and
\begin{equation}
\label{vec_eq2}
V^{\omega_{\vec c,j}}(x) + \sum_{i\in I_d}V^{\omega_{\vec c,i}}(x) \begin{cases} = \ell_{\vec c,j}, & x\in\Delta_{\vec c,j}, \smallskip \\ > \ell_{\vec c,j}, & x\in\Delta(\mu_j)\setminus\Delta_{\vec c,j}, \end{cases}
\end{equation}
for some constants \( \ell_{\vec c,j} \), \( j\in I_d \) (the presence of a strict inclusion \( \Delta_{\vec c,i} \subsetneq \Delta(\mu_i) \) is colloquially known as a \emph{pushing effect}; it can happen to none, some, or all but one intervals, see Proposition~\ref{prop:Ya} further below). The vector-equilibrium measure \( \vec\omega_{\vec c} \) can also be defined via the energy minimization process  similar to the logarithmic equilibrium measures. The central result of \cite{Grakh81}  is the following theorem.

\begin{theorem}
\label{thm:GR}
Let  \( \{P_\n(z)\}_\n \) be the table of multiple orthogonal polynomials with respect to an Angelesco system of measures \( \vec \mu \). Fix a non-marginal ray sequence \( \mathcal N(\vec c) \). Assume that each \( \mu_i \) is absolutely continuous with respect to the Lebesgue measure on \( \Delta(\mu_i) \) and \( \mu_i^\prime(x)>0 \) almost everywhere on \( \Delta(\mu_i) \).  Then, it holds for each \( i\in I_d \) that
\[
\lim_{\mathcal N(\vec c)} \frac1{n_i}\log |P_{\n,i}(z)| = -c_i^{-1}V^{\omega_{\vec c,i}}(z)
\]
locally uniformly in \( D_{\Delta(\mu_i)} \). Moreover, the normalized counting measures of the zeros of the polynomials \( P_{\n,i}(x) \) converge weak$^*$ to \( c_i^{-1}\omega_{\vec c,i} \) along \( \mathcal N(\vec c) \) for any \( i\in I_d \). 
\end{theorem}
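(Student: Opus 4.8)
The plan is to use the standard link between weak asymptotics of zero-counting measures, weighted potential theory, and the extremal characterization of the vector equilibrium measure $\vec\omega_{\vec c}$. First I would argue that the family of normalized counting measures $\{\mu_{\n,i}\}$, $\mu_{\n,i}:=n_i^{-1}\sum_{P_{\n,i}(x)=0}\delta_x$, is precompact in the weak$^*$ topology (all zeros lie on the fixed compact set $\Delta(\mu_i)$), so it suffices to show that every weak$^*$ convergent subsequence has limit $c_i^{-1}\omega_{\vec c,i}$. Fix such a subsequence and call the limits $\nu_i$, $i\in I_d$; each $\nu_i$ is a probability measure on $\Delta(\mu_i)$, and the Angelesco condition \eqref{Angelesco} keeps the supports pairwise disjoint. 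The potential-theoretic statement then follows from the principle of descent and the lower envelope theorem once we identify $\nu_i$, and the locally uniform convergence of $\frac1{n_i}\log|P_{\n,i}(z)|$ on $D_{\Delta(\mu_i)}$ is a consequence of the fact that the potentials $V^{\nu_i}$ are harmonic there and the convergence of the full logarithmic derivative can be upgraded from weak$^*$ convergence of measures to locally uniform convergence of potentials off the supports (using that $\log|P_{\n,i}|$ is subharmonic and the limit is harmonic).

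The heart of the matter is showing $\nu_i = c_i^{-1}\omega_{\vec c,i}$, i.e.\ that $c_i\nu_i$ satisfies the characterizing variational conditions \eqref{vec_eq1}--\eqref{vec_eq2} of the Gonchar--Rakhmanov measure, which is unique. Here I would extract from the orthogonality relations \eqref{ortho} the two classical ingredients. The \emph{upper bound} on the weighted potential: using that $P_\n$ is the monic polynomial of least degree satisfying \eqref{ortho}, a comparison argument against $\Big(\frac{4}{\cdots}\Big)^{|\n|}$-scaled competitors (or, equivalently, estimating $\int |P_\n|^2\,d\mu_i$ against integrals of products of other monic polynomials with the correct degree vector) yields, in the limit, that the combined potential $c_j V^{\nu_j}+\sum_{i} c_i V^{\nu_i}$ is $\le \ell_j$ on $\Delta_{\vec c,j}$ for suitable constants. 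The \emph{lower bound} comes from the location of zeros: orthogonality forces, for every $k<n_i$, a sign change of $\int x^k P_{\n,i}\,\widehat P_{\n,i}(x)\,d\mu_i$ where $\widehat P_{\n,i}:=P_\n/P_{\n,i}$ has constant sign on $\Delta(\mu_i)$; hence $P_{\n,i}$ has exactly $n_i$ simple zeros in $\Delta(\mu_i)$, and a standard argument (e.g.\ via the logarithmic derivative, or a ``balayage''/minimum-principle comparison on the complement) shows the reverse inequality $\ge\ell_j$ on $\Delta(\mu_j)\setminus\Delta_{\vec c,j}$ and equality on the support of $\nu_j$. The hypothesis $\mu_i'>0$ a.e.\ enters precisely at this stage: it guarantees that no mass of $\nu_i$ is ``lost'' through a regularity defect of $\mu_i$, i.e.\ that $\mu_i$ is UST-regular on every subinterval (cf.\ the regularity criteria quoted after \eqref{Lebesgue}), so that the weighted potential estimates are tight and the equilibrium conditions hold with equality where required rather than merely as inequalities.

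Combining the upper and lower bounds, $(c_1\nu_1,\dots,c_d\nu_d)$ is a vector of positive measures with $\supp(c_i\nu_i)\subseteq\Delta(\mu_i)$, total masses $c_i$, satisfying \eqref{vec_eq2} with appropriate constants $\ell_j$; by the uniqueness in \cite{Grakh81} we conclude $c_i\nu_i=\omega_{\vec c,i}$ and $\Delta_{\vec c,i}=\supp\nu_i$. Since the subsequence was arbitrary, $\mu_{\n,i}\xrightarrow{w^*}c_i^{-1}\omega_{\vec c,i}$ along the whole ray sequence $\mathcal N(\vec c)$, and then $\frac1{n_i}\log|P_{\n,i}(z)|=-V^{\mu_{\n,i}}(z)\to -c_i^{-1}V^{\omega_{\vec c,i}}(z)$ locally uniformly in $D_{\Delta(\mu_i)}$ as the potentials converge uniformly on compact subsets away from the supports. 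The main obstacle I anticipate is making the lower-bound (equilibrium) half fully rigorous in the presence of the pushing effect: one must track the constants $\ell_{\vec c,j}$ and the possibly strict inclusion $\Delta_{\vec c,i}\subsetneq\Delta(\mu_i)$, ruling out accumulation of zeros of $P_{\n,i}$ on $\Delta(\mu_i)\setminus\Delta_{\vec c,i}$; this is where the interaction between the $d$ measures, encoded in \eqref{vec_eq2}, is genuinely non-trivial and where the regularity hypothesis $\mu_i'>0$ a.e.\ does the essential work.
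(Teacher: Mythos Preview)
The paper does not give its own proof of this theorem: Theorem~\ref{thm:GR} is stated as the central result of Gonchar and Rakhmanov \cite{Grakh81} and is quoted without argument, serving only as background for the strong-asymptotics result Theorem~\ref{thm:sa4}. So there is nothing in the paper to compare your proposal against.

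That said, the outline you give is essentially the strategy of the original Gonchar--Rakhmanov proof: weak$^*$ precompactness of the zero-counting measures, extraction of a limit vector $(\nu_1,\dots,\nu_d)$, and identification of $(c_1\nu_1,\dots,c_d\nu_d)$ with the vector equilibrium measure via the variational conditions \eqref{vec_eq2}, using UST-regularity (guaranteed by $\mu_i'>0$ a.e.) to make the potential estimates sharp. Your description of the upper bound via $L^2$-extremality and of the lower bound via the equilibrium characterization is correct in spirit, though in the actual argument one typically works with the $n$-th root asymptotics of the $L^2$-norms $\int P_\n^2\,d\mu_i$ rather than directly with the potentials, and the identification of the limit is most cleanly done by showing that $(c_i\nu_i)$ minimizes the same energy functional that $\vec\omega_{\vec c}$ minimizes (rather than verifying \eqref{vec_eq2} directly). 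The concern you flag about the pushing effect is real but is handled automatically by the energy-minimization characterization: one does not need to rule out zeros accumulating on $\Delta(\mu_i)\setminus\Delta_{\vec c,i}$ separately, since the uniqueness of the minimizer forces the supports to be $\Delta_{\vec c,i}$.
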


In the remaining part of this section we provide a more detailed description of the vector-equilibrium measure \( \vec\omega_{\vec c} \).  What follows is taken from \cite[Section~2]{Ya16}.

Given pairwise disjoint closed intervals \( (\Delta_1,\Delta_2,\ldots,\Delta_d) \), define \( \RS \) to be a \( (d+1) \)-sheeted compact Riemann surface realized as follows: take \( d+1 \) copies of the extended complex plane; cut the zeroth copy along \( \cup_{i\in I_d}\Delta_i \) and denote it by $\RS_0$; cut the \( i \)-th  copy along a single interval \( \Delta_i \) and denote it by $\RS_i$, \( i\in I_d \); glue $\RS_i$, \( i\in I_d \), to \( \RS_0 \) crosswise along the corresponding cut. Denote by \( \pi \) the natural projection from \( \RS \) onto \( \overline \C \) that takes a point on \( \RS_k \) and maps it into the corresponding point in the cut plane. 

Let \( \RS_{\vec c} \) be the surface corresponding to \( (\Delta_{\vec c,1},\Delta_{\vec c,2},\ldots,\Delta_{\vec c,d}) \). Denote by \( E_{\vec c} \) the set of ramification points of \( \RS_{\vec c} \), i.e., \( E_{\vec c}=\{\boldsymbol \alpha_{\vec c,1},\boldsymbol\beta_{\vec c,1},\ldots,\boldsymbol \alpha_{\vec c,d},\boldsymbol\beta_{\vec c,d} \} \subset \RS_{\vec c,0} \), where \( \pi(\boldsymbol \alpha_{\vec c,i}) = \alpha_{\vec c,i} \) and \( \pi(\boldsymbol \beta_{\vec c,i}) = \beta_{\vec c,i} \). Define
\begin{equation}
\label{CauchyInt}
\boldsymbol h_{\vec c,i}(z) := \int\frac{d\omega_{\vec c,i}(x)}{x-z}, \quad z\in D_{\Delta_{\vec c,i}}, \quad i\in I_d,
\end{equation}
and put \( \boldsymbol h_{\vec c,0}(z) := - \sum_{i\in I_d} \boldsymbol h_{\vec c,i}(z) \). Define \( h_{\vec c} \) to be the function on \( \RS_{\vec c} \) such that \( \boldsymbol h_{\vec c|\RS_{\vec c,k}} = \boldsymbol h_{\vec c,k}\circ \pi_{|\RS_{\vec c,k}} \) for all \( k\in\{0,1,\ldots,d \} \).

\begin{proposition}
\label{prop:Ya}
The function \( \boldsymbol h_{\vec c} \) extends continuously to \( \RS_{\vec c}\setminus E_{\vec c} \) and is in fact a rational function on \( \RS_{\vec c} \). It has a simple zero at each point on top of infinity, a single simple zero
\[
\z_{\vec c,i} \in \RS_{\vec c,0} \cap \pi^{-1}([\beta_{\vec c,i},\alpha_{\vec c,i+1}])
\]
for each \( i\in\{1,2,\ldots,d-1\} \), a simple pole at each element of \( E_{\vec c} \) (if \( \z_{\vec c,i} \) coincides with either \( \boldsymbol\beta_{\vec c,i} \) or \(\boldsymbol \alpha_{\vec c,i+1} \), then it cancels the corresponding pole), and otherwise it is non-vanishing and finite. Moreover, \( \z_{\vec c,i}=\boldsymbol\beta_{\vec c,i} \) (resp. \( \z_{\vec c,i}=\boldsymbol\alpha_{\vec c,i+1} \)) if and only if
\[
V^{\omega_{\vec c,i}}(z) + \sum_{j\in I_d}V^{\omega_{\vec c,j}}(z) - \ell_{\vec c,i}>0
\]
for \( x\in(\beta_{\vec c,i},\beta_{\vec c,i}+\epsilon) \) (resp. \( x\in(\alpha_{\vec c,i+1}-\epsilon,\alpha_{\vec c,i+1}) \)) for some \( \epsilon>0 \), \( i\in\{1,2,\ldots,d-1\} \). Furthermore, we have for every \( i\in I_d \) that
\begin{equation}
\label{future_kappas}
V^{\omega_{\vec c,i}}(z) + \sum_{j\in I_d}V^{\omega_{\vec c,j}}(z) - \ell_{\vec c,i} = \re\left(\int_{\alpha_{\vec c,i}}^z \big( \boldsymbol h_{\vec c,i}(s) - \boldsymbol h_{\vec c,0}(s) \big) ds \right),
\end{equation}
where \( \alpha_{\vec c,i} \) can be replaced by any point on \( \Delta_{\vec c,i} \) as the integrand is purely imaginary on \( \Delta_{\vec c,i} \). It also holds that
\begin{equation}
\label{eq_meas}
d\omega_{\vec c,i}(x) = \big( \boldsymbol h_{\vec c,i+}(x) - \boldsymbol h_{\vec c,i-}(x) \big) \frac{dx}{2\pi\ic}, \quad i\in I_d.
\end{equation}
Finally, if \( \{\vec c_n\}\subset(0,1)^d \), \( |\vec c_n| =1\), is sequence of vectors that converge to \( \vec c \), then the measures \( \omega_{\vec c_n,i} \) converge weak$^*$ to \( \omega_{\vec c,i} \) for each \( i\in I_d \).
\end{proposition}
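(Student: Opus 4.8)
Here is how I would approach the proof. The plan is to show first that the sheet-wise Cauchy transforms $\boldsymbol h_{\vec c,k}$ glue into a single rational function on the genus-zero surface $\RS_{\vec c}$, then to read off its divisor, and finally to deduce \eqref{future_kappas}, \eqref{eq_meas}, and the stability statement; I expect the local analysis at $E_{\vec c}$ to be the main obstacle.

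\emph{From Cauchy transforms to a rational function.} Each $\boldsymbol h_{\vec c,k}$ is holomorphic off the corresponding cut, so $\boldsymbol h_{\vec c}$ is holomorphic on $\RS_{\vec c}$ away from $E_{\vec c}$ and the cuts. Across the interior of a cut $\pi^{-1}(\Delta_{\vec c,i})$, continuity of $\boldsymbol h_{\vec c}$ amounts, because of the crosswise gluing, to $\boldsymbol h_{\vec c,0\pm}=\boldsymbol h_{\vec c,i\mp}$ on $\Delta_{\vec c,i}$; since on $\Delta_{\vec c,i}$ the only Cauchy transform among $\boldsymbol h_{\vec c,1},\dots,\boldsymbol h_{\vec c,d}$ that jumps is $\boldsymbol h_{\vec c,i}$, the ``jump part'' of this is automatic, and the ``average part'' reduces, using $\frac{d}{dx}V^{\omega}(x)=\int d\omega(t)/(t-x)$ off $\supp\omega$ and its principal value analogue on $\supp\omega$, to $\frac{d}{dx}(V^{\omega_{\vec c,i}}+\sum_{j}V^{\omega_{\vec c,j}})=0$ on $\Delta_{\vec c,i}$, which is exactly the equality in \eqref{vec_eq2}. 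At a point of $E_{\vec c}$, writing $\zeta$ for the local holomorphic coordinate ($\pi$ equals the ramification value plus $\zeta^2$), $\boldsymbol h_{\vec c}$ is holomorphic on a punctured disc; by the classical description of equilibrium densities under real-analytic external fields (here $\sum_{j\neq i}V^{\omega_{\vec c,j}}$ is real-analytic on $\Delta(\mu_i)$, the remaining supports being disjoint from $\Delta(\mu_i)$) the density of $\omega_{\vec c,i}$ is a real-analytic factor times $|x-a|^{\pm1/2}$ near an endpoint $a$, so $\boldsymbol h_{\vec c,i}(z)=O(|z-a|^{-1/2})$, i.e. $\zeta\boldsymbol h_{\vec c}$ is bounded; hence $\boldsymbol h_{\vec c}$ has at worst a simple pole at each ramification point and, being meromorphic on the compact surface $\RS_{\vec c}$, is rational. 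This endpoint analysis is, I expect, the technical heart.

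\emph{The divisor.} By Riemann--Hurwitz, $2-2g(\RS_{\vec c})=2(d+1)-2d=2$, so $\RS_{\vec c}$ has genus zero and $\boldsymbol h_{\vec c}$ has equally many zeros and poles. The expansions $\boldsymbol h_{\vec c,i}(z)=-c_i/z+O(z^{-2})$ and, since $\sum_i c_i=1$, $\boldsymbol h_{\vec c,0}(z)=1/z+O(z^{-2})$ give a simple zero at each of the $d+1$ points over $\infty$. On a sheet-$0$ arc over $(\beta_{\vec c,i},\alpha_{\vec c,i+1})$ one has $\frac{d}{dx}\boldsymbol h_{\vec c,0}(x)=-\sum_j\int d\omega_{\vec c,j}(t)/(t-x)^2<0$, so $\boldsymbol h_{\vec c,0}$ is strictly decreasing there; since $\boldsymbol h_{\vec c,0}$ does not vanish on $(-\infty,\alpha_{\vec c,1})\cup(\beta_{\vec c,d},\infty)$ (fixed sign) and $\im\boldsymbol h_{\vec c,k}(z)=\im z\int d\omega_{\vec c,k}(t)/|t-z|^2\neq0$ for $\im z\neq0$ on every sheet, the only possible finite zeros of $\boldsymbol h_{\vec c}$ are at most one per such arc, and it is simple where it occurs. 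Matching zeros and poles then yields the stated divisor: $d+1$ zeros over $\infty$, the $d-1$ zeros $\z_{\vec c,i}$ in $\RS_{\vec c,0}\cap\pi^{-1}([\beta_{\vec c,i},\alpha_{\vec c,i+1}])$, and $2d$ nominal simple poles on $E_{\vec c}$, with the convention that a $\z_{\vec c,i}$ located at a ramification point cancels the corresponding pole (that endpoint of the support then no longer carries an unbounded density).

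\emph{Remaining identities and stability.} For \eqref{future_kappas}, both sides are real-valued, continuous up to $\bigcup_j\Delta_{\vec c,j}$, and have $\partial_z$-derivative $\tfrac12(\boldsymbol h_{\vec c,i}-\boldsymbol h_{\vec c,0})$ (use $\partial_z V^{\omega}=\tfrac12\int d\omega(t)/(t-z)$ and $\boldsymbol h_{\vec c,0}=-\sum_j\boldsymbol h_{\vec c,j}$); the right side is single-valued on the connected set $\C\setminus\bigcup_j\Delta_{\vec c,j}$ because every period $\oint(\boldsymbol h_{\vec c,i}-\boldsymbol h_{\vec c,0})\,ds$ around a cut equals $-2\pi\ic$ times a real combination of the masses $c_k$, hence is purely imaginary; as both sides vanish at $\alpha_{\vec c,i}\in\Delta_{\vec c,i}$ by \eqref{vec_eq2} they coincide, and the same gluing identity shows the integrand is purely imaginary on $\Delta_{\vec c,i}$. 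The equivalence for ``$\z_{\vec c,i}=\boldsymbol\beta_{\vec c,i}$'' (resp.\ ``$\z_{\vec c,i}=\boldsymbol\alpha_{\vec c,i+1}$'') then follows by localizing $\z_{\vec c,i}$ through the one-sided boundary values of the monotone function $\boldsymbol h_{\vec c,0}$ on the arc and converting that sign condition, via \eqref{future_kappas} together with the one-sided behaviour of $V^{\omega_{\vec c,i}}+\sum_j V^{\omega_{\vec c,j}}-\ell_{\vec c,i}$ dictated by \eqref{vec_eq2}, into the stated potential inequality. Next, \eqref{eq_meas} is the Sokhotski--Plemelj formula for $\boldsymbol h_{\vec c,i}$: being the restriction of the rational $\boldsymbol h_{\vec c}$, it has continuous one-sided boundary values on $\Delta_{\vec c,i}$, so $\omega_{\vec c,i}$ has no singular part and its density is the normalized jump. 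Finally, for the stability claim let $\vec c_n\to\vec c$: the measures $\omega_{\vec c_n,i}$ are positive, supported in the fixed compacta $\Delta(\mu_i)$, with masses $c_{n,i}\to c_i$, hence weak$^*$ precompact; along any subsequence with $\omega_{\vec c_{n_k},i}\to\nu_i$ weak$^*$ for all $i$, lower semicontinuity of the Gonchar--Rakhmanov energy (whose mutual-energy terms are even continuous, the $\Delta(\mu_i)$ being disjoint), compared against rescalings $\tfrac{c_{n_k,i}}{c_i}\sigma_i$ of an arbitrary admissible $(\sigma_i)$, forces $(\nu_i)$ to minimize that energy under \eqref{vec_eq1}, so $\nu_i=\omega_{\vec c,i}$ by the uniqueness in \cite{Grakh81}; as every weak$^*$ limit point equals $\vec\omega_{\vec c}$, the whole sequence converges. (This, and the rest of the argument, follows \cite[Section~2]{Ya16}.)
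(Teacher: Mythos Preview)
The paper does not actually supply its own proof of this proposition: it is stated as background ``taken from \cite[Section~2]{Ya16}'', with no proof environment attached. Your sketch is therefore not being compared to an in-paper argument but rather to the external source you yourself cite at the end.

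On its merits, your outline is sound and follows the standard route. Two places deserve a bit more care. First, in the divisor count you establish at most one zero of $\boldsymbol h_{\vec c,0}$ per gap and no other finite zeros, but you do not explicitly secure \emph{existence} of a zero in each gap before invoking ``matching zeros and poles''; the clean way is to note that on the closed gap $[\beta_{\vec c,i},\alpha_{\vec c,i+1}]$ the monotone function $\boldsymbol h_{\vec c,0}$ either blows up to $+\infty$ at the left end and $-\infty$ at the right (pole at both endpoints), or is finite at an endpoint precisely when that endpoint is the zero itself, so in either case a unique zero lies in the closed gap. Second, your treatment of the equivalence ``$\z_{\vec c,i}=\boldsymbol\beta_{\vec c,i}$ iff the potential is strictly positive just to the right'' is compressed: you should spell out that by \eqref{future_kappas} the one-sided derivative of the potential expression at $\beta_{\vec c,i}$ equals $\boldsymbol h_{\vec c,i}(\beta_{\vec c,i}^+)-\boldsymbol h_{\vec c,0}(\beta_{\vec c,i}^+)=-2\boldsymbol h_{\vec c,0}(\beta_{\vec c,i}^+)$ (using the gluing), so strict positivity to the right is equivalent to $\boldsymbol h_{\vec c,0}(\beta_{\vec c,i}^+)<0$, i.e.\ the zero has already been passed at $\beta_{\vec c,i}$, i.e.\ $\z_{\vec c,i}=\boldsymbol\beta_{\vec c,i}$. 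Apart from these tightenings the argument is correct and matches the approach in \cite{Ya16}.
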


It can be deduced from this proposition that there is a one-to-one correspondence between \( \vec c\in(0,1)^d \), \( |\vec c|=1 \), and vectors \( (z_1,z_2,\ldots,z_{d-1}) \) such that \( z_1<z_2<\cdots <z_{d-1} \) and \( z_i\in(\alpha(\mu_i),\beta(\mu_{i+1})) \). On the one hand, \( \vec c \) corresponds to \( (\pi(\z_{\vec c,1}),\pi(\z_{\vec c,2}),\ldots,\pi(\z_{\vec c,d})) \). On the other hand, let \( (z_1,z_2,\ldots,z_{d-1}) \) be as described. Set
\[
\alpha_1=\alpha(\mu_1), \;\; \alpha_{i+1} = \max\{\alpha(\mu_{i+1}),z_i\},  \;\; \beta_i = \min\{\beta(\mu_i),z_i\}, \qandq \beta_d=\beta(\mu_d),
\]
where \( i\in I_d\setminus\{d\} \). Define \( \RS \) with respect to the intervals \( \Delta_i=[\alpha_i,\beta_i] \), \( i\in I_d \). Let \( \boldsymbol h \) be a rational function on \( \RS \) with the zero/pole divisor as described in Proposition~\ref{prop:Ya} where the simple zero in the gap \( \RS_0 \cap \pi^{-1}([\beta_i,\alpha_{i+1}]) \) has natural projection \( z_i \), \( i\in I_d\setminus\{d\} \). Normalize this function to have residue \( 1 \) at the point on top of infinity on \( \RS_0 \). Define measures \( \omega_i \) via \eqref{eq_meas} and let \( c_i=|\omega_i| \), \( i\in I_d \). Then, \( |\vec c|=1 \) and one can use \eqref{future_kappas} to show that \( \omega_i = \omega_{\vec c,i} \), \( i\in I_d \).

The following two facts about measures \( \omega_{\vec c,i} \) are important for the forthcoming analysis. Notice that it readily follows from \eqref{eq_meas} that these measures are absolutely continuous with respect to the Lebesgue measure, i.e., \( d\omega_{\vec c,i}(x) = \omega_{\vec c,i}^\prime(x)dx \).

\begin{proposition}
\label{prop:gaps}
For each \( \vec c\in(0,1)^d, |\vec c|=1 \) and \( i\in I_d \), the density \( \omega_{\vec c,i}^\prime(x) \) is non-vanishing on \( (\alpha_{\vec c,i},\beta_{\vec c,i}) \). Moreover, 
\[
\lim_{ x\uparrow \beta_{\vec c,i} } \omega_{\vec c,i}^\prime(x) (\beta_{\vec c,i}-x)^{\pm 1/2}
\]
exists and is positive and finite, where one needs to take exponent \( 1/2 \) if \( \boldsymbol h_{\vec c} \) has a pole at \( \boldsymbol \beta_{\vec c,i} \) and exponent \( -1/2 \) if \( \boldsymbol h_{\vec c} \) is finite at \( \boldsymbol \beta_{\vec c,i} \), \( i\in I_d \). Furthermore, analogous claims hold at each \( \alpha_{\vec c,i} \).
\end{proposition}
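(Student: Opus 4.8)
The plan is to read off everything from \eqref{eq_meas}, which expresses $\omega_{\vec c,i}^\prime$ as a boundary value of a single branch of the rational function $\boldsymbol h_{\vec c}$, together with the elementary observation that $\boldsymbol h_{\vec c,j}(z)=\int(t-z)^{-1}d\omega_{\vec c,j}(t)$ maps $\C_+$ strictly into $\C_+$ and $\C_-$ strictly into $\C_-$ (because $\omega_{\vec c,j}$ is a non-zero positive measure), so that $\im\boldsymbol h_{\vec c,0}=-\sum_{j\in I_d}\im\boldsymbol h_{\vec c,j}<0$ on $\C_+$. Since $\RS_{\vec c,0}$ and $\RS_{\vec c,i}$ are glued crosswise along $\Delta_{\vec c,i}$ one has $\boldsymbol h_{\vec c,0\pm}(x)=\boldsymbol h_{\vec c,i\mp}(x)$ there, and conjugate symmetry then turns \eqref{eq_meas} into $\omega_{\vec c,i}^\prime(x)=-\pi^{-1}\im\boldsymbol h_{\vec c,0+}(x)\ge0$ for $x\in(\alpha_{\vec c,i},\beta_{\vec c,i})$. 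To prove non-vanishing in the interior, fix $x_0\in(\alpha_{\vec c,i},\beta_{\vec c,i})$, take a small disc $D$ centred at $x_0$ with $D\cap\R\subset(\alpha_{\vec c,i},\beta_{\vec c,i})$, and set $H:=\boldsymbol h_{\vec c,0}$ on $D\cap\C_+$ and $H:=\boldsymbol h_{\vec c,i}$ on $D\cap\C_-$; by the crosswise gluing (and continuity of $\boldsymbol h_{\vec c}$ on $\RS_{\vec c}\setminus E_{\vec c}$, Proposition~\ref{prop:Ya}) the two pieces match continuously along $D\cap\R$, so $H$ is holomorphic in $D$ — it is just the restriction of $\boldsymbol h_{\vec c}$ to the part of $\RS_{\vec c}$ over $D$ sitting on sheet $0$ above $\R$ and on sheet $i$ below. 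Then $\im H<0$ on $D\cap\C_+$ and, because $\boldsymbol h_{\vec c,i}$ maps $\C_-$ into $\C_-$, also on $D\cap\C_-$, while $\im H=-\pi\omega_{\vec c,i}^\prime\le0$ on $D\cap\R$; thus $\im H\le0$ on $D$. If $\omega_{\vec c,i}^\prime(x_0)=0$ then $\im H$ attains an interior maximum at $x_0$, so by the maximum principle $H$, hence $\boldsymbol h_{\vec c}$, is constant — impossible, as $\boldsymbol h_{\vec c}$ is a non-constant rational function.

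For the endpoint behaviour at $\boldsymbol\beta_{\vec c,i}$ (that at $\boldsymbol\alpha_{\vec c,i}$ being symmetric), recall that $\boldsymbol\beta_{\vec c,i}$ is a simple ramification point, so $\zeta\mapsto z=\beta_{\vec c,i}-\zeta^2$ is a local uniformiser, real $\zeta$ corresponding to $\Delta_{\vec c,i}$, and the two points over $x=\beta_{\vec c,i}-s$ ($s>0$ small) being $\zeta=\pm\sqrt{s}$. In this coordinate $\boldsymbol h_{\vec c}$ is meromorphic and \eqref{eq_meas} reads $2\pi\ic\,\omega_{\vec c,i}^\prime(x)=\pm(\boldsymbol h_{\vec c}(-\sqrt{s})-\boldsymbol h_{\vec c}(\sqrt{s}))$, i.e. (up to sign) $-2$ times the odd-in-$\zeta$ part of $\boldsymbol h_{\vec c}$ at $\zeta=\sqrt{s}$. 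If $\boldsymbol h_{\vec c}$ has a (simple) pole at $\boldsymbol\beta_{\vec c,i}$, say $\boldsymbol h_{\vec c}(\zeta)=a\zeta^{-1}+O(1)$ with $a\neq0$, this gives $\omega_{\vec c,i}^\prime(x)(\beta_{\vec c,i}-x)^{1/2}=\mp a/(\pi\ic)+O(\beta_{\vec c,i}-x)$; the limit therefore exists, it is real (a limit of real numbers), non-zero ($a\neq0$) and $\ge0$ (the measure is positive), hence positive and finite. If $\boldsymbol h_{\vec c}$ is finite at $\boldsymbol\beta_{\vec c,i}$, say $\boldsymbol h_{\vec c}(\zeta)=b+a^\prime\zeta+O(\zeta^2)$, the same computation gives $\omega_{\vec c,i}^\prime(x)(\beta_{\vec c,i}-x)^{-1/2}=\mp a^\prime/(\pi\ic)+O((\beta_{\vec c,i}-x)^{1/2})$, a limit that again exists and is real and $\ge0$; what is left is to show $a^\prime\neq0$, equivalently that the ramification point $\boldsymbol\beta_{\vec c,i}$ is not a critical point of $\boldsymbol h_{\vec c}$.

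I expect this last step to be the main obstacle: $a^\prime=0$ would mean a degenerate $(\beta_{\vec c,i}-x)^{3/2}$ edge, and it is not ruled out by Proposition~\ref{prop:Ya} alone. I would exclude it by convexity. By \eqref{vec_eq2} with $j=i$, the measure $\omega_{\vec c,i}/c_i$ is the unit-mass weighted equilibrium measure of $\Delta(\mu_i)$ in the external field $Q_i:=(2c_i)^{-1}\sum_{j\ne i}V^{\omega_{\vec c,j}}$; since every $\omega_{\vec c,j}$ with $j\ne i$ is supported on an interval disjoint from $\Delta(\mu_i)$, the field $Q_i$ is real-analytic and strictly convex on $\Delta(\mu_i)$, because $(V^{\omega_{\vec c,j}})^{\prime\prime}(x)=\int(t-x)^{-2}d\omega_{\vec c,j}(t)>0$ for $x\in\Delta(\mu_i)$. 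For equilibrium measures in a strictly convex real-analytic external field the density is regular: positive in the interior of its (single-interval) support, comparable to $(\beta_{\vec c,i}-x)^{-1/2}$ at a hard endpoint $\beta_{\vec c,i}=\beta(\mu_i)$ and to $(\beta_{\vec c,i}-x)^{1/2}$ at a soft endpoint $\beta_{\vec c,i}<\beta(\mu_i)$, with no higher-order vanishing — a classical fact of weighted potential theory. Matching these two cases with the pole/finite dichotomy above, via the criterion in Proposition~\ref{prop:Ya} together with \eqref{vec_eq2} (pole $\Leftrightarrow$ hard, finite $\Leftrightarrow$ soft), forces $a^\prime\neq0$ and finishes the proof. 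A more hands-on alternative is to use that, by Proposition~\ref{prop:Ya}, $\boldsymbol h_{\vec c}$ is the unique rational function on the genus-zero surface $\RS_{\vec c}$ with the stated divisor; after rationally uniformising $\RS_{\vec c}$ one writes $\boldsymbol h_{\vec c}$ explicitly and checks that, in the finite case, the ramification point $\boldsymbol\beta_{\vec c,i}$ is not among its critical points.
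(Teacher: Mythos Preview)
Your maximum-principle argument for interior non-vanishing is correct and cleaner than the paper's route. The paper instead uniformises the genus-zero surface $\RS_{\vec c}$ by its conformal map $\chi_{\vec c}$ to $\overline\C$, writes $\boldsymbol h_{\vec c}=\sum_{k=1}^{2d}\gamma_k/(\chi_{\vec c}-p_k)$ with all $\gamma_k\ge0$ and $\sum_k\gamma_k=1$, and reads off $\pi\omega_{\vec c,i}^\prime(x)=\sum_k\gamma_k\,\im\chi_{\vec c,0+}(x)/|\chi_{\vec c,0+}(x)-p_k|^2>0$ from $\im\chi_{\vec c,0+}>0$ on the cuts. Your approach bypasses this setup entirely for the interior statement.

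For the endpoint, your treatment of the pole case is fine, and you correctly isolate the crux in the finite case: $a^\prime\neq0$. But your primary route through convexity is a gap as written. Strict convexity of $Q_i$ on $\Delta(\mu_i)$ is correct and gives a single-interval support, but the assertion that convex analytic fields automatically produce \emph{exact} square-root vanishing at a soft edge is not a one-line citation, and you do not supply one; you would still need an argument. The paper takes precisely your ``hands-on alternative'': in the partial-fraction form above the finite case is $\gamma_{2l}=0$, and expanding $\chi_{\vec c,0}(z)=p_{2l}+q_{2l}(z-\beta_{\vec c,l})^{1/2}+O(z-\beta_{\vec c,l})$ with $q_{2l}>0$ yields
\[
\pi\omega_{\vec c,l}^\prime(x)=(\beta_{\vec c,l}-x)^{1/2}\,q_{2l}\sum_{k\ne 2l}\frac{\gamma_k}{(p_{2l}-p_k)^2}+O(\beta_{\vec c,l}-x),
\]
whose leading coefficient is strictly positive because the $\gamma_k$ are non-negative and sum to $1$. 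So the uniformisation, once set up, settles both the interior and the delicate endpoint order at once; your split saves the machinery for the interior but leaves you needing it anyway (or an external reference you have not provided) for $a^\prime\neq0$.
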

\begin{proof}
Let \( \RS_{\vec c} \) be the Riemann surface defined after Theorem  \ref{thm:GR}. Denote by \( \infty_k \) the point on top of infinity that belongs to \( \RS_{\vec c,k}\), \( k\in\{0,1,\ldots,d \} \). Let \( \chi_{\vec c}:\RS_{\vec c}\to \overline\C \) be the conformal map such that \( \chi_{\vec c,0}(z) = z + \mathcal O(1/z) \) as \( z\to\infty \), where, as before, \( \chi_{\vec c,k}(z) \) is the pull-back to \( \overline\C \) of \( \chi_{\vec c} \) from \( \RS_{\vec c,k} \), \( k\in\{0,1,\ldots,d \} \). That is, \( \chi_{\vec c} \) is a rational function on \(  \RS_{\vec c} \) with a simple pole at \( \infty_0 \) and no constant term in its Laurent expansion there. The specified behavior at \( \infty_0 \) determines \( \chi_{\vec c} \) uniquely (the difference of any two such functions must be a constant function as it has no poles and is analytic everywhere on \( \RS_{\vec c} \); as it vanishes at \( \infty_0 \), this difference is identically zero). The uniqueness immediately yields that
\[
\overline{\chi_{\vec c,k}(\bar z)} = \chi_{\vec c,k}(z), \quad k\in\{0,1,\ldots,d \}.
\]
Thus, the preimage of the real line is the cycle that proceeds along segments of the real line when sheets \( \RS_{\vec c,k} \) are identified with cut planes in the following manner:
\[
\infty_0 \stackrel{\RS_{\vec c,0}}{\to} \boldsymbol\alpha_{\vec c,1} \stackrel{\RS_{\vec c,1}}{\to} \infty_1 \stackrel{\RS_{\vec c,1}}{\to} \boldsymbol\beta_{\vec c,1} \stackrel{\RS_{\vec c,0}}{\to} \boldsymbol\alpha_{\vec c,2} \stackrel{\RS_{\vec c,2}}{\to} \cdots \stackrel{\RS_{\vec c,d}}{\to} \infty_d \stackrel{\RS_{\vec c,d}}{\to} \boldsymbol\beta_{\vec c,d} \stackrel{\RS_{\vec c,0}}{\to} \infty_0.
\]
In particular, we have that \( \pm\im(\chi_{\vec c,0\pm}(x))>0 \) for \( x\in (\alpha_{\vec c,i},\beta_{\vec c,i}) \), \( i\in I_d \). Let
\[
p_{2i-1} := \chi_{\vec c}(\boldsymbol\alpha_{\vec c,i}), \quad p_{2i} := \chi_{\vec c}(\boldsymbol\beta_{\vec c,i}), \quad x_{2i-1} := \chi_{\vec c}(\infty_i), \quad i\in I_d, 
\]
and \( x_{2i} := \chi_{\vec c}(\z_{\vec c,i}) \), \( i\in I_d\setminus\{d\} \). Then, it holds that
\begin{equation}
\label{px_ineq}
p_{2i-1} < x_{2i-1} < p_{2i}, \;\; i \in I_d, \qandq p_{2i} \leq x_{2i} \leq p_{2i+1}, \;\; i \in I_d\setminus\{d\}.
\end{equation}
It also can be readily seen that
\begin{equation}
\label{h_repr}
\boldsymbol h_{\vec c} = \frac{(\chi_{\vec c}-x_1)(\chi_{\vec c}-x_2)\cdots(\chi_{\vec c}-x_{2d-1})}{(\chi_{\vec c}-p_1)(\chi_{\vec c}-p_2)\cdots(\chi_{\vec c}-p_{2d})} = \sum_{i=1}^{2d} \frac{\gamma_i}{\chi_{\vec c}-p_i}
\end{equation}
(this is not the reduced form because \( x_{2i} \) can be equal to either \( p_{2i} \) or \( p_{2i+1} \) as explained in Proposition~\ref{prop:Ya}). Clearly, it holds that
\[
\gamma_k = \prod_{i=1}^{k-1} \frac{p_k-x_i}{p_k-p_i} \prod_{i=k+1}^{2d} \frac{p_k-x_{i-1}}{p_k-p_i} \geq 0,
\]
where the last inequality follows from \eqref{px_ineq} (notice also that \( \sum_{i=1}^{2d}\gamma_i =1 \) as follows from the normalization of \( \boldsymbol h_{\vec c} \) at \( \infty_0 \)). Since \( \boldsymbol h_{\vec c} \) is a rational function on \( \RS_{\vec c} \), it necessarily holds that \( \boldsymbol h_{\vec c,0\pm}(x) = \boldsymbol h_{\vec c,i\mp}(x) \), \( x\in\Delta_{\vec c,i} \) for each \( i\in I_d \). Thus, we get from \eqref{eq_meas} and \eqref{h_repr} that
\[
\pi\omega_{\vec c,i}^\prime(x) = \frac{\boldsymbol h_{\vec c,0-}(x)-\boldsymbol h_{\vec c,0+}(x)}{2\ic} = -\im(\boldsymbol h_{\vec c,0+}(x)) = \sum_{i=1}^{2d} \frac{\gamma_i \im(\chi_{\vec c,0+}(x))}{|\chi_{\vec c,0+}(x)-p_i|^2} >0
\]
for \( x\in(\alpha_{\vec c,i},\beta_{\vec c,i}) \) and each \( i\in I_d \) as claimed. Moreover, we get for each \( i\in I_d \) that
\[
\chi_{\vec c,0}(z) = p_{2i} + q_{2i}(z-\beta_{\vec c,i})^{1/2} + \mathcal O((z-\beta_{\vec c,i}))
\]
for \( z\not\in\Delta_{\vec c,i} \) and sufficiently close to \( \beta_{\vec c,i} \), where we take the principal branch of the square root and \( q_{2i} > 0 \)  since \( \chi_{\vec c} \) is conformal. Then, we get for each \( l\in I_d \) that
\[
\pi\omega_{\vec c,l}^\prime(x) = (\gamma_{2l}/q_{2l}) (\beta_{\vec c,l}-x)^{-1/2} + \mathcal O(1)
\]
if \( \gamma_{2l} \neq 0 \), i.e., \( \boldsymbol z_{\vec c,l} \neq \boldsymbol\beta_{\vec c,l} \), or
\[
\pi\omega_{\vec c,l}^\prime(x) = (\beta_{\vec c,l}-x)^{1/2} \sum_{i=1,i\neq 2l}^{2d} \frac{\gamma_i q_{2l}}{(p_{2l}-p_i)^2} + \mathcal O((\beta_{\vec c,l}-x))
\]
otherwise. Since analogous claims hold at each \( \alpha_{\vec c,l} \), this finishes the proof of the proposition.
\end{proof}

\section{Strong Asymptotics of MOPs}
\label{sec:sa}

We keep all the notation given in the introduction and Section~\ref{sec:wa}.  

\subsection{Uniformity of Szeg\H{o} Measures}

To describe the strong limits of multiple orthogonal polynomials we need to define an  analog of the Szeg\H{o} functions \eqref{SzegoFun}.

\begin{proposition}
\label{prop:sa1}
Let \( \{\Delta_i\}_{i\in I_d} \) be a collection of pairwise disjoint closed intervals and \( \{ \mu_i \}_{i\in I_d} \) be positive Borel measures such that \( \mu_i\in\mathrm{Sz}(\Delta_i) \) for each \( i\in I_d \). There exists a unique collection of functions \( \{S_i(z)\}_{i\in I_d} \) such that each \( S_i(z) \) is a conjugate-symmetric outer function in \( H^2(D_{\Delta_i}) \) with \( S_i(\infty)>0 \) and it holds that
\begin{equation}
\label{SzegoonRiemann}
|S_{i\pm}(x)|^2 \prod_{j\in I_d,j\neq i}S_j(x) = v_{\Delta_i}(x)  \quad \text{for a.e.} \quad x\in\Delta_i, \quad i\in I_d,
\end{equation}
where \( v_{\Delta_i}(x) \) is the Radon-Nikodym derivative of \( \mu_i \) w.r.t. \( \omega_{\Delta_i} \), see \eqref{szego}.
\end{proposition}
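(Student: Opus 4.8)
The plan is to take the logarithm of \eqref{SzegoonRiemann}, which converts it into an affine linear system for the boundary‑value functions, then to establish existence via the Fredholm alternative and uniqueness by transplanting the problem to the compact Riemann surface \( \RS \) associated, as in Section~\ref{sec:wa}, to the intervals \( (\Delta_1,\dots,\Delta_d) \) and using that a holomorphic function on a compact Riemann surface is constant. I would first observe that for \( j\neq i \) the interval \( \Delta_i \) is a compact subset of \( D_{\Delta_j} \), so a conjugate‑symmetric outer function \( S_j\in H^2(D_{\Delta_j}) \) with \( S_j(\infty)>0 \) is holomorphic, real, positive and bounded away from \( 0 \) and \( \infty \) on \( \Delta_i \); hence \( \log S_j \) is a bounded continuous function on \( \Delta_i \). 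Writing \( \phi_i:=\log|S_{i\pm}| \) and \( (\mathcal C_j\phi)(z):=w_{\Delta_j}(z)\int_{\Delta_j}\phi(t)\,d\omega_{\Delta_j}(t)/(z-t)=\log\out_{\Delta_j}(e^{\phi},z) \), the relation \eqref{SzegoonRiemann} is equivalent (via \eqref{outer}--\eqref{outer-b}, and because \( S_i=\out_{\Delta_i}(e^{\phi_i}) \) is recovered from \( \phi_i \)) to the system
\[
\phi_i \;=\; \tfrac12\log v_{\Delta_i}\;-\;\tfrac12\sum_{j\neq i}\big(\mathcal C_j\phi_j\big)\big|_{\Delta_i},\qquad i\in I_d .
\]
Substituting \( \phi_i=\tfrac12\log v_{\Delta_i}+\widehat\phi_i \) turns this into \( \widehat\phi_i=b_i-\tfrac12\sum_{j\neq i}(\mathcal C_j\widehat\phi_j)|_{\Delta_i} \) with \( b_i:=-\tfrac14\sum_{j\neq i}(\mathcal C_j\log v_{\Delta_j})|_{\Delta_i} \); each \( b_i \) is continuous on \( \Delta_i \) because \( \log v_{\Delta_j}\in L^1(\omega_{\Delta_j}) \) while the kernel \( w_{\Delta_j}(x)/(x-t) \) is continuous for \( x\in\Delta_i,\ t\in\Delta_j \) — here the disjointness \( \dist(\Delta_i,\Delta_j)>0 \) from \eqref{Angelesco} is essential. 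Conversely any \( (\widehat\phi_i)\in\bigoplus_i C(\Delta_i) \) solving this system yields admissible \( S_i:=\out_{\Delta_i}(e^{\phi_i}) \), since \( \phi_i\in L^1(\omega_{\Delta_i}) \) and \( e^{\phi_i}=e^{\widehat\phi_i}\sqrt{v_{\Delta_i}}\in L^2(\omega_{\Delta_i}) \).

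\emph{Existence.} On \( X:=\bigoplus_i C(\Delta_i) \) the operator \( K \) with blocks \( K_{ij}\psi:=-\tfrac12(\mathcal C_j\psi)|_{\Delta_i} \), \( j\neq i \) (and \( K_{ii}:=0 \)), is an integral operator with continuous kernels, hence compact. By the Fredholm alternative the equation \( (I-K)\widehat\Phi=b \) is then solvable provided \( I-K \) is injective, and injectivity is precisely the special case \( v_{\Delta_i}\equiv1 \) of the uniqueness argument below (applied to \( S_i:=\out_{\Delta_i}(e^{\widehat\phi_i}) \) for a putative nonzero \( \widehat\Phi\in\ker(I-K) \)).

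\emph{Uniqueness.} Given two solutions \( (S_i) \), \( (\widetilde S_i) \), I would set \( R_i:=S_i/\widetilde S_i \), a conjugate‑symmetric function, holomorphic and non‑vanishing on \( D_{\Delta_i} \), with \( R_i(\infty)>0 \), whose log‑modulus is the Poisson integral over \( D_{\Delta_i} \) of \( \log|R_{i\pm}| \), and satisfying \( |R_{i\pm}|^2\prod_{j\neq i}R_j=1 \) a.e. on \( \Delta_i \). Since \( \log|R_{i\pm}|=-\tfrac12\sum_{j\neq i}\log R_j|_{\Delta_i} \) is a bounded function (a finite sum of functions continuous on the compact set \( \Delta_i \)), the Poisson representation shows that \( |R_i| \) and \( 1/|R_i| \) are bounded on \( D_{\Delta_i} \). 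I would then build \( \mathbf R \) on \( \RS \) by \( \mathbf R|_{\RS_0}:=\prod_{k}R_k \) and \( \mathbf R|_{\RS_i}:=1/R_i \). Using \( R_{i-}=\overline{R_{i+}} \) on \( \Delta_i \) and that \( \prod_{j\neq i}R_j \) is real there, the relation \( |R_{i+}|^2\prod_{j\neq i}R_j=1 \) says exactly that the traces of these two pieces match across the cut \( \Delta_i \); a standard gluing argument for Smirnov‑class functions with a.e.-matching non‑tangential boundary values then shows \( \mathbf R \) is holomorphic on \( \RS \) off the finite set of ramification points, and the uniform bounds on \( |R_i|^{\pm1} \) together with analyticity of \( R_j \), \( j\neq i \), near the endpoints of \( \Delta_i \) show \( \mathbf R \) is bounded near each ramification point, hence holomorphic and non‑vanishing on all of the compact surface \( \RS \). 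Therefore \( \mathbf R \) is a positive constant \( c \); evaluating on \( \RS_i \) gives \( R_i\equiv1/c \), and evaluating on \( \RS_0 \) and using \( R_k\equiv1/c \) gives \( c=c^{-d} \), so \( c^{d+1}=1 \), \( c=1 \), and \( R_i\equiv1 \). Thus \eqref{SzegoonRiemann} has at most one solution, and the same computation with \( R_i:=\out_{\Delta_i}(e^{\widehat\phi_i}) \) gives \( \widehat\Phi=0 \) for \( \widehat\Phi\in\ker(I-K) \), completing the existence proof.

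\emph{Main obstacle.} I expect the delicate point to be the gluing step — verifying that the piecewise‑defined \( \mathbf R \) is genuinely analytic across the cuts and has removable singularities at the ramification points. This rests on the a.e. boundary‑value identity together with the uniform two‑sided bounds on \( |R_i| \); and the latter are available precisely because the intervals \( \Delta_i \) are pairwise disjoint, which is what makes \( \log|R_{i\pm}| \) a bounded function on each \( \Delta_i \). Everything else is routine: the compactness of \( K \) follows from the continuity of its kernels (again a consequence of disjointness), and the algebra of the reformulation is just \eqref{outer}--\eqref{outer-b}.
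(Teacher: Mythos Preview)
Your proposal is correct and follows essentially the same route as the paper: linearize via logarithms, show the resulting operator (your $K$, the paper's $\mathcal H_{\vec\Delta}$) is compact with trivial kernel, and invoke the Fredholm alternative. The only noteworthy difference is in the kernel/uniqueness step: the paper (Lemma~\ref{lem:sa5}) works with the \emph{harmonic} extensions $h_i=H_{\Delta_i}u_i$, uses Schwarz reflection to glue them into a global harmonic function on $\RS$, and concludes it is identically zero; you instead build a \emph{holomorphic} function $\mathbf R$ on $\RS$ from the ratios $R_i$ and use Liouville on the compact surface. These are two sides of the same coin ($\log|\mathbf R|$ is precisely the paper's harmonic object, up to sign), but the paper's version sidesteps the a.e.-boundary-value gluing you flag as the main obstacle, since Schwarz reflection for continuous harmonic functions is entirely elementary. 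Your substitution $\phi_i=\tfrac12\log v_{\Delta_i}+\widehat\phi_i$ to land in $C(\Delta_i)$ is also what the paper effectively does (it notes $\mathcal H_{\vec\Delta}(L^1)\subset C^1$ and then works in $L^1$, $C$, or $C^1$ interchangeably).
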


We point out that for absolutely continuous measures \( d\mu_i = \mu_i^\prime(x)dx \) with smooth non-vanishing densities \( \mu_i^\prime(x) \), the integral representation of functions \( S_i(z) \) was obtained in \cite{ap11}, see also \cite[Section~6]{Ya16}. This representation is similar to formulae  \eqref{outer}, \eqref{SzegoFun}  and it uses a Cauchy-like kernel on the Riemann surface \( \RS \) that corresponds to \( (\Delta_1,\Delta_2,\ldots,\Delta_d) \).

To account for the pushing effect, we further strengthen the notion of a Szeg\H{o} measure.  We shall say that a measure \( \mu \) is \emph{uniformly Szeg\H{o}} on a closed interval \( \Delta \), and denote this by \( \mu\in\mathrm{USz}(\Delta) \), if \( \mu\in\mathrm{Sz}(\Delta) \) and for any sequence of closed intervals \( \{\Delta_n\} \) such that \( \Delta_n\subseteq \Delta(\mu) \) and \( \Delta_n\to\Delta \) as \( n\to\infty \), there is $n_0$ such that \( \mu\in\mathrm{Sz}(\Delta_n) \) for $n\ge n_0$ and 
\begin{equation}
\label{UnSzego}
\lim_{n\to\infty}\int \big|\log \mu^\prime(x) - \log \mu^\prime(l_{\Delta\to\Delta_n}(x)) \big| d\omega_\Delta(x) \to 0 \,,
\end{equation}
where \( l_{\Delta\to\Delta_n}(x) \) is a linear function with a positive leading coefficient that maps \( \Delta \) onto \( \Delta_n \).  We call the class ``uniform Szeg\H{o}'' to emphasize that small perturbations of the endpoints of \( \Delta \) result in small changes of the value of the Szeg\H{o} function at infinity. In fact, this is true for the whole Szeg\H{o} function locally uniformly away from \( \Delta \).

\begin{proposition}
\label{prop:sa2}
Let \( \{\Delta_i\}_{i\in I_d} \) be a collection of pairwise disjoint closed intervals and \( \{\mu_i\}_{i\in I_d} \) be a collection of positive Borel measures such that \( \mu_i\in \mathrm{USz}(\Delta_i) \) for each \( i\in I_d \). Further, let \( \{\Delta_{n,i}\}_{i\in I_d} \), \( n\in \N \), be collections of pairwise disjoint closed intervals such that \( \Delta_{n,i}\subseteq \Delta(\mu_i) \) and \( \Delta_{n,i}\to\Delta_i \) as \( n\to\infty \) for each \( i\in I_d \). Then
\begin{equation}
\label{SzegoCont}
S_{n,i}(z) \to S_i(z) \qasq  n\to\infty
\end{equation}
locally uniformly in \( D_{\Delta_i} \) for each \( i\in I_d \), where \( \{S_i(z)\}_{i\in I_d} \) and \( \{ S_{n,i}(z)\}_{i\in I_d} \) are the collection of functions guaranteed by Proposition~\ref{prop:sa1} for \( \{\Delta_i\}_{i\in I_d} \) and \( \{\Delta_{n,i}\}_{i\in I_d} \), respectively.
\end{proposition}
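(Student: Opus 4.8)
The plan is to realize each $S_i$ and $S_{n,i}$ as a Cauchy-type integral of $\log v_{\Delta_l}$, respectively $\log v_{\Delta_{n,l}}$, against a kernel built from the Riemann surfaces attached to $(\Delta_1,\dots,\Delta_d)$ and $(\Delta_{n,1},\dots,\Delta_{n,d})$, and then to pass to the limit inside this representation. The surface $\RS$ attached to pairwise disjoint intervals $(\Delta_1,\dots,\Delta_d)$ (constructed in Section~\ref{sec:wa}) has genus zero by Riemann--Hurwitz, hence carries a uniformizing rational function $\chi$ normalized by $\chi_0(z)=z+\mathcal O(1/z)$ at $\infty_0$, exactly as in the proof of Proposition~\ref{prop:gaps}. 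Lifting \eqref{SzegoonRiemann} to $\RS$, the collection $\{\log S_i\}_{i\in I_d}$ assembles into a single function $\boldsymbol\Phi$ on $\RS$ (equal to $\log S_i\circ\pi$ on $\RS_i$ and to $-\sum_{j\in I_d}\log S_j\circ\pi$ on $\RS_0$) which, up to integrable singularities at the $2d$ ramification points, is conjugate-symmetric, holomorphic off the cuts, and has the single additive jump $\log v_{\Delta_i}\circ\pi$ across $\Delta_i$; the requirement of Proposition~\ref{prop:sa1} that the $S_i$ be outer with $S_i(\infty)>0$ pins $\boldsymbol\Phi$ down to be precisely the Cauchy integral of these jumps on $\RS$. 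Pushing everything down to $\overline\C$ via the genus-zero uniformizer $\chi$ turns that Cauchy integral into explicit scalar integrals over the intervals, and reading off $\log S_i$ yields, on every compact $K\subset D_{\Delta_i}$,
\begin{equation*}
\log S_i(z)=\sum_{l\in I_d}\int_{\Delta_l}\log v_{\Delta_l}(x)\,\mathcal K_{i,l}(z,x)\,d\omega_{\Delta_l}(x),
\end{equation*}
where the kernels $\mathcal K_{i,l}$, assembled from $\chi$ and $w_{\Delta_l}$, are bounded for $z\in K$, $x\in\Delta_l$, and depend continuously on the positions of the endpoints of $\Delta_1,\dots,\Delta_d$; the value $\log S_i(\infty)$ is recovered by letting $z\to\infty$, cf.\ \eqref{geom-mean}. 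The same formula with $\RS_n$, $\chi_n$, $\Delta_{n,l}$ in place of $\RS$, $\chi$, $\Delta_l$ represents $S_{n,i}$.

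The first step is the geometric convergence $\RS_n\to\RS$. The ramification points of $\RS_n$ are precisely the endpoints of the $\Delta_{n,l}$, which converge to those of $\Delta_l$; the sheet-wise pullbacks $\chi_{n,k}$ (rational functions on the cut plane), being pinned down by the normalization at $\infty_0$, then converge to $\chi_k$ locally uniformly off the endpoints (any locally uniform limit is a uniformizer of $\RS$ with the same normalization, hence equals $\chi$), and the distinguished points $p_i^{(n)},x_i^{(n)}$ of Proposition~\ref{prop:gaps} converge to $p_i,x_i$. Consequently $\mathcal K_{n,i,l}\to\mathcal K_{i,l}$ uniformly on $K\times\Delta_l$ once $K\subset D_{\Delta_{n,l}}$, which holds for all large $n$ since $\Delta_{n,l}\to\Delta_l$; in particular $\sup_n\|\mathcal K_{n,i,l}\|_{L^\infty(K\times\Delta_l)}<\infty$.

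The second step is the convergence of the weights, where the hypothesis $\mu_i\in\mathrm{USz}(\Delta_i)$ is used. Let $l_{n,l}:=l_{\Delta_l\to\Delta_{n,l}}$ be the linear bijection, with slope $a_{n,l}>0$ (so $a_{n,l}\to1$ and $l_{n,l}\to\mathrm{id}$ because $\Delta_{n,l}\to\Delta_l$). Using $d\omega_{\Delta_{n,l}}=(l_{n,l})_*d\omega_{\Delta_l}$ and the scaling $\sqrt{(l_{n,l}(x)-\alpha_{n,l})(\beta_{n,l}-l_{n,l}(x))}=a_{n,l}\sqrt{(x-\alpha_l)(\beta_l-x)}$, one gets
\begin{equation*}
\log v_{\Delta_{n,l}}(l_{n,l}(x))-\log v_{\Delta_l}(x)=\log a_{n,l}+\big(\log\mu_l^\prime(l_{n,l}(x))-\log\mu_l^\prime(x)\big),
\end{equation*}
so that $\log v_{\Delta_{n,l}}\circ l_{n,l}\to\log v_{\Delta_l}$ in $L^1(\omega_{\Delta_l})$ by \eqref{UnSzego}. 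Now change variables $x\mapsto l_{n,l}(x)$ in the integral representation of $S_{n,i}$ and subtract the one for $S_i$; the difference splits into the integral of $\big(\mathcal K_{n,i,l}(z,l_{n,l}(x))-\mathcal K_{i,l}(z,x)\big)\log v_{\Delta_l}(x)$, which tends to $0$ uniformly on $K$ by the first step together with $l_{n,l}\to\mathrm{id}$ and $\log v_{\Delta_l}\in L^1(\omega_{\Delta_l})$, plus the integral of $\mathcal K_{n,i,l}(z,l_{n,l}(x))\big(\log v_{\Delta_{n,l}}\circ l_{n,l}-\log v_{\Delta_l}\big)(x)$, which tends to $0$ uniformly on $K$ by the second step and the uniform bound on the kernels. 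Hence $S_{n,i}\to S_i$ uniformly on $K$, and since $K\subset D_{\Delta_i}$ was arbitrary this is \eqref{SzegoCont}.

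The hard part is the first step: one must show that the genus-zero uniformizer $\chi$ — equivalently, the divisor data of Proposition~\ref{prop:gaps} and the location of the gap zeros $\z_{\vec c,i}$ — depends continuously on the endpoints, with enough uniformity to control the kernels $\mathcal K_{n,i,l}$ up to the endpoints of the $\Delta_l$, where they carry the square-root singularities inherited from $w_{\Delta_l}$ and where the intervals $\Delta_{n,l}$ are themselves moving. Here the local expansion $\chi_0(z)=p_{2i}+q_{2i}(z-\beta_i)^{1/2}+\mathcal O(z-\beta_i)$ with $q_{2i}>0$ already used in the proof of Proposition~\ref{prop:gaps}, now tracked uniformly in $n$, together with the compactness of the family $\{\chi_{n,k}\}$ and the pinning normalization at $\infty_0$, is what makes the required uniform convergence and bounds go through.
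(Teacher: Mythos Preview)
Your approach is genuinely different from the paper's. You propose to write $\log S_i$ and $\log S_{n,i}$ as Cauchy integrals on the associated genus-zero Riemann surfaces, push these down via the uniformizer $\chi$ to explicit kernel integrals over the $\Delta_l$, and then pass to the limit by combining (i) convergence of the uniformizers $\chi_n\to\chi$ (hence of the kernels) with (ii) the $L^1(\omega_{\Delta_l})$ convergence $\log v_{\Delta_{n,l}}\circ l_{n,l}\to\log v_{\Delta_l}$ supplied by the $\mathrm{USz}$ hypothesis. The paper instead never builds the Riemann-surface Cauchy kernel at all: it characterizes the boundary data $\vec s$ of the $\log S_i$ as the unique solution of $(\mathcal I-\mathcal H_{\vec\Delta})\vec s=\vec a$, where $\mathcal H_{\vec\Delta}$ is a compact matrix of harmonic-extension-then-restrict operators (Lemmas~\ref{lem:sa5}--\ref{lem:sa7}); it then proves a uniform bound $\|(\mathcal I-\mathcal H_{\vec\Delta_n})^{-1}\|\le C$ by showing $\mathcal H_{\vec\Delta_n}\to\mathcal H_{\vec\Delta}$ in operator norm after conjugating by the linear rescalings $l_{n,i}$ (Lemma~\ref{lem:sa9}), and bootstraps through the smoothing of $\mathcal H$ to reduce to the single-interval statement (Lemma~\ref{lem:sa8}), which is exactly where $\mathrm{USz}$ enters. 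What your route buys is an explicit integral formula for $S_i$ valid for general Szeg\H{o} data, in the spirit of the references cited right after Proposition~\ref{prop:sa1}; what the paper's route buys is that the entire ``hard part'' you flag---uniform control of the kernels near the moving endpoints, continuity of $\chi$ in the branch points, and the square-root bookkeeping---is replaced by a single soft compactness/Fredholm argument on a fixed Banach space, with all the endpoint analysis absorbed into the elementary estimate \eqref{eps_n}.

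As a plan your argument is viable, but be aware that the step you yourself call hard is genuinely the crux and is not carried out: you need the kernels $\mathcal K_{n,i,l}(z,\cdot)$ to be \emph{uniformly} bounded in $n$ on all of $\Delta_l$ (not just compact subsets of its interior), and to converge there, while the intervals and the square-root factors $w_{\Delta_{n,l}}$ are moving. The paper's references for the integral representation assume smooth non-vanishing densities; extending that representation to bare $L^1$ data and proving the requisite uniform kernel bounds would have to be done from scratch. If you pursue this line, the normal-family-plus-uniqueness argument for $\chi_n\to\chi$ is fine, but you will still need a quantitative local expansion of $\chi_{n,0}$ at the moving branch points with constants uniform in $n$ to close the estimate---this is doable but is real work, whereas the paper's operator framework sidesteps it entirely.
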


Since the concept of uniformly Szeg\H{o } measures is important to our analysis, let us provide a different characterization of this class. To this end, given an integrable function \( \theta \) on an interval \( \Delta \), we let
\[
(I_\gamma\theta)(x) :=  \frac1{\sqrt\pi}\int_\gamma^x \frac{\theta(t)dt}{\sqrt{|x-t|}}, \quad x\in\Delta,
\]
where \( \gamma\in\Delta \) is fixed, which is a version of the so-called Riemann-Liouville fractional integral (corresponding to the exponent \( 1/2 \)). As an integral transform, \( I_\gamma \) is a continuous operator from \( L^1(\Delta) \) into weak-\( L^2 (\Delta) \), see \cite[Lemma~2.13]{MR4191495}. Notice also that given two different $\gamma_1$ and $\gamma_2$ such that $\gamma_1,\gamma_2\in \Delta, \gamma_1<\gamma_2$, the difference
\[
(I_{\gamma_1}\theta)(x)-(I_{\gamma_2}\theta)(x)=\frac{1}{\sqrt \pi}\int_{\gamma_1}^{\gamma_2} \frac{\theta(t)dt}{\sqrt{|x-t|}}
\]
 is  continuous in $x$ on $\R\backslash [\gamma_1,\gamma_2]$.
\begin{proposition}
\label{prop:sa3}
Let \( [\alpha,\beta]=\Delta \subseteq \Delta(\mu) \). The following are equivalent
\begin{itemize}
\item[(i)] \( \mu\in\mathrm{USz}(\Delta) \);
\item[(ii)] \( (I_\gamma\log\mu^\prime)(x) \), \( x\in\Delta(\mu) \), is continuous at \( \beta \) and \( \alpha \), where \( \gamma \in(\alpha,\beta) \) is any;
\item[(iii)] for any \( \epsilon>0 \) there exists \( d_\epsilon>0 \) such that 
\[
\left| \int_{a_\alpha}^{b_\alpha}\frac{\log \mu^\prime(t) dt}{\sqrt{t-a_\alpha}} \right|,\left| \int_{a_\beta}^{b_\beta}\frac{\log \mu^\prime(t) dt}{\sqrt{b_\beta-t}} \right|<\epsilon
\]
when \( \dist(\alpha,[a_\alpha,b_\alpha]),\dist(\beta,[a_\beta,b_\beta])<d_\epsilon \).
\end{itemize}
\end{proposition}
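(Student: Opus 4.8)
All three conditions are local---they depend only on $\mu'$ near the two endpoints of $\Delta$---and the map $x\mapsto\alpha+\beta-x$ interchanges the roles of $\alpha$ and $\beta$, so it suffices to work near $\beta$. By the remark preceding the statement, replacing the base point $\gamma\in(\alpha,\beta)$ changes $(I_\gamma\log\mu')$ by a function continuous near $\beta$, so in (ii) the choice of $\gamma$ is immaterial and may be made convenient. The elementary identity behind everything is the substitution $t=b-u^2$, which turns the half-order fractional integral into an ordinary one,
\[
\int_\gamma^{b}\frac{\log\mu'(t)\,dt}{\sqrt{b-t}}=2\int_0^{\sqrt{b-\gamma}}\log\mu'(b-u^2)\,du,
\]
and under which $d\omega_\Delta$ is comparable to $du$ near $\beta$. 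Finally, for a general $\Delta_n=[\alpha_n,\beta_n]\to\Delta$ I would use $\big(l_{\Delta\to\Delta_n}\big)_*\omega_\Delta=\omega_{\Delta_n}$ (affine maps carry arcsine measures to arcsine measures) to split the integral in \eqref{UnSzego} into a bulk part over $[\alpha+\delta,\beta-\delta]$ and two one-sided endpoint parts; on the bulk $\omega_\Delta$ has bounded density, $l_{\Delta\to\Delta_n}\to\mathrm{id}$ uniformly and $g\mapsto g\circ l_{\Delta\to\Delta_n}$ is continuous on $L^1_{\mathrm{loc}}$, so this part always tends to $0$. Thus (i) decouples into the assertion that \eqref{UnSzego} holds for every sequence $[\alpha,\beta_n]$ with $\beta_n\to\beta$, and its mirror image at $\alpha$.

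\textbf{(ii)$\Leftrightarrow$(iii).} For $x$ near $\beta$ and fixed small $\eta>0$,
\[
(I_\gamma\log\mu')(x)-(I_\gamma\log\mu')(\beta)=\frac1{\sqrt\pi}\int_\gamma^{\beta-\eta}\log\mu'(t)\Big(\tfrac1{\sqrt{|x-t|}}-\tfrac1{\sqrt{\beta-t}}\Big)dt+\frac1{\sqrt\pi}\bigg(\int_{\beta-\eta}^{x}\frac{\log\mu'(t)\,dt}{\sqrt{|x-t|}}-\int_{\beta-\eta}^{\beta}\frac{\log\mu'(t)\,dt}{\sqrt{\beta-t}}\bigg).
\]
For fixed $\eta$ the first term tends to $0$ as $x\to\beta$ by dominated convergence (the kernels converge uniformly on $[\gamma,\beta-\eta]$, where $\log\mu'\in L^1$), while the two integrals in the second term are exactly the short-interval integrals of (iii) over intervals collapsing to $\beta$. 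Hence $I_\gamma\log\mu'$ is continuous at $\beta$ iff these are uniformly small, i.e.\ iff (iii) holds at $\beta$; the case of $\alpha$ is identical. (Within the standing assumption $\log\mu'\in L^1(\Delta)$, needed for $I_\gamma\log\mu'$ to be defined, each of (i)--(iii) forces $\mu\in\mathrm{Sz}(\Delta)$: since $\log^+\mu'\le C_\theta(\mu')^\theta$ is $\omega_\Delta$-integrable for $\theta<1/2$ by H\"older's inequality, a Szeg\H{o} failure can only come from the negative part near an endpoint, and such a failure is seen both by (ii) and by (iii).)

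\textbf{(iii)$\Leftrightarrow$(i).} By the decoupling it suffices to treat $\Delta_n=[\alpha,\beta_n]$, $\beta_n\to\beta$; with $l_n:=l_{\Delta\to\Delta_n}$ and $\lambda_n:=(\beta_n-\alpha)/(\beta-\alpha)\to1$ one has $l_n(\beta-u^2)=\beta_n-\lambda_n u^2$, and by the substitution $x=\beta-u^2$ the $\beta$-end of \eqref{UnSzego} is comparable to
\[
\int_0^{\sqrt\delta}\big|\log\mu'(\beta-u^2)-\log\mu'(\beta_n-\lambda_n u^2)\big|\,du.
\]
Assuming (iii): split at $u=\eta$; the range $u\ge\eta$ tends to $0$ as $n\to\infty$ for each fixed $\eta$ (both arguments stay away from $\beta$, and one uses $L^1$-continuity of the reparametrisation), and the range $u<\eta$ is bounded by $\int_0^\eta|\log\mu'(\beta-u^2)|\,du$ plus, after undoing the substitution, an integral of $|\log\mu'|$ over a short interval abutting $\beta_n$ against $(\beta_n-\cdot)^{-1/2}$; both are made small uniformly in $n$ by taking $\delta$ (hence $\eta$) small and invoking (iii), where one passes from the signed integrals of (iii) to their absolute values via the Szeg\H{o} condition and the bound $\log^+\mu'\le C_\theta(\mu')^\theta$ with H\"older's inequality. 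The same choice yields $\mu\in\mathrm{Sz}(\Delta_n)$ for large $n$, so (i) holds. Conversely, if (iii) fails at $\beta$ there are $\epsilon_0>0$ and intervals $[a_k,b_k]$ collapsing to $\beta$ with $\big|\int_{a_k}^{b_k}\frac{\log\mu'(t)\,dt}{\sqrt{b_k-t}}\big|\ge\epsilon_0$. We may assume $\mu\in\mathrm{Sz}(\Delta)$ and, along a subsequence, $\mu\in\mathrm{Sz}([\alpha,b_{k_n}])$ for all $n$ (otherwise (i) already fails). Put $\Delta_n:=[\alpha,b_{k_n}]$ and restrict the displayed integral to the $u$'s with $\beta_n-\lambda_n u^2\in[a_{k_n},b_{k_n}]$, an interval $[0,r_n]$ with $r_n\to0$; by the substitution this portion is at least $\tfrac12\big|\int_{a_{k_n}}^{b_{k_n}}\frac{\log\mu'(t)\,dt}{\sqrt{b_{k_n}-t}}\big|-\int_0^{r_n}|\log\mu'(\beta-u^2)|\,du\ge\epsilon_0/3$ for large $n$, the last integral vanishing by the Szeg\H{o} condition since $r_n\to0$. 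Hence \eqref{UnSzego} stays bounded away from $0$ along $\{\Delta_n\}$, so $\mu\notin\mathrm{USz}(\Delta)$.

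\textbf{The main obstacle.} The step (ii)$\Leftrightarrow$(iii) is routine. The difficulty lies in the equivalence with (i): one has to control the endpoint tails \emph{uniformly over the moving intervals $\Delta_n$}---exactly the point where the Szeg\H{o} condition on $\Delta$ alone is insufficient and (iii) is needed, because shrinking $\Delta$ drags the singularity of the arcsine weight into regions where $\log\mu'$ may concentrate---and, in doing so, one must bridge the gap between the \emph{signed} tail integrals appearing in (iii) and the \emph{absolute-value} integrals governing \eqref{UnSzego}.
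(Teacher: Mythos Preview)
Your argument for (iii)$\Rightarrow$(i) is essentially the paper's Lemma~\ref{lem:sa13}, and your contrapositive route for (i)$\Rightarrow$(iii)---building $\Delta_n=[\alpha,b_{k_n}]$ from a sequence of bad short intervals and lower-bounding the $\beta$-tail of \eqref{UnSzego} by the signed short integral minus an $o(1)$ Szeg\H{o} term---is different from the paper (which instead proves (i)$\Rightarrow$(ii) directly in Lemma~\ref{lem:sa12}) but looks sound. Likewise, (iii)$\Rightarrow$(ii) follows from your decomposition exactly as you say.

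The genuine gap is in (ii)$\Rightarrow$(iii). Your decomposition gives
\[
(I_\gamma\log\mu')(x)-(I_\gamma\log\mu')(\beta)=[\text{bulk over }[\gamma,\beta-\eta]]+\frac1{\sqrt\pi}\bigg(\int_{\beta-\eta}^{x}\frac{\log\mu'(t)\,dt}{\sqrt{|x-t|}}-\int_{\beta-\eta}^{\beta}\frac{\log\mu'(t)\,dt}{\sqrt{\beta-t}}\bigg),
\]
and continuity at $\beta$ says only that this \emph{difference} of two short integrals is small (once the bulk term is discarded). You then assert the ``iff'': that continuity forces \emph{each} short integral to be small, hence (iii). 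But (iii) demands smallness of $\int_a^b\frac{\log\mu'}{\sqrt{b-t}}\,dt$ for \emph{arbitrary} $[a,b]$ with $\dist(\beta,[a,b])<d_\epsilon$, whereas your decomposition only produces intervals with the \emph{fixed} left endpoint $\beta-\eta$ and controls only a difference, not the individual terms. Even passing to $\theta=|\log\mu'|$ (legitimate by the remark following the proposition), the obvious bound $\int_a^b\frac{\theta}{\sqrt{b-t}}\le\sqrt\pi\,(I_\gamma\theta)(b)$ gives a quantity close to $(I_\gamma\theta)(\beta)$, which is finite but not small, and attempting to subtract off $\int_\gamma^a\frac{\theta}{\sqrt{b-t}}$ and compare it to $(I_\gamma\theta)(a)$ produces another short-interval tail of the same type, so the argument circles back on itself. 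This is exactly the implication on which the paper spends the most effort (Lemma~\ref{lem:sa14}): it invokes the Abel inversion $\theta=\frac{d}{dx}(I_\gamma^2\theta)$, justifies differentiation under the integral via Vitali's theorem and uniform integrability of divided differences of the absolutely continuous $I_\gamma^2\theta$, and after a change of variables arrives at
\[
\frac1{\sqrt\pi}\int_{x-\delta}^x\frac{\theta(t)\,dt}{\sqrt{x-t}}=(I_\gamma\theta)(x)-\frac2\pi\int_0^{L_{x,\delta}}\frac{(I_\gamma\theta)(x-\delta-\delta t^2)}{1+t^2}\,dt,
\]
from which the smallness of the left side for $x$ and $x-\delta$ near $\beta$ follows by splitting the $t$-integral and using both the continuity and the local boundedness of $I_\gamma\theta$ near $\beta$. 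Your sketch contains no substitute for this step, and I do not see how to extract (iii) from (ii) with only the elementary splitting you propose.
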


If \( \theta\in L^p(\Delta) \) for some \( p>2 \), then \( I_\gamma\theta \)  is H\"older continuous on \( \Delta \) with exponent at least \( 2-1/p \), see \cite[Theorem~12]{MR1544927}. Since \( \mu^\prime \) is an integrable function, \( \log^+\mu^\prime \) is in \( L^p(\Delta) \) for any \( p>2 \). Hence, \( I_\gamma\log^+\mu^\prime \) is necessarily H\"older continuous and therefore Proposition~\ref{prop:sa3} could be equivalently stated with \( \log\mu^\prime \) replaced by either \( |\log\mu^\prime| \) or \( \log^-\mu^\prime \).
 
\subsection{Main Theorem} 

Recall definitions \eqref{ortho}--\eqref{Nc} as well as \eqref{vec_eq1}--\eqref{vec_eq2}. Below, we label simply by \( \n \) the quantities usually labeled by \( \vec  c \) when they are referred to with the value of the parameter \( \vec c \) being \( \n/|\n| \).

\begin{theorem}
\label{thm:sa4}
Let  \( \{P_\n(z)\}_\n \) be the table of multiple orthogonal polynomials with respect to an Angelesco system of measures \( \vec \mu \). Fix a non-marginal ray sequence in \( \mathcal N(\vec c) \). Assume that \( \mu_i \in \mathrm{USz}(\Delta_{\vec c,i}) \), \( i\in I_d \).  Then, it holds for each \( i\in I_d \) that
\[
P_{\n,i}(z) = (1+o(1)) \exp\left( |\n| \int\log(z-x)d\omega_{\n,i}(x)\right) \frac{S_{\vec c,i}(\infty)}{S_{\vec c,i}(z)} 
\]
locally uniformly in \( D_{\Delta_{\vec c,i}} \) for all \( |\n| \) large enough, where \( \{S_{\vec c,i}(z)\}_{i\in I_d} \) is the collection of functions guaranteed by Proposition~\ref{prop:sa1} for the collection of intervals \( \{ \Delta_{\vec c,i} \}_{i\in I_d} \).
\end{theorem}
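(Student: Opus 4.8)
The plan is to reduce the analysis of each factor \( P_{\n,i} \) to a question about polynomials orthogonal on a single interval with respect to a \emph{varying} weight, and then to feed this into the machinery of Sections~\ref{sec:vw}--\ref{sec:pw}. (A matrix Riemann--Hilbert analysis is possible in principle, but the low regularity of the \( \mu_i \)'s makes the potential-theoretic route more natural.)

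\emph{Reduction to a varying weight.} Fix \( i\in I_d \). By the factorization \eqref{partition} and the orthogonality \eqref{ortho}, for every \( k<n_i \) we have \( \int x^kP_{\n,i}(x)\prod_{j\neq i}P_{\n,j}(x)\,d\mu_i(x)=0 \). Since each \( P_{\n,j} \), \( j\neq i \), has all its zeros on \( \Delta(\mu_j) \) and the intervals are pairwise disjoint, the polynomial \( W_{\n,i}:=\prod_{j\neq i}P_{\n,j} \) has constant sign on \( \Delta(\mu_i) \); hence \( P_{\n,i} \) is the monic orthogonal polynomial of degree \( n_i \) with respect to the positive varying measure \( d\widehat\mu_{\n,i}:=|W_{\n,i}|\,d\mu_i \) on \( \Delta(\mu_i) \). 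Using the weak asymptotics of Section~\ref{sec:wa} (Theorem~\ref{thm:GR}, together with Proposition~\ref{prop:Ya} for weak\(^*\)-continuity of \( \vec\omega_{\vec c} \) in \( \vec c \), and its extension to the present hypotheses), \( -|\n|^{-1}\log|W_{\n,i}| \) converges uniformly on \( \Delta(\mu_i) \) to the external field \( Q_{\vec c,i}:=\sum_{j\neq i}V^{\omega_{\vec c,j}} \). The decisive point is that \eqref{vec_eq1}--\eqref{vec_eq2} say precisely that the weighted equilibrium measure with external field \( Q_{\vec c,i} \) and total mass \( c_i \) is \( \omega_{\vec c,i} \), supported on \( \Delta_{\vec c,i} \); hence the normalized counting measure of the zeros of \( P_{\n,i} \) tends to \( c_i^{-1}\omega_{\vec c,i} \) and the equilibrium supports satisfy \( \Delta_{\n,i}\to\Delta_{\vec c,i} \). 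This is where the pushing effect enters, and why the hypothesis is imposed on \( \Delta_{\vec c,i} \) rather than on \( \Delta(\mu_i) \).

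\emph{Strong asymptotics for the varying weight.} The field \( Q_{\vec c,i} \) is real-analytic on \( \Delta(\mu_i) \), so the varying-weight results of Section~\ref{sec:vw} apply. Their proof proceeds by approximating the Szeg\H{o} factor of \( \mu_i \) by reciprocals of polynomials and invoking the strong asymptotics for reciprocal polynomial weights of Section~\ref{sec:pw} (in the spirit of de~la Calle Ysern--L\'opez Lagomasino and Stahl) together with Totik's theory. This yields
\[
P_{\n,i}(z)=(1+o(1))\exp\Big(|\n|\int\log(z-x)\,d\omega_{\n,i}(x)\Big)\frac{D_{\n,i}(\infty)}{D_{\n,i}(z)}
\]
locally uniformly in \( D_{\Delta_{\vec c,i}} \), where \( D_{\n,i} \) is the Szeg\H{o} function on \( \Delta_{\n,i} \) of the \emph{fixed} part of \( d\widehat\mu_{\n,i} \): that is, of \( \mu_i' \) multiplied by the slowly varying prefactors carried by \( |W_{\n,i}| \) beyond its exponential growth. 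Evaluating these prefactors by means of the asymptotics of the \( P_{\n,j} \), \( j\neq i \), on \( \Delta(\mu_i) \) (a compact subset of each \( D_{\Delta(\mu_j)} \)) identifies them, up to \( o(1) \), with \( \prod_{j\neq i}|S_{\n,j}(x)/S_{\n,j}(\infty)| \), and comparison with the defining relation \eqref{SzegoonRiemann} identifies \( D_{\n,i} \) with \( S_{\n,i} \) up to \( o(1) \).

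\emph{Closing the coupling; the main obstacle.} The previous step is self-referential: the weight for \( P_{\n,i} \) involves the other \( P_{\n,j} \). The mechanism that breaks the circularity is that each \( P_{\n,j} \) enters \( W_{\n,i} \) only through \( \log|P_{\n,j}| \), so an \( o(1) \) \emph{relative} error in \( P_{\n,j} \) produces only an \( o(1) \) perturbation of the external field and, by the continuity statement of Proposition~\ref{prop:sa2} (precisely the reason the class \( \mathrm{USz} \) was introduced, its applicability being verified through Proposition~\ref{prop:sa3}), an \( o(1) \) perturbation of the associated Szeg\H{o} factor; a simultaneous fixed-point/bootstrap over the \( d \) components then upgrades the weak input of the first step to the claimed strong asymptotics, with errors that remain uniform as \( |\n|\to\infty \). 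I expect the main difficulty to be exactly the coordination of the three limiting processes occurring at once — the growth \( |\n|\to\infty \), the motion of the equilibrium supports \( \Delta_{\n,i}\to\Delta_{\vec c,i} \), and the approximation of the merely-Szeg\H{o} densities \( \mu_i' \) by regular ones — so that all \( o(1) \) terms are genuinely uniform across these limits; Propositions~\ref{prop:sa1}--\ref{prop:sa3} and the varying-weight / reciprocal-polynomial estimates of Sections~\ref{sec:vw}--\ref{sec:pw} are exactly the tools designed to make this bookkeeping go through.
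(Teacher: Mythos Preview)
Your overall strategy matches the paper's: reduce each $P_{\n,i}$ to a single-interval orthogonality with a varying weight built from the other $P_{\n,j}$, invoke Theorem~\ref{thm:vw2}, and close the coupling by a fixed-point argument, with Propositions~\ref{prop:sa1}--\ref{prop:sa3} controlling the Szeg\H{o} factors as the supports move. You have also correctly anticipated that the main technical burden is making all the $o(1)$'s uniform across the three simultaneous limits.

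There is, however, a genuine gap in how you initialize the coupling. You invoke Theorem~\ref{thm:GR} (and ``its extension to the present hypotheses'') to get $-|\n|^{-1}\log|W_{\n,i}|\to\sum_{j\neq i}V^{\omega_{\vec c,j}}$ on $\Delta(\mu_i)$ as a first step. But Theorem~\ref{thm:GR} requires $\mu_i'>0$ a.e.\ on all of $\Delta(\mu_i)$, whereas Theorem~\ref{thm:sa4} only assumes $\mu_i\in\mathrm{USz}(\Delta_{\vec c,i})$; the latter says nothing about $\mu_i'$ off $\Delta_{\vec c,i}$, nor does it force $\mu_i'>0$ on a set of full measure even inside $\Delta_{\vec c,i}$. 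So weak asymptotics is not available a priori, and your bootstrap has no legitimate starting point. The hedge ``and its extension to the present hypotheses'' hides exactly the content of the theorem.

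The paper sidesteps this by never using weak asymptotics as input. It parameterizes the unknown corrections directly: set $q_{\n,i}(x):=\tfrac12\sum_{j\neq i}\big(\log|P_{\n,j}(x)|+|\n|V^{\omega_{\n,j}}(x)\big)$ on $\Delta_i$ and observe (Lemma~\ref{lem:sa18}) that $\vec q_\n$ is the \emph{unique} fixed point of a nonlinear operator $\mathcal D_\n$ on $C(\vec\Delta)$. The crucial feature of Theorem~\ref{thm:vw2} is that it applies with $h_n$ ranging over an arbitrary compact set in $C(\Delta(\mu_i))$, with $o_{\mathcal K}(1)$ uniform over that set; this lets one compare $\mathcal D_\n$ to its linearization $\mathcal H_\n$ near the expected value $\vec y_\n=\mathcal H_\n(\vec s_\n-\vec s_{\n,\infty})$ uniformly on balls (Lemma~\ref{lem:sa19}). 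Schauder--Tychonoff then produces a fixed point of the associated operator inside any prescribed small $C^1$-ball for $|\n|$ large (Lemma~\ref{lem:sa21}), and uniqueness forces $\vec q_\n-\vec y_\n\to0$. No prior information about zero distributions of the $P_{\n,j}$ is used; the equilibrium potentials $V^{\omega_{\n,j}}$ enter only through the vector equilibrium problem \eqref{vec_eq1}--\eqref{vec_eq2} and the structural Proposition~\ref{prop:gaps}, both of which are independent of the measures $\mu_i$.
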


Let us point out that if \( \mathcal N(\vec c) \) is such that \( \Delta_{\n,i} = \Delta_{\vec c,i} \), \( i\in I_d \), for all \( \n\in\mathcal N(\vec c) \) with \( |\n| \) large enough, then we can simply require that \( \mu_i \in \mathrm{Sz}(\Delta_{\vec c,i}) \). This holds whenever, e.g., \( \mathcal N(\vec c)=\{n\cdot\vec m:n\in\N\}\) with \( \vec c = \vec m/|\vec m| \) for some \( \vec m\in\N^d \), or when \( \vec c \) belongs to a relatively open subset of \( \{\vec c\in(0,1)^d:|\vec c|=1\} \) for which \( \Delta_{\vec c,i} = \Delta_i \), \(i\in I_d \). This set is known to be non-empty which follows from the paragraph after Proposition~\ref{prop:Ya}. 

When \( d=2 \), \( \vec c=(1/2,1/2) \), and \( \mathcal N(\vec c)=\{(n,n):n\in\Z_+\} \), the result of Theorem~\ref{thm:sa4} is contained in \cite{Ap88}. We also want to mention the work  \cite{ap4} where the strong asymptotics was obtained for two touching intervals and the Jacobi weights. Theorem~\ref{thm:sa4} also generalizes the results in \cite{Ya16}, where the measures \( \mu_i \) were  absolutely continuous and their Radon-Nikodym derivatives with respect to the Lebesgue measure  were assumed to be Fisher-Hartwig perturbations of functions non-vanishing and analytic around the corresponding intervals. We also mention our earlier work \cite{ApDenYa21}, where in the case of two absolutely continuous measures with analytic non-vanishing derivatives the strong asymptotics was derived along all ray sequences including the marginal cases \( \vec c=(0,1) \) and \( \vec c=(1,0) \). In the forthcoming work \cite{uYa1}, which is a continuation of \cite{ApDenYa21}, the error estimates are shown to be uniform in \( \n \) with explicit bounds on their rate of decay.

Our approach relies on two improvements of a theorem by Totik \cite[Theorem~14.4]{Totik} on the asymptotic behavior of orthogonal polynomials with varying weights. We discuss these generalizations in Section~\ref{sec:vw}.

In conclusion, we mention that finding strong asymptotics of MOPs is required in the theory of Jacobi matrices on trees  \cite{ser2,ser3,ApDenYa21}, the study of simultaneous Gaussian quadrature \cite{ser1}, random matrices \cite{ap12}, asymptotcs of special Hankel determinants, and other areas. We will address some of these applications in subsequent papers.

\subsection{Proofs of Propositions~\ref{prop:sa1} and~\ref{prop:sa2}}

In what follows, we switch to a vector notation. Given \( \vec \Delta = (\Delta_1,\Delta_2, \ldots ,\Delta_d ) \), a collection of closed pairwise disjoint intervals, we put
\[
L^1(\omega_{\vec \Delta}):= L^1(\omega_{\Delta_1})\times\cdots\times  L^1(\omega_{\Delta_d}),
\]
and define the spaces of continuous \( C(\vec\Delta) \) and continuously differentiable \( C^1(\vec\Delta) \) vector-functions similarly. We shall use vector notation \( \vec u \) to denote elements of these spaces and write either \( u_i \) or \( (\vec u)_i \), whichever is more convenient, for the \( i \)-th component of the vector~\( \vec u \).

Throughout the paper we write \( a_n\lesssim b_n \) or \( f_n(x) \lesssim g_n(x) \), \( x\in K \), if there exists a constant \( C>0 \), independent of \( n \) but possibly dependent on some other relevant parameters, such that \( a_n\leq C b_n \) or \( f_n(x) \leq C g_n(x) \), \( x\in K \). If we want to emphasize that  \( C \) does depend on some quantity \( Q \), we shall write \( \lesssim_Q \).

Given a (real-valued) function \( u \) in \( L^1(\omega_\Delta) \), we set
\begin{equation}
\label{Dirichlet}
(H_\Delta u)(z) := \log|\out_\Delta(e^u,z)|, \quad z\in D_\Delta = \overline\C\setminus\Delta,
\end{equation}
where \( \out_\Delta \) was introduced in \eqref{outer}. Then, \( H_\Delta u \) is a harmonic function in \( D_\Delta \) whose non-tangential boundary values from above and below \( \Delta \) exist almost everywhere and are equal to \( u \), see \eqref{outer-b}. That is, \( H_\Delta u \) is a solution of the Dirichlet problem in \( D_\Delta \) with boundary data~\( u \). When \( u\in C(\Delta) \), \( (H_\Delta u)(z) \) is in fact continuous in the entire extended complex plane, see \cite[Corollary~4.1.8 and Theorem~4.2.1]{Ransford}. Next, let
\[
H_{\Delta_j\to\Delta_i} : L^1(\omega_{\Delta_j}) \to L^1(\omega_{\Delta_i}), \quad u\mapsto (H_{\Delta_j}u)_{|\Delta_i}
\]
for \( i\neq j \), \( i,j\in I_d \). For convenience, define \( H_{\Delta_i\to\Delta_i} \) to be the operator that sends every function into the zero function. Define
\begin{equation}
\label{operH}
\mathcal H_{\vec \Delta} := -\frac12\begin{pmatrix} H_{\Delta_j\to\Delta_i} \end{pmatrix}_{i,j=1}^d: L^1(\omega_{\vec\Delta}) \to L^1(\omega_{\vec\Delta}),
\end{equation}
where \( i \) is the row index and \( j \) is the column one giving the matrix form
\[
\mathcal H_{\vec \Delta}=-\frac 12\left[
\begin{array}{cccc}
0& H_{\Delta_2\to \Delta_1}&\cdots&H_{\Delta_n\to\Delta_1}\\
H_{\Delta_1\to\Delta_2}&0&\cdots&H_{\Delta_n\to\Delta_2}\\
\vdots&\vdots&\ddots&\vdots\\
H_{\Delta_1\to\Delta_n}&H_{\Delta_2\to\Delta_n}&\cdots&0
\end{array}
\right]\,.
\]
Since harmonic functions are infinitely smooth, it in fact holds that \( \mathcal H_{\vec \Delta}(L^1(\omega_{\vec\Delta})) \subset C^1(\vec\Delta) \). 

We also need to introduce certain modifications of the operators \( \mathcal H_{\vec \Delta}\). These modifications are not important for the proofs of Propositions~\ref{prop:sa1} and \ref{prop:sa2}, but will be indispensable in the proof of Theorem~\ref{thm:sa4} when we work with ray sequences exhibiting the pushing effect. To this end, let
\[
\vec\Delta^* \subseteq \vec\Delta \quad (\Delta_i^*\subseteq\Delta_i, ~~ \forall i\in I_d), \quad \vec\Delta^* = (\Delta_1^*,\Delta_2^*,\ldots,\Delta_d^*),
\]
be a vector of closed intervals. For each pair \( \Delta^*\subseteq\Delta \), denote by \( R_{\Delta\to\Delta^*} \) the restriction operator of a function on \( \Delta \) to a function on \( \Delta^* \). Put
\begin{equation}
\label{operHrest}
\mathcal H_{\vec \Delta^*,\vec \Delta} := -\frac12\begin{pmatrix} H_{\Delta_j^*\to\Delta_i} \circ R_{\Delta_j\to\Delta_j^*}\end{pmatrix}_{i,j=1}^d: C(\vec\Delta) \to C(\vec\Delta).
\end{equation}
That is, \( \mathcal H_{\vec \Delta^*,\vec \Delta}\vec u \) is obtained by first restricting \( \vec u \) to \( \vec\Delta^* \), then applying the same harmonic extension operators as in the case of the operator \( \mathcal H_{\vec\Delta^*} \), and finally restricting these extensions to \( \vec\Delta \) (and not \( \vec\Delta^* \) as in the case of  \( \mathcal H_{\vec\Delta^*} \)). We consider \(  \mathcal H_{\vec \Delta^*,\vec \Delta} \) only as an operator on continuous functions because a restriction of \( \vec u \in L^1(\omega_{\vec\Delta}) \) to \( \vec\Delta^* \) might not lie in \( L^1(\omega_{\vec\Delta^*}) \).

\begin{lemma}
\label{lem:sa5}
If \( \mathcal H_{\vec \Delta}\vec u = \vec u \), \( \vec u \in L^1(\omega_{\vec\Delta}) \), then \( \vec u = \vec 0 \), where \( \vec 0 \) is the zero vector-function. The same conclusion holds in the case of \( \mathcal H_{\vec \Delta^*,\vec \Delta} \) on \( C(\vec\Delta) \).
\end{lemma}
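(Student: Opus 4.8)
The plan is to argue by contradiction using a maximum-principle / harmonic-measure argument combined with the fact that the vector equilibrium structure of $\vec\omega_{\vec\Delta}$ makes the operator $\mathcal H_{\vec\Delta}$ a genuine averaging operator. Suppose $\vec u\in L^1(\omega_{\vec\Delta})$ satisfies $\mathcal H_{\vec\Delta}\vec u=\vec u$. Since $\mathcal H_{\vec\Delta}(L^1(\omega_{\vec\Delta}))\subset C^1(\vec\Delta)$ as noted after \eqref{operH}, the fixed-point relation forces $\vec u\in C^1(\vec\Delta)$ already, so we may work entirely with continuous data. For each $i$, form the function $U_i(z):=(H_{\Delta_i}u_i)(z)$, the harmonic extension of $u_i$ to $D_{\Delta_i}$, which is continuous on all of $\overline{\mathbb C}$ by \cite[Corollary~4.1.8 and Theorem~4.2.1]{Ransford}. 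The fixed-point equation $\mathcal H_{\vec\Delta}\vec u=\vec u$ reads, componentwise on $\Delta_i$,
\[
u_i(x) = -\frac12\sum_{j\neq i} (H_{\Delta_j}u_j)(x) = -\frac12\sum_{j\neq i} U_j(x), \quad x\in\Delta_i .
\]
Adding $U_i$ and using $U_{i|\Delta_i}=u_i$ shows that the total potential-like function $W:=\sum_{j\in I_d} U_j$ satisfies $W(x) = U_i(x)+u_i(x) = 2u_i(x)+\tfrac12\sum_{j\neq i}U_j(x)$; more usefully, rearranging the displayed identity gives that on each $\Delta_i$,
\[
2U_i(x) + \sum_{j\neq i} U_j(x) = 0, \qquad\text{i.e.}\qquad U_i(x) + W(x) = 0 \quad\text{on }\Delta_i.
\]

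Now I would exploit this like the equilibrium conditions \eqref{vec_eq2}. Consider $W=\sum_j U_j$, which is harmonic on $\overline{\mathbb C}\setminus\bigcup_i\Delta_i$ and continuous everywhere, and vanishes at $\infty$ (each $U_j$ does, since $\out_{\Delta_j}(e^{u_j},\cdot)$ is an outer function normalized so that $\log|\out_{\Delta_j}|$ has no constant term at $\infty$ — more precisely $U_j(\infty)=\int u_j\,d\omega_{\Delta_j}$, which need not vanish, so I must be slightly more careful: set $c_j:=U_j(\infty)=\int u_j d\omega_{\Delta_j}$ and work with $W-\sum_j c_j$, or simply track these constants). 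The key point is that on $\Delta_i$ we have $U_i = -W$, i.e. $U_i = -\sum_{j\neq i}U_j - U_i$, so $2U_i=-\sum_{j\neq i}U_j$ on $\Delta_i$; each $U_j$ for $j\neq i$ is harmonic across $\Delta_i$. Consider the function $\Phi_i := U_i$ on the closed plane: it equals $-\tfrac12\sum_{j\neq i}U_j$ on $\Delta_i$ and is harmonic off $\Delta_i$, hence $U_i + \tfrac12\sum_{j\neq i}U_j$ is harmonic on $D_{\Delta_i}$, continuous on $\overline{\mathbb C}$, and vanishes on $\Delta_i$; by the maximum principle it vanishes identically. Thus $U_i = -\tfrac12\sum_{j\neq i}U_j$ \emph{everywhere}, for every $i$. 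Summing over $i$: $W = \sum_i U_i = -\tfrac12\sum_i\sum_{j\neq i}U_j = -\tfrac{d-1}{2}W$, so $(1+\tfrac{d-1}{2})W=0$, giving $W\equiv 0$ since $d\geq 2$. Then $U_i = -\tfrac12(W-U_i)$ gives $U_i = \tfrac12 U_i$, hence $U_i\equiv 0$, and therefore $u_i = U_{i|\Delta_i}\equiv 0$ for all $i$.

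For the second statement, with $\mathcal H_{\vec\Delta^*,\vec\Delta}$ acting on $C(\vec\Delta)$, the same argument runs with one modification. A fixed point $\vec u\in C(\vec\Delta)$ of $\mathcal H_{\vec\Delta^*,\vec\Delta}$ satisfies, on $\Delta_i$,
\[
u_i(x) = -\frac12\sum_{j\neq i}\big(H_{\Delta_j^*}(R_{\Delta_j\to\Delta_j^*}u_j)\big)(x).
\]
Define $U_j^*:=H_{\Delta_j^*}(u_{j|\Delta_j^*})$, harmonic on $D_{\Delta_j^*}\supseteq D_{\Delta_j}$ and continuous on $\overline{\mathbb C}$. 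The relation above gives $u_i = -\tfrac12\sum_{j\neq i}U_j^*$ on $\Delta_i$, but now $U_i^*$ restricted to $\Delta_i$ is $u_i$ only on the subinterval $\Delta_i^*\subseteq\Delta_i$. Nonetheless $u_i$ is the restriction to $\Delta_i$ of a function harmonic on $D_{\Delta_i^*}$: indeed $-\tfrac12\sum_{j\neq i}U_j^*$ is harmonic off $\bigcup_{j\neq i}\Delta_j^*$, hence harmonic on a neighborhood of $\Delta_i$, and it agrees with $u_i$ there. So on $\Delta_i^*$ we get $U_i^*(x) = u_i(x) = -\tfrac12\sum_{j\neq i}U_j^*(x)$, i.e. $2U_i^* + \sum_{j\neq i}U_j^*$ vanishes on $\Delta_i^*$; since this combination is harmonic on $D_{\Delta_i^*}$ and continuous on $\overline{\mathbb C}$, it vanishes identically, and summing over $i$ as before yields $U_i^*\equiv 0$ for all $i$. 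Restricting to $\Delta_i$ gives $u_i\equiv 0$.

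The main obstacle — the place that requires care rather than routine manipulation — is the bookkeeping of the constants $c_j = U_j(\infty) = \int u_j\,d\omega_{\Delta_j}$: one must check that the "harmonic off $\Delta_i$, vanishing on $\Delta_i$, continuous on $\overline{\mathbb C}$" function to which the maximum principle is applied genuinely has the right behavior at $\infty$ (it is bounded there, which is all the maximum principle needs, so in fact the constants cause no trouble — but this should be verified rather than asserted). A secondary point, already flagged in the excerpt, is that in the $\mathcal H_{\vec\Delta^*,\vec\Delta}$ case one cannot phrase things in $L^1(\omega_{\vec\Delta^*})$, which is precisely why the argument is run with continuous functions and harmonic extensions on the larger domains $D_{\Delta_j^*}$; one must make sure every harmonic-extension and restriction step stays within the continuous category, which it does because $H_{\Delta^*}$ maps $C(\Delta^*)$ into $C(\overline{\mathbb C})$.
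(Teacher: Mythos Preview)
Your argument has a genuine gap at the key step. You claim that $U_i + \tfrac12\sum_{j\neq i}U_j$ is harmonic on $D_{\Delta_i}=\overline{\mathbb C}\setminus\Delta_i$, but this is false: for $j\neq i$ the function $U_j=H_{\Delta_j}u_j$ is harmonic only on $D_{\Delta_j}$, and the interval $\Delta_j$ lies \emph{inside} $D_{\Delta_i}$. So the combination $U_i+\tfrac12\sum_{j\neq i}U_j$ is harmonic only on $\overline{\mathbb C}\setminus\bigcup_{k}\Delta_k$, and the maximum principle on that domain tells you merely that its extrema are attained on $\bigcup_k\Delta_k$ --- but you know it vanishes only on $\Delta_i$, not on the other intervals. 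The subsequent summation over $i$ and the conclusion $W\equiv 0$ therefore have no foundation. (You wrote ``each $U_j$ for $j\neq i$ is harmonic across $\Delta_i$'', which is true, but then slid to ``harmonic on $D_{\Delta_i}$'', which is not the same thing.) The same error recurs verbatim in your treatment of $\mathcal H_{\vec\Delta^*,\vec\Delta}$.

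This is not a bookkeeping issue that can be patched by tracking the constants $c_j$; the obstruction is structural. The paper's proof handles it by a different mechanism: it uses Schwarz reflection to show that each $h_i:=H_{\Delta_i}u_i$ extends harmonically across $\Delta_i$ to the function $h_0:=-\sum_j h_j$, and then observes that the functions $h_0,h_1,\ldots,h_d$ glue together to form a \emph{global} harmonic function on the compact Riemann surface $\RS$ built from the intervals $\Delta_1,\ldots,\Delta_d$. A global harmonic function on a compact surface is constant, and since $h_0(\infty)=-\sum_j h_j(\infty)$ while each $h_j(\infty)$ equals that same constant, the constant must be zero. The Riemann surface gluing is precisely what replaces the failed maximum-principle step: it is the device that lets you pass from ``vanishes on $\Delta_i$'' to ``vanishes everywhere'' despite the other intervals being in the way.
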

\begin{proof}
Set \( h_i(z) := (H_{\Delta_i}u_i)(z) \), \( i\in I_d \). Since \( \vec u \) lies in the image of \( \mathcal H_{\vec \Delta} \), each \( u_i \) is necessarily a continuous function on \( \Delta_i \) and therefore \( h_i(z) \) is not only harmonic in \( D_{\Delta_i} \), but is also continuous in the whole extended complex plane (and is equal to \( u_i \) on \( \Delta_i \)). Thus, \( \mathcal H_{\vec\Delta}\vec u = \vec u \) can be rewritten as
\[
h_i(x) + \frac12 \sum_{j\in I_d,j\neq i} h_j(x) = 0, \quad x\in\Delta_i, \quad i\in I_d.
\]
This equality, conjugate-symmetry of each \( h_l(z) \), \( l\in I_d \), and the Schwarz reflection principle allow us to conclude that for each \( i\in I_d \) the function
\begin{equation}
\label{apsad1}
\mathcal{R}_i(z) := \begin{cases}
h_i(z) + \frac12 \sum_{j\in I_d,j\neq i} h_j(z), & z\in \mathbb{C}^+:=\{z:\im(z)>0\},  \medskip \\
-h_i(z) - \frac12 \sum_{j\in I_d,j\neq i} h_j(z), & z\in \mathbb{C}^- :=\{z:\im(z)<0\},
\end{cases}
\end{equation}
is not only harmonic in the upper and lower half-planes but is also harmonic across \( \Delta_i \) when extended to \( \Delta_i \) by zero.  Since the sum $\sum_{j\in I_d,j\neq i} h_j(z)$ is harmonic in  \(\C\setminus(\cup_{j\neq i} \Delta_j) \) and
\[
h_i(z) = \mathcal R_i(z) - \frac12 \sum_{j\in I_d,j\neq i} h_j(z), \quad z\in\C^+,
\]
this formula provides a harmonic continuation of \( h_i(z) \) from \( \C^+ \) to \( \C^- \) across \( \Delta_i \). It readily follows from the second equation in \eqref{apsad1} that this continuations coincides in \( \C^- \) with 
\[
-\sum_{j\in I_d} h_j(z) =: h_0(z), \quad z\in \C\setminus (\cup_{i\in I_d} \Delta_i).
\]
Notice that $h_0(z)$ does not depend on $i$ in our argument. In a similar way, one can show that $h_i(z)$ can be harmonically extended from $\C^-$ to $\C^+$ across $\Delta_i$ resulting in the same $h_0(z)$.

Let \( \RS \) be the Riemann surface introduced before Proposition~\ref{prop:Ya}. Set \( h \) to be a function on \( \RS \) that is equal to \( h_k(z) \) on \( \RS_k \), \( k\in\{0,1,\ldots,d\} \). It follows from the arguments above that \( h \) is a global harmonic function on \( \RS \) and therefore must be a constant, see \cite[Corollary~19.7]{Forster}. From the definition of \( h_0(z) \) it easily follows that this constant is zero.

Now, if \( \mathcal H_{\vec \Delta^*,\vec \Delta}\vec u = \vec u \) for some \( \vec u\in C(\vec\Delta) \), then \( \mathcal H_{\vec \Delta^*}\vec u^* = \vec u^* \), where \( \vec u^* \) is the restriction of \( \vec u \) to \( \Delta^* \). Hence, \( \vec u^* \) is the zero vector. Since \( \mathcal H_{\vec \Delta^*,\vec \Delta} \) depends only on \( \vec u^* \), \( \mathcal H_{\vec \Delta^*,\vec \Delta}\vec u \) is the zero vector as well, which finishes the proof of the lemma.
\end{proof}

In what follows, we set \( \mathcal I \) to be the identity operator on any considered space. In the next lemma  \( \mathfrak B \)  stands for either \( L^1(\omega_{\vec \Delta}) \), \(  C(\vec\Delta) \), or \( C^1(\vec\Delta) \). 

\begin{lemma}
\label{lem:sa6}
\( \mathcal H_{\vec \Delta} \) is a linear bounded compact operator on \( \mathfrak B \) while \( \mathcal I-\mathcal H_{\vec \Delta} \) is invertible on \( \mathfrak B \). In particular, for any \( \vec a\in \mathfrak B \), \( (\mathcal I-\mathcal H_{\vec \Delta})^{-1} \vec a \) is the unique solution of \( \vec u = \mathcal H_{\vec \Delta}\vec u + \vec a \). Similarly, \( \mathcal H_{\vec \Delta^*,\vec \Delta} \) is a linear bounded compact operator on \( \mathfrak B\) while \( \mathcal I-\mathcal H_{\vec \Delta^*,\vec \Delta} \) is invertible on \( \mathfrak B\) (here, we only speak of continuous spaces).
\end{lemma}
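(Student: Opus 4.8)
The plan is to recognize the off-diagonal blocks of \( \mathcal H_{\vec\Delta} \) as integral operators with jointly smooth, uniformly bounded kernels, read off boundedness and compactness from this, and then deduce invertibility of \( \mathcal I-\mathcal H_{\vec\Delta} \) from the Fredholm alternative together with the already-established triviality of the kernel (Lemma~\ref{lem:sa5}).

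First I would make the blocks explicit. Combining \eqref{outer} with \eqref{Dirichlet}, for \( i\neq j \) and \( z\in\Delta_i \) one has
\[
(H_{\Delta_j\to\Delta_i}u)(z) = \re\left( w_{\Delta_j}(z) \int_{\Delta_j} \frac{u(x)}{z-x}\, d\omega_{\Delta_j}(x) \right) = \int_{\Delta_j} K_j(z,x)\, u(x)\, d\omega_{\Delta_j}(x),
\]
where \( K_j(z,x):=\re\big(w_{\Delta_j}(z)/(z-x)\big) \) and the integrand is in fact real for \( z\in\Delta_i\subset\R \). Since \( \Delta_i\cap\Delta_j=\varnothing \), the interval \( \Delta_i \) stays at a positive distance from \( \Delta_j \), so \( K_j \) together with all of its \( z \)-derivatives is bounded on the compact product \( \Delta_i\times\Delta_j \), while \( \omega_{\Delta_j} \) is a finite (probability) measure. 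Hence \( H_{\Delta_j\to\Delta_i} \) is bounded from \( L^1(\omega_{\Delta_j}) \) into \( C^k(\Delta_i) \) for every \( k \), with norm controlled by \( \sup_{\Delta_i\times\Delta_j}|\partial_z^k K_j| \), and thus bounded into \( L^1(\omega_{\Delta_i}) \), \( C(\Delta_i) \), and \( C^1(\Delta_i) \). Compactness I would get from Arzel\`a--Ascoli: the image of the unit ball of \( L^1(\omega_{\Delta_j}) \) is bounded in \( C^2(\Delta_i) \), hence precompact in \( C^1(\Delta_i) \), in \( C(\Delta_i) \), and, through the continuous embedding \( C(\Delta_i)\hookrightarrow L^1(\omega_{\Delta_i}) \), in \( L^1(\omega_{\Delta_i}) \). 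Since a finite matrix of compact operators is compact, \( \mathcal H_{\vec\Delta} \) is a linear bounded compact operator on each of the three choices of \( \mathfrak B \).

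For invertibility I would invoke Riesz--Schauder theory: on a Banach space, \( \mathcal I \) minus a compact operator is invertible if and only if it is injective. Because \( C^1(\vec\Delta)\subseteq C(\vec\Delta)\subseteq L^1(\omega_{\vec\Delta}) \), it suffices to check that \( \mathcal H_{\vec\Delta}\vec u=\vec u \) forces \( \vec u=\vec 0 \) on \( L^1(\omega_{\vec\Delta}) \), which is exactly Lemma~\ref{lem:sa5}; hence \( \mathcal I-\mathcal H_{\vec\Delta} \) is invertible on \( \mathfrak B \), and \( (\mathcal I-\mathcal H_{\vec\Delta})^{-1}\vec a \) is the unique solution of \( \vec u=\mathcal H_{\vec\Delta}\vec u+\vec a \). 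For the restricted operator the argument is identical: each block of \( \mathcal H_{\vec\Delta^*,\vec\Delta} \) is \( u\mapsto\int_{\Delta_j^*}\re\big(w_{\Delta_j^*}(z)/(z-x)\big)u(x)\,d\omega_{\Delta_j^*}(x) \) precomposed with the bounded restriction \( R_{\Delta_j\to\Delta_j^*} \), and because \( \Delta_j^*\subseteq\Delta_j \) still lies at positive distance from \( \Delta_i \) the kernel is again smooth and bounded on \( \Delta_i\times\Delta_j^* \); thus \( \mathcal H_{\vec\Delta^*,\vec\Delta} \) is bounded and compact on \( C(\vec\Delta) \) and on \( C^1(\vec\Delta) \), injectivity of \( \mathcal I-\mathcal H_{\vec\Delta^*,\vec\Delta} \) is the final clause of Lemma~\ref{lem:sa5}, and Riesz--Schauder finishes the proof.

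I do not expect a genuine obstacle here. The only points that need care are running the argument simultaneously over \( L^1(\omega_{\vec\Delta}) \), \( C(\vec\Delta) \), and \( C^1(\vec\Delta) \) with the appropriate continuous embeddings, differentiating under the integral sign to control the \( C^k \)-norms, and, in the starred case, keeping in mind that the equilibrium measure \( \omega_{\Delta_j^*} \) and the kernel \( w_{\Delta_j^*} \) depend on \( \Delta_j^* \) while their boundedness on \( \Delta_i\times\Delta_j^* \) is unaffected; once the blocks are seen as integral operators with jointly smooth, uniformly bounded kernels, compactness is standard and the invertibility is a one-line application of the Fredholm alternative and Lemma~\ref{lem:sa5}.
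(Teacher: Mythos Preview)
Your proposal is correct and follows essentially the same approach as the paper: both write the off-diagonal blocks as integral operators with smooth bounded kernels (via \eqref{outer}--\eqref{Dirichlet} and the disjointness of the intervals), deduce boundedness and compactness, and then invoke the Fredholm alternative together with Lemma~\ref{lem:sa5} for invertibility. The only cosmetic difference is that the paper obtains compactness by citing a normal-family result for uniformly bounded harmonic functions, whereas you differentiate the kernel directly and apply Arzel\`a--Ascoli; these are the same argument in slightly different dress.
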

\begin{proof}
Let \( u \in L^1(\omega_\Delta) \). Explicit expression \eqref{outer} shows that for any closed subset \( K\subset D_\Delta \) there exists a constant \( C_K \) such that
\[
|(H_\Delta u)(z)| \leq C_K \|u\|, \quad z\in K,
\]
where we take the \( L^1 \)-norm of \( u \). Since \( L^1 \)-norm is dominated by the uniform norm, which is dominated by the \( C^1 \)-norm, the above inequality remains valid if \( u \) belongs to \( C(\Delta) \) or \( C^1(\Delta) \) with the norm coming from the corresponding space. Let \( U_i \), \( i\in I_d \), be pairwise disjoint open sets such that \( \Delta_i \subset U_i \). If \( \vec u=(u_1,u_2,\ldots,u_d)\in\mathfrak B \), then it follows from the first observation of the lemma that
\begin{equation}
\label{Bnorm_est}
\left|\sum_{j\in I_d,j\neq i}(H_{\Delta_j}u_j)(z)\right| \leq C_{U_i} \big\|\vec u \big\|_\mathfrak{B}, \quad z\in U_i,
\end{equation}
for some constant \( C_{U_i} \) and each \( i\in I_d \). Notice that when restricted to \( \Delta_i \) the expression inside the absolute value above is equal to the \( i \)-th component of \( -2(\mathcal H_{\vec \Delta}\vec u) \). This expression is also a harmonic function on \( U_i \). According to \cite[Theorem~1.3.10]{Ransford}, any  sequence of harmonic functions, which is uniformly bounded above and below, has a locally uniformly convergent subsequence. On \( \Delta_i \) this convergence takes place in particular in \( C^1(\Delta_i) \). Hence, \( \mathcal H_{\vec \Delta} \) is a compact linear operator from  \( \mathfrak B \) into itself. Let now \( \mathfrak{B} \) be either \( C(\vec\Delta) \) or \( C^1(\vec\Delta) \). Notice that \eqref{Bnorm_est} remains true if we replace operators \( H_{\Delta_j} \) by \( H_{\Delta_j^*}\circ R_{\Delta_j\to\Delta_j^*} \). Hence, we can similarly conclude that \( \mathcal H_{\vec \Delta^*,\vec \Delta} \)  is a compact linear operator from  \( \mathfrak B \) into itself.

According to the Fredholm theory of compact operators, see \cite[Section~7.11]{MR1009162}, \( \mathcal I-\mathcal H_{\vec \Delta} \) (resp. \( \mathcal I-\mathcal H_{\vec\Delta^*,\vec \Delta} \)) is invertible if and only if \( 1 \) is not an eigenvalue of \( \mathcal H_{\vec \Delta} \) (resp. \( \mathcal H_{\vec\Delta^*,\vec \Delta} \)), which was proven in Lemma~\ref{lem:sa5}. This finishes the proof of the lemma.
\end{proof}

\begin{lemma}
\label{lem:sa7}
Proposition~\ref{prop:sa1} takes place. Moreover,
\begin{equation}
\label{apsad2}
S_i(z) = \out_{\Delta_i}(e^{s_i},z), \quad i\in I_d,
\end{equation}
where \( \vec s = (s_1,s_2,\ldots,s_d) = (\mathcal I -\mathcal H_{\vec \Delta})^{-1}\vec a \) for
\[
\vec a = \frac12\big( \log v_1, \log v_2, \ldots, \log v_d \big) \in L^1(\omega_{\vec \Delta}),
\]
and we set, for simplicity, \( v_i(x) := v_{\Delta_i}(x) \), \( i\in I_d \).
\end{lemma}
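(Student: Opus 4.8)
The plan is to reduce the multiplicative system \eqref{SzegoonRiemann} to the linear fixed-point equation $(\mathcal I-\mathcal H_{\vec\Delta})\vec s=\vec a$ by taking logarithms, and then to invoke Lemma~\ref{lem:sa6}. I would look for $S_i$ of the form $\out_{\Delta_i}(e^{s_i},z)$ with $s_i$ a real function on $\Delta_i$; by \eqref{outer-b} the boundary moduli of such an $S_i$ satisfy $|S_{i\pm}(x)|=e^{s_i(x)}$ for a.e.\ $x\in\Delta_i$, and $S_i$ is a conjugate-symmetric outer function in $H^2(D_{\Delta_i})$ positive at infinity as soon as $e^{s_i}\in L^2(\omega_{\Delta_i})$ and $s_i\in L^1(\omega_{\Delta_i})$. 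For $j\neq i$ the interval $\Delta_i$ lies in $D_{\Delta_j}$, and since $\out_{\Delta_j}(e^{s_j},\cdot)$ is conjugate-symmetric, nonvanishing on $D_{\Delta_j}$, and positive at infinity, it is in fact positive on the whole of $\R\setminus\Delta_j$, so by \eqref{Dirichlet} one has $\log S_j(x)=(H_{\Delta_j}s_j)(x)$ there. Taking the logarithm of \eqref{SzegoonRiemann} and halving turns it into
\[
s_i(x)+\tfrac12\sum_{j\in I_d,\,j\neq i}(H_{\Delta_j}s_j)(x)=\tfrac12\log v_i(x),\qquad x\in\Delta_i,\ i\in I_d,
\]
which, in the notation \eqref{operH}, is precisely $(\mathcal I-\mathcal H_{\vec\Delta})\vec s=\vec a$ with $\vec a=\tfrac12(\log v_1,\dots,\log v_d)$.

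For existence I would first note that $\vec a\in L^1(\omega_{\vec\Delta})$: the finiteness of $\mu_i$ together with $v_i\,d\omega_{\Delta_i}\le d\mu_i$ gives $v_i\in L^1(\omega_{\Delta_i})$, while $\mu_i\in\mathrm{Sz}(\Delta_i)$ gives $\log v_i\in L^1(\omega_{\Delta_i})$. By Lemma~\ref{lem:sa6}, $\vec s:=(\mathcal I-\mathcal H_{\vec\Delta})^{-1}\vec a$ is then the unique solution of that equation in $L^1(\omega_{\vec\Delta})$. Writing $\vec s=\vec a+\mathcal H_{\vec\Delta}\vec s$ and using that $\mathcal H_{\vec\Delta}\vec s\in C^1(\vec\Delta)$ is bounded on each $\Delta_i$, one gets $e^{2s_i}\lesssim v_i\in L^1(\omega_{\Delta_i})$, hence $e^{s_i}\in L^2(\omega_{\Delta_i})$; thus $S_i:=\out_{\Delta_i}(e^{s_i},z)$ is a legitimate conjugate-symmetric outer function in $H^2(D_{\Delta_i})$ with $S_i(\infty)>0$, and reversing the logarithm computation shows that the collection $\{S_i\}$ satisfies \eqref{SzegoonRiemann}. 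This proves existence and yields formula \eqref{apsad2}.

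For uniqueness, if $\{S_i\}$ is any collection as in Proposition~\ref{prop:sa1}, then by the recovery statement between \eqref{outer-b} and \eqref{geom-mean} we have $S_i=\out_{\Delta_i}(|S_{i\pm}|,z)$, and $s_i:=\log|S_{i\pm}|\in L^1(\omega_{\Delta_i})$ by the $H^2$ boundary theory recalled in the introduction; repeating the logarithm reduction gives $(\mathcal I-\mathcal H_{\vec\Delta})\vec s=\vec a$, so $\vec s$, and hence every $S_i$, is determined by the invertibility half of Lemma~\ref{lem:sa6}. I expect the one genuinely delicate point to be the verification $e^{s_i}\in L^2(\omega_{\Delta_i})$, which is what legitimizes $\out_{\Delta_i}(e^{s_i},\cdot)$ as an $H^2$ outer function; this step uses both the finiteness of $\mu_i$ (to bound $v_i$ in $L^1$) and the separation of the intervals (to bound the off-diagonal harmonic extensions on $\Delta_i$). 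Everything else is bookkeeping with \eqref{outer}--\eqref{outer-b}, \eqref{Dirichlet}, and the mapping properties already recorded in Lemma~\ref{lem:sa6}.
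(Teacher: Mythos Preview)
Your proposal is correct and follows essentially the same approach as the paper: reduce \eqref{SzegoonRiemann} via logarithms to the linear equation $(\mathcal I-\mathcal H_{\vec\Delta})\vec s=\vec a$, solve it using Lemma~\ref{lem:sa6}, check $e^{s_i}\in L^2(\omega_{\Delta_i})$ from $e^{2s_i}=v_i\,e^{2(\mathcal H_{\vec\Delta}\vec s)_i}$ and boundedness of $\mathcal H_{\vec\Delta}\vec s$, and run the argument backwards for uniqueness. The paper's proof matches yours step for step, including the identification of the $H^2$ verification as the one nontrivial point.
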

\begin{proof}
We get from Lemma~\ref{lem:sa6} that \( \vec s\in L^1(\omega_{\vec\Delta}) \). Define functions $S_i(z)$ by \eqref{apsad2}. First, we show that $\{S_i\}_{i\in I_d}$ satisfy conditions of Proposition~\ref{prop:sa1}. From our discussion of formulae \eqref{outer} and \eqref{outer-b}, one concludes that \( S_i(z)\) is necessarily outer, conjugate-symmetric, $S_i(\infty)>0$, and it satisfies \( \log|S_{i\pm}(x)| = s_i(x) \) for almost every \( x\in\Delta_i \), \( i\in I_d \). To show that it belongs to the Hardy space, we use
\begin{equation}
\label{sapsad20}
e^{2s_i(x)} = v_i(x)e^{2(\mathcal H_{\vec\Delta}\vec s)_i(x)} \in L^1(\omega_i), \quad i\in I_d,
\end{equation}
where the inclusion holds because the image of \( \mathcal H_{\vec\Delta} \) lies in \( C(\vec\Delta) \). Therefore, $S_i\in H^2(D_{\Delta_i})$.

Clearly,  \( S_i(x)>0 \) for \( x\in\R\setminus\Delta_i \). We also see from \eqref{Dirichlet} that 
\begin{equation}\label{sapsad15}
 \log |S_i(z)| = (H_{\Delta_i} s_i)(z)\,.
 \end{equation} Hence, the equation \( \vec s = \mathcal H_{\vec \Delta}\vec s + \vec a \) can now be rewritten as 
\[
\log|S_{i\pm}(x)| = \frac12\left( \log v_i(x) - \sum_{j\in I_d,j\neq i} \log S_j(x)\right)
\]
for almost every \( x\in \Delta_i \), \( i\in I_d \). Exponentiation then readily yields \eqref{SzegoonRiemann}.

Conversely, let \( Q_i(z) \), \( i\in I_d \), be outer conjugate-symmetric functions with \( Q_i(\infty)>0 \) that satisfy \eqref{SzegoonRiemann} with \( S_i \) replaced by \( Q_i \). Then, by taking logarithms we get that
\[
\frac12\left(\log v_i(x)-\sum_{j\in I_d,j\neq i} \log |Q_j(x)| \right) =  \log |Q_{i\pm}(x)|
\]
for almost every \( x\in\Delta_i \), \( i\in I_d \). Moreover, we readily get from \eqref{outer}, \eqref{outer-b}, and \eqref{Dirichlet} that \( \log |Q_i(z)|  = (H_{\Delta_i} \log|Q_{i+}|)(z) \) and therefore the equation \( \vec u = \mathcal H_{\vec \Delta}\vec u + \vec a \) is solved by the vector-function \( (\log|Q_{1+}|,\log |Q_{2+}|,\ldots,\log|Q_{d+}|) \). Hence, this vector-function must be equal to \( \vec s \) by Lemma~\ref{lem:sa6}, which means that \( Q_i(z) =S_i(z) \), \( i\in I_d \), as desired.
\end{proof}

\begin{lemma}
\label{lem:sa8}
Let \( \{\Delta_n\} \) be a sequence of closed intervals converging to a non-degenerate interval \( \Delta \). Further, let \( \mu\in\mathrm{USz}(\Delta) \) and \( \{h_n\} \) be a  sequence of continuous functions that converges uniformly on some closed interval that contains \( \Delta \) in its interior. Then
\[
\big(H_{\Delta_n}(\log v_n+h_{n|\Delta_n})\big)(z) \to \big(H_\Delta (\log v+h_{|\Delta})\big)(z)
\]
locally uniformly in \( D_\Delta \) as \( n\to\infty \), where \( v_n \) and \( v \) are the Radon-Nikodym derivatives of \( \mu_{|\Delta_n} \) and \( \mu_{|\Delta} \) with respect to \( \omega_{\Delta_n} \) and \( \omega_\Delta \), respectively, and \( h \) is the limit of \( h_n \).
\end{lemma}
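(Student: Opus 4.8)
The plan is to transplant the entire statement onto the fixed interval $\Delta$ by an affine change of variables and then reduce the convergence to the $L^1(\omega_\Delta)$-continuity of the Dirichlet solution operator $H_\Delta$. Write $\Delta=[\alpha,\beta]$, $\Delta_n=[\alpha_n,\beta_n]$, and let $\ell_n:=l_{\Delta\to\Delta_n}$, say $\ell_n(\eta)=a_n\eta+b_n$ with $a_n=(\beta_n-\alpha_n)/(\beta-\alpha)>0$; since $\Delta_n\to\Delta$ we have $a_n\to1$ and $b_n\to0$, hence $\ell_n\to\mathrm{id}$ and $\ell_n^{-1}\to\mathrm{id}$ uniformly on compact subsets of $\overline\C$. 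Using $d\omega_\Delta(x)=dx/(\pi\sqrt{(x-\alpha)(\beta-x)})$ and $w_\Delta(z)=\sqrt{(z-\alpha)(z-\beta)}$, a direct substitution $x=\ell_n(\eta)$ shows $(\ell_n)_*\omega_\Delta=\omega_{\Delta_n}$ and $w_{\Delta_n}\circ\ell_n=a_n w_\Delta$ (both sides behave like $z+\mathcal O(1)$ at infinity, so the two branches agree on the connected set $D_\Delta$); plugging these into \eqref{outer}--\eqref{Dirichlet} yields the affine covariance
\[
(H_{\Delta_n}g)(z)=\big(H_\Delta(g\circ\ell_n)\big)\big(\ell_n^{-1}(z)\big),\qquad z\in D_{\Delta_n}.
\]
Note that any compact $K\subset D_\Delta$ is contained in $D_{\Delta_n}$ for all large $n$, and $\ell_n^{-1}(K)$ eventually lies in a fixed slightly larger compact subset of $D_\Delta$.

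Second, I would pull back the boundary data. Since $v_n(x)=\pi\mu'(x)\sqrt{(x-\alpha_n)(\beta_n-x)}$ on $\Delta_n$ (see \eqref{szego}) and $(\ell_n(\eta)-\alpha_n)(\beta_n-\ell_n(\eta))=a_n^2(\eta-\alpha)(\beta-\eta)$, we obtain for $\eta\in\Delta$
\[
(\log v_n)(\ell_n(\eta))=\log v(\eta)+\log a_n+\big(\log\mu'(\ell_n(\eta))-\log\mu'(\eta)\big),
\]
so that, with $g_n:=(\log v_n+h_{n|\Delta_n})\circ\ell_n$ and $g:=\log v+h_{|\Delta}$, one has $g_n-g=\log a_n+(\log\mu'\circ\ell_n-\log\mu')+(h_n\circ\ell_n-h)$ on $\Delta$. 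Now $\log a_n\to0$; the term $\log\mu'\circ\ell_n-\log\mu'$ tends to $0$ in $L^1(\omega_\Delta)$ by the very definition \eqref{UnSzego} of $\mu\in\mathrm{USz}(\Delta)$ (which moreover guarantees $\mu\in\mathrm{Sz}(\Delta_n)$, i.e.\ $g_n\in L^1(\omega_\Delta)$, for all large $n$); and $h_n\circ\ell_n-h\to0$ even uniformly on $\Delta$, because $h_n\to h$ uniformly on a closed interval $K_0$ containing $\Delta$ in its interior, $\ell_n(\Delta)=\Delta_n\subset K_0$ for large $n$, $\ell_n\to\mathrm{id}$ uniformly, and the uniform limit $h$ is uniformly continuous on $K_0$. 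Hence $g_n\to g$ in $L^1(\omega_\Delta)$.

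Finally I would pass to the limit. By the elementary estimate $|(H_\Delta u)(z)|\le C_K\|u\|_{L^1(\omega_\Delta)}$ valid on every compact $K\subset D_\Delta$, noted in the proof of Lemma~\ref{lem:sa6}, the $L^1$-convergence $g_n\to g$ gives $H_\Delta g_n\to H_\Delta g$ locally uniformly in $D_\Delta$; since $H_\Delta g$ is harmonic, hence continuous, in $D_\Delta$ and $\ell_n^{-1}\to\mathrm{id}$ uniformly on compacta, the splitting
\[
\big|(H_\Delta g_n)(\ell_n^{-1}(z))-(H_\Delta g)(z)\big|\le\big|(H_\Delta g_n-H_\Delta g)(\ell_n^{-1}(z))\big|+\big|(H_\Delta g)(\ell_n^{-1}(z))-(H_\Delta g)(z)\big|
\]
shows that $(H_{\Delta_n}(\log v_n+h_{n|\Delta_n}))(z)=(H_\Delta g_n)(\ell_n^{-1}(z))\to(H_\Delta g)(z)=(H_\Delta(\log v+h_{|\Delta}))(z)$ locally uniformly in $D_\Delta$, which is the claim. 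All the computations are routine; the one point to get exactly right is the affine covariance identity for $H$ (the branch of the square root and the behavior at $\infty$), while the conceptual crux is simply recognizing that \eqref{UnSzego} is precisely the hypothesis needed for the transplanted densities to converge in $L^1(\omega_\Delta)$ — the rest is bookkeeping with the double limit $g_n\to g$, $\ell_n^{-1}\to\mathrm{id}$.
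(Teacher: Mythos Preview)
Your proof is correct and rests on the same core ingredients as the paper's: the change of variables $x\mapsto\ell_n(x)$ to bring everything to $\Delta$, the decomposition $\log v_n\circ\ell_n-\log v=\log a_n+(\log\mu'\circ\ell_n-\log\mu')$, and the uniform Szeg\H{o} hypothesis \eqref{UnSzego} to force the $L^1(\omega_\Delta)$-convergence of the transplanted data.

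The organization differs slightly. The paper keeps $z$ fixed, writes $(H_{\Delta_n}u_n)(z)-(H_\Delta u)(z)$ as a single integral over $\Delta$, and splits the integrand into three terms $J_{n,1},J_{n,2},J_{n,3}$ handling separately the drift of $w_{\Delta_n}$, of the data, and of the pole $1/(z-l_n(x))$. You instead transform $z$ as well, packaging the change of variables into the clean covariance identity $(H_{\Delta_n}g)(z)=(H_\Delta(g\circ\ell_n))(\ell_n^{-1}(z))$; this reduces the problem to two decoupled facts---continuity of $H_\Delta$ on $L^1(\omega_\Delta)$ and uniform continuity of $H_\Delta g$ on compacta---and avoids the explicit three-term bookkeeping. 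Your route is a bit more conceptual and reusable (the covariance identity is a statement worth isolating), while the paper's is more hands-on; neither buys anything the other does not.
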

\begin{proof}
Let \( l_n(x) = l_{\Delta\to\Delta_n}(x) \) be as in \eqref{UnSzego}. Observe that \( l_n(x) \) converge to \( x \) uniformly on \( \Delta \). Set \( u_n(x) := h_{n|\Delta_n}(x)+\log v_n(x) \) and \( u(x):=h_{|\Delta}(x)+\log v(x) \). Then
\[
(H_{\Delta_n}u_n)(z) - (H_\Delta u)(z) = \int_\Delta \re\left(w_{\Delta_n}(z)  \frac{u_n(l_n(x))}{z-l_n(x)}- w_\Delta(z)\frac{u(x)}{z-x}\right) d\omega_\Delta(x),
\]
see  \eqref{outer} and \eqref{Dirichlet}. The function in parenthesis above can be rewritten as
\begin{multline*}
(w_{\Delta_n}(z)-w_\Delta(z))\frac{u(x)}{z-x} +w_{\Delta_n}(z)\frac{u_n(l_n(x))-u(x)}{z-l_n(x)} + \\   w_{\Delta_n}(z)\frac{(l_n(x)-x)u(x)}{(z-l_n(x))(z-x)} =: (J_{n,1}+J_{n,2}+J_{n,3})(x,z).
\end{multline*}
Observe that the functions \( w_{\Delta_n}(z) - w_\Delta(z) \) converge to zero uniformly in the whole extended complex plane. Hence,
\begin{equation}
\label{J123}
\int_\Delta J_{n,1}(x,z)d\omega_\Delta(x) \to 0  
\end{equation}
locally uniformly in \( D_\Delta \) as \( n\to\infty \). Furthermore, since
\[
v_n(x) = \pi\mu^\prime(x)\sqrt{(x-\alpha_n)(\beta_n-x)}, \quad x\in\Delta_n=[\alpha_n,\beta_n],
\]
it can be readily checked that \( u_n(l_n(x))-u(x) \) is equal to
\[
\big[h_n( l_n(x)) - h(x) \big] + \frac12\log \frac{\beta_n-\alpha_n}{\beta-\alpha} +  \big[\log\mu^\prime(l_n(x)) - \log\mu^\prime(x) \big]
\]
on \( \Delta \), where \( \Delta=[\alpha,\beta] \). Due to uniform convergence of \( h_n \) to \( h \), uniform continuity of \( h \) on \( \Delta \), and \eqref{UnSzego}, the functions \( u_n\circ l_n - u \) converge to zero in \( L^1(\omega_\Delta) \). As functions \( |w_{\Delta_n}(z)/(z-l_n(x))| \) are uniformly bounded for \( x\in[-1,1] \) and \( z \) on closed subsets of \( D_\Delta \), this necessarily yields that \eqref{J123} holds with  \( J_{n,1}(x,z) \) is replaced by \( J_{n,2}(x,z) \). Uniform convergence to zero of \( l_n(x)-x \)  on \( \Delta \) now guarantees that \eqref{J123} remains valid if \( J_{n,1}(x,z) \) is replaced by \( J_{n,3}(x,z) \), which finishes the proof of the lemma.
\end{proof}

\begin{lemma}
\label{lem:sa9}
Let \( \vec\Delta_n=(\Delta_{n,1},\Delta_{n,2},\ldots,\Delta_{n,d}) \) be as in Proposition~\ref{prop:sa2} and \( \mathfrak B_n \) be either \( C(\vec \Delta_n) \) or \( C^1(\vec \Delta_n) \). Then, 
\[
\big\|(\mathcal I-\mathcal H_{\vec \Delta_n})^{-1} \big\|_{\mathfrak B_n} \leq C
\]
for some constant \( C \) independent of \( n \). Further, let \( \vec\Delta^\prime =(\Delta_1^\prime,\Delta_2^\prime,\ldots,\Delta_d^\prime) \) be such that \( \vec\Delta_n\subseteq \vec\Delta^\prime \) and the intervals \( \Delta_i^\prime \), \( i\in I_d \), are pairwise disjoint. Let \( \mathfrak B^\prime \) be either \( C(\vec \Delta^\prime) \) or \( C^1(\vec \Delta^\prime) \). Then
\[
\big\|(\mathcal I-\mathcal H_{\vec \Delta_n,\vec\Delta^\prime})^{-1} \big\|_{\mathfrak B^\prime} \leq C^\prime
\]
for some constant \( C^\prime \) independent of \( n \).
\end{lemma}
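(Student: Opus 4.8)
The plan is to realise \( \mathcal I-\mathcal H_{\vec\Delta_n} \) (resp. \( \mathcal I-\mathcal H_{\vec\Delta_n,\vec\Delta^\prime} \)) as a convergent perturbation of an invertible limiting operator and to control the inverses by a compactness argument. Write \( \vec\Delta=(\Delta_1,\dots,\Delta_d) \) for the collection of limiting intervals; these are pairwise disjoint non-degenerate closed intervals since \( \Delta_{n,i}\subseteq\Delta(\mu_i) \), \( \Delta_{n,i}\to\Delta_i \), and the \( \Delta(\mu_i) \) are disjoint by \eqref{Angelesco}, so in particular \( \vec\Delta\subseteq\vec\Delta^\prime \) in the second assertion. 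By Lemma~\ref{lem:sa6}, both \( \mathcal I-\mathcal H_{\vec\Delta} \) and \( \mathcal I-\mathcal H_{\vec\Delta,\vec\Delta^\prime} \) are invertible. The operators \( \mathcal H_{\vec\Delta_n,\vec\Delta^\prime} \) all act on the fixed space \( \mathfrak B^\prime \), so the limit \( \mathcal H_{\vec\Delta,\vec\Delta^\prime} \) is directly available. For \( \mathcal H_{\vec\Delta_n} \) the domains vary, and I will remove this by conjugating with the affine maps from \eqref{UnSzego}: letting \( A_n\colon\mathfrak B(\vec\Delta)\to\mathfrak B_n \) act componentwise by \( (A_n\vec u)_i=u_i\circ l_{\Delta_i\to\Delta_{n,i}}^{-1} \) — an isomorphism with \( \|A_n\|,\|A_n^{-1}\|\to1 \) since it is an isometry on the \( C \)-spaces and its \( C^1 \)-distortion is governed by the slopes of \( l_{\Delta_i\to\Delta_{n,i}} \), which tend to \( 1 \) — and setting \( \widetilde{\mathcal H}_n:=A_n^{-1}\mathcal H_{\vec\Delta_n}A_n \) on the fixed space \( \mathfrak B(\vec\Delta) \), one has \( \|(\mathcal I-\mathcal H_{\vec\Delta_n})^{-1}\|_{\mathfrak B_n}\le\|A_n\|\,\|A_n^{-1}\|\,\|(\mathcal I-\widetilde{\mathcal H}_n)^{-1}\|_{\mathfrak B(\vec\Delta)} \), so it suffices to bound \( \|(\mathcal I-\widetilde{\mathcal H}_n)^{-1}\| \) uniformly in \( n \).

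Two properties of the families \( \{\widetilde{\mathcal H}_n\} \) and \( \{\mathcal H_{\vec\Delta_n,\vec\Delta^\prime}\} \) are needed. \emph{Uniform smoothing:} by the explicit formula \eqref{outer}--\eqref{Dirichlet}, each off-diagonal entry sends \( u_j \) to a restriction of \( z\mapsto\re\big(w_{\Delta_{n,j}}(z)\int_{\Delta_{n,j}}u_j(t)(z-t)^{-1}d\omega_{\Delta_{n,j}}(t)\big) \), whose modulus and \( z \)-derivatives on the target interval are \( \lesssim\|u_j\|_{C^0} \), with constant depending only on a lower bound for the distance between source and target intervals, and that distance stays bounded below because \( \Delta_{n,i}\to\Delta_i \) with the \( \Delta_i \) disjoint; hence the images of the unit ball under all \( \widetilde{\mathcal H}_n \) (resp. all \( \mathcal H_{\vec\Delta_n,\vec\Delta^\prime} \)) lie in a bounded subset of \( C^1 \) (or \( C^2 \)), therefore in a relatively compact subset of \( \mathfrak B(\vec\Delta) \) (resp. \( \mathfrak B^\prime \)) by Arzel\`a--Ascoli. \emph{Strong convergence:} for a fixed \( \vec u \), after the affine substitution \( t=l_{\Delta_j\to\Delta_{n,j}}(s) \) and using the affine covariance of both \( \omega_{\Delta_{n,j}} \) and \( w_{\Delta_{n,j}} \), the difference of the \( n \)-th kernel and the limiting one is estimated exactly as in the proof of Lemma~\ref{lem:sa8}, where now the only varying ingredients are the intervals (the data \( \vec u \) being fixed), and it tends to \( 0 \) since \( l_{\Delta_j\to\Delta_{n,j}}\to\mathrm{id} \) and \( w_{\Delta_{n,j}}\to w_{\Delta_j} \) uniformly and each \( u_j \) is uniformly continuous; thus \( \widetilde{\mathcal H}_n\vec u\to\mathcal H_{\vec\Delta}\vec u \) and \( \mathcal H_{\vec\Delta_n,\vec\Delta^\prime}\vec u\to\mathcal H_{\vec\Delta,\vec\Delta^\prime}\vec u \) in the respective norms.

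Granting these, I argue by contradiction (this is an instance of the collectively-compact operator approximation scheme). Suppose \( \|(\mathcal I-\widetilde{\mathcal H}_n)^{-1}\| \) is not bounded; the case of \( \mathcal H_{\vec\Delta_n,\vec\Delta^\prime} \) is verbatim the same. Passing to a subsequence, pick \( \vec b_n \) with \( \|\vec b_n\|=1 \) and \( t_n:=\|(\mathcal I-\widetilde{\mathcal H}_n)^{-1}\vec b_n\|\to\infty \), and set \( \vec w_n:=t_n^{-1}(\mathcal I-\widetilde{\mathcal H}_n)^{-1}\vec b_n \), so that \( \|\vec w_n\|=1 \) and \( \vec w_n-\widetilde{\mathcal H}_n\vec w_n=t_n^{-1}\vec b_n\to\vec 0 \). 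By uniform smoothing and Arzel\`a--Ascoli, along a further subsequence \( \widetilde{\mathcal H}_n\vec w_n \) converges to some \( \vec g \), hence \( \vec w_n\to\vec g \) and \( \|\vec g\|=1 \). Since the \( \widetilde{\mathcal H}_n \) are uniformly bounded and converge strongly on the dense set of \( C^1 \)-vector-functions (second property), they converge strongly on all of \( \mathfrak B(\vec\Delta) \) and uniformly on the compact set \( \{\vec w_n\}_n\cup\{\vec g\} \); writing \( \widetilde{\mathcal H}_n\vec w_n=(\widetilde{\mathcal H}_n-\mathcal H_{\vec\Delta})\vec w_n+\mathcal H_{\vec\Delta}(\vec w_n-\vec g)+\mathcal H_{\vec\Delta}\vec g \) and letting \( n\to\infty \) gives \( \widetilde{\mathcal H}_n\vec w_n\to\mathcal H_{\vec\Delta}\vec g \), so \( \vec g=\mathcal H_{\vec\Delta}\vec g \) and therefore \( \vec g=\vec 0 \) by Lemma~\ref{lem:sa5} — a contradiction. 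Thus \( \|(\mathcal I-\widetilde{\mathcal H}_n)^{-1}\| \) is bounded for all large \( n \), and the finitely many remaining indices are handled by the invertibility in Lemma~\ref{lem:sa6}; taking the maximum yields the claimed \( C \) and \( C^\prime \). \emph{The main obstacle} is precisely the first assertion's moving domains: turning the pointwise convergence of Lemma~\ref{lem:sa8} into a statement on one fixed space that is simultaneously uniform over the unit ball (uniform smoothing) and strong (pointwise convergence) — this is exactly what the affine conjugation and the surplus regularity of the images accomplish.
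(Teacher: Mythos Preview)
Your proof is correct, but it takes a genuinely different route from the paper's. Both arguments begin identically: conjugate by the affine isomorphisms \( A_n \) (the paper's \( \mathcal L_n \)) to put all operators on the fixed space \( \mathfrak B(\vec\Delta) \), and reduce to bounding \( \|(\mathcal I-\widetilde{\mathcal H}_n)^{-1}\| \). From there the paper is more direct: it proves \emph{operator-norm} convergence \( \|\widetilde{\mathcal H}_n-\mathcal H_{\vec\Delta}\|_{\mathfrak B}\to0 \) by estimating the kernel difference \( w_{\Delta_{n,i}}(l_{n,j}^{-1}(y))/(l_{n,j}^{-1}(y)-l_{n,i}^{-1}(x)) - w_{\Delta_i}(y)/(y-x) \) uniformly over source and target (this is their \( \epsilon_n \)), and then writes \( (\mathcal I-\widetilde{\mathcal H}_n)^{-1}=\mathcal R_n^{-1}(\mathcal I-\mathcal H_{\vec\Delta})^{-1} \) with \( \mathcal R_n=\mathcal I-(\mathcal I-\mathcal H_{\vec\Delta})^{-1}(\widetilde{\mathcal H}_n-\mathcal H_{\vec\Delta}) \), so a Neumann-series bound finishes at once. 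You instead use the weaker ingredients of \emph{strong} convergence plus \emph{collective compactness} (uniform smoothing) and run the standard Anselone-type contradiction: if \( \|\vec w_n\|=1 \) and \( (\mathcal I-\widetilde{\mathcal H}_n)\vec w_n\to0 \), extract a limit \( \vec g \) via collective compactness, upgrade strong convergence to uniform convergence on the compact orbit, and contradict Lemma~\ref{lem:sa5}. Your approach would survive in settings where only strong convergence of the \( \widetilde{\mathcal H}_n \) holds; the paper's buys a shorter, quantitative argument because here the kernels actually converge uniformly, making norm convergence available. Two small remarks: your detour through ``dense set of \( C^1 \)-vector-functions'' is unnecessary, since your strong-convergence paragraph already covers every \( \vec u\in C(\vec\Delta) \) via uniform continuity; and in the \( C^1 \)-case you should state that the uniform smoothing puts images into a bounded set of \( C^2 \) (which you do note) so that both the Arzel\`a--Ascoli step and the strong convergence hold in the \( C^1 \)-topology.
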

\begin{proof}
We start by proving the first claim.  Using the notation of Proposition~\ref{prop:sa2}, let \( l_{n,i}(z) := l_{\Delta_{n,i}\to\Delta_i}(z) \) be the linear function that maps \( \Delta_{n,i} \) onto \( \Delta_i \) that has positive leading coefficient, \( i\in I_d\), and let
\[
\mathcal L_n : \mathfrak B \to \mathfrak B_n, \quad \vec u \mapsto ( u_1\circ l_{n,1}, u_2\circ l_{n,2},\ldots, u_d\circ l_{n,d} ),
\]
where \( \vec u =(u_1,u_2,\ldots,u_d) \) and \( \mathfrak B \) is either \( C(\vec\Delta) \) or \( C^1(\vec\Delta) \). Then, \( \mathcal L_n \) is an isometry of \( C(\vec \Delta_n) \) or an operator on \( C^1(\vec\Delta) \) with the norm of size \( 1+o(1) \) as \( n\to\infty \). Define
\[
\widetilde {\mathcal H}_n := \mathcal L_n^{-1} \circ \mathcal H_{\vec\Delta_n} \circ \mathcal L_n.
\]
It is sufficient to prove the statement of the lemma with \( \mathcal H_{\vec\Delta_n} \) replaced by \(  \widetilde {\mathcal H}_n \)  because
\[ 
\|(\mathcal I-\mathcal H_{\vec\Delta_n})^{-1}\|_{\mathfrak B_n} = (1+o(1)) \|(\mathcal I-\widetilde{\mathcal H}_n)^{-1}\|_{\mathfrak B},
\]
where we have actual equality of norms if \( \mathfrak B_n = C(\vec\Delta_n) \). It trivially holds that
\[
\mathcal R_n (\mathcal I-\widetilde{\mathcal H}_n)^{-1} = (\mathcal I-\mathcal H_{\vec \Delta})^{-1}, \quad \mathcal R_n:=  \mathcal I - (\mathcal I-\mathcal H_{\vec \Delta})^{-1}(\widetilde {\mathcal H}_n-\mathcal H_{\vec \Delta}).
\]
Thus, it is enough to show that the operators \( \mathcal R_n \) are invertible and the norms of their inverses are uniformly bounded. To this end, it is sufficient to show that
\[ 
\|  \widetilde {\mathcal H}_n - \mathcal H_{\vec \Delta} \|_{\mathfrak B} \to 0 \qasq  n\to\infty.
\]
First, let $\mathfrak B=C(\vec\Delta)$. The specific form \eqref{operH} of these operators yields that the above claim will follow if we prove that
\begin{equation}
\label{upperB1}
\big\|(H_{\Delta_{n,i}}(u\circ l_{n,i})) \circ l_{n,j}^{-1} - H_{\Delta_i}u \big\|_{C(\Delta_j)} \leq \epsilon_n \|u\|_{C(\Delta_i)}
\end{equation}
for each  \( i\neq j \), \( i,j\in I_d \), with \( \epsilon_n\to 0 \) as \( n\to\infty \). It follows from \eqref{outer} and \eqref{Dirichlet} that we need to estimate the supremum norm of the following function in $y$
\begin{multline*}
\int_{\Delta_{n,i}} w_{\Delta_{n,i}}(l_{n,j}^{-1}(y)) \frac{u(l_{n,i}(x))}{l_{n,j}^{-1}(y)-x} d\omega_{\Delta_{n,i}}(x) - \int_{\Delta_i} w_{\Delta_i}(y)\frac{u(x)}{y-x} d\omega_{\Delta_i}(x) \\ = \int_{\Delta_i} \left(\frac{w_{\Delta_{n,i}}(l_{n,j}^{-1}(y))}{l_{n,j}^{-1}(y)-l_{n,i}^{-1}(x)} - \frac{w_{\Delta_i}(y)}{y-x}\right) u(x)d\omega_{\Delta_i}(x)
\end{multline*}
on \( \Delta_{n,j} \), \( j\neq i \) (we have removed the reference to the real part as the integrals are real for the considered values of \( y \)). As \( \omega_{\Delta_i} \) is a probability measure, we  put
\begin{equation}
\label{eps_n}
\epsilon_n := \max_{i\neq j} \max_{y\in \Delta_j} \max _{x\in \Delta_i} \left| \frac{w_{\Delta_{n,i}}(l_{n,j}^{-1}(y))}{l_{n,j}^{-1}(y)-l_{n,i}^{-1}(x)} - \frac{w_{\Delta_i}(y)}{y-x} \right|
\end{equation}
to get the  desired bound. 
Since $\{\Delta_l\}_{l\in I_d}$ are pairwise disjoint and both \( l_{n,i}(z)\) and \(l_{n,j}(z) \) converge to \( z \) locally uniformly in \( \C \), it  holds that \( \epsilon_n \to 0 \) as \( n\to\infty \), as claimed. 

When $\mathfrak B=C^1(\vec\Delta)$, it is sufficient to show that
\begin{equation}
\label{upperB2}
\big\|(H_{\Delta_{n,i}}(u\circ l_{n,i})) \circ l_{n,j}^{-1} - H_{\Delta_i}u \big\|_{C^1(\Delta_j)} \leq \widehat\epsilon_n \|u\|_{C(\Delta_i)} 
\end{equation}
for each  \( i\neq j \), \( i,j\in I_d \), with \( \widehat\epsilon_n\to 0 \) as \( n\to\infty \). Since we can differentiate under the integral sign, the proof of this claim is no different from the already considered case.

The proof of the second claim is essentially the same. We have that
\[
\mathcal R_n^\prime (\mathcal I-\mathcal H_{\vec\Delta_n,\vec\Delta^\prime})^{-1} = (\mathcal I-\mathcal H_{\vec\Delta,\vec\Delta^\prime})^{-1}, \quad \mathcal R_n^\prime :=  \mathcal I - (\mathcal I-\mathcal H_{\vec \Delta,\vec\Delta^\prime})^{-1}(\mathcal H_{\vec \Delta_n,\vec\Delta^\prime}-\mathcal H_{\vec \Delta,\vec\Delta^\prime}).
\]
Clearly, \( C(\Delta_i) \) can be replaced by \( C(\Delta_i^\prime) \) in \eqref{upperB1} and \eqref{upperB2} simply because \( \Delta_i\subseteq \Delta_i^\prime \). Moreover, the spaces \( C(\Delta_j) \) and \( C^1(\Delta_j) \) in \eqref{upperB1} and \eqref{upperB2}, respectively, can be replaced by \( C(\Delta_j^\prime) \) and \( C^1(\Delta_j^\prime) \) because \eqref{eps_n}  only uses the fact that all the intervals \( \Delta_k \) are separated from each other, which is also true about the intervals \( \Delta_k^\prime \). Hence,
\[ 
\|  \mathcal H_{\vec \Delta_n,\vec\Delta^\prime}-\mathcal H_{\vec \Delta,\vec\Delta^\prime} \|_{\mathfrak B^\prime} \to 0 \qasq  n\to\infty,
\]
which finishes the proof of the lemma.
\end{proof}

\begin{lemma}
\label{lem:sa10}
Let \( \vec\Delta_n \) and \( \vec\Delta \)  be as in Proposition~\ref{prop:sa2}. Further, for each $i\in I_d$ let \( b_{n,i}(x) \), \( n\in \N \), be continuous functions defined on some interval containing  \( \cup_{m\in \mathbb{N}}\Delta_{m,i} \)  such that the sequence $\{b_{n,i}\}$ converges uniformly on this interval to a continuous function \( b_i(x) \) as $n\to\infty$. Let \( \vec y_n,\vec y \) be the unique solutions of
\[
\vec y_n = \mathcal H_{\vec \Delta_n}\vec y_n + \vec b_n \qandq \vec y = \mathcal H_{\vec \Delta}\vec y + \vec b,
\]
where \( (\vec b_n)_i(x) := b_{n,i}(x) \), \( x\in\Delta_{n,i} \), and \( (\vec b)_i(x) := b_i(x) \), \( x\in\Delta_i \), \( i\in I_d \). Then, for each \( i\in I_d \), the convergence
\[
H_{\Delta_{n,i}}(\vec y_n)_i(z) \to H_{\Delta_i}(\vec y)_i(z) \qasq n\to\infty
\]
holds uniformly in $z$ in the extended complex plane.
\end{lemma}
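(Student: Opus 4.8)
The plan is to reduce to a single fixed collection of intervals by an affine change of variables, carry the convergence $\vec b_n\to\vec b$ through the uniformly invertible operators $\mathcal I-\mathcal H$, and then upgrade convergence of the solutions on $\vec\Delta$ to uniform convergence of their harmonic extensions on all of $\overline\C$ by combining the affine invariance of the Dirichlet solution operator $H_\Delta$ with the maximum principle. Concretely, keeping the notation from the proof of Lemma~\ref{lem:sa9}, let $l_{n,i}:=l_{\Delta_{n,i}\to\Delta_i}$ be the affine bijection $\Delta_{n,i}\to\Delta_i$ with positive slope, let $\mathcal L_n\colon\mathfrak B\to\mathfrak B_n$, $\vec u\mapsto(u_i\circ l_{n,i})_{i\in I_d}$ with $\mathfrak B=C(\vec\Delta)$ and $\mathfrak B_n=C(\vec\Delta_n)$, and set $\widetilde{\mathcal H}_n:=\mathcal L_n^{-1}\circ\mathcal H_{\vec\Delta_n}\circ\mathcal L_n$. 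Applying $\mathcal L_n^{-1}$ to $\vec y_n=\mathcal H_{\vec\Delta_n}\vec y_n+\vec b_n$ and inserting $\mathcal L_n\mathcal L_n^{-1}=\mathcal I$ shows that $\widetilde{\vec y}_n:=\mathcal L_n^{-1}\vec y_n$ solves $\widetilde{\vec y}_n=\widetilde{\mathcal H}_n\widetilde{\vec y}_n+\widetilde{\vec b}_n$, where $(\widetilde{\vec b}_n)_i=b_{n,i}\circ l_{n,i}^{-1}$ on $\Delta_i$.

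Next I would check that $\widetilde{\vec y}_n\to\vec y$ in $C(\vec\Delta)$. Since $\Delta_{n,i}\to\Delta_i$ and $\Delta_i$ is non-degenerate, $l_{n,i}^{-1}\to\mathrm{id}$ uniformly on $\Delta_i$; together with the uniform convergence $b_{n,i}\to b_i$ and the uniform continuity of $b_i$, this gives $\widetilde{\vec b}_n\to\vec b$ in $C(\vec\Delta)$. In the course of proving Lemma~\ref{lem:sa9} it was established that $\|\widetilde{\mathcal H}_n-\mathcal H_{\vec\Delta}\|_{C(\vec\Delta)}\to0$, while $\mathcal I-\mathcal H_{\vec\Delta}$ is invertible on $C(\vec\Delta)$ by Lemma~\ref{lem:sa6}; a standard perturbation argument (using also the uniform bound on $\|(\mathcal I-\widetilde{\mathcal H}_n)^{-1}\|$ furnished by Lemma~\ref{lem:sa9}) then yields $(\mathcal I-\widetilde{\mathcal H}_n)^{-1}\to(\mathcal I-\mathcal H_{\vec\Delta})^{-1}$ in operator norm, whence $\widetilde{\vec y}_n=(\mathcal I-\widetilde{\mathcal H}_n)^{-1}\widetilde{\vec b}_n\to(\mathcal I-\mathcal H_{\vec\Delta})^{-1}\vec b=\vec y$ in $C(\vec\Delta)$.

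Finally I would pass from convergence on $\vec\Delta$ to uniform convergence of the harmonic extensions on $\overline\C$. The key observation is the affine invariance $H_{l(\Delta)}(f)(l(z))=H_\Delta(f\circ l)(z)$ for an affine bijection $l$ of $\R$ with positive slope: it follows from \eqref{outer}--\eqref{Dirichlet} via the substitution $x=l(t)$ (using $d\omega_{l(\Delta)}(l(t))=d\omega_\Delta(t)$ and $w_{l(\Delta)}(l(z))=l'\,w_\Delta(z)$), or simply from uniqueness of the bounded harmonic extension, since $H_{l(\Delta)}(f)\circ l$ is bounded, harmonic in $D_\Delta$, and has boundary values $f\circ l$. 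Taking $\Delta=\Delta_{n,i}$, $l=l_{n,i}$, and $f=(\widetilde{\vec y}_n)_i$ (so that $f\circ l_{n,i}=(\vec y_n)_i$) gives $H_{\Delta_{n,i}}\big((\vec y_n)_i\big)(z)=H_{\Delta_i}\big((\widetilde{\vec y}_n)_i\big)\big(l_{n,i}(z)\big)$, and both sides extend continuously to $\overline\C$. I would then write
\[
H_{\Delta_{n,i}}\big((\vec y_n)_i\big)(z)-H_{\Delta_i}\big((\vec y)_i\big)(z)=H_{\Delta_i}\big((\widetilde{\vec y}_n)_i-(\vec y)_i\big)\big(l_{n,i}(z)\big)+\Big(H_{\Delta_i}\big((\vec y)_i\big)\big(l_{n,i}(z)\big)-H_{\Delta_i}\big((\vec y)_i\big)(z)\Big),
\]
bound the first term by $\|(\widetilde{\vec y}_n)_i-(\vec y)_i\|_{C(\Delta_i)}\to0$ using the maximum principle (for continuous $v$ on $\Delta_i$, $H_{\Delta_i}v$ is continuous on $\overline\C$ and harmonic in $D_{\Delta_i}$, so $\|H_{\Delta_i}v\|_{L^\infty(\overline\C)}\le\|v\|_{C(\Delta_i)}$), and bound the second term by $0$ uniformly, since $H_{\Delta_i}\big((\vec y)_i\big)$ is uniformly continuous on the compact sphere $\overline\C$ while $l_{n,i}$, viewed as M\"obius maps fixing $\infty$, converge to the identity uniformly on $\overline\C$ in the spherical metric.

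\textbf{Main obstacle.} The difficulty is bookkeeping rather than conceptual: one must ensure every estimate is genuinely uniform up to the boundary intervals and to the point at infinity. Once the affine-invariance identity and the maximum-principle bound $\|H_\Delta v\|_{L^\infty(\overline\C)}\le\|v\|_{C(\Delta)}$ are in place, the remaining steps are routine, and the perturbation machinery for $(\mathcal I-\widetilde{\mathcal H}_n)^{-1}$ has already been set up in Lemma~\ref{lem:sa9}, so the argument reduces to an assembly of these pieces.
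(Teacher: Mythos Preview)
Your proof is correct and follows essentially the same architecture as the paper: pull back to the fixed intervals via $\mathcal L_n$, show the pulled-back solutions converge in $C(\vec\Delta)$, then use the affine invariance of $H_\Delta$ together with the maximum principle and uniform continuity on $\overline\C$ to upgrade to uniform convergence of the harmonic extensions. The one tactical difference is in the middle step: you invoke the operator-norm convergence $\widetilde{\mathcal H}_n\to\mathcal H_{\vec\Delta}$ (established inside the proof of Lemma~\ref{lem:sa9}) to get $(\mathcal I-\widetilde{\mathcal H}_n)^{-1}\to(\mathcal I-\mathcal H_{\vec\Delta})^{-1}$ directly, whereas the paper instead rewrites $\vec y_n^*=\mathcal L_n^{-1}\vec y_n$ as a solution of $\vec y_n^*=\mathcal H_{\vec\Delta}\vec y_n^*+\vec b_n^*$ with a modified right-hand side involving differences $h_{n,j}(l_{n,j}^{-1}(x))-h_{n,j}(l_{n,i}^{-1}(x))$, and uses the a priori bound $\|\vec y_n\|_{C(\vec\Delta_n)}\lesssim1$ from Lemma~\ref{lem:sa9} plus the explicit kernel estimate to show $\vec b_n^*\to\vec b$. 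Your route is a bit cleaner since it packages the same analysis into the already-proved operator-norm convergence rather than recomputing the correction terms; the paper's route, on the other hand, applies $(\mathcal I-\mathcal H_{\vec\Delta})^{-1}$ only for the fixed limiting collection, which keeps the invertibility input minimal.
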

\begin{proof}
Since \( \vec b_n \in C(\vec \Delta_n) \) and \( \vec b \in C(\vec \Delta) \), we have that \( \vec y_n \in C(\vec \Delta_n) \) and \( \vec y \in C(\vec \Delta) \) by Lemma~\ref{lem:sa6}. Thus, we get from Lemma~\ref{lem:sa9} and the conditions placed on the functions \( b_{n,i}(x) \), \( i\in I_d \), that
\begin{equation}\label{sapsad4}
\|\vec y_n\|_{C(\vec \Delta_n)} = \|(\mathcal I-\mathcal H_{\vec\Delta_n})^{-1}\vec b_n\|_{C(\vec \Delta_n)} \lesssim \|\vec b_n\|_{C(\vec \Delta_n)} \lesssim 1.
\end{equation}
Put \(  h_{n,i}(z) := H_{\Delta_{n,i}}(\vec y_n)_i(z) \), which is a continuous function in \( \overline\C \) that is harmonic in \( D_{\Delta_{n,i}} \) and is equal to \( (\vec y_n)_i \) on \( \Delta_{n,i} \). Set \( \vec y_n^*  := \mathcal L_n^{-1} \vec y_n \), where \( \mathcal L_n \) was defined in the previous lemma. Then, it holds that 
\[
\vec y_n^* = \mathcal H_{\vec\Delta} \vec y_n^* + \vec b_n^*,
\]
where \( \vec b_n^* \) is the vector-function with coordinates
\[
b_{n,i}^*(x) := b_{n,i}(l_{n,i}^{-1}(x)) + \frac12\sum_{j\neq i,j\in I_d} \left(h_{n,j}(l_{n,j}^{-1}(x)) - h_{n,j}(l_{n,i}^{-1}(x))\right), \quad x\in \Delta_i,
\]
and \( l_{n,k}(z) \) is the linear function, see \eqref{UnSzego}, that maps \( \Delta_{n,k} \) onto \( \Delta_k \), \( k\in I_d \).  Explicit integral representation \eqref{Dirichlet}, \eqref{outer} and the bound \eqref{sapsad4} yield that
\[
|h_{n,j}(t_1) - h_{n,j}(t_2)| \lesssim \max_{x\in\Delta_{n,j}}\left|\frac{w_{\Delta_{n,j}}(t_1)}{t_1-x}-\frac{w_{\Delta_{n,j}}(t_2)}{t_2-x}\right|
\]
for any two points \( t_1,t_2\not\in \Delta_{n,j} \). Since the functions \( l_{n,k}(z) \) converge to \( z \) and the functions \( |w_{\Delta_{n,k}}(z)| \) converge to \( |w_{\Delta_k}(z)| \) locally uniformly in \( \C \), \( k\in I_d \), we get that
\[
\|\vec y_n^*-\vec y\|_{C(\vec\Delta)} \leq \| (\mathcal I-\mathcal H_{\vec\Delta})^{-1}\| \|\vec b_n^*-\vec b\|_{C(\vec\Delta)} \to 0
\]
as \( n \to \infty \) by Lemma~\ref{lem:sa6}. Put \( h_i(z) := H_{\Delta_i}(\vec y)_i(z) \), which is a continuous function in \( \overline\C \) that is harmonic in \( D_{\Delta_i} \) and equal to \( (\vec y)_i \) on \( \Delta_i \). It follows from the maximum principle for harmonic functions that
\[
\big|h_i(l_{n,i}(z)) - h_{n,i}(z) \big| \leq \|h_i\circ l_{n,i}-h_{n,i}\|_{C(\Delta_{n,i})} = \|(\vec y)_i-(\vec y_n^*)_i\|_{C(\Delta_i)} \to 0
\]
as \( n\to\infty \) for each \( z\in \overline \C \) and \( i\in I_d \). It only remains to observe that the differences \( h_i\circ l_{n,i} - h_i \) are uniformly converging to zero in the extended complex plane by the maximum modulus principle for harmonic functions as \( h_i \) is uniformly continuous on any compact set containing \( \Delta_i \) and \( l_{n,i}(z) \) converge to \( z \) uniformly on any such set.
\end{proof}

\begin{lemma}
\label{lem:sa11}
Proposition~\ref{prop:sa2} takes place.
\end{lemma}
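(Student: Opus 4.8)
The plan is to reduce the statement to the integral representation of Lemma~\ref{lem:sa7}. By that lemma $S_i(z)=\out_{\Delta_i}(e^{s_i},z)$ and $S_{n,i}(z)=\out_{\Delta_{n,i}}(e^{s_{n,i}},z)$, where $\vec s=(\mathcal I-\mathcal H_{\vec\Delta})^{-1}\vec a$, $\vec s_n=(\mathcal I-\mathcal H_{\vec\Delta_n})^{-1}\vec a_n$, with $\vec a=\tfrac12(\log v_1,\dots,\log v_d)$, $\vec a_n=\tfrac12(\log v_{n,1},\dots,\log v_{n,d})$, and $v_{n,i}$ the Radon--Nikodym derivative of the restriction of $\mu_i$ to $\Delta_{n,i}$ with respect to $\omega_{\Delta_{n,i}}$. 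Since each $S_{n,i}$ and $S_i$ is the holomorphic, non-vanishing, conjugate-symmetric branch that is positive at infinity, it is enough to prove that $\log|S_{n,i}|=H_{\Delta_{n,i}}s_{n,i}\to H_{\Delta_i}s_i=\log|S_i|$ locally uniformly in $D_{\Delta_i}$ and then pass from moduli to functions. The genuine obstacle — and what I expect to be the crux — is that the data $\vec a_n$ of the defining equations do \emph{not} converge uniformly on $\Delta_{n,i}$ (neither the endpoint factor $\tfrac12\log((x-\alpha_{n,i})(\beta_{n,i}-x))$ nor $\log\mu_i'$ is uniformly controlled near the endpoints), so Lemma~\ref{lem:sa10} cannot be applied to $\vec a_n$ directly; this is precisely the single-interval phenomenon that Lemma~\ref{lem:sa8} is built to absorb.

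To exploit this, I would split $\vec s_n=\vec a_n+\vec g_n$ with $\vec g_n:=\mathcal H_{\vec\Delta_n}\vec s_n\in C(\vec\Delta_n)$; applying $\mathcal H_{\vec\Delta_n}$ to $\vec s_n=\mathcal H_{\vec\Delta_n}\vec s_n+\vec a_n$ shows that $\vec g_n$ is the unique solution in $C(\vec\Delta_n)$ of $(\mathcal I-\mathcal H_{\vec\Delta_n})\vec g_n=\vec b_n$ with $\vec b_n:=\mathcal H_{\vec\Delta_n}\vec a_n$, and likewise $\vec g:=\mathcal H_{\vec\Delta}\vec s=\vec s-\vec a$ solves $(\mathcal I-\mathcal H_{\vec\Delta})\vec g=\vec b$ with $\vec b:=\mathcal H_{\vec\Delta}\vec a$. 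The crucial gain is that $(\vec b_n)_i$ is the restriction to $\Delta_{n,i}$ of the function
\[
b_{n,i}(z):=-\tfrac12\sum_{j\in I_d,\,j\neq i}H_{\Delta_{n,j}}\bigl(\tfrac12\log v_{n,j}\bigr)(z),
\]
a sum of log-moduli of \emph{one-interval} Szeg\H{o} functions that is harmonic on the $n$-independent open set $\overline\C\setminus\bigcup_{j\neq i}\Delta(\mu_j)$, which contains $\bigcup_{m}\Delta_{m,i}$. By the Angelesco condition \eqref{Angelesco}, $\Delta(\mu_i)$ is a compact subset of $D_{\Delta_j}$ for every $j\neq i$, so Lemma~\ref{lem:sa8} with $d=1$, $\mu=\mu_j$, and $h_n\equiv0$ gives $H_{\Delta_{n,j}}(\tfrac12\log v_{n,j})\to H_{\Delta_j}(\tfrac12\log v_j)$ uniformly on $\Delta(\mu_i)$; hence $b_{n,i}\to b_i$ uniformly there, and $\vec b_n,\vec b$ satisfy the hypotheses of Lemma~\ref{lem:sa10}. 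That lemma then yields $H_{\Delta_{n,i}}(\vec g_n)_i(z)\to H_{\Delta_i}(\vec g)_i(z)$ uniformly in $\overline\C$.

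Reassembling, linearity of the harmonic extension and $s_{n,i}=\tfrac12\log v_{n,i}+(\vec g_n)_i$ give
\[
\log|S_{n,i}(z)|=H_{\Delta_{n,i}}\bigl(\tfrac12\log v_{n,i}\bigr)(z)+H_{\Delta_{n,i}}(\vec g_n)_i(z).
\]
The first term tends to $H_{\Delta_i}(\tfrac12\log v_i)$ locally uniformly in $D_{\Delta_i}$ by Lemma~\ref{lem:sa8} (again with $d=1$, now $\mu=\mu_i$), and the second tends to $H_{\Delta_i}(\vec g)_i$ uniformly in $\overline\C$ by the previous step; since $\vec g=\vec s-\vec a$, the two limits add to $H_{\Delta_i}s_i=\log|S_i(z)|$. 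Hence $\log|S_{n,i}|\to\log|S_i|$ locally uniformly in $D_{\Delta_i}$ for each $i\in I_d$.

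To pass from moduli to the functions, note that each $S_{n,i}$ is holomorphic and non-vanishing in $D_{\Delta_{n,i}}$ with $S_{n,i}(\infty)>0$, so on any fixed compact set $K\subset D_{\Delta_i}$ (and $n$ large) $\log S_{n,i}$ is well defined via $\log S_{n,i}(z)=w_{\Delta_{n,i}}(z)\int_{\Delta_{n,i}}\tfrac{s_{n,i}(x)}{z-x}\,d\omega_{\Delta_{n,i}}(x)$; together with the uniform bound $\|\vec s_n\|_{L^1(\omega_{\vec\Delta_n})}\lesssim1$ — a consequence of the $\mathrm{USz}$ hypothesis, which keeps $\|\log\mu_i'\|_{L^1(\omega_{\Delta_{n,i}})}$ bounded under the linear change of variables $l_{\Delta_i\to\Delta_{n,i}}$, and of the $n$-uniform operator estimates in the proofs of Lemmas~\ref{lem:sa6} and~\ref{lem:sa9} — this makes $\{\log S_{n,i}\}$ locally bounded on $D_{\Delta_i}$. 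Being a normal family whose real parts $\log|S_{n,i}|$ converge locally uniformly and whose values at infinity are real and converge to $\log S_i(\infty)$, $\{\log S_{n,i}\}$ converges locally uniformly to $\log S_i$, and therefore $S_{n,i}\to S_i$ locally uniformly in $D_{\Delta_i}$. Apart from the crux already flagged, the only point needing a quick check is that $\overline\C\setminus\bigcup_{j\neq i}\Delta(\mu_j)$ really is a neighborhood of $\bigcup_m\Delta_{m,i}$, which is immediate from \eqref{Angelesco}.
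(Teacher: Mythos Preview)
Your proof is correct and follows essentially the same route as the paper: both introduce $\vec g_n=\mathcal H_{\vec\Delta_n}\vec s_n$ (the paper calls it $\vec y_n$), derive the auxiliary equation $(\mathcal I-\mathcal H_{\vec\Delta_n})\vec g_n=\mathcal H_{\vec\Delta_n}\vec a_n$, invoke Lemma~\ref{lem:sa8} to verify the hypotheses of Lemma~\ref{lem:sa10}, and then combine the resulting uniform convergence of $H_{\Delta_{n,i}}(\vec g_n)_i$ with a second application of Lemma~\ref{lem:sa8}. The only cosmetic differences are that the paper applies the final instance of Lemma~\ref{lem:sa8} to the sum $s_{n,i}=h_{n,i}+\tfrac12\log v_{n,i}$ in one shot (using the $h_n$ slot) rather than splitting linearly, and it passes from moduli to functions via the ratio $S_{n,i}/S_i$ instead of bounding $\|\vec s_n\|_{L^1}$ directly---both variants are equivalent.
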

\begin{proof}
Let \( v_{n,i} \) be the Radon-Nikodym derivative of \( \mu_{i|\Delta_{n,i}} \) with respect to \( \omega_{\Delta_{n,i}} \) and \( v_i \) be the Radon-Nikodym derivative of \( \mu_{i|\Delta_i} \) with respect to \( \omega_{\Delta_i} \), \( i\in I_d \). Set
\[
\vec a_n := \frac12\big( \log v_{n,1}, \ldots , \log v_{n,d}\big) \qandq \vec a := \frac12\big( \log v_1, \ldots , \log v_d\big).
\]
Further, let \( \vec s_n \) and \( \vec s \) be the unique solutions of
\begin{equation}\label{apsad3}
\vec s_n = \mathcal H_{\vec\Delta_n} \vec s_n + \vec a_n \qandq \vec s = \mathcal H_{\vec\Delta} \vec s + \vec a,
\end{equation}
 respectively.  Set \( \vec y_n:=\mathcal H_{\vec\Delta_n}\vec s_n \), \( \vec b_n:=\mathcal H_{\vec\Delta_n}\vec a_n \), \( \vec y:=\mathcal H_{\vec\Delta}\vec s \), and \( \vec b:= \mathcal H_{\vec\Delta}\vec a \). Applying the operators $\mathcal H_{\vec\Delta_n}$ and $\mathcal H_{\vec\Delta}$ to equations in \eqref{apsad3}, respectively, we get that
\[
\vec y_n = \mathcal H_{\vec\Delta_n} \vec y_n + \vec b_n \qandq \vec y = \mathcal H_{\vec\Delta}\vec y + \vec b.
\]
It follows from Lemma~~\ref{lem:sa8}, applied with \( h_n\equiv0 \), that the vector-functions \( \vec b_n \) and \( \vec b \) satisfy the conditions of Lemma~\ref{lem:sa10}. Hence, we have that
\[
h_{n,i}(z) := H_{\Delta_{n,i}}(\vec y_n)_i(z) \to h_i(z) := H_{\Delta_i}(\vec y)_i(z) \qasq n\to\infty
\]
uniformly in \( \overline\C \). Recall that
\[
\begin{cases}
h_{n,i}(x) & = (\vec y_n)_i(x) =  \big(\mathcal H_{\vec\Delta_n}\vec s_n\big)_i(x), \quad x\in\Delta_{n,i}, \medskip \\
h_i(x) & = (\vec y)_i(x) =  \big(\mathcal H_{\vec\Delta}\vec s\big)_i(x), \quad x\in\Delta_i,
\end{cases}
\]
for each \( i\in I_d \), by the very definition of \( \vec y_n \) and \( \vec y \) as well as the properties of harmonic extensions of continuous functions. Thus, we get from the very definition of \( \vec s_n \) and \( \vec s \) that
\[
\begin{cases}
(\vec s_n)_i(x) &= h_{n,i}(x) + \frac12\log v_{n,i}(x), \quad x\in\Delta_{n,i}, \medskip \\
(\vec s)_i(x) &= h_i(x) + \frac12\log v_i(x), \quad x\in\Delta_i,
\end{cases}
\] 
for each \( i\in I_d \). Since functions \( h_{n,i} \) converge uniformly to \( h_i \), we get from Lemma~\ref{lem:sa8} that
\begin{equation}
\label{limit_HnH}
H_{\Delta_{n,i}}(\vec s_n)_i(z) \to H_{\Delta_i}(\vec s)_i(z) \qasq n\to\infty
\end{equation}
locally uniformly in \( D_{\Delta_i} \) for each \( i\in I_d \). 

It only remains to show that \eqref{limit_HnH} implies \eqref{SzegoCont}. Since \( S_i(z) \) is a fixed non-vanishing function, it is enough to show that the ratios \( S_{n,i}(z)/S_i(z) \) converge to \( 1 \) locally uniformly in \( D_{\Delta_i} \) for each \( i\in I_d \). It can be readily seen from \eqref{outer}, \eqref{Dirichlet}, \eqref{apsad2} and \eqref{limit_HnH} that this claim is true for \( |S_{n,i}(z)/S_i(z)| \). Thus, the functions \( S_{n,i}(z)/S_i(z) \) form a normal family in \( D_{\Delta_i} \) whose only limit points are unimodular constants. As these functions are positive at infinity, the desired claim follows.
\end{proof}

\subsection{Proof of Proposition~\ref{prop:sa3} and an example.}

Set \( \theta(t) := |\log\mu^\prime(t)| \). As mentioned right after the statement of Proposition~\ref{prop:sa3}, we can consider \( (I_\gamma\theta)(x) \) instead of \( (I_\gamma\log\mu^\prime)(x) \).

\begin{lemma}
\label{lem:sa12}
Proposition~\ref{prop:sa3}(i) implies Proposition~\ref{prop:sa3}(ii).
\end{lemma}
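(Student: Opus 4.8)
\emph{Plan.} The goal is to show that \( \mu\in\mathrm{USz}(\Delta) \) forces \( I_\gamma\theta \) to be continuous at \( \beta \) (the statement at \( \alpha \) being obtained by the reflection \( x\mapsto-x \)), where \( \theta=|\log\mu'| \) as set above and, by the observation preceding Proposition~\ref{prop:sa3}, we may fix \( \gamma\in(\alpha,\beta) \) at our convenience. Two elementary preparations. First, \( \mu\in\mathrm{Sz}(\Delta) \) forces \( \theta(\cdot)(\beta-\cdot)^{-1/2} \) to be integrable near \( \beta \) (compare with the density of \( \omega_\Delta \), which behaves like \( (\beta-x)^{-1/2} \) there), so \( (I_\gamma\theta)(\beta) \) is finite. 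Second, on all of \( [\alpha,\beta] \) one has the pointwise bound \( (\beta-s)^{-1/2}\le\pi\sqrt{\beta-\alpha}\,\omega_\Delta'(s) \), which converts an integral against \( (\beta-s)^{-1/2}\,ds \) into one against \( d\omega_\Delta \); coupled with the reverse triangle inequality \( \bigl||\log\mu'(s)|-|\log\mu'(l(s))|\bigr|\le|\log\mu'(s)-\log\mu'(l(s))| \), this lets condition \eqref{UnSzego} do all the real work. In both halves of the argument the mechanism is the same: an affine substitution turns the integral of interest into an integral over \( \Delta \) of \( \theta\circ l_{\Delta\to\Delta_x} \) weighted by \( (\beta-\cdot)^{-1/2} \), after which \eqref{UnSzego} applies.

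\emph{Left-continuity at \( \beta \).} Fix a sequence \( x\uparrow\beta \), put \( \Delta_x:=[\alpha,x] \), and let \( l:=l_{\Delta\to\Delta_x} \), so \( l(s)=\alpha+\tfrac{x-\alpha}{\beta-\alpha}(s-\alpha) \). Substituting \( t=l(s) \) in \( (I_\gamma\theta)(x)=\pi^{-1/2}\int_\gamma^x\theta(t)(x-t)^{-1/2}\,dt \) yields
\[
(I_\gamma\theta)(x)=\sqrt{\tfrac{x-\alpha}{\beta-\alpha}}\;\pi^{-1/2}\int_{s_\gamma}^\beta\theta(l(s))(\beta-s)^{-1/2}\,ds,\qquad s_\gamma:=l^{-1}(\gamma)\downarrow\gamma,
\]
with the prefactor tending to \( 1 \). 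Writing \( \int_\gamma^\beta=\int_\gamma^{s_\gamma}+\int_{s_\gamma}^\beta \), the integral \( \int_\gamma^{s_\gamma}\theta(s)(\beta-s)^{-1/2}\,ds\to0 \) by absolute continuity of the (finite) integral, while \( \bigl|\int_{s_\gamma}^\beta\bigl(\theta(l(s))-\theta(s)\bigr)(\beta-s)^{-1/2}\,ds\bigr|\lesssim\int_\Delta|\theta\circ l-\theta|\,d\omega_\Delta\le\int_\Delta|\log\mu'\circ l-\log\mu'|\,d\omega_\Delta\to0 \) by \eqref{UnSzego}. Hence \( (I_\gamma\theta)(x)\to(I_\gamma\theta)(\beta) \).

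\emph{Right-continuity at \( \beta \)} (relevant only when \( \beta<\beta(\mu) \), so that \( \theta \) is defined on \( [\beta,x] \)). Fix \( x\downarrow\beta \) and split
\[
(I_\gamma\theta)(x)-(I_\gamma\theta)(\beta)=\pi^{-1/2}\int_\gamma^\beta\theta(t)\bigl((x-t)^{-1/2}-(\beta-t)^{-1/2}\bigr)\,dt+\pi^{-1/2}\int_\beta^x\theta(t)(x-t)^{-1/2}\,dt.
\]
The first integrand is dominated in absolute value by \( \theta(t)(\beta-t)^{-1/2}\in L^1(\gamma,\beta) \) and tends to \( 0 \) pointwise, so it vanishes in the limit by dominated convergence. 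For the second, substitute \( t=s+(x-\beta) \) to get \( \int_{2\beta-x}^\beta\theta(s+(x-\beta))(\beta-s)^{-1/2}\,ds \); bounding \( \theta(s+(x-\beta))\le\theta(s)+|\theta(s+(x-\beta))-\theta(s)| \), the \( \theta(s) \)-part is \( \int_{2\beta-x}^\beta\theta(s)(\beta-s)^{-1/2}\,ds\to0 \) (shrinking interval), and the remainder is \( \lesssim\int_\Delta|\log\mu'(\cdot+(x-\beta))-\log\mu'(\cdot)|\,d\omega_\Delta\to0 \) by \eqref{UnSzego} applied to the translated intervals \( \Delta_x:=[\alpha+(x-\beta),x]\to\Delta \), for which \( l_{\Delta\to\Delta_x}(s)=s+(x-\beta) \). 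Thus \( (I_\gamma\theta)(x)\to(I_\gamma\theta)(\beta) \) from the right as well, which together with the previous step proves continuity at \( \beta \); the case of \( \alpha \) is the mirror image.

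\emph{Main obstacle.} The one place where \( \mathrm{USz}(\Delta) \) is used beyond \( \mathrm{Sz}(\Delta) \) is the estimate of the ``moved-weight'' term \( \int_\Delta(\theta\circ l_{\Delta\to\Delta_x}-\theta)\,(\beta-\cdot)^{-1/2} \): a tall, thin spike of \( |\log\mu'| \) placed just inside \( \beta \) already destroys continuity of \( I_\gamma\theta \) at \( \beta \) for a measure that is Szeg\H{o} but not uniformly Szeg\H{o}, so this step genuinely needs \eqref{UnSzego}. Everything else is bookkeeping: choosing \( \Delta_x \) so that the change of variables produces \emph{exactly} an integral of \( (\theta\circ l_{\Delta\to\Delta_x})(\cdot)(\beta-\cdot)^{-1/2} \) over \( \Delta \) (a contraction toward the endpoint \( \beta \) for the left side, a rigid translation for the right side), converting \( (\beta-s)^{-1/2}\,ds \) into \( d\omega_\Delta \) via the bound above, and passing from \( \log\mu' \) to \( \theta \) by the reverse triangle inequality.
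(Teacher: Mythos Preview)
Your proof is correct and shares the core mechanism with the paper's: after an affine change of variable, the comparison reduces to an integral of \( |\theta\circ l_{\Delta\to\Delta_x}-\theta| \) against \( (\beta-s)^{-1/2}\,ds \), which is dominated by the \( \omega_\Delta \)-integral appearing in \eqref{UnSzego}.

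The difference is in organization. The paper treats both one-sided limits at once: for \emph{any} sequence \( \beta_n\to\beta \) in \( \Delta(\mu) \) it takes \( \Delta_n=[\alpha,\beta_n] \) (so \( l_n \) is a dilation or contraction fixing \( \alpha \)), replaces \( \gamma \) by \( \alpha \), and applies the single identity
\[
(I_\alpha\theta)(\beta_n)=\sqrt{\tfrac{\beta_n-\alpha}{\beta-\alpha}}\,\pi^{-1/2}\!\int_\alpha^\beta\theta(l_n(t))(\beta-t)^{-1/2}\,dt,
\]
after which \eqref{UnSzego} finishes the job in one stroke. You instead keep \( \gamma \) interior and split into left- and right-continuity; your left half is the paper's argument (with a small extra term from \( [\gamma,s_\gamma] \)), while for the right half you decompose \( (I_\gamma\theta)(x)-(I_\gamma\theta)(\beta) \) into an integral over \( [\gamma,\beta] \) handled by dominated convergence and a tail over \( [\beta,x] \) handled via the \emph{translation} family \( \Delta_x=[\alpha+(x-\beta),x] \). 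This works, but it is longer: the paper's observation that the same dilation family \( [\alpha,\beta_n] \) covers both \( \beta_n<\beta \) and \( \beta_n>\beta \) avoids the case split and the dominated-convergence step entirely.
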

\begin{proof}
We shall prove continuity at \( \beta \) understanding that continuity at \( \alpha \) can be proven analogously. We need to show that
\[
\lim_{n\to\infty} (I_\gamma\theta)(\beta_n) = (I_\gamma\theta)(\beta) 
\]
for any sequence \( \{\beta_n\}\subset\Delta(\mu) \) such that \( \beta_n\to\beta \) as \( n\to\infty \). Clearly, in the limit above we can replace \( \gamma \) by \( \alpha \). Let \( l_n(t) = \alpha + \frac{\beta_n-\alpha}{\beta-\alpha}(t-\alpha) \). Observe also that
\[
 (I_\alpha\theta)(\beta_n) = \sqrt{\frac{\beta_n-\alpha}{\beta-\alpha}} \int_\alpha^\beta \frac{\theta(l_n(t))}{\sqrt{\beta-t}}dt.
\]
The claim of the lemma now follows from \eqref{UnSzego} and the estimate
\begin{align*}
\left| \int_\alpha^\beta \frac{\theta(t)}{\sqrt{\beta-t}}dt - \int_\alpha^\beta \frac{\theta(l_n(t))}{\sqrt{\beta-t}}dt \right| & \leq \pi\sqrt{\beta-\alpha}\int_\alpha^\beta |\theta(t) - \theta(l_n(t))|d\omega_\Delta(t) \\
& \leq \pi\sqrt{\beta-\alpha}\int_\alpha^\beta |\log\mu^\prime(t) - \log\mu^\prime(l_n(t))|d\omega_\Delta(t). \qedhere
\end{align*}
\end{proof}

\begin{lemma}
\label{lem:sa13}
Proposition~\ref{prop:sa3}(iii) implies Proposition~\ref{prop:sa3}(i).
\end{lemma}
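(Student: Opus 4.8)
The plan is to verify, directly from (iii), the two requirements in the definition of \( \mu\in\mathrm{USz}(\Delta) \): that \( \mu\in\mathrm{Sz}(\Delta_n) \) for all large \( n \), and that the integrals in \eqref{UnSzego} tend to zero. Throughout I would work, as already noted at the start of this subsection, with \( \theta=|\log\mu^\prime| \) in place of \( \log\mu^\prime \) in (iii); this is legitimate because \( \log^+\mu^\prime\in L^p \) for \( p>2 \), so its Riemann--Liouville integral is H\"older continuous and contributes a negligible amount near the endpoints.

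First I would record the Szeg\H{o} membership. Apply (iii) with \( \epsilon=1 \) to obtain \( d_1>0 \); I claim \( \mu\in\mathrm{Sz}(\Delta^\prime) \) for every closed \( \Delta^\prime=[\alpha^\prime,\beta^\prime]\subseteq\Delta(\mu) \) with \( |\alpha^\prime-\alpha|,|\beta^\prime-\beta|<d_1/2 \), which covers \( \Delta \) itself and \( \Delta_n \) for large \( n \). Writing \( d\omega_{\Delta^\prime}(t)=\frac{dt}{\pi\sqrt{(t-\alpha^\prime)(\beta^\prime-t)}} \), I split \( \Delta^\prime \) into a middle interval of fixed size, where this density is bounded and \( \theta\in L^1(dt) \) because (iii) applied to \( [a_\alpha,b_\alpha]=[\alpha^\prime,\,\cdot\,] \) bounds \( \int\theta(t)/\sqrt{t-\alpha^\prime}\,dt \) while \( 1/\sqrt{t-\alpha^\prime} \) is bounded below there, together with two short end pieces \( [\alpha^\prime,\alpha^\prime+\delta_0] \) and \( [\beta^\prime-\delta_0,\beta^\prime] \). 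On the left end piece \( \beta^\prime-t \) is bounded below, so \( \int_{\alpha^\prime}^{\alpha^\prime+\delta_0}\theta\,d\omega_{\Delta^\prime}\lesssim\int_{\alpha^\prime}^{\alpha^\prime+\delta_0}\theta(t)/\sqrt{t-\alpha^\prime}\,dt \), which is finite by (iii) since \( \dist(\alpha,[\alpha^\prime,\alpha^\prime+\delta_0])<d_1 \); the right end piece is symmetric. Hence \( \int\theta\,d\omega_{\Delta^\prime}<\infty \), i.e.\ \( \mu\in\mathrm{Sz}(\Delta^\prime) \).

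Next I would prove the limit. Let \( \{\Delta_n=[\alpha_n,\beta_n]\} \) be closed subintervals of \( \Delta(\mu) \) with \( \Delta_n\to\Delta \), and put \( l_n:=l_{\Delta\to\Delta_n} \). Given \( \epsilon>0 \), take \( d_\epsilon \) from (iii) and \( \delta\in\big(0,\min\{d_\epsilon,(\beta-\alpha)/4\}\big) \), and decompose \( \int_\Delta\big|\log\mu^\prime(x)-\log\mu^\prime(l_n(x))\big|\,d\omega_\Delta(x) \) over \( [\alpha,\alpha+\delta] \), \( [\alpha+\delta,\beta-\delta] \) and \( [\beta-\delta,\beta] \). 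On the middle interval \( \omega_\Delta \) has bounded density, \( \log\mu^\prime \) lies in \( L^1 \) of a neighbourhood of it (by the argument of the previous paragraph), and \( l_n\to\mathrm{id} \) uniformly, so the standard \( L^1 \)-continuity of affine changes of variable forces this contribution to tend to \( 0 \) as \( n\to\infty \) (for our fixed \( \delta \)). On \( [\beta-\delta,\beta] \) I use the triangle inequality: \( \int_{\beta-\delta}^\beta|\log\mu^\prime(x)|\,d\omega_\Delta(x)\lesssim(\beta-\alpha)^{-1/2}\int_{\beta-\delta}^\beta|\log\mu^\prime(x)|/\sqrt{\beta-x}\,dx<(\beta-\alpha)^{-1/2}\epsilon \) by (iii) with \( [a_\beta,b_\beta]=[\beta-\delta,\beta] \); for the remaining term I substitute \( t=l_n(x) \), and since an affine map sends the arcsine distribution of an interval onto that of the image, \( \int_{\beta-\delta}^\beta|\log\mu^\prime(l_n(x))|\,d\omega_\Delta(x)=\int_{l_n(\beta-\delta)}^{\beta_n}|\log\mu^\prime(t)|\,d\omega_{\Delta_n}(t) \). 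For large \( n \), \( \beta_n-l_n(\beta-\delta)\le2\delta \) and \( t-\alpha_n \) stays bounded below by a positive multiple of \( \beta-\alpha \) on this range, so this is \( \lesssim(\beta-\alpha)^{-1/2}\int_{l_n(\beta-\delta)}^{\beta_n}|\log\mu^\prime(t)|/\sqrt{\beta_n-t}\,dt<(\beta-\alpha)^{-1/2}\epsilon \) by (iii) applied with \( [a_\beta,b_\beta]=[l_n(\beta-\delta),\beta_n] \), whose distance to \( \beta \) is at most \( |\beta_n-\beta|<d_\epsilon \) for \( n \) large. The piece \( [\alpha,\alpha+\delta] \) is handled identically using the first integral in (iii). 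Altogether \( \limsup_n\int_\Delta|\log\mu^\prime(x)-\log\mu^\prime(l_n(x))|\,d\omega_\Delta(x)\lesssim(\beta-\alpha)^{-1/2}\epsilon \), and letting \( \epsilon\downarrow0 \) yields \eqref{UnSzego}. Combined with the previous paragraph this gives \( \mu\in\mathrm{USz}(\Delta) \).

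The part I expect to require the most care is the endpoint bookkeeping in the last paragraph: after the substitution \( t=l_n(x) \) one must verify that the transformed interval \( [l_n(\beta-\delta),\beta_n] \) is still within \( d_\epsilon \) of \( \beta \), so that (iii) is applicable, and one must exploit that the singularity \( 1/\sqrt{b_\beta-t} \) of the weight in (iii) sits exactly at \( b_\beta=\beta_n \), which is precisely what matches the arcsine density of \( \omega_{\Delta_n} \) at that endpoint. The interior estimate (\( L^1 \)-continuity of affine reparametrization) and the reduction from \( \log\mu^\prime \) to \( |\log\mu^\prime| \) are routine.
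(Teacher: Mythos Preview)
Your proposal is correct and follows essentially the same route as the paper: the same three-piece decomposition of \( \Delta \), the triangle inequality and affine change of variables on the end pieces to reduce to integrals of the form in (iii), and the \( L^1 \)-continuity argument on the middle piece with the bounded density of \( \omega_\Delta \). You additionally verify that \( \mu\in\mathrm{Sz}(\Delta_n) \) for large \( n \), which the paper's proof omits but is indeed part of the definition of \( \mathrm{USz}(\Delta) \); your observation that (iii) forces \( \theta\in L^1(\omega_{\Delta^\prime}) \) for nearby \( \Delta^\prime \) is a nice explicit point. One small wording issue: your phrase ``whose distance to \( \beta \) is at most \( |\beta_n-\beta| \)'' is not literally correct, since \( l_n(\beta-\delta) \) is roughly \( \beta-\delta \) and hence farther from \( \beta \) than \( \beta_n \) is; what you actually need (and have arranged by taking \( \delta<d_\epsilon \) and \( n \) large) is that \emph{both} endpoints of \( [l_n(\beta-\delta),\beta_n] \) lie within \( d_\epsilon \) of \( \beta \).
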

\begin{proof}
Let \( \{ \Delta_n \}\) be a sequence of closed subintervals of \( \Delta(\mu) \) that converges to \( \Delta \). Pick \( \epsilon> 0 \) and let \( \delta\)  be some positive number we will specify later. Then,
\begin{eqnarray}\nonumber
\int_\alpha^\beta  |\log\mu^\prime(t) - \log\mu^\prime(l_n(t))|d\omega_\Delta(t) \leq  \int_{\alpha+\delta}^{\beta-\delta}  |\log\mu^\prime(t) - \log\mu^\prime(l_n(t))|d\omega_\Delta(t) + \\ \int_{\beta-\delta}^\beta \big( \theta(t) + \theta(l_n(t)) \big) d\omega_\Delta(t) + \int_{\alpha}^{\alpha+\delta} \big( \theta(t) + \theta(l_n(t)) \big) d\omega_\Delta(t),\label{sapsad6}
\end{eqnarray}
where \( l_n(t) \) is the linear transformation with the positive leading coefficient that takes \( \Delta \) onto \( \Delta_n \). 
If $\alpha_n$ and $\beta_n$ are the endpoints of $\Delta_n$, i.e.,  \( \Delta_n = [\alpha_n,\beta_n] \), then
\[
\int_{\beta-\delta}^\beta \theta(l_n(t)) d\omega_\Delta(t) = \int_{\beta_n-\delta_n}^{\beta_n} \theta(t) d\omega_{\Delta_n}(t),
\] 
where \( \delta_n = \frac{\beta_n-\alpha_n}{\beta-\alpha}\delta \), and a similar equality holds for the integral of \( \theta(l_n(t)) \) on \( [\alpha,\alpha+\delta] \). Notice that $\lim_{n\to\infty}\delta_n=\delta$.
Now, it becomes clear that the assumption (iii) of the proposition  implies that exists \( \delta>0 \) and $N\in \mathbb{N}$ such that
\begin{eqnarray*}
\int_{\beta-\delta}^\beta \theta(t) d\omega_\Delta(t)< \frac\epsilon5,\,\int_{\alpha}^{\alpha+\delta}\theta(t) d\omega_\Delta(t) < \frac\epsilon5,\,\hspace{3cm}\\
\int_{\beta-\delta}^\beta \theta(l_n(t)) d\omega_\Delta(t)< \frac\epsilon5,\,\int_{\alpha}^{\alpha+\delta}\theta(l_n(t)) d\omega_\Delta(t) < \frac\epsilon5
\end{eqnarray*}
for all $n\ge N$.
To see that the first integral in \eqref{sapsad6} also can be made smaller than \( \epsilon/5 \) for all  large enough \( n \), observe that \( d\omega_\Delta(t) \leq (\pi\delta)^{-1}dt \) on the interval of integration and that
\[
\lim_{n\to\infty}
\int_{\alpha+\delta}^{\beta-\delta}  |\log\mu^\prime(t) - \log\mu^\prime(l_n(t))|dt=0\,,
\]
which can be shown, for instance, by approximating \( \log\mu^\prime(t) \) with continuous functions in \( L^1 \) norm (the desired estimate for continuous functions follows trivially from uniform continuity).
\end{proof}

\begin{lemma}
\label{lem:sa14}
Proposition~\ref{prop:sa3}(ii) implies Proposition~\ref{prop:sa3}(iii).
\end{lemma}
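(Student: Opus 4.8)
\emph{Plan.} Throughout I would work with $\theta=|\log\mu^\prime|$ in place of $\log\mu^\prime$, as permitted at the start of this subsection, and it suffices to treat the endpoint $\beta$; the statement at $\alpha$ follows verbatim after the reflection $t\mapsto-t$ (which turns the $\alpha$-type integral with singular weight at the left endpoint into a $\beta$-type integral). Fix $\gamma\in(\alpha,\beta)$ and set $F(x):=\sqrt\pi\,(I_\gamma\theta)(x)=\int_\gamma^x\theta(t)|x-t|^{-1/2}dt$ for $x\in\Delta(\mu)$ with $x>\gamma$. This is a nonnegative function which, by hypothesis (ii), extends continuously to $\beta$, so that in particular $F(\beta)=\int_\gamma^\beta\theta(t)(\beta-t)^{-1/2}dt<\infty$.

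The argument rests on three elementary facts about $F$ near $\beta$, each holding for every $\epsilon>0$: (a) there is $\rho>0$ with $|F(x)-F(\beta)|<\epsilon$ whenever $x\in\Delta(\mu)$ and $|x-\beta|<\rho$ (continuity, i.e. (ii)); (b) $\int_{\beta-\delta}^\beta\theta(t)(\beta-t)^{-1/2}dt<\epsilon$ once $\delta>0$ is small, since $F(\beta)<\infty$; (c) for each fixed $\delta>0$, $\int_\gamma^{\beta-\delta}\theta(t)\big((\beta-t)^{-1/2}-(b-t)^{-1/2}\big)dt<\epsilon$ once $|b-\beta|$ is small, because on that interval the integrand is bounded by $\delta^{-3/2}|b-\beta|\,\theta(t)$ and $\theta\in L^1(\Delta(\mu))$. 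Given $\epsilon>0$, I would first fix $\delta_0\in(0,\beta-\gamma)$ using (b) and (c), and then fix $d_\epsilon\in(0,\delta_0)$ small using (a) and (c); the claim is then $\int_{a}^{b}\theta(t)(b-t)^{-1/2}dt<\epsilon$ for every interval $[a,b]\subseteq\Delta(\mu)$ contained in the $d_\epsilon$-neighbourhood of $\beta$.

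To prove this I would split according to the position of $[a,b]$ relative to $\beta$. When $b\le\beta$, splitting the integral at $a$ and then at $\beta$ (legitimately, since $\gamma<a\le\beta$) yields
\[
\int_{a}^{b}\frac{\theta(t)dt}{\sqrt{b-t}}=\big(F(b)-F(\beta)\big)+\int_{a}^\beta\frac{\theta(t)dt}{\sqrt{\beta-t}}-\int_\gamma^{a}\theta(t)\Big(\frac1{\sqrt{b-t}}-\frac1{\sqrt{\beta-t}}\Big)dt,
\]
where, since $b\le\beta$, the last integral is nonnegative and may be discarded, leaving the first term small by (a) (as $|b-\beta|<d_\epsilon\le\rho$) and the second bounded by $\int_{\beta-\delta_0}^\beta\theta(t)(\beta-t)^{-1/2}dt$ (as $a>\beta-d_\epsilon\ge\beta-\delta_0$), hence small by (b). When $b>\beta$ I would split the original integral at $\beta$: using $(b-t)^{-1/2}\le(\beta-t)^{-1/2}$ for $t\le\beta$, the contribution of $[a,b]\cap(-\infty,\beta]$ is at most $\int_{\beta-\delta_0}^\beta\theta(t)(\beta-t)^{-1/2}dt$ (this part is empty when $a\ge\beta$, and otherwise $a>\beta-\delta_0$), while the contribution of $[\beta,b]$ equals $\int_\beta^{b}\theta(t)(b-t)^{-1/2}dt=\big(F(b)-F(\beta)\big)+\int_\gamma^\beta\theta(t)\big((\beta-t)^{-1/2}-(b-t)^{-1/2}\big)dt$, which is small by (a), (b) and (c) after splitting the last integral at $\beta-\delta_0$. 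The remaining $\epsilon$-bookkeeping is routine.

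The genuinely delicate point — and the reason the mere finiteness of $F(\beta)$ would not be enough — is that, as $b\to\beta$, the weight $(b-t)^{-1/2}$ is far larger than $(\beta-t)^{-1/2}$ near its own singularity $t=b$, so a naive dominated-convergence comparison of $\int\theta(t)(b-t)^{-1/2}dt$ with $F(\beta)$ breaks down; it is precisely the continuity of $F$ at $\beta$, i.e. hypothesis (ii), that supplies the uniform control of $F(b)$, and organizing the splittings so that continuity enters only through the differences $F(b)-F(\beta)$ is the principal obstacle. I would also flag that in (iii) the test intervals $[a_\beta,b_\beta]$ must be understood as lying within a small neighbourhood of $\beta$, rather than merely at distance $<d_\epsilon$ from it, since otherwise the bulk of $\int_{a_\beta}^{b_\beta}\theta(t)(b_\beta-t)^{-1/2}dt$ need not be small.
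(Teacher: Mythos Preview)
Your argument is correct and takes a genuinely different, more elementary route than the paper's.

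The paper's proof is considerably more technical. It invokes the Riemann--Liouville inversion formula $\theta = \frac{d}{dx}(I_\gamma^2\theta)$, differentiates a truncated double fractional integral under the integral sign (justified via Vitali's convergence theorem and uniform integrability of divided differences), and eventually arrives at the exact representation
\[
\int_\gamma^{x-\delta}\frac{\theta(t)\,dt}{\sqrt{x-t}} \;=\; \frac{2}{\sqrt\pi}\int_0^{\sqrt{(x-\delta-\gamma)/\delta}}(I_\gamma\theta)(x-\delta-\delta t^2)\,\frac{dt}{1+t^2}.
\]
Subtracting this from $\sqrt\pi\,(I_\gamma\theta)(x)$ isolates $\int_{x-\delta}^x\theta(t)(x-t)^{-1/2}dt$ and expresses it entirely through values of $I_\gamma\theta$ at points near $x$ and $x-\delta$; continuity at $\beta$ then gives the smallness.

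You bypass all of this machinery. Writing $F=\sqrt\pi\,I_\gamma\theta$, you decompose $\int_a^b\theta(t)(b-t)^{-1/2}dt$ directly into $F(b)-F(\beta)$, the tail $\int_{\beta-\delta_0}^\beta\theta(t)(\beta-t)^{-1/2}dt$, and a correction integral over $[\gamma,a]$ or $[\gamma,\beta-\delta_0]$ that is handled by the elementary mean-value bound $\big|(b-t)^{-1/2}-(\beta-t)^{-1/2}\big|\lesssim\delta_0^{-3/2}|b-\beta|$ away from the singularity. Nonnegativity of $\theta$ lets you discard the correction in the case $b\le\beta$ with the correct sign. One small refinement: in your item (c) you only need $\theta\in L^1[\gamma,\beta-\delta_0]$, which follows at once from $F(\beta)<\infty$; the stronger claim $\theta\in L^1(\Delta(\mu))$ is not needed and not asserted by the hypotheses.

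What the paper's route buys is an exact integral identity for the windowed integral in terms of $I_\gamma\theta$, potentially of independent interest; for the lemma itself your inequalities suffice and the argument is shorter. Your closing remark on the reading of (iii) is also on target: the paper's own proof concludes with ``$|x-\beta|<d_\epsilon$ and $|x-\delta-\beta|<d_\epsilon$'', confirming that $[a_\beta,b_\beta]$ is meant to lie inside the $d_\epsilon$-neighbourhood of $\beta$, not merely meet it.
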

\begin{proof}
Since \( (I_\gamma\theta)(x) \) is continuous at \( \beta \), there exists an interval, say \( [a,b] \), that contains \( \beta \) in its interior (unless \( \beta \) is the right endpoint of \( \Delta(\mu)\), in which case \( b=\beta \)), on which \( (I_\gamma\theta)(x) \) is bounded.  It is known \cite[Theorem~2.1]{SamkoKilbasMarichev} that
\begin{equation}
\label{I-0}
\theta(x) = \frac{d}{dx} \left(\frac1{\sqrt\pi} \int_\gamma^x \frac{(I_\gamma\theta)(t)}{\sqrt{x-t}}dt\right) = \frac{d}{dx} (I_\gamma(I_\gamma\theta))(x), \quad x\in(\gamma,b).
\end{equation}
Let us write \( (I_\gamma^2\theta)(x) \) for \( (I_\gamma(I_\gamma\theta))(x) \), which is an absolutely continuous function on \( [\gamma,b] \) that vanishes at \( \gamma \), see again \cite[Theorem~2.1]{SamkoKilbasMarichev}. Fix some \( \delta \in (0,(b-a)/2) \). Notice that
\begin{multline*}
\frac{d}{dx}\left( \int_\gamma^{x-\delta}\frac{(I_\gamma^2\theta)(t)}{\sqrt{x-t}}dt \right) = \frac{d}{dx}\left( \int^{x-\gamma}_\delta \frac{(I_\gamma^2\theta)(x-s)}{\sqrt s}d s \right) = \\
 \lim_{h\to 0}\int^{x-\gamma}_\delta  \frac{(I_\gamma^2\theta)(x+h-s)-(I_\gamma^2\theta)(x-s)}{h}\frac{ds}{\sqrt s} +\lim_{h\to 0} \frac1h\int_\gamma^{\gamma+h}\frac{(I_\gamma^2\theta)(t)}{\sqrt{x+h-t}}dt.
 \end{multline*}
The second limit is equal to zero due to continuity of the integrand and vanishing of \( (I_\gamma^2\theta)(t) \) at \( \gamma \). It is known,  see \cite[Theorem~6.9]{RoydenFitzpatrick}, that absolute continuity of a function is equivalent to uniform integrability of its divided differences. As  \( (I_\gamma^2\theta)(x) \) is absolutely continuous and \( 1/\sqrt{s} \) is continuous on \( [\delta,b-\gamma] \), we get from Vitali's convergence theorem and \eqref{I-0} that the first limit is equal to 
\[
\int_\delta^{x-\gamma} \frac{\theta(x-s)}{\sqrt s}ds 
\]
and hence
\begin{equation}
\label{I-1}
\frac{d}{dx}\left( \int_\gamma^{x-\delta}\frac{(I_\gamma^2\theta)(t)}{\sqrt{x-t}}dt \right) =\int_\delta^{x-\gamma} \frac{\theta(x-s)}{\sqrt s}ds = \int_\gamma^{x-\delta} \frac{\theta(t)}{\sqrt{x-t}}dt.
\end{equation}
Writing the outer \( I_\gamma \) transform explicitly and  changing the order of integration gives us
\begin{multline}
\label{I-2}
\int_\gamma^{x-\delta}\frac{(I_\gamma^2\theta)(t)}{\sqrt{x-t}}dt = \frac1{\sqrt\pi}\int_\gamma^{x-\delta} \left( \int_s^{x-\delta} \frac{dt}{\sqrt{(t-s)(x-t)}} \right) (I_\gamma\theta)(s)ds  = \\ \int_\gamma^{x-\delta}  F\left(1-\frac\delta{x-s} \right) (I_\gamma\theta)(s)ds, \quad F(s) := \frac1{\sqrt \pi} \int_0^s \frac{dt}{\sqrt{t(1-t)}}.
\end{multline}
 Again, we need to justify changing the order of differentiation and integration. To this end, assume now that \( x\in(a+2\delta,b) \). By the mean-value theorem and its very definition, the derivative of the last integral in \eqref{I-2} is equal to the limit as \( h\to0 \) of the following sum of three terms
 \begin{multline}
 \label{I-3}
 \delta\int_\gamma^a F^\prime\left(1-\frac\delta{x+\xi_h-s} \right) \frac{(I_\gamma\theta)(s)}{(x-s)^2}ds + \\
 \frac1h\int_a^{x-\delta} \left( F\left(1-\frac\delta{x+h-s}\right) - F\left(1-\frac\delta{x-s}\right) \right)  (I_\gamma\theta)(s)ds + \\ 
\frac1h\int_{x-\delta}^{x+h-\delta}F\left(1-\frac\delta{x+h-s}\right)  (I_\gamma\theta)(s)ds,
 \end{multline}
where \( \xi_h = \xi_h(x,s) \) is such that \( |\xi_h|\leq |h| \). The first term in the sum above converges to 
\[
 \sqrt{\frac\delta\pi} \int_\gamma^a \frac{(I_\gamma\theta)(s)ds}{(x-s)\sqrt{x-\delta-s}},
\]
when \( h\to0 \).  This follows from  the dominated convergence theorem because \( (I_\gamma\theta)(s) \) is a fixed integrable function and the other factor in the integrand is a function continuous in  $s$ that converges uniformly when $h\to 0$. The second term in \eqref{I-3} has the following limit 
\[
\delta \int_a^{x-\delta} F^\prime\left(1-\frac\delta{x-s} \right) \frac{(I_\gamma\theta)(s)}{(x-s)^2}ds = \sqrt{\frac\delta\pi} \int_a^{x-\delta}\frac{(I_\gamma\theta)(s)ds}{(x-s)\sqrt{x-\delta-s}}
\]
as \( h\to0 \) according to Vitali's convergence theorem. Indeed,  \( (I_\gamma\theta)(s) \) is bounded on the interval of integration by assumptions of Proposition~\ref{prop:sa3}(ii) and the divided differences of $F(1-\delta(x-s)^{-1})$ are uniformly integrable. The last term in \eqref{I-3} can be rewritten as
\[
\frac\delta h \int_0^{h/(h+\delta)}  (I_\gamma\theta)\left(x+h-\frac\delta{1-t} \right)\frac{F(t)}{(1-t)^2}dt.
\]
Its limit as \( h\to0 \) is equal to \( 0 \) due to the boundedness of \( (I_\gamma\theta)(x) \) on \( (a,b) \) as well as the continuity of the function \( F(t)/(1-t)^2 \) around the origin and its vanishing  at $t=0$.  Altogether, we get from \eqref{I-1}, \eqref{I-2}, and the reasoning above that
\[
\int_\gamma^{x-\delta} \frac{\theta(t)dt}{\sqrt{x-t}} = \sqrt{\frac\delta\pi} \int_\gamma^{x-\delta} \frac{(I_\gamma\theta)(s)ds}{(x-s)\sqrt{x-\delta-s}} = \frac2{\sqrt\pi} \int_0^{ L_{x,\delta}} (I_\gamma\theta)\big(x-\delta-\delta t^2\big)\frac{dt}{1+t^2},
\]
where \( L_{x,\delta} = \sqrt{(x-\delta-\gamma)/\delta} \). We can use the identity $\int_0^\infty (t^2+1)^{-1}dt=\pi/2$ to rewrite 
\begin{multline*}
\frac1{\sqrt\pi}\int_{x-\delta}^x \frac{\theta(t)dt}{\sqrt{x-t}} = (I_\gamma\theta)(x) -\frac1{\sqrt\pi} \int_\gamma^{x-\delta} \frac{\theta(t)dt}{\sqrt{x-t}} = \frac2\pi \left( \int_{ \ell_\delta/\sqrt\delta}^\infty \frac{(I_\gamma\theta)(x)dt}{1+t^2} \right. \\   \left. - \int_{\ell_\delta/\sqrt\delta}^{ L_{x,\delta}} \frac{(I_\gamma\theta)(x-\delta-\delta t^2)dt}{1+t^2} + \int_0^{\ell_\delta/\sqrt\delta} \frac{(I_\gamma\theta)(x) - (I_\gamma\theta)(x-\delta-\delta t^2)}{1+t^2}dt\right)
\end{multline*}
for any positive \( \ell_\delta \). If we choose $\ell_\delta=\delta^{\frac 18}$, we get that
\[
\left| \int_{\ell_\delta/\sqrt\delta}^\infty \frac{(I_\gamma\theta)(x)dt}{1+t^2} \right| \leq |(I_\gamma\theta)(x)|\frac{\sqrt\delta}{\ell_\delta}=|(I_\gamma\theta)(x)|\delta^{\frac 38}\,.
\]
Similarly,
\begin{multline*}
\left| \int_{\ell_\delta/\sqrt\delta}^{ L_{x,\delta}} \frac{(I_\gamma\theta)(x-\delta-\delta t^2)dt}{1+t^2} \right| \leq \frac{\sqrt\delta}{\ell_\delta^2}\int_\gamma^{x-\delta-\ell_\delta^2}\frac{|(I_\gamma\theta)(s)|ds}{\sqrt{x-\delta-s}} \\
 \leq \frac{\sqrt\delta}{\ell_\delta^3} \int_\gamma^b |(I_\gamma\theta)(s)|ds=\delta^{\frac 18} \int_\gamma^b |(I_\gamma\theta)(s)|ds\lesssim\delta^{\frac 18}  \,.
\end{multline*}
Next, we have that
\[
\left|\int_0^{\ell_\delta/\sqrt{\delta}} \frac{(I_\gamma\theta)(x) - (I_\gamma\theta)(x-\delta-\delta t^2)}{1+t^2}dt \right| \leq \frac\pi2 \max_{s\in[x-\delta-\delta^{\frac 14},x-\delta]} \big|(I_\gamma\theta)(x) - (I_\gamma\theta)(s) \big|.
\]
Now, using continuity of \( (I_\gamma\theta)(x) \) at \( \beta \), the above estimates show that given \( \epsilon> 0\), we can always find \( d_\epsilon \) so that \( |x-\beta|<d_\epsilon\) and \(|x-\delta-\beta|<d_\epsilon \) imply
\[
\int_{x-\delta}^x \frac{\theta(t)dt}{\sqrt{x-t}} < \epsilon.
\]
This is precisely the statement of the second part in Proposition~\ref{prop:sa3}(iii). The first one is proved similarly.
\end{proof}

\noindent{\bf An example.} The uniform Szeg\H{o} condition is subtle and it depends on the direction in which the point is approached as shown by the following example. Given \( \epsilon\in[0,1] \), let \( \mu_\epsilon \) be an absolutely continuous measure on \( [-1,1] \) such that \( \log\mu^\prime_\epsilon(x) = - \theta_\epsilon(x) \), where
\[
\theta_\epsilon(x) := \begin{cases}
0, & x\in[-1,0], \\
x^{-1/2}(1 - \log x)^{-\epsilon}, & x\in(0,1].
\end{cases}
\]
When \( \epsilon=0 \), it holds that \( \mu_0\in\mathrm{Sz}([-1,a]) \) for any \( a\in(-1,1] \). Indeed, the claim is obvious when \( a\leq 0 \). When \( a>0 \), it holds that
\[
\frac1\pi\int_{-1}^a \frac{\theta_0(x)dx}{\sqrt{(x+1)(a-x)}} = \frac1\pi \int_0^a \frac{dx}{\sqrt{(x+1)x(a-x)}} \leq \frac1\pi \int_0^a \frac{dx}{\sqrt{x(a-x)}} = 1.
\]
This computation also shows that
\[
\frac1\pi \int_{-1}^\delta \frac{\theta_0(x)dx}{\sqrt{\delta-x}} = \begin{cases} 0, & \delta\leq0, \\ 1, & \delta>0, \end{cases}
\]
and therefore \( \mu\not\in\mathrm{USz}([-1,0]) \) as follows from Proposition~\ref{prop:sa3}(ii). On the other hand, when \( \epsilon>0 \), it holds that
\[
0\leq \frac1\pi \int_{-1}^\delta \frac{\theta_\epsilon(x)dx}{\sqrt{\delta-x}} \leq \frac1{(1-\log\delta)^\epsilon} \to 0 \qasq \delta\to0^+,
\]
and therefore \( \mu\in\mathrm{USz}([-1,0]) \) again by Proposition~\ref{prop:sa3}(ii) (in fact, \( \mu\in\mathrm{USz}([-1,a]) \)  for any \( a\in(-1,1] \) in this cases). However, when \( \epsilon\leq 1 \), we have that \( \mu_\epsilon\not\in\mathrm{Sz}([0,1]) \) since
\[
\frac1\pi \int_0^1  \frac{\theta_\epsilon(x)dx}{\sqrt{x(1-x)}} \geq \frac1\pi \int_0^1 \frac{dx}{x(1-\log x)^\epsilon} = \frac1\pi\int_1^\infty \frac{du}{u^\epsilon} = \infty.
\]

\subsection{Proof of Theorem~\ref{thm:sa4}}

For convenience, we keep \( \vec c \) as a symbol standing for an arbitrary vector in \( (0,1)^d \) that satisfies $|\vec c|=1$ while fixing \( \vec \varsigma\in(0,1)^d \), \( |\vec \varsigma|=1  \), and a ray sequence of multi-indices \( \mathcal N(\vec\varsigma) \) satisfying
\[
\n / |\n| \to \vec\varsigma \qasq |\n|\to\infty, \;\; \n\in\mathcal N(\vec\varsigma).
\]
As agreed earlier, we use subindex \( \vec  c\) to indicate that a quantity depends on the parameter \( \vec c \) while for rational values of \( \vec c \), i.e., when \( \vec c \) is equal to  \( \n/|\n| \) for some \( \n\in\N^d \), we replace the subindex \( \vec c \) by \( \n \). For the purposes of this subsection, we also let  \( \Delta_i := \Delta(\mu_i) \) and assume \( \mu_i \in \mathrm{USz}(\Delta_{\vec \varsigma,i}) \), \( i\in I_d \).

Theorem~\ref{thm:vw2} further below is central to our approach. It has three conditions appearing in it. The first one will be trivial to check. The other two require  more work and we do it in two separate lemmas. Our goal is to show that these conditions are satisfied with \( \mu=\mu_i \) and \( \Delta_n = \Delta_{\n,i} \), \( \n\in\mathcal N(\vec\varsigma) \), for each \( i\in I_d \). Notice that the intervals \( \Delta_{\n,i} \) converge to \( \Delta_{\vec \varsigma,i} \) for each \( i\in I_d \) as \( |\n|\to\infty \), \( \n\in\mathcal N(\vec \varsigma) \), as shown in \cite[Proposition~2.1]{Ya16}.

\begin{lemma}
\label{lem:sa15}
Let \( N(\vec\varsigma) \) be as above and \((\omega_{\n,1},\omega_{\n,2},\ldots,\omega_{\n,d}) \), \( \n\in\mathcal N(\vec\varsigma) \), be the vector-equilibrium measures \eqref{vec_eq1}--\eqref{vec_eq2}. For each \( i\in I_d \), the sequence \( \{ ( |\n|/n_i) \omega_{\n,i} \}_{\n\in\mathcal N(\vec\varsigma)} \) satisfies condition (3) of Theorem~\ref{thm:vw2} with \( \mu=\mu_i \) and \( \Delta_n = \Delta_{\n,i} \).
\end{lemma}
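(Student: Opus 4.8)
The plan is to combine the weak$^*$ continuity of the vector‑equilibrium measures with the explicit description of their densities obtained in the proof of Proposition~\ref{prop:gaps}. Write $\nu_\n := (|\n|/n_i)\,\omega_{\n,i}$; by \eqref{vec_eq1} this is a probability measure supported on $\Delta_{\n,i}=[\alpha_{\n,i},\beta_{\n,i}]$, and for $|\n|$ large these intervals converge to $\Delta_{\vec\varsigma,i}$ by \cite[Proposition~2.1]{Ya16}. Since $\n/|\n|\in(0,1)^d$ for $|\n|$ large and $\n/|\n|\to\vec\varsigma$, the last assertion of Proposition~\ref{prop:Ya} gives $\omega_{\n,i}\to\omega_{\vec\varsigma,i}$ weak$^*$, hence $\nu_\n\to\varsigma_i^{-1}\omega_{\vec\varsigma,i}$ weak$^*$; the limit is absolutely continuous with a density that, by Proposition~\ref{prop:gaps}, is a fixed positive constant times $(\beta_{\vec\varsigma,i}-x)^{\mp1/2}$ near $\beta_{\vec\varsigma,i}$ and $(x-\alpha_{\vec\varsigma,i})^{\mp1/2}$ near $\alpha_{\vec\varsigma,i}$ (the sign depending on whether $\boldsymbol h_{\vec\varsigma}$ has a pole there). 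This settles the weak‑convergence portion of condition~(3) of Theorem~\ref{thm:vw2}.

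It then remains to establish the quantitative part of condition~(3): control of $\nu_\n$ near the endpoints of $\Delta_{\n,i}$, uniformly in $\n$. The idea is to rerun the computation in the proof of Proposition~\ref{prop:gaps} with $\vec c=\n/|\n|$ and to verify that every quantity there is uniformly bounded. On $\RS_\n$ we have the conformal map $\chi_\n$ normalized by $\chi_{\n,0}(z)=z+\mathcal O(1/z)$ and the partial fraction representation $\boldsymbol h_\n=\sum_{k=1}^{2d}\gamma_{\n,k}/(\chi_\n-p_{\n,k})$ with $\gamma_{\n,k}\ge0$ and $\sum_k\gamma_{\n,k}=1$, so that
\[
\pi\nu_\n^\prime(x)=\frac{|\n|}{n_i}\sum_{k=1}^{2d}\frac{\gamma_{\n,k}\,\im(\chi_{\n,0+}(x))}{|\chi_{\n,0+}(x)-p_{\n,k}|^2}.
\]
Isolating the term with $p_{\n,2i}=\chi_\n(\boldsymbol\beta_{\n,i})$ and using the local expansion $\chi_{\n,0}(z)=p_{\n,2i}+q_{\n,2i}(z-\beta_{\n,i})^{1/2}+\mathcal O(z-\beta_{\n,i})$ exactly as in that proof, one obtains, near $\beta_{\n,i}$,
\[
\pi\nu_\n^\prime(x)=\frac{|\n|}{n_i}\left(\frac{\gamma_{\n,2i}}{q_{\n,2i}}(\beta_{\n,i}-x)^{-1/2}+\mathcal O\bigl((\beta_{\n,i}-x)^{1/2}\bigr)\right),
\]
with the analogous identity at $\alpha_{\n,i}$ and with the implied constant and the remainder coefficient expressible through the $p_{\n,k}$, the $\gamma_{\n,k}$, the distances $|p_{\n,k}-p_{\n,m}|$ with $k\ne m$, and the local coefficients of $\chi_{\n,0}$ at the ramification points. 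Hence it suffices to check that all these data are bounded above, and that the distances and the local coefficients are bounded away from zero, uniformly for $|\n|$ large. This follows from the continuous dependence of $\RS_{\vec c}$ and $\chi_{\vec c}$ on $\vec c$: the endpoints $\alpha_{\n,j},\beta_{\n,j}$ converge to $\alpha_{\vec\varsigma,j},\beta_{\vec\varsigma,j}$, the maps $\chi_\n$ converge to $\chi_{\vec\varsigma}$ locally uniformly together with their local expansion data, and at $\vec\varsigma$ every one of those quantities is finite and, where relevant, strictly positive; since in addition $|\n|/n_i\to\varsigma_i^{-1}$, the uniform bounds follow.

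The step I expect to be the main obstacle is the \emph{pushing transition}. By Propositions~\ref{prop:Ya} and~\ref{prop:gaps}, the type of endpoint singularity of $\nu_\n^\prime$ — inverse square root when $\boldsymbol h_\n$ has a pole at $\boldsymbol\beta_{\n,i}$, square root when $\boldsymbol h_\n$ is finite there — is governed by whether $\z_{\n,i}$ sits at the ramification point $\boldsymbol\beta_{\n,i}$ or strictly inside the adjacent gap, and this can genuinely vary along $\mathcal N(\vec\varsigma)$ when $\vec\varsigma$ lies on the boundary between the pushing and the non‑pushing regime at that endpoint. The resolution is that the estimate we actually need is insensitive to which side one is on: at such a transition $\gamma_{\vec\varsigma,2i}=0$, so nearby in the non‑pushing regime $\gamma_{\n,2i}\to0$ and the inverse–square–root term of $\nu_\n^\prime$, though formally present, is dominated by a fixed multiple of $\omega_{\Delta_{\n,i}}^\prime$, whereas in the pushing regime $\gamma_{\n,2i}\equiv0$ and $\nu_\n^\prime$ has only the milder square–root behaviour; in either case $\nu_\n$ is comparable to $\omega_{\Delta_{\n,i}}$ with constants uniform in $\n$. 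One treats the two endpoints of $\Delta_{\n,i}$ and the compact subsets of its interior separately, the interior bound being immediate from the displayed formula and the positivity of $\im\chi_{\n,0+}$ there; assembling these local estimates yields condition~(3).
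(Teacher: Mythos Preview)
Your approach shares the starting point with the paper (the density formula from the proof of Proposition~\ref{prop:gaps}) but diverges in the key technical device. The paper does not track the conformal maps $\chi_\n$; instead it introduces the auxiliary function $W_{\vec c,l}(z):=w_{\Delta_{\vec c,l}}(z)\bigl(\boldsymbol h_{\vec c,0}(z)-\boldsymbol h_{\vec c,l}(z)\bigr)$, holomorphic in a neighbourhood of $\Delta_{\vec c,l}$, whose locally uniform convergence along $\mathcal N(\vec\varsigma)$ follows immediately from the weak$^*$ convergence of the measures (last claim of Proposition~\ref{prop:Ya}) and the Cauchy integral \eqref{CauchyInt}. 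In the transition case the paper factors out the single moving zero $\gamma_{\n,l}\leq\alpha_{\n,l}$ of $W_{\n,l}$, so that $W_{\n,l}(z)/(z-\gamma_{\n,l})$ is nonvanishing and still converges uniformly; this yields $\omega_{\n,l}'(x)\asymp(x-\gamma_{\n,l})/|w_{\Delta_{\n,l}}(x)|$ with constants uniform in $\n$, and hence \eqref{w_cond_1} with $\varkappa_L=1$, $\varkappa_U=-1$. Since pushing forces $\gamma_{\n,l}=\alpha_{\n,l}$ (via Proposition~\ref{prop:Ya}), the same formula gives \eqref{w_cond_2} with $\widehat\varkappa_U=1/2$. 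Equicontinuity on interior compacta then follows from Cauchy's integral formula applied to the uniformly convergent analytic family $W_{\n,l}$.

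Your sketch has two genuine gaps. First, the claimed convergence of $\chi_\n$ and its local expansion coefficients $p_{\n,k},q_{\n,k}$ is asserted, not proved; these maps live on different Riemann surfaces, the paper nowhere establishes such a statement, and your equicontinuity argument and your ``uniform bounds follow'' step both rest on it. Second, your conclusion that ``in either case $\nu_\n$ is comparable to $\omega_{\Delta_{\n,i}}$'' is false as written: in the pushing regime $\nu_\n'$ vanishes like $(x-\alpha_{\n,i})^{1/2}$ while $\omega_{\Delta_{\n,i}}'$ blows up like $(x-\alpha_{\n,i})^{-1/2}$, so no two-sided comparison with the arcsine density can hold. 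What \eqref{w_cond_1} actually needs is a lower bound with some fixed $\varkappa_L>-2$, and your local expansion does not deliver it uniformly: when $\gamma_{\n,2i}\to0$ the $(\beta_{\n,i}-x)^{-1/2}$ term disappears and you must control the coefficient of the next $(\beta_{\n,i}-x)^{1/2}$ term from below, which you have not done. Finally, you never verify \eqref{w_cond_2}; it demands a strictly positive exponent $\widehat\varkappa_U$ whenever $\alpha_{\n,i}>\alpha(\mu_i)$, and is not implied by \eqref{w_cond_1}.
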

\begin{proof}
From now on \( l\in I_d \) is fixed. By its very definition, \( ( |\n|/n_l) \omega_{\n,l} \) is a probability measure supported on \( \Delta_{\n,l} \) for each \( \n\in\mathcal N(\vec\varsigma) \). It follows from \eqref{eq_meas} that each of these measures is absolutely continuous with respect to the Lebesgue measure. Since \( |\n|/n_l \to 1/(\vec\varsigma)_l\neq 0 \) as \( |\n|\to\infty \), \( \n\in\mathcal N(\vec\varsigma) \), we shall omit the factors \( |\n|/n_l \) in the forthcoming analysis.

Recall functions \( \h_{\vec c,k}(z) \), \( k\in\{0,1,\ldots,d\} \), defined in \eqref{CauchyInt}. Put
\begin{equation}
\label{Wc-1}
W_{\vec c,l}(z) := w_{\Delta_{\vec c,l}}(z) \big( \h_{\vec c,0}(z) - \h_{\vec c,l}(z) \big),
\end{equation}
which is holomorphic in \( \C\setminus\cup_{i\in I_d}\Delta_{\vec c,i} \). Since \( \h_{\vec c,0}(z) \) and \( \h_{\vec c,l}(z) \) are branches of a rational function on a Riemann surface, it holds that \(  \h_{\vec c,0\pm}(x) = \h_{\vec c,l\mp}(x) \) for \( x\in\Delta_{\vec c,l} \). One can also readily see that \( w_{\Delta_{\vec c,l}+}(x) = - w_{\Delta_{\vec c,l}-}(x) \) for \( x\in\Delta_{\vec c,l} \) and so $W_{{\vec c,l}+}(x)=W_{{\vec c,l}-}(x)$ for $x\in \Delta_{\vec c,l}$. Hence,  \( W_{\vec c,l}(z) \) is in fact analytic across \( (\alpha_{\vec c,l},\beta_{\vec c,l}) \). Since
\begin{equation}
\label{hhw}
\h_{\vec c,0+}(x) - \h_{\vec c,l+}(x) = \h_{\vec c,l-}(x) - \h_{\vec c,l+}(x) = -2\pi\ic\omega_{\vec c,l}^\prime(x), \quad x\in\Delta_{\vec c,l},
\end{equation}
by \eqref{eq_meas}, we get from the second claim of Proposition~\ref{prop:gaps} that the difference \( \h_{\vec c,0}(z) - \h_{\vec c,l}(z) \) can have at most square root singularities at \( \alpha_{\vec c,l},\beta_{\vec c,l} \), which means that \( W_{\vec c,l}(z) \) is in fact analytic in some neighborhood of \( \Delta_{\vec c,l} \). 

Let now \( \vec\varsigma \) and \( \mathcal N(\vec\varsigma) \) be as in the statement of the lemma. It follows from the last claim of Proposition~\ref{prop:Ya} and \eqref{CauchyInt} that there exists an open neighborhood of \( \Delta_{\vec\varsigma,l} \) such that 
\begin{equation}
\label{Wc-2}
W_{\n,l}(z) \to W_{\vec\varsigma,l}(z) \qasq |\n|\to\infty, \quad \n\in\mathcal N(\vec\varsigma),
\end{equation}
uniformly on its closure. 

Assume that \( \omega_{\vec \varsigma,l}(x) \) blows up like a square root at both \( \alpha_{\vec \varsigma,l} \) and \( \beta_{\vec \varsigma,l} \). Then, we get from \eqref{hhw} and Proposition~\ref{prop:gaps} that \( W_{\vec\varsigma,l}(x) \neq 0 \) for \( x\in\Delta_{\vec \varsigma,l} \). Thus, the neighborhood in \eqref{Wc-2} can be chosen so that all the functions are non-vanishing on its closure and respectively the moduli \( |W_{\n,l}(z)| \) are uniformly bounded above and away from zero there for all \( |\n| \) large enough, \( \n\in\mathcal N(\vec\varsigma) \). Then \eqref{Wc-1} and \eqref{hhw} yield that
\[
|w_{\Delta_{\n,l}}(x)|^{-1} \lesssim \omega_{\n,l}^\prime(x) \lesssim |w_{\Delta_{\n,l}}(x)|^{-1}, \quad x\in\Delta_{\n,l},
\]
for all \( |\n| \) large enough, \( \n\in\mathcal N(\vec\varsigma) \), where the constants of proportionality are independent of \( \n \). Thus, \eqref{w_cond_1} holds with \( \varkappa_L=\varkappa_U=-1 \).

Next, assume that \( \omega_{\vec \varsigma,l}(x) \) blows up like a square root at \( \beta_{\vec \varsigma,l} \) and vanishes like a square root at \( \alpha_{\vec \varsigma,l} \). Then, we get from \eqref{hhw} and Proposition~\ref{prop:gaps} that \( W_{\vec\varsigma,l}(x) \neq 0 \) for \( x\in(\alpha_{\vec \varsigma,l},\beta_{\vec \varsigma,l}] \) and \( W_{\vec\varsigma,l}(z) \) has a simple zero at \( \alpha_{\vec \varsigma,l} \). Therefore, the neighborhood in \eqref{Wc-2} can be chosen so that all the functions have exactly one zero, necessarily simple, in its closure.  Each \( W_{\n,l}(z) \)  is conjugate-symmetric and does not vanish on \( (\alpha_{\n,l},\beta_{\n,l}) \) by \eqref{hhw} and Proposition~\ref{prop:gaps}. Hence, for each \( \n \) there exists \(\gamma_{\n,l}$ such that \(\gamma_{\n,l}\leq\alpha_{\n,l} \), \( W_{\n,l}(\gamma_{\n,l})=0 \), and \( \gamma_{\n,l}\to \alpha_{\vec\varsigma,l} \) as \( |\n| \to\infty \), \( \n\in\mathcal N(\vec\varsigma) \). Then, \eqref{Wc-2} yields that
\begin{equation}
\label{Wc-3}
\frac{W_{\n,l}(z)}{z-\gamma_{\n,l}} \to \frac{W_{\vec\varsigma,l}(z)}{z-\alpha_{\vec\varsigma,l}} \qasq |\n|\to\infty, \quad \n\in\mathcal N(\vec\varsigma),
\end{equation}
uniformly on the closure of some neighborhood of \( \Delta_{\vec\varsigma,l} \), and all the functions are non-vanishing on this closure. As in the first  case, we can conclude that
\begin{equation}
\label{second_est}
\frac{x-\gamma_{\n,l}}{|w_{\Delta_{\n,l}}(x)|} \lesssim \omega_{\n,l}^\prime(x) \lesssim \frac{x-\gamma_{\n,l}}{|w_{\Delta_{\n,l}}(x)|}, \quad x\in\Delta_{\n,l},
\end{equation}
for all \( |\n| \) large enough, \( \n\in\mathcal N(\vec\varsigma) \). Trivially, \( x-\alpha_{\n,l}\leq x-\gamma_{\n,l}<2(\beta_{\vec\varsigma,l}-\alpha_{\vec\varsigma,l}) \), \( x\in\Delta_{\n,l} \), where the upper bound holds for all \( |\n| \) large enough. Necessarily,
\[
|w_{\Delta_{\n,l}}(x)| \lesssim \omega_{\n,l}^\prime(x) \lesssim |w_{\Delta_{\n,l}}(x)|^{-1}, \quad x\in\Delta_{\n,l},
\]
for all \( |\n| \) large enough, \( \n\in\mathcal N(\vec\varsigma) \).  Thus, \eqref{w_cond_1} holds with \( -\varkappa_L=\varkappa_U=-1 \).

The cases where \( \omega_{\vec \varsigma,l}(x) \) blows up like a square root at \( \alpha_{\vec \varsigma,l} \) and vanishes like a square root at \( \beta_{\vec \varsigma,l} \) and where \( \omega_{\vec \varsigma,l}(x) \) vanishes like a square root at both \( \alpha_{\vec \varsigma,l} \) and \( \beta_{\vec \varsigma,l} \) can be examined similarly. This finishes the proof of \eqref{w_cond_1}.

Let us now verify \eqref{w_cond_2}. If \( \alpha_{\n,l}>\alpha(\mu_l) \), then we get from \eqref{vec_eq2} and Proposition~\ref{prop:Ya} that \( \boldsymbol z_{\n,l-1} = \boldsymbol \alpha_{\n,l} \). This, according to Proposition~\ref{prop:gaps}, implies that \( \omega_{\n,l}(x) \) vanishes like a square root at \( \alpha_{\n,l} \). That is, \( \gamma_{\n,l} = \alpha_{\n,l} \) in \eqref{second_est} by \eqref{Wc-1} and \eqref{hhw}. Hence, the upper bound in \eqref{second_est} can be rewritten as
\[
\omega_{\n,l}^\prime(x) \lesssim \sqrt{\frac{x-\alpha_{\n,l}}{\beta_{\n,l}-x}}, \quad x\in\Delta_{\n,l},
\]
which yields \eqref{w_cond_2} with \( \widehat\varkappa_U = 1/2 \). When \( \beta_{\n,l}<\beta(\mu_l) \), the analysis around \( \beta_{\n,l} \)  is identical.

Finally, since \eqref{Wc-2} is a claim about uniform convergence of analytic functions, we easily get from Cauchy integral formula as well as \eqref{Wc-1} and \eqref{hhw} that \( \{\omega_{\n,l}^{\prime\prime}(x) \}\) converges uniformly to \( \omega_{\vec\varsigma,l}^{\prime\prime}(x) \) on compact subsets of \((\alpha_{\vec\varsigma,l},\beta_{\vec\varsigma,l}) \). This, in turn, yields  equicontinuity of the densities \( \omega_{\n,l}^\prime(x) \) on compact subsets of \( (\alpha_{\vec\varsigma,l},\beta_{\vec\varsigma,l}) \).
\end{proof}

Functions \( \kappa_n(x) \) that appear in Theorem~\ref{thm:vw2} will be drawn from the following family: for each \( \vec c\in(0,1)^d, |\vec c|=1 \) and \( i\in I_d \), set
\begin{equation}
\label{kappani}
\kappa_{\vec c,i}(x) := \frac1{c_i}\left( -V^{\omega_{\vec c,i}}(x) + \frac{\ell_{\vec c,i}}2 - \frac12\sum_{j\in I_d,j\neq i} V^{\omega_{\vec c,j}}(x)\right), \quad x\in\Delta_i.
\end{equation}

\begin{lemma}
\label{lem:sa16}
 For each \( i\in I_d \), the sequence \( \{ \kappa_{\n,i} \}_{\n\in\mathcal N(\vec\varsigma)} \) satisfies condition (1) of Theorem~\ref{thm:vw2} with \( \mu=\mu_i \) and \( \Delta_n = \Delta_{\n,i} \), \( \n\in\mathcal N(\vec\varsigma) \).
\end{lemma}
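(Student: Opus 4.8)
The plan is to read off from \eqref{kappani} and \eqref{vec_eq2} what condition~(1) of Theorem~\ref{thm:vw2} requires and then to verify it. Multiplying \eqref{kappani} by \(-2c_i\) gives the identity \(-2c_i\kappa_{\vec c,i}(x)=V^{\omega_{\vec c,i}}(x)+\sum_{j\in I_d}V^{\omega_{\vec c,j}}(x)-\ell_{\vec c,i}\); by \eqref{vec_eq2} the right-hand side vanishes on \(\Delta_{\vec c,i}\) and is strictly positive on \(\Delta_i\setminus\Delta_{\vec c,i}\), so each \(\kappa_{\n,i}\) is a continuous function on \(\Delta_i=\Delta(\mu_i)\) that vanishes on \(\Delta_{\n,i}\) and is nonpositive on \(\Delta_i\). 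What remains --- and what I take to be the genuine content of the lemma --- is the uniform convergence \(\kappa_{\n,i}\to\kappa_{\vec\varsigma,i}\) on \(\Delta_i\) as \(|\n|\to\infty\) along \(\mathcal N(\vec\varsigma)\); I would prove this by separating the ``far'' potentials from the ``near'' one.

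For the indices \(j\ne i\): the measure \(\omega_{\n,j}\) is supported on \(\Delta_{\n,j}\subseteq\Delta_j\), a fixed compact set disjoint from \(\Delta_i\), so \((x,t)\mapsto-\log|x-t|\) is uniformly continuous on \(\Delta_i\times\Delta_j\); together with the weak\(^*\) convergence \(\omega_{\n,j}\to\omega_{\vec\varsigma,j}\) (last claim of Proposition~\ref{prop:Ya}) this gives \(V^{\omega_{\n,j}}\to V^{\omega_{\vec\varsigma,j}}\) uniformly on \(\Delta_i\). The convergence \(\ell_{\n,i}\to\ell_{\vec\varsigma,i}\) then follows from \eqref{vec_eq2} evaluated at any point in the interior of \(\Delta_{\vec\varsigma,i}\) (which is eventually interior to \(\Delta_{\n,i}\)), once the \(j=i\) term below is controlled there.

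The delicate term is \(V^{\omega_{\n,i}}\) on \(\Delta_i\), where the support \(\Delta_{\n,i}\) actually meets and moves inside \(\Delta_i\). Rather than work with this potential directly, I would use the integral representation \eqref{future_kappas}: \(-2c_i\kappa_{\n,i}\equiv0\) on \(\Delta_{\n,i}\) by \eqref{vec_eq2}, while on a gap component of \(\Delta_i\setminus\Delta_{\n,i}\) --- necessarily adjacent to an endpoint of \(\Delta_i\) --- one has \(-2c_i\kappa_{\n,i}(x)=\re\int_{x_\n}^{x}\big(\h_{\n,i}(s)-\h_{\n,0}(s)\big)\,ds\) with \(x_\n\) the adjacent endpoint of \(\Delta_{\n,i}\). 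Writing \(\h_{\n,i}-\h_{\n,0}=-W_{\n,i}/w_{\Delta_{\n,i}}\) with \(W_{\n,i}\) as in \eqref{Wc-1}, the integrand is real and has only an integrable square-root singularity at \(s=x_\n\); substituting \(s=x_\n+(x-x_\n)t^2\) to absorb the singularity and invoking the uniform convergence \(W_{\n,i}\to W_{\vec\varsigma,i}\) on a full neighbourhood of \(\Delta_{\vec\varsigma,i}\) from \eqref{Wc-2} (established in the proof of Lemma~\ref{lem:sa15}), together with \(\alpha_{\n,i}\to\alpha_{\vec\varsigma,i}\), \(\beta_{\n,i}\to\beta_{\vec\varsigma,i}\) and dominated convergence, shows that these gap values converge uniformly to the corresponding values of \(-2c_i\kappa_{\vec\varsigma,i}\). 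Finally, given \(\epsilon>0\), the interval \([\alpha_{\vec\varsigma,i}+\epsilon,\beta_{\vec\varsigma,i}-\epsilon]\) lies in \(\Delta_{\n,i}\cap\Delta_{\vec\varsigma,i}\) for \(|\n|\) large, where both \(\kappa_{\n,i}\) and \(\kappa_{\vec\varsigma,i}\) vanish; combining this with the uniform gap estimates on the \(\epsilon\)-neighbourhoods of \(\alpha_{\vec\varsigma,i}\) and \(\beta_{\vec\varsigma,i}\) (extended out to the endpoints of \(\Delta_i\)) and letting \(\epsilon\downarrow0\) yields \(\sup_{\Delta_i}|\kappa_{\n,i}-\kappa_{\vec\varsigma,i}|\to0\).

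I expect the main obstacle to be precisely this \(j=i\) term near the moving endpoints of \(\Delta_{\n,i}\): there the density \(\omega_{\n,i}^\prime\) has square-root type behaviour (blow-up or vanishing), and which side exhibits the pushing effect, and whether it does so at all, may vary along \(\mathcal N(\vec\varsigma)\). The remedy is to piggyback on Lemma~\ref{lem:sa15}, which already converts the mere weak\(^*\) convergence of the \(\omega_{\n,k}\) into analyticity of \(W_{\n,i}\) across \(\Delta_{\n,i}\) and into uniform convergence \eqref{Wc-2} on a fixed neighbourhood of \(\Delta_{\vec\varsigma,i}\); with that in hand, the endpoint analysis is a routine estimate for integrals carrying a moving integrable singularity.
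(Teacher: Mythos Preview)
You have misread condition~(1) of Theorem~\ref{thm:vw2}. It does not ask for uniform convergence of \(\kappa_{\n,i}\) to \(\kappa_{\vec\varsigma,i}\) on \(\Delta_i\); it asks for a two-sided bound
\[
|w_{\Delta_{\n,i}}(x)|^\varkappa \lesssim |\kappa_{\n,i}(x)| \lesssim |w_{\Delta_{\n,i}}(x)|^\varkappa, \quad x\in\Delta_i\setminus\Delta_{\n,i},
\]
for some fixed \(\varkappa>0\), with implied constants independent of \(\n\). Uniform convergence alone does not deliver this: for instance, if \(\alpha_{\n,i}>\alpha(\mu_i)\) but \(\alpha_{\vec\varsigma,i}=\alpha(\mu_i)\), then \(\kappa_{\vec\varsigma,i}\equiv0\) near \(\alpha(\mu_i)\) and the limit carries no information about the rate at which \(\kappa_{\n,i}\) leaves zero as \(x\) moves below \(\alpha_{\n,i}\). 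What you need instead is the precise local behaviour \((\alpha_{\n,i}-x)^{3/2}\lesssim -\kappa_{\n,i}(x)\lesssim(\alpha_{\n,i}-x)^{3/2}\) on \([\alpha(\mu_i),\alpha_{\n,i}]\) (i.e.\ \(\varkappa=3\)), together with the analogous estimate at \(\beta_{\n,i}\).

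Fortunately the machinery you set up is exactly what the paper uses to get this. From \eqref{future_kappas} one has \(-2(|\n|/n_i)\kappa_{\n,i}(x)=\int_{\alpha_{\n,i}}^x(\h_{\n,i}-\h_{\n,0})\,dt\) on \([\alpha(\mu_i),\alpha_{\n,i}]\). The key point---which you do observe in Lemma~\ref{lem:sa15} but do not exploit here---is that \(\alpha_{\n,i}>\alpha(\mu_i)\) forces \(\boldsymbol z_{\n,i-1}=\boldsymbol\alpha_{\n,i}\), so \(\gamma_{\n,i}=\alpha_{\n,i}\) in \eqref{Wc-3} and hence \(\h_{\n,0}-\h_{\n,i}\) is comparable to \((\alpha_{\n,i}-x)^{1/2}\) uniformly on a fixed neighbourhood of \(\alpha_{\vec\varsigma,i}\). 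Integrating gives the \((\alpha_{\n,i}-x)^{3/2}\) bound there; on any remaining piece \([\alpha(\mu_i),\alpha_{\vec\varsigma,i}-\epsilon]\) the bound follows from the uniform convergence \(\kappa_{\n,i}\to\kappa_{\vec\varsigma,i}\) (which is easy there, since all potentials are harmonic) together with \(\kappa_{\vec\varsigma,i}<0\) on that set. So your ingredients are right, but they should be assembled to produce the \(|w_{\Delta_{\n,i}}|^3\) comparison, not merely uniform convergence.
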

\begin{proof}
Fix \( l\in I_d \). It follows from \eqref{vec_eq2}, that each \( \kappa_{\vec c,l}(x) =0 \) on \( \Delta_{\vec c,l} \) and \( \kappa_{\vec c,l}(x) < 0 \) on \( \Delta_l\setminus\Delta_{\vec c,l} \). Hence, the functions \( \kappa_{\n,l}(x) \) satisfy the first part of assumption (1) of Theorem~\ref{thm:vw2}. To verify the rest, let us concentrate on the estimates around \( \alpha_{\n,l} \) as estimates around \( \beta_{\n,l} \) are similar. Naturally, we only have something to prove when \( \alpha_{\n,l}> \alpha(\mu_l) \). It follows from \eqref{future_kappas} that
\[
-2(|\n|/n_l)\kappa_{\n,l}(x) = \int_{\alpha_{\n,l}}^x \big( \boldsymbol h_{\n,l}(t) - \boldsymbol h_{\n,0}(t) \big) dt, \quad x\in[\alpha(\mu_l),\alpha_{\n,l}].
\]
As we have explained in the next to last paragraph of the previous lemma, \( \alpha_{\n,l}> \alpha(\mu_l) \) implies that \( \gamma_{\n,l} = \alpha_{\n,l} \) in \eqref{Wc-3}. In particular, we have that 
\[
(\alpha_{\n,l}-x)^{1/2} \lesssim \boldsymbol h_{\n,0}(x) - \boldsymbol h_{\n,l}(x) \lesssim (\alpha_{\n,l}-x)^{1/2}, \quad  x\in[\alpha_{\vec\varsigma,l}-\epsilon,\alpha_{\n,l}]
\]
for some \( \epsilon>0 \) by \eqref{Wc-1}.  Since \( n_i/|\n| \to (\vec\varsigma)_i \) as \( |\n|\to\infty \), \( \n\in\mathcal N(\vec\varsigma) \), we then get that
\[
(\alpha_{\n,l}-x)^{3/2} \lesssim - \kappa_{\n,l}(x) \lesssim (\alpha_{\n,l}-x)^{3/2}, \quad x\in[\alpha_{\vec\varsigma,l}-\epsilon,\alpha_{\n,l}].
\]
If \( \alpha(\mu_l) < \alpha_{\vec\varsigma,l}-\epsilon \), the above estimate can be readily extended to \( [\alpha(\mu_l),\alpha_{\vec\varsigma,l}-\epsilon] \) by noticing that the functions \( \kappa_{\n,l}(x) \) converge uniformly there to \( \kappa_{\vec\varsigma,l}(x) \) as \( |\n|\to\infty \), \( \n\in\mathcal N(\vec\varsigma) \) (one can use \eqref{kappani} and the weak$^*$ convergence of measures to see this). This finishes the proof of the lemma.
\end{proof}

Recall definition \eqref{operH} of the operators \( \mathcal H_{\vec\Delta_{\vec c}} \) from the previous subsection. Let
\begin{equation}
\label{vec-sc}
\vec s_{\vec c} := (\mathcal I-\mathcal H_{\vec\Delta_{\vec c}})^{-1}\vec a_{\vec c}, \quad \vec a_{\vec c} := \frac12\big( \log v_{\vec c,1}, \log v_{\vec c,2}, \ldots, \log v_{\vec c,d} \big), 
\end{equation}
where \( v_{\vec c,i}(x) \) is the Radon-Nikodym derivative of \( \mu_{i|\Delta_{\vec c,i}} \) with respect to the arcsine distribution of \( \Delta_{\vec c,i} \), \( i\in I_d \), see \eqref{szego}. As follows from Lemma~\ref{lem:sa7} and \eqref{Dirichlet}, we have 
\[
\log \big|S_{\vec c,i}(z) \big| = H_{\Delta_{\vec c,i}}(s_{\vec c,i},z),
\]
where we write \( \vec s_{\vec c} = (s_{\vec c,1},s_{\vec c,2},\ldots,s_{\vec c,d})  \). Since the intervals \( \Delta_{\n,i} \) converge to \( \Delta_{\vec \varsigma,i} \) for each \( i\in I_d \) as \( |\n|\to\infty \), \( \n\in\mathcal N(\vec \varsigma) \), we get from Proposition~\ref{prop:sa2} that
\begin{equation}
\label{Conv-H}
H_{\Delta_{\n,i}}(s_{\n,i},z) \to H_{\Delta_{\vec\varsigma,i}}(s_{\vec \varsigma,i},z)
\end{equation}
locally uniformly in \( D_{\Delta_{\vec\varsigma,i}} \) as \( |\n|\to\infty \), \( \n\in\mathcal N(\vec \varsigma) \), for each \( i\in I_d \).

The following lemma is an application of Theorem~\ref{thm:vw2}. In what follows, we denote by \( T_n(f\mu) \) the \( n \)-th monic orthogonal polynomial with respect to the measure \( f\mu \), where \( f \) is a non-vanishing continuous function and \( \mu \) is a Borel measure.

\begin{lemma}
\label{lem:sa17}
Let \( U \) be a  compact set in $C(\vec\Delta)$. Assume that \( \mu_i\in\mathrm{USz}(\Delta_{\vec\varsigma,i}) \) for every $i\in I_d$. For each \( \n\in \mathcal N(\vec\varsigma) \), 
choose \( \vec u_\n \in U \)  and write \( \vec u_\n = (u_{\n,1},u_{\n,2},\ldots,u_{\n,d}) \). Then, it holds for every \( i\in I_d \) that
\begin{multline*}
\log\left|T_{n_i} \left( e^{2 u_{\n,i} - \sum_{j\neq i}(H_{\Delta_{\n,j}}s_{\n,j} + |\n|V^{\omega_{\n,j}})}\mu_i \right)(z) \right| = o_U(1) - |\n| V^{\omega_{\n,i}}(z)
\\  + H_{\Delta_{\n,i}}(s_{\n,i}+u_{\n,i})(\infty) - H_{\Delta_{\n,i}}(s_{\n,i}+u_{\n,i})(z),
\end{multline*}
where \( o_U(1) \) is such that for every \( \epsilon>0 \) and every closed \( K_i \subset D_{\Delta_{\vec\varsigma,i}} \), there exists \( N_\epsilon \) such that \( \| o_U(1) \|_{C^1(K_i)} \leq \epsilon \), \( |\n|\geq N_\epsilon \), \( \n\in\mathcal N(\vec \varsigma) \), independently of the choice of \( \{\vec u_\n \}\subseteq U \). 
\end{lemma}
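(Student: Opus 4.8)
The plan is to recognize the left–hand side as the modulus of a monic orthogonal polynomial with respect to a varying weight of exactly the type handled by Theorem~\ref{thm:vw2}, and then to read off the conclusion. Fix \( i\in I_d \) and set \( \Delta_n:=\Delta_{\n,i} \), \( \Delta:=\Delta_{\vec\varsigma,i} \), \( \mu:=\mu_i \), \( \nu_\n:=(|\n|/n_i)\omega_{\n,i} \) (a probability measure on \( \Delta_{\n,i} \)), \( \kappa_n:=\kappa_{\n,i} \) as in \eqref{kappani}, and introduce the regular perturbation
\[
g_\n(x):=u_{\n,i}(x)-\tfrac12\sum_{j\in I_d,\,j\neq i}\big(H_{\Delta_{\n,j}}s_{\n,j}\big)(x).
\]
Condition~(3) of Theorem~\ref{thm:vw2} for \( \nu_\n \) is Lemma~\ref{lem:sa15}, condition~(1) for \( \kappa_{\n,i} \) is Lemma~\ref{lem:sa16}, and condition~(2) is supplied by the hypothesis \( \mu_i\in\mathrm{USz}(\Delta_{\vec\varsigma,i}) \). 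Since \( \vec u_\n \) ranges over the compact set \( U\subset C(\vec\Delta) \) (hence \( \{u_{\n,i}\} \) is uniformly bounded and equicontinuous on \( \Delta_i \)) while \( H_{\Delta_{\n,j}}s_{\n,j}\to H_{\Delta_{\vec\varsigma,j}}s_{\vec\varsigma,j} \) uniformly on \( \Delta_i \) by \eqref{Conv-H} (Proposition~\ref{prop:sa2}), the family \( \{g_\n\} \) is uniformly bounded and equicontinuous on \( \Delta_i \); in particular each \( e^{2g_\n}\mu_i \) is again uniformly Szeg\H{o} on \( \Delta_{\vec\varsigma,i} \) with uniform control, which is what the regular-perturbation slot of Theorem~\ref{thm:vw2} requires.

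The algebraic step is to bring the exponent of the weight into the normal form \( 2n_iV^{\nu_\n}+2n_i\kappa_{\n,i}+2g_\n \) up to an additive constant. Rearranging the definition \eqref{kappani} of \( \kappa_{\n,i} \) (equivalently, using \eqref{vec_eq2}) and recalling \( c_i=n_i/|\n| \) gives, for \( x\in\Delta_i \),
\[
-\sum_{j\neq i}|\n|V^{\omega_{\n,j}}(x)=2n_i\kappa_{\n,i}(x)+2|\n|V^{\omega_{\n,i}}(x)-|\n|\ell_{\n,i},
\]
while \( |\n|V^{\omega_{\n,i}}=n_iV^{\nu_\n} \) because \( \nu_\n \) has mass one. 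Hence on \( \Delta_i \)
\[
2u_{\n,i}-\sum_{j\neq i}\big(H_{\Delta_{\n,j}}s_{\n,j}+|\n|V^{\omega_{\n,j}}\big)=2n_iV^{\nu_\n}+2n_i\kappa_{\n,i}+2g_\n-|\n|\ell_{\n,i},
\]
and, monic orthogonal polynomials being unchanged under scaling the measure by a positive constant,
\[
T_{n_i}\!\Big(e^{2u_{\n,i}-\sum_{j\neq i}(H_{\Delta_{\n,j}}s_{\n,j}+|\n|V^{\omega_{\n,j}})}\mu_i\Big)=T_{n_i}\!\big(e^{2n_iV^{\nu_\n}+2n_i\kappa_{\n,i}+2g_\n}\mu_i\big).
\]

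Applying Theorem~\ref{thm:vw2} to the right–hand side yields
\[
\log\big|T_{n_i}(\cdots)(z)\big|=o(1)-n_iV^{\nu_\n}(z)+G_\n(\infty)-G_\n(z),
\]
where \( G_\n(z)=H_{\Delta_{\n,i}}\big(g_\n+\tfrac12\log v_{\n,i}\big)(z) \) and \( v_{\n,i} \) is the Radon--Nikodym derivative of \( \mu_i \) with respect to \( \omega_{\Delta_{\n,i}} \). To recognize the Szeg\H{o} factor, recall from Lemma~\ref{lem:sa7} that \( \vec s_\n=(s_{\n,1},\dots,s_{\n,d}) \) solves \( \vec s_\n=\mathcal H_{\vec\Delta_\n}\vec s_\n+\vec a_\n \) with \( \vec a_\n=\tfrac12(\log v_{\n,1},\dots,\log v_{\n,d}) \); its \( i \)-th equation restricted to \( \Delta_{\n,i} \) reads \( s_{\n,i}=-\tfrac12\sum_{j\neq i}H_{\Delta_{\n,j}}s_{\n,j}+\tfrac12\log v_{\n,i} \) there, so adding \( u_{\n,i} \) gives \( g_\n+\tfrac12\log v_{\n,i}=s_{\n,i}+u_{\n,i} \) on \( \Delta_{\n,i} \) and hence \( G_\n=H_{\Delta_{\n,i}}(s_{\n,i}+u_{\n,i}) \). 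Together with \( n_iV^{\nu_\n}=|\n|V^{\omega_{\n,i}} \), this is precisely the asserted identity.

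Finally, the error \( o(1) \) is the difference of two functions harmonic in \( D_{\Delta_i} \) (the polynomial \( T_{n_i}(\cdots) \) is non-vanishing there, and \( V^{\omega_{\n,i}} \), \( G_\n \) are harmonic off \( \Delta_{\n,i}\subseteq\Delta_i \)) and it tends to zero locally uniformly in \( D_{\Delta_{\vec\varsigma,i}} \); interior estimates for harmonic functions then upgrade this to convergence in \( C^1(K_i) \) for every compact \( K_i\subset D_{\Delta_{\vec\varsigma,i}} \). Uniformity with respect to \( U \) follows by compactness: if it failed there would be \( \epsilon>0 \), a compact \( K_i \), and \( \vec u_\n\in U \) along a subsequence with \( \|o(1)\|_{C^1(K_i)}>\epsilon \); passing to a further subsequence along which \( \vec u_\n\to\vec u_*\in U \) in \( C(\vec\Delta) \) makes the associated \( g_\n \) converge uniformly on \( \Delta_i \), and Theorem~\ref{thm:vw2} applied along this subsequence forces \( \|o(1)\|_{C^1(K_i)}\to0 \), a contradiction. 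I expect the main obstacle to be verifying that the family \( \{g_\n\} \) meets the hypotheses Theorem~\ref{thm:vw2} places on the regular perturbation uniformly in \( \n \) and in the choice \( \vec u_\n\in U \), and then propagating that uniformity through the conclusion; the weight normalization and the identification of the Szeg\H{o} factor are bookkeeping.
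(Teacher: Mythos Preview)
Your proof is correct and follows essentially the same route as the paper: cast the weight into the normal form \(2n_iV^{\nu_\n}+2n_i\kappa_{\n,i}+h_{\n,i}\) via \eqref{kappani}, verify the hypotheses of Theorem~\ref{thm:vw2} using Lemmas~\ref{lem:sa15} and~\ref{lem:sa16}, and then collapse the Szeg\H{o} factor to \(H_{\Delta_{\n,i}}(s_{\n,i}+u_{\n,i})\) using the \(i\)-th row of \(\vec s_\n=\mathcal H_{\vec\Delta_\n}\vec s_\n+\vec a_\n\) (this is exactly \eqref{simplify}).

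Two small remarks. First, your phrasing ``condition~(2) is supplied by the hypothesis \(\mu_i\in\mathrm{USz}(\Delta_{\vec\varsigma,i})\)'' conflates two separate hypotheses of Theorem~\ref{thm:vw2}: the \(\mathrm{USz}\) assumption is on the measure and is independent of condition~(2), which requires the perturbations \(h_n=2g_\n\) to lie in a fixed compact \(\mathcal K\subset C(\Delta(\mu_i))\). You do supply the latter (via equicontinuity and \eqref{Conv-H}), but it is worth keeping the two hypotheses distinct. Second, your subsequence argument for uniformity in \(U\) is correct but unnecessary: the paper simply observes that the set \(\{2u_i-\sum_{j\neq i}H_{\Delta_{\n,j}}s_{\n,j}:\vec u\in U,\ \n\in\mathcal N(\vec\varsigma)\}\) is precompact in \(C(\Delta_i)\), takes \(\mathcal K_i\) to be its closure, and then the error term \(o_{\mathcal K_i}(1)\) delivered by Theorem~\ref{thm:vw2} is \emph{by definition} uniform over all \(h_n\in\mathcal K_i\), hence over all choices \(\vec u_\n\in U\). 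This buys the uniformity in one stroke rather than by contradiction.
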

\begin{proof}
For each $i\in I_d$, we apply Theorem~\ref{thm:vw2} with
\[
\Delta(\mu_i), \quad \Delta_n = \Delta_{\n,i}, \quad \Delta=\Delta_{\vec\varsigma,i}, \quad \omega_n = (|\n_i|/n_i) \omega_{\n,i}, \quad \kappa_n(x) = \kappa_{\n,i}(x), \quad \n\in\mathcal N(\vec\varsigma),
\]
where \( \kappa_{\vec c,i}(x) \) were defined in \eqref{kappani}, and
\begin{equation}
\label{sapsad11}
h_{\n,i}(x) := 2u_{\n,i}(x) -  \sum_{j\in I_d,j\neq i}(H_{\Delta_{\n,j}}s_{\n,j})(x), \quad x\in\Delta_i, \quad \n\in\mathcal N(\vec\varsigma). 
\end{equation}
The sequence $\{\kappa_{\n,i}\}$ satisfies assumption (1) of Theorem~\ref{thm:vw2} by Lemma~\ref{lem:sa16}. It readily follows from \eqref{Conv-H} that
\[
\left\{ 2u_i - \sum_{j\in I_d,j\neq i} H_{\Delta_{\n,j}} s_{\n,j} :\vec u\in U \qandq \n\in\mathcal N(\vec\varsigma)\right\} 
\]
is a precompact subset of \( C(\Delta_i) \). Clearly, its closure, say $\mathcal K_i$, is compact and \( h_{\n,i} \in \mathcal K_i \) for each \( \n\in\mathcal N(\vec\varsigma) \). That is, the sequence \( \{ h_{\n,i} \} \) satisfies assumption (2) of Theorem~\ref{thm:vw2}. The measures \( \{(|\n|/n_i)\omega_{\n,i} \}\) satisfy  assumption (3) of Theorem~\ref{thm:vw2} by Lemma~\ref{lem:sa15}. All assumptions of Theorem~\ref{thm:vw2} have now been checked. Set
\[
\theta_{\n,i}(x) := 2n_i\big( (|\n|/n_i) V^{\omega_{\n,i}}(x) + \kappa_{\n,i}(x) \big) + h_{\n,i}(x), \quad x\in\Delta_i.
\]
It readily follows from \eqref{kappani}, \eqref{sapsad11}, and the definition above that
\[
T_{n_i} \left( e^{2 u_{\n,i} - \sum_{j\neq i}(H_{\Delta_{\n,j}}s_{\n,j} + |\n|V^{\omega_{\n,j}})}\mu_i \right)(z) = T_{n_i}(e^{\theta_{\n,i}-|\vec n|\ell_{\n,i}}\mu_i)(z) = T_{n_i}(e^{\theta_{\n,i}}\mu_i)(z),
\]
where the last equality holds because monic orthogonal polynomials do not depend on the normalization of the measure of orthogonality. Then, an application of Theorem~\ref{thm:vw2} gives
\begin{multline*}
\log|T_{n_i}(e^{\theta_{\n,i}}\mu_i)(z)| =  o_U(1) -|\n| V^{\omega_{\n,i}}(z) + \\ \log\left|\out_{\Delta_{\n,i}}\left(e^{h_{\n,i}/2}\sqrt{v_{\n,i}},\infty\right)\right| -  \log\left|\out_{\Delta_{\n,i}}\left(e^{h_{\n,i}/2}\sqrt{v_{\n,i}},z\right)\right|,
\end{multline*}
where we used \eqref{SzegoFun}. Here, \( o_U(1) \) is such that for every \( \epsilon>0 \) and every closed \( K_i \subset D_{\Delta_{\vec\varsigma,i}} \), there exists \( N_\epsilon \) such that \( \| o_U(1) \|_{C(K_i)} \leq \epsilon \), \( |\n|\geq N_\epsilon \), \( \n\in\mathcal N(\vec \varsigma) \), independently of the choice of \( \{\vec u_\n \}\subseteq U \). Now, notice that the error terms \( o_U(1) \) represent functions harmonic on $K_i$.  Hence, it follows from properties of harmonic functions, e.g., their integral representations, that for every $\epsilon>0$, there is $\widehat N_\epsilon$ such that \( \| o_U(1) \|_{C^1(K_i)} \leq \epsilon \), \( |\n|\geq \widehat N_\epsilon \), \( \n\in\mathcal N(\vec \varsigma) \), independently of the choice of \( \{\vec u_\n \}\subseteq U \).

 Finally, we apply  \eqref{vec-sc}  to the vector \( \vec s_\n \) to write
\begin{equation}
\label{simplify}
\sqrt{v_{\n,i}}(x)e^{(\mathcal H_{\vec\Delta_\n}\vec s_\n)_i(x)} = \sqrt{v_{\n,i}}(x)e^{s_{\n,i}(x)-(\vec a_\n)_i(x)} = e^{s_{\n,i}(x)}
\end{equation}
for almost every \( x\in \Delta_{\n,i} \), \( i\in I_d \). Notice that \(  h_{\n,i}/2 = u_{\n,i} + (\mathcal H_\n\vec s_\n)_i \) on \( \Delta_{\n,i} \) by the very definition of \( \mathcal H_{\vec\Delta_\n} \) in \eqref{operH}. Hence, \eqref{Dirichlet} yields that
\[
\log\left|\out_{\Delta_{\n,i}} \big(e^{h_{\n,i}/2} \sqrt{v_{\n,i}},z\big) \right| = H_{\Delta_{\n,i}}(s_{\n,i}+u_{\n,i})(z),
\]
which finishes the proof of the lemma. 
\end{proof}

Operators \( \mathcal H_{\vec\Delta_{\vec c}} \) were designed to construct vectors of Szeg\H{o} functions based on the boundary value problem satisfied by them. As the statement of Theorem~\ref{thm:sa4} implies, (normalized) Szeg\H{o} functions also appear as the deviation of the multiple orthogonal polynomials from their expected geometric behavior. This motivates introduction of the following nonlinear operators. For \( j\neq i \), \( j,i\in I_d \), define 
\begin{align*}
D_{\n,\Delta_j\to\Delta_i} : & ~C(\Delta_j) \to C(\Delta_i), \\ & u \mapsto \left( \log\left| T_{n_j}\left(e^{2u-|\n|\sum_{k\neq j}V^{\omega_{\n,k}}}\mu_j\right) \right| + |\n|V^{\omega_{\n,j}} \right)_{|\Delta_i}.
\end{align*}
It readily follows from \eqref{vec_eq2}  that in the case when \( \Delta_{\n,j} = \Delta_j \) we can replace the sum \( \sum_{k\neq j}V^{\omega_{\n,k}} \) by \( -2V^{\omega_{\n,j}} \) in this definition as monic orthogonal polynomials do not depend on the normalization of the measure of orthogonality. Further, let \( D_{\n,\Delta_i\to\Delta_i} \) be the operator whose image is the zero function, \( i\in I_d \). Put
\[
\mathcal D_\n := \frac12\begin{pmatrix} D_{\n,\Delta_j\to\Delta_i} \end{pmatrix}_{i,j=1}^d: C(\vec \Delta) \to C(\vec\Delta),
\]
where \( i \) is the row index, \( j \) is the column one, and \( \vec\Delta=(\Delta_1,\Delta_2,\ldots,\Delta_d\)).  In the matrix form, this gives
\[
\mathcal D_\n =\frac 12\left[
\begin{array}{cccc}
0& D_{\n,\Delta_2\to\Delta_1}&\cdots&D_{\n,\Delta_d\to\Delta_1}\\
D_{\n,\Delta_1\to\Delta_2}&0&\cdots&D_{\n,\Delta_d\to\Delta_2}\\
\vdots&\vdots&\ddots&\vdots\\
D_{\n,\Delta_1\to\Delta_d}&D_{\n,\Delta_2\to\Delta_d}&\cdots&0
\end{array}
\right]\,.
\]
As each component of \( \mathcal D_\n\,\vec u  \) is a restriction of a harmonic function, \( \mathcal D_\n(C(\vec \Delta))\subset C^1(\vec \Delta) \).
\begin{lemma}
\label{lem:sa18}
Let \( P_{\n,i}(x) \), \( i\in I_d \), be as in \eqref{ortho} and \eqref{partition}.  Set 
\[
q_{\n,i}(x) := \frac12\sum_{j\in I_d,j\neq i } \big( \log | P_{\n,j}(x) | + |\n| V^{\omega_{\n,j}}(x) \big) , \quad x\in\Delta_i, \quad i\in I_d.
\]
Then, \( \vec q_\n = (q_{\n,1},q_{\n,2},\ldots,q_{\n,d}) \) is the unique solution of \( \vec u = \mathcal D_\n \, \vec u \) in \( C(\vec\Delta) \).
\end{lemma}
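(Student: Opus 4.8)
The plan is to reduce the whole statement to one clean fact: for each \( j\in I_d \), the Angelesco factor \( P_{\n,j} \) is itself an ordinary monic orthogonal polynomial, namely \( P_{\n,j}=T_{n_j}\!\left(e^{\,2q_{\n,j}-|\n|\sum_{k\in I_d,\,k\neq j}V^{\omega_{\n,k}}}\mu_j\right) \). Granting this, existence is immediate: the \( i \)-th component of \( \mathcal D_\n\vec q_\n \) is, by the definition of \( D_{\n,\Delta_j\to\Delta_i} \), equal to \( \tfrac12\sum_{j\neq i}\big(\log|P_{\n,j}|+|\n|V^{\omega_{\n,j}}\big)_{|\Delta_i}=q_{\n,i} \). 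Membership \( \vec q_\n\in C(\vec\Delta) \) is also clear, because for \( j\neq i \) the polynomial \( P_{\n,j} \) does not vanish on \( \Delta_i \) (its zeros lie on \( \Delta_j \), which is disjoint from \( \Delta_i \)) and \( V^{\omega_{\n,j}} \) is continuous on a neighbourhood of \( \Delta_i \) (again by disjointness, since \( \supp\omega_{\n,j}\subseteq\Delta_j \)).

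To prove the displayed identity, first rewrite the weight: from the definition of \( q_{\n,j} \) one has \( e^{\,2q_{\n,j}(x)-|\n|\sum_{k\neq j}V^{\omega_{\n,k}}(x)}=\prod_{k\in I_d,\,k\neq j}\big|P_{\n,k}(x)\big| \) for \( x\in\Delta_j \). Since each \( P_{\n,k} \), \( k\neq j \), is nonzero on the interval \( \Delta_j \), the product \( \prod_{k\neq j}P_{\n,k} \) has a constant sign \( \sigma_j\in\{-1,1\} \) there, so this weight equals \( \sigma_j\prod_{k\neq j}P_{\n,k}(x) \) on \( \Delta_j \); in particular it is a non-vanishing continuous function, so the right-hand side of the identity is well defined. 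On the other hand, the defining relations \eqref{ortho} with \( i=j \), together with the factorization \eqref{partition}, give \( \int x^m P_{\n,j}(x)\prod_{k\neq j}P_{\n,k}(x)\,d\mu_j(x)=0 \) for \( m=0,1,\dots,n_j-1 \). Multiplying by \( \sigma_j \) (a harmless nonzero constant), these are precisely the orthogonality conditions defining \( T_{n_j} \) of the measure \( \sigma_j\prod_{k\neq j}P_{\n,k}\,\mu_j=e^{2q_{\n,j}-|\n|\sum_{k\neq j}V^{\omega_{\n,k}}}\mu_j \); since \( P_{\n,j} \) is monic of degree \( n_j \) (Angelesco systems are perfect, see \cite{Ang19}) and the monic orthogonal polynomial of a given degree is unique, the identity follows.

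For uniqueness, let \( \vec u=(u_1,\dots,u_d)\in C(\vec\Delta) \) satisfy \( \mathcal D_\n\vec u=\vec u \) and put \( Q_j:=T_{n_j}\big(e^{2u_j-|\n|\sum_{k\neq j}V^{\omega_{\n,k}}}\mu_j\big) \); the exponent is a non-vanishing continuous function on \( \Delta_j=\Delta(\mu_j) \), so each \( Q_j \) is a well-defined monic polynomial of degree \( n_j \) with all its zeros on \( \Delta_j \). Reading off the \( i \)-th equation of \( \mathcal D_\n\vec u=\vec u \) exactly as above yields \( e^{2u_i-|\n|\sum_{j\neq i}V^{\omega_{\n,j}}}=\tau_i\prod_{j\neq i}Q_j \) on \( \Delta_i \) for a sign \( \tau_i \), and hence \( \int x^m Q_i(x)\prod_{j\neq i}Q_j(x)\,d\mu_i(x)=0 \) for \( m<n_i \) and every \( i\in I_d \). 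Therefore \( Q:=\prod_{i\in I_d}Q_i \) is a non-identically-zero monic polynomial of degree \( |\n| \) satisfying \eqref{ortho}, so by normality of \( \n \) it equals \( P_\n \); comparing the two factorizations of \( P_\n \) through the location of the zeros in the pairwise disjoint intervals \( \Delta_i \) forces \( Q_i=P_{\n,i} \) for each \( i \). Substituting \( \log|Q_j|=\log|P_{\n,j}| \) back into \( \mathcal D_\n\vec u=\vec u \) gives \( \vec u=\vec q_\n \).

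I do not anticipate a real obstacle: the argument is essentially bookkeeping of the Angelesco factorization, together with the invariance of monic orthogonal polynomials under rescaling of the orthogonality measure. The only places asking for a little care are the sign factors \( \sigma_j,\tau_i \) (innocuous precisely because of that invariance) and the identification \( Q=P_\n \), which uses that Angelesco systems are perfect so that the minimal-degree monic solution of \eqref{ortho} has degree \( |\n| \) and is unique.
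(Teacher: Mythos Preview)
Your proof is correct and follows essentially the same approach as the paper's: both identify \( P_{\n,j}=T_{n_j}\big(|P_\n/P_{\n,j}|\,\mu_j\big) \) to show \( \vec q_\n \) is a fixed point, and for uniqueness both set \( Q_j=T_{n_j}(e^{2u_j-|\n|\sum_{k\neq j}V^{\omega_{\n,k}}}\mu_j) \), deduce that \( Q=\prod_j Q_j \) satisfies \eqref{ortho}, and invoke uniqueness of MOPs for Angelesco systems to conclude \( Q_j=P_{\n,j} \). Your treatment is slightly more explicit about the sign factors and the comparison of factorizations, but the substance is identical.
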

\begin{proof}
Since each \( q_{\n,i}(x) \) is a restriction of a harmonic function, \( \vec q_\n \in C(\vec\Delta_\n) \). Moreover, for each \( i\in I_d \), we have 
\[
T_{n_i}\left(e^{2q_{\n,i}-|\n|\sum_{j\neq i}V^{\omega_{\n,j}}}\mu_i\right)(z) = T_{n_i}\left(|P_\n/P_{\n,i}|\mu_i\right)(z) = P_{\n,i}(z),
\]
by the  definition of \( P_\n(x) \) because \( P_\n(x)/P_{\n,i}(x) \) does not change sign on \( \Delta_i \). Hence, 
\[
\big(\mathcal D_\n\,\vec q_\n\big)_i(x) = \frac12\sum_{j\in I_d,j\neq i} \big( \log |P_{\n,j}(x)| + |\n| V^{\omega_{\n,j}}(x) \big) = q_{\n,i}(x), \quad x\in\Delta_i,
\]
i.e., \( \vec q_\n(x) \) is a fixed point of \( \mathcal D_\n \). 

Conversely, let \( \mathcal D_\n \,\vec u= \vec u \) for some \( \vec u=(u_1,u_2,\ldots,u_d)\in C(\vec \Delta) \). Set 
\[
T_\n(x) = \prod_{i\in I_d} T_{\n,i}(x), \quad T_{\n,i} = T_{n_i}\left(e^{2u_i-|\n|\sum_{j\neq i}V^{\omega_{\n,j}}}\mu_i\right), \quad i\in I_d.
\]
Then, the equality \( \mathcal D_\n \, \vec u= \vec u \) yields that
\[
2u_i(x)  = \sum_{j\in I_d,j\neq i}\big( \log |T_{\n,j}(x)| + |\n| V^{\omega_{\n,j}}(x) \big), \quad x\in \Delta_i, \quad i\in I_d.
\]
The last two displayed formulae imply that \( T_{\n,i} = T_{n_i}(|T_\n/T_{\n,i}|\mu_i) \), \( i\in I_d \). That is, the polynomial \( T_\n \) satisfies orthogonality conditions \eqref{ortho} with respect to the measures \( \mu_i \), \( i\in I_d \). It follows from the uniqueness of the type II multiple orthogonal polynomials for Angelesco systems that \( T_\n = P_\n\) and the proof is completed.
\end{proof}

Let now \( \mathcal H_\n := \mathcal H_{\vec\Delta_\n,\vec\Delta} \), see \eqref{operHrest}, and recall that $\vec s_{\vec c}$ was defined in \eqref{vec-sc}. In what follows, we shall slightly abuse the notation and write \( \mathcal H_\n \vec s_\n \) without specifying the exact extension of \( \vec s_\n \) to a vector-function on \( \vec\Delta \) as the images under \( \mathcal H_\n \) of all such extensions are identical.  

For each \( \vec u=(u_1,u_2,\ldots,u_d) \) in \( L^1(\omega_{\vec \Delta_{\vec c}}) \), we introduce the following notation
\begin{equation}
\label{u_infty}
\vec u_{\vec c,\infty} := \big( (H_{\Delta_{\vec c,1}}u_1)(\infty),(H_{\Delta_{\vec c,2}}u_2)(\infty),\ldots,(H_{\Delta_{\vec c,d}}u_d)(\infty) \big).
\end{equation}
We also abbreviate \( (\vec s_\n)_{\n,\infty} \) into \( \vec s_{\n,\infty} \). Observe that
\begin{equation}
\label{H-consts}
2(\mathcal H_\n \vec k)_i(x) = -\sum_{j\in I_d,j\neq i} (H_{\Delta_\n,j} k_j)(x) = - \sum_{j\in I_d,j\neq i} k_j
\end{equation}
for any vector of constants \( \vec k=(k_1,k_2,\ldots,k_d) \).

To obtain the asymptotics of $P_{\n,i}$, we will first find the asymptotics of $q_{\n,i}$. 
The expected behavior of the polynomials \( P_{\n,i}(z) \) suggests that
\begin{align*}
e^{2q_{\n,i}(x)} & = \prod_{j\in I_d,j\neq i}\left| P_{\n,j}(x) \right| e^{|\n|V^{\omega_{\n,j}}(x)}   \\
& ``\sim" \prod_{j\in I_d,j\neq i} \left|\frac{S_{\n,j}(\infty)}{S_{\n,j}(x)}\right|  \stackrel{\eqref{sapsad15}+\eqref{operH}}{=} e^{2(\mathcal H_\n \vec s_\n)_i(x) - 2(\mathcal H_\n (\vec s_{\n,\infty})_i(x)}\,.
\end{align*}
 Hence,  we expect that
\begin{equation}
\label{yn}
\vec q_\n \, ``\sim" \,  \mathcal H_\n \big(\vec s_\n - \vec s_{\n,\infty}\big) =: \vec y_\n.  
\end{equation}
Since $\vec q_\n$ is the fixed point of \( \mathcal D_\n \),   these heuristics suggest to study \( \mathcal D_\n \) in the vicinity of the vector-function \( \vec y_\n \), which we do now in the sequence of lemmas.

\begin{lemma}
\label{lem:sa19}
For any \( \epsilon>0 \) and any compact \( U\subset C(\vec \Delta)\) there exists \( N_U(\epsilon) \) such that
\[
\big\| \Upsilon_\n \, \vec u  \big\|_{C^1(\vec\Delta)} \leq \epsilon
\]
for all \( \vec u \in U \) and  \( |\n| \geq N_U(\epsilon) \), where
\begin{align*}
\Upsilon_\n  :&~ C(\vec \Delta) \to C(\vec\Delta), \\
&~ \vec u \mapsto \mathcal D_\n(\vec y_\n + \vec u)  - \mathcal H_\n \big (\vec u -\vec u_{\n,\infty} \big) - \vec y_\n.
\end{align*}
\end{lemma}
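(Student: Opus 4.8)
The plan is to reduce the whole statement to a single application of Lemma~\ref{lem:sa17}. Fix \( i\in I_d \) and \( \vec u=(u_1,\ldots,u_d)\in C(\vec\Delta) \), write \( y_{\n,j} \) for the \( j \)-th component of \( \vec y_\n \), and evaluate the \( i \)-th component of \( \mathcal D_\n(\vec y_\n+\vec u) \) at a point \( x\in\Delta_i \). By the definitions of \( \mathcal D_\n \) and \( D_{\n,\Delta_j\to\Delta_i} \) it equals
\[
\frac12\sum_{j\in I_d,\,j\neq i}\left(\log\left|T_{n_j}\!\left(e^{2(y_{\n,j}+u_j)-|\n|\sum_{k\neq j}V^{\omega_{\n,k}}}\mu_j\right)(x)\right|+|\n|V^{\omega_{\n,j}}(x)\right).
\]
The first step is to recast the weight inside each factor \( T_{n_j} \) in the form treated by Lemma~\ref{lem:sa17}. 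Since \( \vec y_\n=\mathcal H_\n(\vec s_\n-\vec s_{\n,\infty}) \), see \eqref{operHrest} and \eqref{yn}, and \( H_{\Delta_{\n,k}} \) reproduces constants, one has \( 2y_{\n,j}=-\sum_{k\neq j}\big(H_{\Delta_{\n,k}}s_{\n,k}-(H_{\Delta_{\n,k}}s_{\n,k})(\infty)\big) \) on \( \Delta_j \). Hence, setting \( \widetilde u_{\n,j}:=u_j+\frac12\sum_{k\neq j}(H_{\Delta_{\n,k}}s_{\n,k})(\infty) \), which differs from \( u_j \) only by an additive constant,
\[
2(y_{\n,j}+u_j)-|\n|\sum_{k\neq j}V^{\omega_{\n,k}}=2\widetilde u_{\n,j}-\sum_{k\neq j}\big(H_{\Delta_{\n,k}}s_{\n,k}+|\n|V^{\omega_{\n,k}}\big).
\]

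The right-hand side of the last display is exactly the weight appearing in Lemma~\ref{lem:sa17}, with the index \( i \) there replaced by \( j \) and \( u_{\n,i} \) by \( \widetilde u_{\n,j} \). To invoke that lemma with uniformity over \( U \), I would check that the vector-functions \( \widetilde{\vec u}_\n=(\widetilde u_{\n,1},\ldots,\widetilde u_{\n,d}) \), as \( \vec u \) ranges over \( U \) and \( \n \) over \( \mathcal N(\vec\varsigma) \), remain in one fixed compact subset of \( C(\vec\Delta) \): indeed \( U \) is compact and the additive constants converge, \( (H_{\Delta_{\n,k}}s_{\n,k})(\infty)=\log S_{\n,k}(\infty)\to\log S_{\vec\varsigma,k}(\infty) \), by \eqref{Conv-H}. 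Since the intervals \( \Delta(\mu_k) \) are pairwise disjoint, \( \Delta_i \) is a compact subset of \( D_{\Delta_{\vec\varsigma,j}} \) for each \( j\neq i \), so Lemma~\ref{lem:sa17} applied with \( K=\Delta_i \) yields, for \( x\in\Delta_i \),
\[
\log\left|T_{n_j}\!\left(e^{2\widetilde u_{\n,j}-\sum_{k\neq j}(H_{\Delta_{\n,k}}s_{\n,k}+|\n|V^{\omega_{\n,k}})}\mu_j\right)(x)\right|+|\n|V^{\omega_{\n,j}}(x)=o_U(1)+H_{\Delta_{\n,j}}(s_{\n,j}+\widetilde u_{\n,j})(\infty)-H_{\Delta_{\n,j}}(s_{\n,j}+\widetilde u_{\n,j})(x),
\]
with the error small in \( C^1(\Delta_i) \), uniformly over \( \vec u\in U \) once \( |\n| \) is large.

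Then I would simplify: by linearity of \( H_{\Delta_{\n,j}} \) the additive constant in \( \widetilde u_{\n,j} \) drops out of the difference of the two harmonic extensions, which therefore equals \( \big(H_{\Delta_{\n,j}}s_{\n,j}(\infty)-H_{\Delta_{\n,j}}s_{\n,j}(x)\big)+\big(H_{\Delta_{\n,j}}u_j(\infty)-H_{\Delta_{\n,j}}u_j(x)\big) \). Summing over \( j\neq i \), dividing by \( 2 \), and using \eqref{operHrest} and \eqref{u_infty} to recognize that \( \frac12\sum_{j\neq i}\big(H_{\Delta_{\n,j}}s_{\n,j}(\infty)-H_{\Delta_{\n,j}}s_{\n,j}(x)\big)=y_{\n,i}(x) \) and \( \frac12\sum_{j\neq i}\big(H_{\Delta_{\n,j}}u_j(\infty)-H_{\Delta_{\n,j}}u_j(x)\big)=\big(\mathcal H_\n(\vec u-\vec u_{\n,\infty})\big)_i(x) \), I would conclude that
\[
\big(\mathcal D_\n(\vec y_\n+\vec u)\big)_i=y_{\n,i}+\big(\mathcal H_\n(\vec u-\vec u_{\n,\infty})\big)_i+o_U(1)\quad\text{on }\Delta_i,
\]
that is, \( (\Upsilon_\n\vec u)_i=o_U(1) \) in \( C^1(\Delta_i) \), uniformly over \( \vec u\in U \) once \( |\n|\geq N_U(\epsilon) \); taking the maximum over \( i\in I_d \) finishes the proof.

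The hard part here is not analytic but organizational: one must unwind the definitions of \( \vec y_\n \), \( \mathcal H_\n=\mathcal H_{\vec\Delta_\n,\vec\Delta} \), \( \vec u_{\n,\infty} \), and \( \mathcal D_\n \) carefully enough to see that the measure inside each \( T_{n_j} \) is, up to an irrelevant positive multiplicative constant, exactly of the form handled by Lemma~\ref{lem:sa17}, and to verify that the shifted data \( \widetilde{\vec u}_\n \) stay in one fixed compact set so that the \( o_U(1) \) error is genuinely uniform in \( \vec u\in U \) and in \( \n \). Once this matching is in place the conclusion is immediate; all the real analytic content (the Totik-type Theorem~\ref{thm:vw2}) is already absorbed into the proof of Lemma~\ref{lem:sa17}.
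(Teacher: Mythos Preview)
Your proof is correct and follows essentially the same route as the paper: both reduce everything to a single application of Lemma~\ref{lem:sa17} and then identify the resulting expression with \( \vec y_\n+\mathcal H_\n(\vec u-\vec u_{\n,\infty}) \). The only cosmetic difference is that the paper drops the constant vector \( -\mathcal H_\n\vec s_{\n,\infty} \) at the outset (invoking that monic orthogonal polynomials are unchanged under rescaling of the measure, so \( \mathcal D_\n(\vec y_\n+\vec u)=\mathcal D_\n(\mathcal H_\n\vec s_\n+\vec u) \)) and then applies Lemma~\ref{lem:sa17} directly with \( u_j \), whereas you carry the constant along inside the shifted \( \widetilde u_{\n,j} \) and remove it by linearity after the lemma is applied; your extra compactness check for the family \( \{\widetilde{\vec u}_\n\} \) is exactly what is needed to make that variant work.
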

\begin{proof}
Since monic orthogonal polynomials do not depend on the normalization of the measure of orthogonality, we get from Lemma~\ref{lem:sa17} and the definition of the operators \( \mathcal D_\n \)  that
\begin{multline*}
\big(\mathcal D_\n (\vec y_\n + \vec u)\big)_i(x) = \big(\mathcal D_\n\,(\mathcal H_\n\vec s_\n + \vec u)\big)_i(x) \\
= \frac12\sum_{j\in I_d,j\neq i} \left( \log\left| T_{n_j}\left(e^{2u_i-\sum_{k\neq j}(H_{\Delta_{\n,k}}s_{\n,k} + |\n|V^{\omega_{\n,k}})}\mu_j\right)(x) \right| + |\n|V^{\omega_{\n,j}}(x) \right)  \\ 
= o_U(1) + \frac12\sum_{j\in I_d,j\neq i} \left( H_{\Delta_{\n,i}}(s_{\n,i}+u_i)(\infty) - H_{\Delta_{\n,i}}(s_{\n,i}+u_i)(x)\right),
\end{multline*}
where, as usual, \( \vec u = (u_1,u_2,\ldots,u_d) \), and for any \( \epsilon>0 \) there exists \( N_U(\epsilon) \) such that \( \|o_U(1)\|_{C^1(\vec\Delta)} \leq \epsilon \) for all \( |\n|\geq N_U(\epsilon) \), \( \n\in\mathcal N(\vec \varsigma) \), regardless the choice of \( \vec u \in U \). Now, it only remains to observe that
\[
\mathcal D_\n (\vec y_\n + \vec u) = o_U(1) + \mathcal H_\n\big(\vec u - \vec u_{\n,\infty} \big) + \vec y_\n.
\]
by \eqref{operHrest}, \eqref{u_infty},  \eqref{yn} as well as identity \eqref{H-consts}.
\end{proof}

In the view of the definition of the operator \( \Upsilon_\n \), the fact that \( \vec q_\n \) is the fixed point of \( \mathcal D_\n \) can now be rewritten as
\begin{equation}
\label{restatement}
(\vec q_\n - \vec y_\n) - \mathcal K_\n (\vec q_\n - \vec y_\n) = (\mathcal I-\mathcal H_\n)^{-1}\Upsilon_\n (\vec q_\n - \vec y_\n),
\end{equation}
where \( \mathcal K_\n \) is a bounded linear operator given by
\begin{align*}
\mathcal K_\n  :&~ C(\vec \Delta) \to C(\vec\Delta), \\
&~ \vec u \mapsto - (\mathcal I-\mathcal H_\n)^{-1}\mathcal H_\n \, \vec u_{\n,\infty} = (\mathcal I - (\mathcal I-\mathcal H_\n)^{-1}) \,\vec u_{\n,\infty}.
\end{align*}
The formula \eqref{u_infty} shows that \( \mathcal K_\n \) has rank at most $d$ and hence it is a compact  operator.
\begin{lemma}
\label{lem:sa20}
Let \( \mathfrak B \) be either \( C(\vec \Delta) \) or \( C^1(\vec \Delta) \).  \( \mathcal I - \mathcal K_\n \) is an invertible linear operator from \( \mathfrak B \) into itself. Moreover, there exists a constant \( C \) such that 
\[
\|(\mathcal I - \mathcal K_\n)^{-1}\|_{\mathfrak B} \leq C, \quad \n\in\mathcal N(\vec\varsigma). 
\]
\end{lemma}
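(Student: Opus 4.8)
The plan is to use that \( \mathcal K_\n \) factors through the finite‑dimensional subspace of constant vector‑functions. Identify \( \R^d \) with the (closed, finite‑dimensional) subspace of \( \mathfrak B \) consisting of constant vector‑functions on \( \vec\Delta \), and write \( \mathcal K_\n=\mathcal A_\n\circ\Phi_\n \) with \( \Phi_\n\colon\mathfrak B\to\R^d \), \( \Phi_\n\vec u:=\vec u_{\n,\infty} \) (see \eqref{u_infty}), and \( \mathcal A_\n\colon\R^d\to\mathfrak B \), \( \mathcal A_\n\vec t:=\bigl(\mathcal I-(\mathcal I-\mathcal H_\n)^{-1}\bigr)\vec t \); this is consistent with the identity \( \mathcal K_\n\vec u=(\mathcal I-(\mathcal I-\mathcal H_\n)^{-1})\vec u_{\n,\infty} \) recorded in the paper. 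Since \( (\Phi_\n\vec u)_i=(H_{\Delta_{\n,i}}u_i)(\infty)=\int u_i\,d\omega_{\Delta_{\n,i}} \) by \eqref{geom-mean} and each \( \omega_{\Delta_{\n,i}} \) is a probability measure, \( \|\Phi_\n\|\le1 \) for all \( \n \); and Lemma~\ref{lem:sa9} provides a bound \( \|(\mathcal I-\mathcal H_\n)^{-1}\|_{\mathfrak B}\le C' \) independent of \( \n \), hence \( \sup_\n\|\mathcal A_\n\|<\infty \). I would then invoke the push‑through (Woodbury‑type) identity: whenever \( \mathcal I_{\R^d}-\Phi_\n\mathcal A_\n \) is invertible on \( \R^d \), the operator \( \mathcal I-\mathcal K_\n=\mathcal I-\mathcal A_\n\Phi_\n \) is invertible on \( \mathfrak B \) with
\[
(\mathcal I-\mathcal K_\n)^{-1}=\mathcal I+\mathcal A_\n\,(\mathcal I_{\R^d}-\Phi_\n\mathcal A_\n)^{-1}\,\Phi_\n .
\]

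The crux is the explicit computation of \( \Phi_\n\mathcal A_\n \), which I claim is independent of \( \n \). By \eqref{H-consts}, \( \mathcal H_\n \) maps the subspace of constant vectors into itself, acting there by the \( d\times d \) matrix \( \tfrac12(\mathcal I-\mathcal J) \), where \( \mathcal J \) is the all‑ones matrix; consequently \( \mathcal I-\mathcal H_\n \) carries constants to constants via \( \tfrac12(\mathcal I+\mathcal J) \), a matrix with eigenvalues \( (d+1)/2 \) and \( 1/2 \), hence invertible. Because \( \mathcal I-\mathcal H_\n \) is injective on all of \( \mathfrak B \) by Lemma~\ref{lem:sa6}, it follows that \( (\mathcal I-\mathcal H_\n)^{-1}\vec t=2(\mathcal I+\mathcal J)^{-1}\vec t \) is again a constant vector for every \( \vec t\in\R^d \), and since \( \Phi_\n \) fixes constant vector‑functions we obtain \( \Phi_\n\mathcal A_\n\vec t=\vec t-2(\mathcal I+\mathcal J)^{-1}\vec t \), that is, \( \mathcal I_{\R^d}-\Phi_\n\mathcal A_\n=2(\mathcal I+\mathcal J)^{-1} \). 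This is an \( \n \)-independent invertible matrix with \( \|(\mathcal I_{\R^d}-\Phi_\n\mathcal A_\n)^{-1}\|\le(d+1)/2 \). Substituting into the push‑through formula gives invertibility of \( \mathcal I-\mathcal K_\n \) together with \( \|(\mathcal I-\mathcal K_\n)^{-1}\|_{\mathfrak B}\le1+\tfrac{d+1}{2}\,\sup_\n\|\mathcal A_\n\| \), the asserted uniform bound; the case \( \mathfrak B=C^1(\vec\Delta) \) goes through verbatim, since \eqref{H-consts}, Lemma~\ref{lem:sa6}, and Lemma~\ref{lem:sa9} all remain available and \( \|\Phi_\n\|_{C^1(\vec\Delta)\to\R^d}\le\|\Phi_\n\|_{C(\vec\Delta)\to\R^d}\le1 \).

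I expect the point requiring the most care, and the reason a crude perturbation estimate of \( \mathcal K_\n \) against its limiting operator is not an option, to be that as \( \Delta_{\n,i}\to\Delta_{\vec\varsigma,i} \) the functionals \( \Phi_\n \) converge to their limit only strongly, not in operator norm on \( C(\vec\Delta) \) (a modulus of continuity is not dominated by the sup norm). The factorization above circumvents this completely: the only genuinely \( \n \)-dependent ingredient surviving in \( (\mathcal I-\mathcal K_\n)^{-1} \) is \( \mathcal A_\n \), and its uniform boundedness is precisely Lemma~\ref{lem:sa9}. The secondary fact that must be checked carefully is that \( (\mathcal I-\mathcal H_\n)^{-1} \) genuinely preserves the constant‑vector subspace; this is immediate from \eqref{H-consts} together with the injectivity of \( \mathcal I-\mathcal H_\n \), but it is the observation on which the whole computation rests.
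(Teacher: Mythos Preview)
Your proof is correct, but the paper takes a shorter and more direct route. Instead of factoring through $\R^d$ and invoking the push-through identity, the paper simply writes down the explicit inverse,
\[
(\mathcal I-\mathcal K_\n)^{-1}\vec u \;=\; \vec u - \mathcal H_\n\,\vec u_{\n,\infty},
\]
and verifies it in one line using $(\mathcal H_\n\,\vec u_{\n,\infty})_{\n,\infty}=\mathcal H_\n\,\vec u_{\n,\infty}$ (a consequence of \eqref{H-consts}) together with the identity $(\mathcal I-\mathcal H_\n)^{-1}(\vec u_{\n,\infty}-\mathcal H_\n\,\vec u_{\n,\infty})=\vec u_{\n,\infty}$. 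Injectivity of the map $\vec u\mapsto \vec u-\mathcal H_\n\,\vec u_{\n,\infty}$ is checked via Lemma~\ref{lem:sa5}. Your Woodbury computation, carried through, yields exactly the same formula (as one sees from $\mathcal A_\n\cdot\tfrac12(\mathcal I+\mathcal J)\vec u_{\n,\infty}=\tfrac12(\mathcal J-\mathcal I)\vec u_{\n,\infty}=-\mathcal H_\n\,\vec u_{\n,\infty}$), so the two arguments are equivalent in content. The practical difference is in the bookkeeping for the uniform bound: the paper's explicit inverse immediately gives $\|(\mathcal I-\mathcal K_\n)^{-1}\|\le 1+\|\mathcal H_\n\|$, which only requires the elementary observation that $\sup_\n\|\mathcal H_\n\|<\infty$; your route instead appeals to Lemma~\ref{lem:sa9} for $\sup_\n\|(\mathcal I-\mathcal H_\n)^{-1}\|<\infty$, a heavier input than is actually needed here. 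Your approach does have the virtue of being mechanical and of making transparent why the compression $\Phi_\n\mathcal A_\n$ is $\n$-independent, but the paper's shortcut is cleaner.
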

\begin{proof}
Since \( \mathcal H_\n \, \vec u_{\n,\infty} \) is a vector of constants and \( \vec k_{\n,\infty} = \vec k \) for every vector of constants \( \vec k \), we get from the very definition of \( \mathcal K_\n \) that
\begin{equation}\label{sapsad16}
(\mathcal I - \mathcal K_\n) (\vec u - \mathcal H_\n\, \vec u_{\n,\infty} ) = \vec u - \vec u_{\n,\infty} + (\mathcal I-\mathcal H_\n)^{-1} ( \vec u_{\n,\infty} - \mathcal H_\n \, \vec u_{\n,\infty})  = \vec u
\end{equation}
for every \( \vec u\in \mathfrak B \). It follows from the linearity of $\mathcal H_\n$ that
\[
\vec u - \mathcal H_\n\, \vec u_{\n,\infty} = \vec v - \mathcal H_\n\, \vec v_{\n,\infty} \quad \Leftrightarrow \quad (\vec u-\vec v) = \mathcal H_\n (\vec u-\vec v)_{\n,\infty}.
\]
Thus, for the above equalities to be true, \( \vec u-\vec v  \) must be a vector of constants and therefore is a fixed point of \( \mathcal H_\n \). Hence, \( \vec u=\vec v \) by Lemma~\ref{lem:sa5}. Therefore, \( \mathcal I - \mathcal K_\n \) is indeed an invertible linear operator. Acting on both sides of \eqref{sapsad16} with $(\mathcal I - \mathcal K_\n)^{-1}$ gives
\[
(\mathcal I - \mathcal K_\n)^{-1} \vec u = \vec u - \mathcal H_\n\, \vec u_{\n,\infty}.
\] 
It holds that \( \|\vec u_{\n,\infty}\|_{\mathfrak B} \leq \|\vec u\|_{\mathfrak B} \) by the maximum principle for harmonic functions, see \eqref{u_infty}. Hence, 
\[
\|(\mathcal I - \mathcal K_\n)^{-1}\|_{\mathfrak B} \leq 1 + \| \mathcal H_\n\|_{\mathfrak B}.
\]
The first estimate of Lemma~\ref{lem:sa6} can be easily quantified to show that \( \| \mathcal H_\n\|_{\mathfrak B} \) is  bounded above by a number that depends only on the size of the convex hull of   \( \cup_{i\in I_d}\Delta_i \) and the shortest distance between \(  \Delta_i \) and \( \Delta_j \), \( i\neq j \), \( i,j\in I_d\). This yields the desired claim.
\end{proof}

Lemmas~\ref{lem:sa18} and~\ref{lem:sa20} as well as \eqref{restatement} allow us to make the crucial observation that \( \vec q_\n - \vec y_\n \) is {\it the unique fixed point of the operator \( (\mathcal I-\mathcal K_\n)^{-1}(I-\mathcal H_\n)^{-1}\Upsilon_\n \).} We now use the  Schauder-Tychonoff fixed-point theorem and Lemma~\ref{lem:sa19} to show that these fixed points must lie  arbitrarily close to zero for all \( |\n| \) large enough.

\begin{lemma}
\label{lem:sa21}
Given \( \delta>0 \), there exists \( N_{\vec\varsigma}(\delta) \) such that
\[
 \vec q_\n - \vec y_\n \in B_\delta := \big\{\vec u:~\|\vec u\|_{C^1(\vec\Delta)} \leq \delta\big\}
 \]
 for all \( |\n| \geq N_{\vec\varsigma}(\delta) \), \( \n\in \mathcal N(\vec\varsigma) \).
\end{lemma}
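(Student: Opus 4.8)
The plan is to recognize $\vec q_\n-\vec y_\n$ as the unique fixed point on $C(\vec\Delta)$ of the operator
\[
\mathcal T_\n:=(\mathcal I-\mathcal K_\n)^{-1}(\mathcal I-\mathcal H_\n)^{-1}\Upsilon_\n,
\]
which is exactly the observation recorded just before the statement of this lemma (it follows from \eqref{restatement}, Lemma~\ref{lem:sa18}, and Lemma~\ref{lem:sa20}), and then, for every sufficiently large $|\n|$, to exhibit a fixed point of $\mathcal T_\n$ inside $B_\delta$ via the Schauder--Tychonoff fixed-point theorem. Since $\mathcal T_\n$ has only one fixed point globally, the one the theorem produces must equal $\vec q_\n-\vec y_\n$, which is then forced into $B_\delta$.

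First I would fix the compact convex set on which $\mathcal T_\n$ acts. Every $\vec u$ with $\|\vec u\|_{C^1(\vec\Delta)}\le\delta$ is uniformly bounded and uniformly Lipschitz, so $B_\delta$ is precompact in $C(\vec\Delta)$ by the Arzel\`a--Ascoli theorem; let $U$ denote its closure in $C(\vec\Delta)$, which is compact and convex. By Lemma~\ref{lem:sa9} (applied with the outer vector of intervals taken to be $\vec\Delta=(\Delta(\mu_1),\dots,\Delta(\mu_d))$, so that the operator there is $\mathcal H_\n=\mathcal H_{\vec\Delta_\n,\vec\Delta}$) and by Lemma~\ref{lem:sa20}, the constants $C_1:=\sup_\n\|(\mathcal I-\mathcal H_\n)^{-1}\|_{C^1(\vec\Delta)}$ and $C_2:=\sup_\n\|(\mathcal I-\mathcal K_\n)^{-1}\|_{C^1(\vec\Delta)}$ are finite. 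Applying Lemma~\ref{lem:sa19} to the compact set $U$ with $\epsilon:=\delta/(C_1C_2)$ furnishes $N_{\vec\varsigma}(\delta)$ such that $\|\Upsilon_\n\vec u\|_{C^1(\vec\Delta)}\le\epsilon$ for all $\vec u\in U$ and $|\n|\ge N_{\vec\varsigma}(\delta)$, $\n\in\mathcal N(\vec\varsigma)$. Since $\Upsilon_\n$ takes values in $C^1(\vec\Delta)$ (each entry of $\mathcal D_\n$ and of $\mathcal H_\n$ is the restriction of a harmonic function, cf.\ the remarks preceding Lemma~\ref{lem:sa18}) and the two resolvents are bounded on $C^1(\vec\Delta)$, this yields $\|\mathcal T_\n\vec u\|_{C^1(\vec\Delta)}\le C_2C_1\epsilon=\delta$ for $\vec u\in U$, hence $\mathcal T_\n(U)\subseteq B_\delta\subseteq U$.

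It then remains to check that $\mathcal T_\n$ is continuous on $C(\vec\Delta)$ and to conclude. The only nonlinear ingredient is $\mathcal D_\n$, and each of its entries $D_{\n,\Delta_j\to\Delta_i}$ depends continuously on $u$: the coefficients of the monic orthogonal polynomial $T_{n_j}(e^{2u-\cdots}\mu_j)$ are continuous functions of the moments of its weight (hence of $u\in C(\Delta_j)$), and these polynomials do not vanish on $\Delta_i$ because all their zeros lie in $\Delta_j$, which is disjoint from $\Delta_i$; the remaining factors of $\mathcal T_\n$ are bounded linear operators. Thus $\mathcal T_\n$ is a continuous self-map of the compact convex set $U\subset C(\vec\Delta)$, and Schauder--Tychonoff yields a fixed point of $\mathcal T_\n$ in $U$. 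By the global uniqueness of the fixed point of $\mathcal T_\n$ this fixed point equals $\vec q_\n-\vec y_\n$; in particular $\vec q_\n-\vec y_\n\in U$, and therefore $\vec q_\n-\vec y_\n=\mathcal T_\n(\vec q_\n-\vec y_\n)\in\mathcal T_\n(U)\subseteq B_\delta$, as desired. The step needing the most care is arranging $\mathcal T_\n(U)\subseteq B_\delta\subseteq U$ uniformly in $\n$: this is where the smoothing property of $\Upsilon_\n$ — that its range lies in $C^1(\vec\Delta)$ — must be balanced against the $C^1$-uniform smallness from Lemma~\ref{lem:sa19} and the $C^1$-uniform boundedness of $(\mathcal I-\mathcal H_\n)^{-1}$ and $(\mathcal I-\mathcal K_\n)^{-1}$, together with the final bootstrap from ``fixed point in $U$'' to ``fixed point in $B_\delta$'' using the self-mapping property.
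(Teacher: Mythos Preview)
Your proof is correct and follows essentially the same approach as the paper: bound $\|\Upsilon_\n\|_{C^1}$ uniformly via Lemma~\ref{lem:sa19}, combine with the uniform $C^1$-bounds on $(\mathcal I-\mathcal H_\n)^{-1}$ and $(\mathcal I-\mathcal K_\n)^{-1}$ from Lemmas~\ref{lem:sa9} and~\ref{lem:sa20}, apply Schauder--Tychonoff, and invoke uniqueness of the fixed point. Your version is in fact slightly more careful: the paper applies Schauder--Tychonoff directly on $B_\delta$, asserting it is compact in $C(\vec\Delta)$, whereas you take the $C(\vec\Delta)$-closure $U$ of $B_\delta$ (which is genuinely compact by Arzel\`a--Ascoli), show $\mathcal T_\n(U)\subseteq B_\delta\subseteq U$, and then bootstrap the fixed point from $U$ back into $B_\delta$; you also spell out the continuity of $\mathcal D_\n$ that the paper leaves implicit.
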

\begin{proof}
Observe that \( B_\delta \) is a compact convex subset of a locally convex space \( C(\vec\Delta) \). It follows from Lemma~\ref{lem:sa9} and Lemma~\ref{lem:sa20} that
\[
\big\| (\mathcal I-\mathcal K_\n)^{-1}(I-\mathcal H_\n)^{-1}\Upsilon_\n \, \vec u \big\|_{C^1(\vec\Delta)} \leq C_{\vec\varsigma} \big\| \Upsilon_\n \, \vec u \big\|_{C^1(\vec\Delta)}
\]
for some constant \( C_{\vec\varsigma} \) independent of \( \n\in\mathcal N(\vec\varsigma) \). We further get from Lemma~\ref{lem:sa19} that there exists \( N_{\vec\varsigma}(\delta) \) such that
\[
C_{\vec\varsigma}\big\| \Upsilon_\n \, \vec u \big\|_{C^1(\vec\Delta)} \leq \delta
\]
for all \( |\n| \geq N_{\vec\varsigma}(\delta) \), \( n\in \mathcal N(\vec\varsigma) \). Therefore, it holds that
\[
\big( (\mathcal I-\mathcal K_\n)^{-1}(I-\mathcal H_\n)^{-1}\Upsilon_\n \big) (B_\delta) \subseteq B_\delta
\]
for all \( |\n| \geq N_{\vec\varsigma}(\delta) \), \( \n\in \mathcal N(\vec\varsigma) \). As \( (\mathcal I-\mathcal K_\n)^{-1}(I-\mathcal H_\n)^{-1}\Upsilon_\n \) is a continuous operator from \( C(\vec\Delta) \) into itself, it must have a fixed point in \( B_\delta \) according to Schauder-Tychonoff fixed-point theorem \cite[Theorem~V.10.5]{MR1009162}. Since \(  \vec q_\n - \vec y_\n \) is its unique fixed point, the desired claim follows.
\end{proof}

\begin{lemma}
\label{lem:sa22}
Under the conditions of Theorem~\ref{thm:sa4} (with \( \vec c \) replaced by \( \vec\varsigma \)) it holds that
\[
P_{\n,i}(z) = (1+o(1)) \exp\left( |\n| \int\log(z-x)d\omega_{\n,i}(x)\right) \frac{S_{\vec \varsigma,i}(\infty)}{S_{\vec \varsigma,i}(z)} 
\]
locally uniformly in \( D_{\Delta_{\vec \varsigma,i}} \) for all \( |\n| \) large enough, \( \n\in\mathcal N(\vec \varsigma) \), and each \( i\in I_d \).
\end{lemma}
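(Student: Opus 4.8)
The plan is to apply Lemma~\ref{lem:sa17} with the vector-functions \( \vec u_\n := \vec q_\n - \vec y_\n \), read off the strong asymptotics of \( |P_{\n,i}| \), and then upgrade this to asymptotics of \( P_{\n,i} \) itself by a normal-family argument. By Lemma~\ref{lem:sa21} the \( \vec u_\n \) eventually lie in a fixed ball of \( C^1(\vec\Delta) \), whose closure in \( C(\vec\Delta) \) is a compact set \( U \); so Lemma~\ref{lem:sa17} applies along \( \mathcal N(\vec\varsigma) \) with these \( \vec u_\n \), and its error term is a genuine \( o(1) \) as \( |\n|\to\infty \).

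The first step is to identify the orthogonal polynomial produced by Lemma~\ref{lem:sa17}. Using that \( \mathcal H_\n = \mathcal H_{\vec\Delta_\n,\vec\Delta} \), that \( \vec y_\n = \mathcal H_\n(\vec s_\n - \vec s_{\n,\infty}) \), and identity \eqref{H-consts}, one gets \( 2y_{\n,i} = -\sum_{j\neq i}H_{\Delta_{\n,j}}s_{\n,j} + \sum_{j\neq i}s_{\n,j}(\infty) \) on \( \Delta_i \), whence the exponent \( 2u_{\n,i} - \sum_{j\neq i}(H_{\Delta_{\n,j}}s_{\n,j}+|\n|V^{\omega_{\n,j}}) \) equals \( 2q_{\n,i} - |\n|\sum_{j\neq i}V^{\omega_{\n,j}} \) up to an additive constant. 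Since monic orthogonal polynomials are insensitive to a constant multiplier of the measure, the polynomial produced by Lemma~\ref{lem:sa17} is \( T_{n_i}(e^{2q_{\n,i}-|\n|\sum_{j\neq i}V^{\omega_{\n,j}}}\mu_i) = P_{\n,i} \) by Lemma~\ref{lem:sa18}. Thus Lemma~\ref{lem:sa17} gives, locally uniformly in \( D_{\Delta_{\vec\varsigma,i}} \),
\[
\log|P_{\n,i}(z)| = o(1) - |\n|V^{\omega_{\n,i}}(z) + H_{\Delta_{\n,i}}(s_{\n,i}+u_{\n,i})(\infty) - H_{\Delta_{\n,i}}(s_{\n,i}+u_{\n,i})(z).
\]

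Next I would simplify the right-hand side. By Lemma~\ref{lem:sa21}, \( \|u_{\n,i}\|_{C(\Delta_{\n,i})}\to0 \); since \( \Delta_{\n,i}\to\Delta_{\vec\varsigma,i} \), every compact \( K\subset D_{\Delta_{\vec\varsigma,i}} \) is eventually at positive distance from \( \Delta_{\n,i} \), so the quantitative bound from the proof of Lemma~\ref{lem:sa6} shows \( H_{\Delta_{\n,i}}u_{\n,i} = o(1) \) uniformly on \( K \) and at \( \infty \). By Lemma~\ref{lem:sa7}, \( H_{\Delta_{\n,i}}s_{\n,i}(z) = \log|S_{\n,i}(z)| \), and \eqref{Conv-H} together with Lemma~\ref{lem:sa7} gives \( \log|S_{\n,i}(z)|\to\log|S_{\vec\varsigma,i}(z)| \) locally uniformly in \( D_{\Delta_{\vec\varsigma,i}} \) and \( \log S_{\n,i}(\infty)\to\log S_{\vec\varsigma,i}(\infty) \). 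Writing \( -V^{\omega_{\n,i}}(z)=\re\big(\int\log(z-x)\,d\omega_{\n,i}(x)\big) \), this yields
\[
\log|P_{\n,i}(z)| = \re\!\left(|\n|\int\log(z-x)\,d\omega_{\n,i}(x)\right) + \log\left|\frac{S_{\vec\varsigma,i}(\infty)}{S_{\vec\varsigma,i}(z)}\right| + o(1)
\]
locally uniformly in \( D_{\Delta_{\vec\varsigma,i}} \). To conclude, put \( R_{\n,i}(z) := P_{\n,i}(z)\exp(-|\n|\int\log(z-x)\,d\omega_{\n,i}(x))\,S_{\vec\varsigma,i}(z)/S_{\vec\varsigma,i}(\infty) \), with the branch of the exponent cut along \( \Delta_{\n,i} \) and normalized to be \( n_i\log z + \mathcal O(1/z) \) at \( \infty \); as \( |\n||\omega_{\n,i}| = n_i\in\Z \), the exponential is single-valued, holomorphic and zero-free on \( \overline\C\setminus\Delta_{\n,i} \) and equals \( z^{n_i}(1+\mathcal O(1/z)) \) at \( \infty \). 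On any fixed compact \( K\subset D_{\Delta_{\vec\varsigma,i}} \) the previous display forces \( \log|P_{\n,i}| \) to be finite and within \( o(1) \) of a fixed continuous function for \( |\n| \) large, so \( P_{\n,i} \) has no zeros in \( K \); hence \( R_{\n,i} \) is holomorphic and zero-free on \( K \) with \( \log|R_{\n,i}|\to0 \) uniformly. Thus \( \{R_{\n,i}\} \) is normal in \( D_{\Delta_{\vec\varsigma,i}} \) with all limit points holomorphic of modulus \( 1 \), hence constant, and the constant is \( R_{\n,i}(\infty)=1 \) (compare monic leading terms and use \( S_{\vec\varsigma,i}(z)/S_{\vec\varsigma,i}(\infty)\to1 \) at \( \infty \)). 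Therefore \( R_{\n,i}\to1 \) locally uniformly, which is the assertion.

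I expect the main obstacle to be the constant-bookkeeping in the identification step (ensuring Lemma~\ref{lem:sa18} really recognizes the polynomial from Lemma~\ref{lem:sa17} as \( P_{\n,i} \)), together with the need to check that the various error terms survive as \( o(1) \) in the locally uniform sense after all substitutions; the branch/normalization issues and the normal-family conclusion are then routine.
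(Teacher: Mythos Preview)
Your argument is correct and close in spirit to the paper's, since both ultimately invoke Theorem~\ref{thm:vw2}. The paper, however, applies Theorem~\ref{thm:vw2} directly with \(h_n=2q_{\n,i}\): using Lemma~\ref{lem:sa21} and the convergence of \(\vec y_\n\), the sequence \(\{2q_{\n,i}\}\) is precompact in \(C(\Delta_i)\), so Theorem~\ref{thm:vw2} yields the \((1+o(1))\)-asymptotics for \(P_{\n,i}(z)\) itself (not just its modulus), with Szeg\H{o} factor \(G(e^{2q_{\n,i}}\mu_{i|\Delta_{\vec\varsigma,i}},\cdot)\); this factor is then reduced to \(S_{\vec\varsigma,i}\) via \eqref{sapsad20}, \eqref{apsad2} and the fact that the constant part \(\mathcal H_{\vec\Delta_{\vec\varsigma}}\vec s_{\vec\varsigma,\infty}\) cancels in the ratio. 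Your route goes through Lemma~\ref{lem:sa17}, which only outputs \(\log|P_{\n,i}|\), so you need the extra normal-family step at the end; that step is sound (monicity forces \(R_{\n,i}(\infty)=1\), and any locally uniform limit has unit modulus, hence equals~\(1\)). Thus both approaches are valid; the paper's is a bit shorter because Theorem~\ref{thm:vw2} already delivers the full asymptotics, while yours is perhaps more self-contained in its use of the machinery already assembled in Lemmas~\ref{lem:sa17}--\ref{lem:sa21}.
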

\begin{proof}
Recall the definition of the functions \( \kappa_{\n,i}(x) \) in \eqref{kappani}. Since monic orthogonal polynomials do not depend on the normalization of the measure of orthogonality, it readily follows from the definition of the vector-functions \( \vec q_\n \) in Lemma~\ref{lem:sa18} that
\[
P_{\n,i}(z) = T_{n_i}\left(e^{2q_{\n,i}+2n_i((|\n|/n_i)V^{\omega_{\n,i}}+\kappa_{\n,i})}\mu_i\right)(z).
\]
It has been shown in Lemma~\ref{lem:sa21} that \( \vec q_\n = \vec y_\n + \vec u_\n \), where \( \|\vec u_\n\|_{C^1(\vec \Delta)} \to 0 \) as \( |\n|\to\infty \), \( \n\in\mathcal N(\vec \varsigma) \). It clearly follows from \eqref{Conv-H} and their definition in \eqref{yn} that the vector-functions \( y_\n \) form a uniformly convergent sequence whose limit, when restricted to  \( \vec\Delta_{\vec\varsigma} \), is equal to \( \mathcal H_{\vec\Delta_{\vec\varsigma}} \big( \vec s_{\vec\varsigma} - \vec s_{\vec\varsigma,\infty}\big) \).  As the sequence \( \{ \vec q_\n \} \) is convergent, it represents a  precompact set. Hence, we  conclude that Theorem~\ref{thm:vw2} is applicable with $h_n=2q_{\n,i}$ and we have
\begin{eqnarray*}
P_{\n,i}(z) = (1+o(1)) \exp\left( |\n| \int\log(z-x)d\omega_{\n,i}(x)\right) \times
\\  \frac{
G_{\exp(2\mathcal H_{\vec\Delta_{\vec\varsigma}} ( \vec s_{\vec\varsigma} - \vec s_{\vec\varsigma,\infty})_i)}(\infty)
}{
G_{\exp(2\mathcal H_{\vec\Delta_{\vec\varsigma}}( \vec s_{\vec\varsigma} - \vec s_{\vec\varsigma,\infty})_i)}(z)
}
\frac{
G_{{\mu_i}_{|\Delta_{\vec\varsigma,i}}}(\infty)
}{
G_{{\mu_i}_{|\Delta_{\vec\varsigma,i}}}(z)
}
\end{eqnarray*}
as \( |\n|\to\infty \), \( \n\in\mathcal N(\vec \varsigma) \). Every component of function $\mathcal H_{\vec\Delta_{\vec\varsigma}} \vec s_{\vec\varsigma,\infty} $ is a constant function and it will cancel out in the fraction.  We recall definitions \eqref{outer} and \eqref{SzegoFun} to conclude that
\begin{equation}
G_{\exp(2\mathcal H_{\vec\Delta_{\vec\varsigma}}( \vec s_{\vec\varsigma} )_i)}(z) G_{{\mu_i}_{|\Delta_{\vec\varsigma,i}}}(z) =
\Omega_{\Delta_{\vec\varsigma,i}} \big(\exp(\mathcal H_{\vec\Delta_{\vec\varsigma}}( \vec s_{\vec\varsigma} )_i,z \big)  \, \Omega_{\Delta_{\vec\varsigma,i}}(\sqrt{\nu_{\Delta_{\vec\varsigma,i}}},z).
\label{sapsad21}
\end{equation}
Formula \eqref{sapsad20} yields
\[
\exp(\vec s_{\vec\varsigma,i})=\sqrt{\nu_{\Delta_{\vec\varsigma,i}}}
\exp\Bigl((\mathcal{H}_{\Delta_{\vec\varsigma}}\vec s_{\vec\varsigma})_i\Bigr)
\]
and formula \eqref{apsad2} gives $S_{\vec\varsigma,i}(z)=\Omega_{\Delta_{\vec\varsigma,i}}(\exp(\vec s_{\vec\varsigma,i}),z)$. Substitution into \eqref{sapsad21} provides the required asymptotics.
\end{proof}

\section{Strong Asymptotics of OPs with Varying Weights}
\label{sec:vw}

In this section, we obtain two results that were crucial for proving Theorem \ref{thm:sa4}. They improve and generalize \cite[Theorem~14.4]{Totik} by Totik.

\subsection{Main Theorems}

We recall the following notation: given a non-negative Borel measure \( \mu \) on an interval \( \Delta \) and a continuous non-negative function \( f \) on \( \Delta \), we denote by \( T_n(f\mu) \) the \( n \)-th monic orthogonal polynomial with respect to the measure \( f\mu \), where, with a slight abuse of notation, we write \( f\mu \) for the measure \( fd\mu \). 

\begin{theorem}
\label{thm:vw1}
Let \( \Delta=[\alpha,\beta] \) and \( \{(\mu_n,h_n,\omega_n)\} \) be a sequence of triples, where \( \mu_n,\omega_n \) are measures on \( \Delta \) and \( h_n \) is a continuous function on \( \Delta \). Assume further that
\begin{itemize}
\item[$(A)$] there exists a finite measure $\mu$ on $\Delta$ such that for any non-negative function \( f\in C(\Delta) \) it holds that
\[
\limsup_{n\to\infty} \int fd\mu_n \leq \int fd\mu;
\]
\item[$(B)$] if $v_n$ and $v$ denote the Radon-Nikodym derivatives of \( \mu_n \) and \( \mu \) with respect to the arcsine distribution \( \omega_\Delta \), see \eqref{szego}, then  \( \|\log v_n- \log v\|_{L^1(\omega_\Delta)}\to 0 \) as \( n\to\infty \); \smallskip
\item[$(C)$] the functions \( h_n(x) \) belong to \( \mathcal K \), a fixed compact subset of \( C(\Delta) \); \smallskip
\item[$(D)$] \( d\omega_n(x)=\omega_n^\prime(x)dx \) are probability measures such that the functions \( \omega_n^\prime(x) \) form a uniformly equicontinuous family on each compact subset of \( (\alpha,\beta) \) and there is $\tau>0$ such that
\begin{align*}
\omega_n^\prime(x)  & \gtrsim |w_\Delta(x)|^{\varkappa_L}, \quad x\in [\alpha+n^{-\tau},\beta-n^{-\tau}], \\
\omega_n^\prime(x) &  \lesssim |w_\Delta(x)|^{\varkappa_U}, \quad  x\in (\alpha,\beta),
\end{align*}
for some \( \varkappa_L,\varkappa_U>-2 \), see \eqref{phi-w}.
\end{itemize}
Write \( \theta_n(x) = 2nV^{\omega_n}(x)+h_n(x) \). Then, it holds locally uniformly in \( D_\Delta = \overline\C\setminus \Delta \) that
\begin{equation}\label{sapsad32}
T_n\left(e^{\theta_n}\mu_n\right)(z) = \left(1+o_{\mathcal K}(1)\right) \exp\left( n \int\log(z-x)d\omega_n(x)\right) \frac{G(e^{h_n}\mu_n,\infty)}{G(e^{h_n}\mu_n,z)}.
\end{equation}
Moreover,
\[
\int_\Delta T_n^2\left(e^{\theta_n}\mu_n\right)(x)e^{\theta_n(x)}d\mu_n(x) = 2\big(1+o_{\mathcal K}(1)\big)G^2\big(e^{h_n}\mu_n,\infty\big).
\]
\end{theorem}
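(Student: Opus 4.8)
The plan is to prove Theorem~\ref{thm:vw1} as a generalization of Totik's \cite[Theorem~14.4]{Totik}, so the strategy is to reduce the varying-weight statement \eqref{sapsad32} to the situation covered by that theorem and then extract the $L^2$-norm asymptotics as a corollary. First I would address \eqref{sapsad32} itself. The key observation is that $\theta_n(x) = 2nV^{\omega_n}(x) + h_n(x)$ is exactly the type of varying exponent for which Totik's machinery was designed: $V^{\omega_n}$ plays the role of the external field generated by the (essentially equilibrium-like) measures $\omega_n$, while the bounded perturbation $h_n$ and the underlying measure $\mu_n$ enter the Szeg\H{o} function $G(e^{h_n}\mu_n,\cdot)$. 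The three structural hypotheses are tailored to make this work: $(A)$ gives the upper-envelope control on $\mu_n$ needed for the upper bound on the polynomials (a Nikolskii/Bernstein-Walsh type estimate), $(B)$ upgrades weak control to the $L^1(\omega_\Delta)$-convergence of $\log v_n$ which is what makes the Szeg\H{o} functions behave continuously, $(C)$ keeps the perturbations $h_n$ in a compact set so all estimates are uniform in the relevant family (this is the source of the $o_{\mathcal K}(1)$), and $(D)$ is the crucial weakening of Totik's original assumptions — it allows the densities $\omega_n'$ to blow up or vanish like $|w_\Delta|^{\varkappa}$ at the endpoints with $\varkappa>-2$, rather than being uniformly comparable to the arcsine density. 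I would follow Totik's proof line by line, checking that at each place where he uses regularity of the equilibrium measure the weaker bound in $(D)$ together with the equicontinuity on compacts still suffices; the point $n^{-\tau}$ cutoff in the lower bound is precisely what is needed to control the contribution of the endpoint neighborhoods when estimating $\|T_n\|_{L^2}$ from below via a Chebyshev-type extremal argument, since on $[\alpha+n^{-\tau},\beta-n^{-\tau}]$ the density is bounded below by a negative power of $n$, which is subexponential and hence harmless after taking $n$-th roots and logarithms.

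The hard part will be the lower bound on the polynomials and, equivalently, the second (norm) assertion: the upper bound $|T_n(e^{\theta_n}\mu_n)(z)| \le (1+o_{\mathcal K}(1))|\exp(n\int\log(z-x)d\omega_n)\,G(e^{h_n}\mu_n,\infty)/G(e^{h_n}\mu_n,z)|$ follows relatively softly from $(A)$, the extremality of $T_n$, and a Bernstein-Walsh argument with the weight $e^{\theta_n}$ whose logarithmic potential is $-2nV^{\omega_n}+$ bounded. For the matching lower bound one must show the polynomial cannot be much smaller, and the standard device is to exhibit a competitor in the extremal problem: test $T_n$ against a cleverly chosen polynomial or, better, compare the $L^2$-norm $\int T_n^2 e^{\theta_n} d\mu_n$ with the norm of the naive ``model'' polynomial $P_n^{\mathrm{model}}(x) = \exp(n\int\log(x-t)d\omega_n(t))/(\text{Szeg\H{o} factors})$ restricted to $\Delta$. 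Here is where $(D)$ is genuinely used: one needs $\int_\Delta (P_n^{\mathrm{model}})^2 e^{\theta_n} d\mu_n \asymp G^2(e^{h_n}\mu_n,\infty)$ up to $1+o(1)$, and the integrand, after all the potential-theoretic cancellations, is comparable to $v_n(x)/\sqrt{(x-\alpha)(\beta-x)}$ times a slowly varying factor, which is integrable precisely because $v_n = \omega_n'/\omega_\Delta'$-type quantities satisfy the $|w_\Delta|^{\varkappa}$ bounds with $\varkappa>-2$. I would isolate this endpoint integrability computation as a separate lemma and treat the bulk contribution via a Riemann-sum / dominated-convergence argument using the equicontinuity in $(D)$ and the $L^1$-convergence in $(B)$.

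Once \eqref{sapsad32} is established, the norm asymptotics should come out by a short additional argument rather than a fresh proof. The orthogonality of $T_n := T_n(e^{\theta_n}\mu_n)$ gives $\int_\Delta T_n^2 e^{\theta_n}d\mu_n = \int_\Delta T_n \cdot x^n e^{\theta_n}d\mu_n$ (since $T_n$ is monic and orthogonal to lower degrees), and one can convert this into a contour-integral expression for the leading coefficient of the associated function of the second kind, or equivalently use the standard Szeg\H{o}-type identity that expresses the $L^2$-norm of the $n$-th orthonormal polynomial in terms of the value at infinity of the corresponding Szeg\H{o} function. Concretely, I would write $\int_\Delta T_n^2 e^{\theta_n}d\mu_n = \frac{1}{2\pi\mathrm i}\oint T_n(z) R_n(z)\,dz$ where $R_n$ is the Cauchy transform $\int_\Delta T_n(x)e^{\theta_n(x)}d\mu_n(x)/(z-x)$, deform the contour to infinity, and read off the residue using the strong asymptotics \eqref{sapsad32} for $T_n$ together with the companion asymptotics for $R_n$ (which one obtains the same way, or by noting $R_n$ behaves like $2 G(e^{h_n}\mu_n,\infty) G(e^{h_n}\mu_n,z)^{-1}\exp(-n\int\log(z-x)d\omega_n)$, the reciprocal model). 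The product $T_n R_n$ then tends to $2G^2(e^{h_n}\mu_n,\infty)\cdot z^{-1}(1+o_{\mathcal K}(1))$ near infinity, and the residue at infinity gives exactly $2(1+o_{\mathcal K}(1))G^2(e^{h_n}\mu_n,\infty)$. The only care needed is that all error terms are uniform over $h_n\in\mathcal K$, which is inherited from the uniformity in \eqref{sapsad32}.
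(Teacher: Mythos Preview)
Your proposal misses the central mechanism. The object you call $P_n^{\mathrm{model}}(x)=\exp\big(n\int\log(x-t)\,d\omega_n(t)\big)/(\text{Szeg\H{o} factors})$ is not a polynomial, so it cannot be plugged into the $L^2$-extremal problem as a competitor, and a Bernstein--Walsh argument yields only $n$-th-root (weak) asymptotics, not the strong asymptotics claimed. The paper does \emph{not} ``follow Totik line by line''; it extracts one specific ingredient, \cite[Theorem~10.2]{Totik}, which under exactly hypothesis~$(D)$ manufactures genuine polynomials $H_n$, $\deg H_n\le n$, non-vanishing on $\Delta$, with $\iota_n(x):=e^{2nV^{\omega_n}(x)}|H_n(x)|^2$ satisfying $0<\iota_n\le1$ on $\Delta$ and $\int\log\iota_n\,d\omega_\Delta\to0$. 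This lets one rewrite
\[
T_n\big(e^{\theta_n}\mu_n\big)=T_n\big(e^{h_n}\tau_n^{-1}(\iota_n\mu_n)\big),\qquad \tau_n(z)=H_n(z)\overline{H_n(\bar z)}/|H_n(0)|^2,
\]
so the varying exponential weight is traded for the reciprocal of a polynomial of degree at most $2n$.

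From here one is in the regime of orthogonal polynomials with reciprocal polynomial weights of growing degree, and strong asymptotics for those is a separate theorem (Theorem~\ref{thm:pw8}), proved by lifting to the circle via the Joukowski map and invoking a Stahl / de~la~Calle~Ysern--L\'opez~Lagomasino type result (Theorem~\ref{thm:pw1}) for $\phi_n^*/W_n^*$. Conditions $(A)$--$(D)$ translate into the circle conditions $(A_\T)$--$(D_\T)$; in particular the Blaschke divergence $\sum_j(1-|b_{n,j}|)\to\infty$ comes for free from the placement of the zeros of $H_n$. Both \eqref{sapsad32} and the $L^2$-norm identity then drop out simultaneously from the circle theorem --- no separate contour-integral or second-kind-function argument is needed. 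Your outline never reaches either of these two reductions, and without them there is no concrete path from the hypotheses to \eqref{sapsad32}.
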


Theorem~\ref{thm:vw1} generalizes \cite[Theorem~14.4]{Totik} in the following ways: it replaces a single absolutely continuous measure \( \mu \) with a sequence of not necessarily absolutely continuous measures \( e^{h_n}\mu_n \). In \cite{Totik}, the analog of Theorem~\ref{thm:vw1} is deduced from \cite[Theorem~10.2]{Totik} combined with the work in \cite{CYLL98}. We follow the same approach but replace the relevant results in \cite{CYLL98} by Theorem~\ref{thm:pw8} proven further below in Section~\ref{sec:pw}. Besides Theorem~\ref{thm:vw1}, we also need the following generalization which is used when the ``pushing effect'' is present in the vector-potential problem \eqref{vec_eq1}--\eqref{vec_eq2} for Angelesco systems.

\begin{theorem}
\label{thm:vw2}
Let \( \mu \) be a compactly supported positive Borel measure and \( [\alpha_n,\beta_n] = \Delta_n \subseteq \Delta(\mu) = [\alpha(\mu),\beta(\mu)] \) be intervals that converge to some interval \( \Delta =[\alpha,\beta]\). Assume that \( \mu\in\mathrm{USz}(\Delta) \). Further, let \( \{(\kappa_n,h_n,\omega_n)\} \) be a sequence of triples, where \( \kappa_n,h_n \) are continuous functions on \( \Delta(\mu) \) and \( \omega_n \) are measures on \( \Delta(\mu) \) such that
\begin{itemize}
\item[$(1)$] the functions \( \kappa_n(x) \) are such that \( \kappa_n(x)\leq 0 \) on \( \Delta(\mu) \), \( \kappa_n(x)\equiv 0 \) on \( \Delta_n \), and it holds on \( \Delta(\mu)\setminus\Delta_n \) that \( |w_{\Delta_n}(x)|^\varkappa \lesssim |\kappa_n(x)| \lesssim |w_{\Delta_n}(x)|^\varkappa \) for some \( \varkappa>0 \); \smallskip
\item[$(2)$] the functions \( h_n(x) \) belong to \( \mathcal K \), a fixed compact subset of \( C(\Delta(\mu)) \); \smallskip
\item[$(3)$] \( d\omega_n(x)=\omega_n^\prime(x)dx \) are probability measures such that \( \supp\omega_n = \Delta_n \), the densities \( \{\omega_n^\prime(x)\} \)  form a uniformly equicontinuous family on any compact subset of $(\alpha,\beta)$, and 
\begin{equation}
\label{w_cond_1}
|w_{\Delta_n}(x)|^{\varkappa_L} \lesssim \omega_n^\prime(x) \lesssim|w_{\Delta_n}(x)|^{\varkappa_U}, \quad x\in(\alpha_n,\beta_n),
\end{equation}
for some \( \varkappa_L,\varkappa_U>-2 \); in addition, for all \( n \) such that \( \alpha_n>\alpha(\mu) \), we assume that there exists $\widehat\varkappa_U>0$ for which
\begin{equation}
\label{w_cond_2}
\omega_n^\prime(x)\lesssim |x-\alpha_n|^{\widehat\varkappa_U}, \quad x\in(\alpha_n,\alpha_n+\delta),
\end{equation}
for some $\delta>0$, and a similar assumption is made for all \( n \) for which  \( \beta_n<\beta(\mu) \).

\end{itemize}
Set \( \theta_n(x) := 2n(V^{\omega_n}(x) + \kappa_n(x))+h_n(x) \). Then, it holds locally uniformly in \( D_\Delta  \) that
\[
T_n\left(e^{\theta_n}\mu\right)(z) = \left(1+o_{\mathcal K}(1)\right) \exp\left( n \int\log(z-x)d\omega_n(x)\right) \frac{G(e^{h_n}\mu_{|\Delta},\infty)}{G(e^{h_n}\mu_{|\Delta},z)}.
\]
Moreover,
\[
\int_{\Delta(\mu)} T_n^2\left(e^{\theta_n}\mu\right)(x)e^{\theta_n(x)}d\mu(x) = 2\big(1+o_{\mathcal K}(1)\big)G^2\big(e^{h_n}\mu_{|\Delta},\infty\big).
\]
In the above two formulae, the functions \( G(e^{h_n}\mu_{|\Delta},z) \) can be replaced by \( G(e^{h_n}\mu_{|\Delta_n},z) \).
\end{theorem}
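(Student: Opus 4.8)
The plan is to derive Theorem~\ref{thm:vw2} from Theorem~\ref{thm:vw1}. The point is that \( \kappa_n \) — being \( \le 0 \), identically \( 0 \) on \( \Delta_n \), and satisfying \( |\kappa_n|\asymp|w_{\Delta_n}|^{\varkappa} \) on \( \Delta(\mu)\setminus\Delta_n \) — makes \( \omega_n \) the Mhaskar--Rakhmanov--Saff equilibrium measure of the total field \( -(V^{\omega_n}+\kappa_n) \) over \( \Delta(\mu) \), with support exactly \( \Delta_n \), and suppresses \( e^{\theta_n} \) on the two ``shoulders'' \( \Delta(\mu)\setminus\Delta_n \) enough that \( T_n(e^{\theta_n}\mu) \) is asymptotically \( R_n:=T_n(e^{\theta_n}\mu_{|\Delta_n}) \). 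Since \( \Delta_n\to\Delta \), the increasing affine bijection \( \ell_n\colon\Delta_n\to\Delta \) has slope \( c_n\to1 \) and \( \ell_n\to\mathrm{id} \); pulling the orthogonality of \( R_n \) back by \( \ell_n \) turns \( \Delta_n \) into the fixed interval \( \Delta \) and turns \( (v_n,h_n|_{\Delta_n},\omega_n) \) into a triple \( (\widetilde v_n,\widetilde h_n,\widetilde\omega_n) \) on \( \Delta \) with \( \widetilde\omega_n \) a probability measure supported on \( \Delta \), \( V^{\widetilde\omega_n}=V^{\omega_n}\circ\ell_n^{-1}-\log c_n \), \( \widetilde h_n=h_n\circ\ell_n^{-1} \) in a compact subset of \( C(\Delta) \), and \( \log\widetilde v_n=\log v+[\log\mu'\circ\ell_n^{-1}-\log\mu']-\log c_n \), \( v \) being the density of \( \mu_{|\Delta} \) with respect to \( \omega_\Delta \). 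As \( \mu\in\mathrm{USz}(\Delta) \), the bracket \( \to0 \) in \( L^1(\omega_\Delta) \) by \eqref{UnSzego}; with \( c_n\to1 \), the equicontinuity in \( (3) \), and the bounds \eqref{w_cond_1} transported via \( |w_{\Delta_n}(\ell_n^{-1}(x))|=c_n^{-1}|w_\Delta(x)| \), this gives hypotheses \( (A) \)--\( (D) \) of Theorem~\ref{thm:vw1}. It therefore supplies the strong asymptotics of \( R_n \) in \( D_{\Delta_n} \) and \( \int_{\Delta_n}R_n^2 e^{\theta_n}d\mu=2(1+o_{\mathcal K}(1))G^2(e^{h_n}\mu_{|\Delta_n},\infty) \), with right side bounded and bounded away from \( 0 \).

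\textbf{Reduction.} Everything then reduces to proving
\[
\int_{\Delta(\mu)\setminus\Delta_n}\bigl|R_n(x)\bigr|^2 e^{\theta_n(x)}\,d\mu(x)=o(1).
\]
Granting this, minimality of monic orthogonal polynomials and the Pythagorean identity in \( L^2(e^{\theta_n}\mu_{|\Delta_n}) \) (valid since \( R_n\perp\mathbb P_{n-1} \) there and \( T_n(e^{\theta_n}\mu)-R_n\in\mathbb P_{n-1} \)) yield at once \( \|T_n(e^{\theta_n}\mu)\|^2_{L^2(e^{\theta_n}\mu)}=(1+o(1))\|R_n\|^2_{L^2(e^{\theta_n}\mu_{|\Delta_n})} \) (the norm statement) and \( \|T_n(e^{\theta_n}\mu)-R_n\|_{L^2(e^{\theta_n}\mu_{|\Delta_n})}=o(1) \). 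A standard reproducing-kernel argument — write \( T_n(e^{\theta_n}\mu)-R_n=-\Pi_{n-1}R_n \), represent \( \Pi_{n-1}R_n(z) \) as an integral of \( R_n(x)K_{n-1}(z,x)e^{\theta_n(x)} \) over the shoulders only, and use the Bernstein--Walsh bound \( |P(z)|e^{nV^{\omega_n}(z)}\le\|Pe^{nV^{\omega_n}}\|_{L^\infty(\Delta_n)} \) for \( \deg P\le n \) (\( \Delta_n \) being the Mhaskar--Saff set) together with \( |R_n(z)|\asymp e^{-nV^{\omega_n}(z)} \) on compacts of \( D_\Delta \) — then upgrades this to \( T_n(e^{\theta_n}\mu)(z)=(1+o(1))R_n(z) \) locally uniformly in \( D_\Delta \). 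Plugging in the asymptotics of \( R_n \) and using \( G(e^{h_n}\mu_{|\Delta_n},z)/G(e^{h_n}\mu_{|\Delta},z)\to1 \) in \( D_\Delta \), which follows from \( \mu\in\mathrm{USz}(\Delta) \) exactly as in Lemma~\ref{lem:sa8}, finishes the theorem, the final remark included.

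\textbf{The shoulder estimate and the obstacle.} On \( \Delta(\mu)\setminus\Delta_n\subset D_{\Delta_n} \) one has \( \theta_n=2nV^{\omega_n}+2n\kappa_n+h_n \) with \( e^{2n\kappa_n(x)}\le e^{-cn|w_{\Delta_n}(x)|^{\varkappa}} \), and in \( |R_n(x)|^2 e^{\theta_n(x)} \) the growth \( |R_n(x)|^2\asymp e^{-2nV^{\omega_n}(x)} \) cancels the \( e^{+2nV^{\omega_n}(x)} \) of \( e^{\theta_n(x)} \). Fix \( \rho>0 \). For \( x \) on a shoulder with \( \dist(x,\partial\Delta_n)\ge\rho \), Theorem~\ref{thm:vw1} makes the integrand \( (1+o(1))e^{2n\kappa_n(x)}e^{h_n(x)}|G(e^{h_n}\mu_{|\Delta_n},\infty)/G(e^{h_n}\mu_{|\Delta_n},x)|^2\lesssim e^{-c'n\rho^{\varkappa/2}}\to0 \). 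For \( x \) within \( \rho \) of an endpoint of \( \Delta_n \), where Theorem~\ref{thm:vw1} is silent, one needs the \( n \)-uniform bound \( |R_n(x)|^2 e^{2nV^{\omega_n}(x)}\lesssim|G(e^{h_n}\mu_{|\Delta_n},\infty)/G(e^{h_n}\mu_{|\Delta_n},x)|^2 \) up to \( \partial\Delta_n \): I would get it from the maximum principle for the function \( \log|R_n|+nV^{\omega_n}-\log|G(e^{h_n}\mu_{|\Delta_n},\cdot)/G(e^{h_n}\mu_{|\Delta_n},\infty)| \), harmonic in \( D_{\Delta_n} \), vanishing at \( \infty \), bounded above on \( \Delta_n \) by an \( n \)-independent constant thanks to the norm estimate, a weighted Nikolskii inequality, and the soft/hard-edge behaviour forced by \eqref{w_cond_1}--\eqref{w_cond_2}. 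Since under \( \mu\in\mathrm{Sz}(\Delta) \) the Szeg\H{o} quotient blows up at \( \partial\Delta_n \) only at an \( n \)-independent power rate reciprocal to the admissible edge singularity of \( \mu' \), the near-endpoint contribution is \( \lesssim\int_{\{\dist(x,\partial\Delta_n)<\rho\}}|w_{\Delta_n}(x)|^{-1}dx\lesssim\rho^{1/2} \), small because \( \mu \) charges only the \emph{open} shoulders. Letting \( n\to\infty \), then \( \rho\downarrow0 \), completes the estimate. The delicate point — and the one I expect to absorb most of the work — is precisely this \( n \)-uniform control of \( R_n \), of the Christoffel kernel, and of the Szeg\H{o} quotient right up to the endpoints of \( \Delta_n \), which is where \eqref{w_cond_1}--\eqref{w_cond_2} and \( \mu\in\mathrm{USz}(\Delta) \) are essential and where the \( e^{o(n)} \) loss of a naive argument must be avoided.
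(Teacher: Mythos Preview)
Your overall architecture—restrict the orthogonality to a neighbourhood of \( \Delta_n \), apply Theorem~\ref{thm:vw1} after rescaling to \( \Delta \), then show the shoulders are negligible—matches the paper. The difference, and it matters, is the choice of the restricted interval: you work with \( \Delta_n \) itself, while the paper works with a slight enlargement \( \Delta_n^*=[\alpha_n-\delta_n,\beta_n+\delta_n]\cap\Delta(\mu) \) for a carefully tuned \( \delta_n=(\xi_n/n)^{2/(\varkappa+1)} \), \( \xi_n\to0 \) slowly. This buffer is not cosmetic; it is what makes the shoulder estimate go through.

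The gap in your argument is the near-endpoint part of the shoulder estimate. You assert that under \( \mu\in\mathrm{Sz}(\Delta) \) the quotient \( |G(e^{h_n}\mu_{|\Delta_n},\infty)/G(e^{h_n}\mu_{|\Delta_n},x)|^2 \) ``blows up at \( \partial\Delta_n \) only at an \( n \)-independent power rate.'' That is false in general: the Szeg\H{o} condition gives no power control at the endpoints. What the paper proves (Lemma~\ref{lem:vw7}), using \( \mu\in\mathrm{USz}(\Delta) \) in an essential way through Proposition~\ref{prop:sa3}(iii), is only
\[
\log G^{-2}\big(e^{h_n}\mu_{|\Delta_n},\beta_n+t\big)\le \frac{\epsilon_\mu(t)}{\sqrt t},\qquad \epsilon_\mu(t)\downarrow0,
\]
which is far worse than a power. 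On \( \Delta(\mu)\setminus\Delta_n \) one therefore has to balance a blowup \( \exp(\epsilon_\mu(t)/\sqrt t) \) against the decay \( \exp(-cn\,t^{\varkappa/2}) \) from \( e^{2n\kappa_n} \). For \( t \) of order \( n^{-2/(\varkappa+1)} \) or smaller the decay wins nothing, so integrating all the way down to \( t=0 \), as you must if you restrict to \( \Delta_n \), cannot be controlled. Your proposed bound \( \lesssim\int_{\{t<\rho\}}|w_{\Delta_n}(x)|^{-1}\,dx\lesssim\rho^{1/2} \) silently replaces \( \exp(\epsilon_\mu(t)/\sqrt t) \) by \( t^{-1/2} \) and also drops \( d\mu \) in favour of Lebesgue measure; neither is justified.

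The paper's fix is to keep the shoulders at distance \( \ge\delta_n \) from \( \Delta_n \): one applies Theorem~\ref{thm:vw1} on \( \Delta_n^* \) (absorbing \( e^{2n\kappa_n} \) into the varying measure and checking \( \|2n\kappa_n\|_{L^1(\omega_{\Delta_n^*})}\lesssim n\delta_n^{(\varkappa+1)/2}=\xi_n\to0 \) for condition \( (B) \)), uses a Hardy-space pointwise bound \( |(P_n^*/F_n)(\beta_n+t)|^2\lesssim t^{-1/2} \) on \( D_{\Delta_n} \), and then (Lemma~\ref{lem:vw8}) chooses \( \xi_n \) so that on \( t\ge\delta_n \) the exponent \( -cn\,t^{\varkappa/2}+\epsilon_\mu(t)/\sqrt t \) is uniformly \( \to-\infty \). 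The comparison of \( P_n^* \) with \( T_n(e^{\theta_n}\mu) \) is then done by a Cauchy-integral/\( L^2 \) argument (Lemma~\ref{lem:vw9}), in the same spirit as your Pythagorean step but avoiding the Christoffel-kernel detour.
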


\subsection{Proof of Theorem~\ref{thm:vw1}}

We prove Theorem~\ref{thm:vw1} in three steps that we organize as separate lemmas.

\begin{lemma}
\label{lem:vw3}
It is enough to prove Theorem~\ref{thm:vw1} for \( \Delta=[-1,1] \) only.
\end{lemma}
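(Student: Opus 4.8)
The plan is to reduce the general case to $\Delta=[-1,1]$ by the obvious affine change of variable. Set $\rho:=(\beta-\alpha)/2$ and let $L(y):=\rho y+(\beta+\alpha)/2$ be the increasing affine bijection of $[-1,1]$ onto $\Delta=[\alpha,\beta]$, with inverse $L^{-1}$; note that $L^{-1}$ extends to a homeomorphism of $\overline\C$ carrying $D_\Delta$ onto $D_{[-1,1]}$ and fixing $\infty$. Given a triple $(\mu_n,h_n,\omega_n)$ on $\Delta$ satisfying $(A)$--$(D)$, transplant it by setting $\widetilde\mu_n:=(L^{-1})_*\mu_n$, $\widetilde\omega_n:=(L^{-1})_*\omega_n$, $\widetilde h_n:=h_n\circ L$, and $\widetilde\mu:=(L^{-1})_*\mu$. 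First I would check that $(A)$--$(D)$ hold for the transplanted triples on $[-1,1]$: this is routine since $\omega_{[-1,1]}=(L^{-1})_*\omega_\Delta$ (affine covariance of the arcsine distribution), so Radon--Nikodym derivatives with respect to it simply compose with $L$ and $(B)$ is literally unchanged; $(A)$ follows from the change-of-variables formula for pushforwards; $(C)$ holds because $g\mapsto g\circ L$ carries the compact set $\mathcal K\subset C(\Delta)$ onto a compact set $\widetilde{\mathcal K}\subset C([-1,1])$; and $(D)$ transfers because $w_\Delta(L(y))=\rho\,w_{[-1,1]}(y)$, because $\widetilde\omega_n'(y)=\rho\,\omega_n'(L(y))$, and because the endpoints map linearly, so the one-sided density bounds hold with the same exponents $\varkappa_L,\varkappa_U$ (up to constants) and on the shrunk neighborhoods $[-1+n^{-\widetilde\tau},1-n^{-\widetilde\tau}]$ once $\widetilde\tau<\tau$ absorbs the factor $\rho$ in $L^{-1}(\alpha+n^{-\tau})=-1+n^{-\tau}/\rho$; equicontinuity is preserved under composition with a fixed affine map.

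Next I would record how the three objects in the conclusion transform. For monic orthogonal polynomials: if $P$ is monic of degree $n$ then $\rho^{-n}P(L(\cdot))$ is monic of degree $n$, and since $\{L(\cdot)^k\}_{k<n}$ spans the same space as $\{(\cdot)^k\}_{k<n}$, orthogonality with respect to $e^{\theta_n}\mu_n$ transplants to orthogonality with respect to $(L^{-1})_*(e^{\theta_n}\mu_n)=e^{\theta_n\circ L}\widetilde\mu_n$; hence $T_n(e^{\theta_n}\mu_n)(z)=\rho^{\,n}\,T_n(e^{\theta_n\circ L}\widetilde\mu_n)(L^{-1}(z))$. For the logarithmic potential: since $\widetilde\omega_n$ is a probability measure, $V^{\omega_n}(L(y))=V^{\widetilde\omega_n}(y)-\log\rho$, so $\theta_n\circ L=2nV^{\widetilde\omega_n}+\widetilde h_n-2n\log\rho$; the additive constant $-2n\log\rho$ merely rescales the orthogonality measure by $\rho^{-2n}$ and does not affect monic orthogonal polynomials, so $T_n(e^{\theta_n\circ L}\widetilde\mu_n)=T_n(e^{\widetilde\theta_n}\widetilde\mu_n)$ with $\widetilde\theta_n:=2nV^{\widetilde\omega_n}+\widetilde h_n$. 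For the Szeg\H{o} function: from \eqref{outer} together with $w_\Delta(L(y))=\rho\,w_{[-1,1]}(y)$ and $d\omega_\Delta(L(u))/(L(y)-L(u))=d\omega_{[-1,1]}(u)/(\rho(y-u))$ one gets $\out_\Delta(f,L(y))=\out_{[-1,1]}(f\circ L,y)$; moreover $\sqrt{(L(y)-\alpha)(\beta-L(y))}=\rho\sqrt{1-y^2}$ while the Jacobian of $L$ cancels this $\rho$, so the Radon--Nikodym derivative of $e^{h_n}\mu_n$ with respect to $\omega_\Delta$, composed with $L$, equals that of $e^{\widetilde h_n}\widetilde\mu_n$ with respect to $\omega_{[-1,1]}$; by \eqref{SzegoFun} this gives $G(e^{h_n}\mu_n,z)=G(e^{\widetilde h_n}\widetilde\mu_n,L^{-1}(z))$ for $z\in D_\Delta$, and letting $z\to\infty$ also $G(e^{h_n}\mu_n,\infty)=G(e^{\widetilde h_n}\widetilde\mu_n,\infty)$.

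Finally I would assemble the pieces. Apply Theorem~\ref{thm:vw1} on $[-1,1]$ to the transplanted data, which is legitimate by the first paragraph. Substituting $y=L^{-1}(z)$ into the resulting formula and using $\int\log(L^{-1}(z)-L^{-1}(t))\,d\omega_n(t)=\int\log(z-t)\,d\omega_n(t)-\log\rho$ together with the transformation rules above, the factor $\rho^{\,n}$ coming from the polynomial cancels the $\rho^{-n}$ coming from the exponential; the $\widetilde{\mathcal K}$-uniform error term is a $\mathcal K$-uniform error term since $\widetilde{\mathcal K}$ is the continuous image of $\mathcal K$; and local uniformity on $D_{[-1,1]}$ becomes local uniformity on $D_\Delta$ since $L^{-1}$ is a homeomorphism of $\overline\C$ carrying $D_\Delta$ onto $D_{[-1,1]}$. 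This is exactly \eqref{sapsad32} for $\Delta$. The quadratic-norm identity is handled identically: changing variables in $\int_\Delta T_n^2(e^{\theta_n}\mu_n)e^{\theta_n}d\mu_n$ and inserting $T_n(e^{\theta_n}\mu_n)(L(y))^2=\rho^{2n}T_n(e^{\widetilde\theta_n}\widetilde\mu_n)(y)^2$ and $e^{\theta_n(L(y))}=\rho^{-2n}e^{\widetilde\theta_n(y)}$ makes the $\rho^{\pm2n}$ cancel, reducing to the $[-1,1]$ identity, whose right-hand side equals $2(1+o_{\mathcal K}(1))G^2(e^{h_n}\mu_n,\infty)$ by the previous paragraph. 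There is no substantive obstacle in this lemma; the only points requiring care are the bookkeeping of the $\rho^{\pm n}$ factors and the verification that hypothesis $(D)$ — in particular the shrinking endpoint neighborhoods $[\alpha+n^{-\tau},\beta-n^{-\tau}]$ — transfers under $L$, which forces the slight decrease of $\tau$ noted above.
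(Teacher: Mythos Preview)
Your proposal is correct and follows essentially the same affine change-of-variable argument as the paper's own proof. In fact you are more careful than the paper, which simply asserts that conditions $(A)$--$(D)$ transfer under $l(z)=az+b$ without explicitly discussing the endpoint neighborhoods in $(D)$ or the quadratic-norm identity; your observation that $\tau$ may need a slight decrease to absorb the factor $\rho$ is a genuine detail the paper glosses over.
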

\begin{proof}
Let \( l(z) = az+b \) be any linear transformation with \( a>0 \) and \( b \) real. Set \( \Delta^{(l)}:=l^{-1}(\Delta) \) and, given a measure \( \mu \) on \( \Delta \), let \( \mu^{(l)} \) denote a Borel measure on \( \Delta^{(l)} \) such that \( \mu^{(l)}(B) = \mu(l(B)) \) for any Borel set \( B\subseteq \Delta^{(l)} \). Notice that \( \omega_\Delta^{(l)} = \omega_{\Delta^{(l)}} \) and that the Radon-Nikodym derivative of \( \mu^{(l)} \) with respect to the Lebesgue measure (resp. \( \omega_{\Delta^{(l)}} \)) is equal to \( a\mu^\prime(l(x)) \) (resp. \( v(l(x) \)), where \( \mu^\prime(x) \) (resp. \( v(x) \)) is the Radon-Nikodym derivative of \( \mu \) with respect to the Lebesgue measure (resp. \( \omega_\Delta \)). Observe also that
\[
V^\omega(l(x)) = -\int\log|l(x)-l(y)|d\omega^{(l)}(y) = -\log a + V^{\omega^{(l)}}(x)
\]
for any Borel measure \( \omega \). Hence, it holds that
\[
T_n\left(e^{2nV^{\omega_n}+h_n}\mu_n\right)(l(z)) = a^n  T_n\left(e^{2nV^{\omega_n^{(l)}}+h_n^{(l)}}\mu_n^{(l)}\right)(z),
\]
where \( h_n^{(l)} = h\circ l \). Since \( w_\Delta(l(z)) = aw_{\Delta^{(l)}}(z) \), the above considerations also show that triples \( (\mu_n^{(l)},h_n^{(l)},\omega_n^{(l)}) \) satisfy conditions \( (A)-(D) \) on \( \Delta^{(l)} \) if the triples \( (\mu_n,h_n,\omega_n) \) satisfy  \( (A)-(D) \) on \( \Delta \). Finally, we get from \eqref{outer} and \eqref{SzegoFun} that
\[
e^{n \int\log(l(z)-x)d\omega_n(x)} \frac{G(e^{h_n}\mu_n,\infty)}{G(e^{h_n}\mu_n,l(z))} = a^ne^{n \int\log(z-x)d\omega_n^{(l)}(x)} \frac{G(e^{h_n^{(l)}}\mu_n^{(l)},\infty)}{G(e^{h_n^{(l)}}\mu_n^{(l)},z)},
\]
which finishes the proof of the lemma.
\end{proof}

Condition \( (D) \), placed on the measures \( \omega_n \) in Theorem~\ref{thm:vw1}, comes from \cite[Theorem~10.2]{Totik}. Under this assumption it was shown there that there exist polynomials \( H_n(x) \), \( \deg H_n \leq n \), that do not vanish on \( [-1,1] \) and such that the functions
\[
\iota_n(x) := e^{2n V^{\omega_n}(x)}|H_n(x)|^2, \quad x\in[-1,1],
\]
satisfy
\begin{equation}
\label{iota}
\begin{cases}
 0< \iota_n(x)\leq 1, \quad x\in [-1,1], \\
 \lim_{n\to\infty}\int\log\iota_n d\omega = 0, \quad \omega=\omega_{[-1,1]}.
\end{cases}
\end{equation}
We remark that \cite[Theorem~10.2]{Totik} was formulated on \( [0,1] \), but its results can be easily brought to \( [-1,1] \) by a linear transformation. In that theorem,  we put \( \gamma=1/2 \) and \( u\equiv 1 \),  and the degree satisfies \( \deg H_n = n-i_n \), where \( i_n\to\infty \). The non-vanishing of $H_n$ was claimed only on \( (-1,1) \), but it is clear from the construction, see \cite[pages~58 and~75]{Totik}, that these polynomials also do not vanish at the endpoints. Set
\begin{equation}\label{sapsad29}
\tau_n(z) := \frac{H_n(z)\overline{H_n(\bar z)}}{|H_n(0)|^2}.
\end{equation}
The polynomial \( \tau_n(z) \) has even degree,  \( \deg \tau_n\le 2n \), and $\tau_n$ satisfies  the following properties:
\begin{itemize}
\item[1.] \( \tau_n(z) \) has real coefficients,
\item[2.] $\tau_n(0)=1$,
\item[3.] $\tau_n$ does not vanish on $[-1,1]$,
\item[4.]  if we denote  the zeros of \( \tau_n(z) \) by \( \{a_{n,j}\} \), \( j\in\{1,2,\ldots,\deg \tau_n \} \), then at least a half of them, see \cite[page~94]{Totik}, are located in $\{|\re\, z|<0.9, |\im \,z|>L_n/n\}$ with $\lim_{n\to\infty}L_n=+\infty$. This guarantees that condition \( (D_{[-1,1]})\) of Theorem~\ref{thm:pw8} further below is satisfied by $\tau_n(z)$.
\end{itemize}

\begin{lemma}
\label{lem:vw4}
Under the conditions of Theorem~\ref{thm:vw1} with \( \Delta=[-1,1] \), it holds that
\begin{multline}
\label{sapsad31}
T_n^2\left(e^{\theta_n}\mu_n\right)(z) = \big(1+o_{\mathcal K}(1)\big) \frac{G^2(e^{h_n}\iota_n\mu_n,\infty)}{G^2(e^{h_n}\iota_n \mu_n,z)} \times \\  \frac1{2^{2n}} \prod_{j=1}^{\deg \tau_n}(2a_{n,j}\phi(a_{n,j})) \frac{\tau_n(z)}{\phi^{2n-\deg \tau_n}(z)}  \prod_{j=1}^{\deg\tau_n}\frac{1-\overline{\phi(a_{n,j})}\phi(z)}{\phi(z)-\phi(a_{n,j})} 
\end{multline}
locally uniformly in \( \overline\C\setminus[-1,1] \), where \( \phi(z) := \phi_{[-1,1]}(z) \), see \eqref{phi-w}. Moreover,
\[
\int_{-1}^1 T_n^2\left(e^{\theta_n}\mu_n\right)(x)e^{\theta_n(x)}d\mu_n(x) = \big(1+o_{\mathcal K}(1)\big) \frac{G^2(e^{h_n}\iota_n\mu_n,\infty)}{2^{2n-1}}\prod_{j=1}^{\deg \tau_n}(2\phi(a_{n,j})).
\]
\end{lemma}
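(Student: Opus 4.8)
The plan is to reduce Lemma~\ref{lem:vw4} to the strong asymptotics of orthogonal polynomials with respect to reciprocal polynomial weights, i.e., to Theorem~\ref{thm:pw8}. Taking \( \Delta=[-1,1] \) as allowed, write \( \phi:=\phi_{[-1,1]} \) and \( \omega:=\omega_{[-1,1]} \), and recall the polynomials \( H_n \) from \cite[Theorem~10.2]{Totik} and \( \tau_n \) from \eqref{sapsad29}. The first step is the purely algebraic identity that on the real line \( |H_n(x)|^2 = H_n(x)\overline{H_n(\bar x)} = |H_n(0)|^2\tau_n(x) \), so that, since \( e^{2nV^{\omega_n}} = \iota_n/|H_n|^2 \) by the definition of \( \iota_n \) preceding \eqref{iota},
\[
e^{\theta_n(x)}d\mu_n(x) = e^{h_n(x)}\iota_n(x)|H_n(x)|^{-2}d\mu_n(x) = |H_n(0)|^{-2}\,\tau_n(x)^{-1}\,d\sigma_n(x), \qquad \sigma_n := e^{h_n}\iota_n\mu_n.
\]
Because monic orthogonal polynomials do not depend on the overall normalization of the measure, this gives \( T_n(e^{\theta_n}\mu_n) = T_n(\tau_n^{-1}\sigma_n) \), while the norms are related by \( \int T_n^2 e^{\theta_n}d\mu_n = |H_n(0)|^{-2}\int T_n^2\,\tau_n^{-1}d\sigma_n \). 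Thus it remains to obtain strong asymptotics for the monic orthogonal polynomials with weight \( \tau_n^{-1} \) against the base measure \( \sigma_n \), and for their norms, keeping the constant \( |H_n(0)| \) bookkept.

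Next I will verify the hypotheses of Theorem~\ref{thm:pw8} and apply it. The polynomials \( \tau_n \) satisfy the listed properties 1--4: real coefficients, even degree \( \deg\tau_n\le 2n \), \( \tau_n(0)=1 \), positivity on \( [-1,1] \), and—by the fourth property—condition \( (D_{[-1,1]}) \) of Theorem~\ref{thm:pw8}. For the base measures, the Radon–Nikodym derivative of \( \sigma_n \) with respect to \( \omega \) is \( e^{h_n}\iota_n v_n \), hence \( \log(d\sigma_n/d\omega) = h_n + \log\iota_n + \log v_n \); here \( h_n \) ranges over the fixed compact set \( \mathcal K\subset C([-1,1]) \), \( \|\log\iota_n\|_{L^1(\omega)}\to0 \) since \( 0<\iota_n\le1 \) and \( \int\log\iota_n\,d\omega\to0 \) by \eqref{iota}, and \( \|\log v_n-\log v\|_{L^1(\omega)}\to0 \) by hypothesis \( (B) \); moreover the weak upper bound \( (A) \) is inherited by \( \sigma_n \) because \( \iota_n\le1 \). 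These are exactly the inputs needed by the uniform (\( o_{\mathcal K}(1) \)) version of Theorem~\ref{thm:pw8}, whose application produces both the locally uniform asymptotics of \( T_n^2(\tau_n^{-1}\sigma_n)(z) \) and the asymptotics of \( \int T_n^2\,\tau_n^{-1}d\sigma_n \), expressed through \( G(\sigma_n,\cdot) \), \( \phi \), the zeros \( a_{n,j} \) of \( \tau_n \), and the normalization \( \tau_n(0)=1 \).

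Finally I will match the output of Theorem~\ref{thm:pw8} with the claimed formulas. Substituting \( \phi=\phi_{[-1,1]} \) and recording the power of \( \phi \) as \( 2n-\deg\tau_n \) (note \( \deg\tau_n=2(n-i_n) \), so \( 2n-\deg\tau_n=2i_n\to\infty \)) gives \eqref{sapsad31}; the constant factor is forced to be self-consistent because \( \tau_n(0)=1 \) makes \( \prod_j a_{n,j} \) the reciprocal of the leading coefficient of \( \tau_n \), which is precisely what is needed for both sides of \eqref{sapsad31} to behave like \( z^{2n} \) at infinity. For the norm identity, I multiply the norm formula of Theorem~\ref{thm:pw8} by \( |H_n(0)|^{-2} \) and simplify using \eqref{sapsad29} and \( \tau_n(0)=1 \); the \( |H_n(0)| \)-dependence cancels, leaving \( \frac{1}{2^{2n-1}}G^2(\sigma_n,\infty)\prod_j 2\phi(a_{n,j}) \). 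The main obstacle will be twofold: first, checking that the error terms delivered by Theorem~\ref{thm:pw8} are genuinely uniform over the compact family \( \mathcal K \), which requires invoking the quantitative statement of that theorem together with the uniform bounds packaged in \( (A) \)--\( (D) \); and second, the constant-chasing that keeps the potentially large factor \( \phi^{-(2n-\deg\tau_n)} \) correctly balanced against \( \prod_j\!\big(2a_{n,j}\phi(a_{n,j})\big) \) and the Blaschke-type product, so that the resulting expressions are exactly those asserted.
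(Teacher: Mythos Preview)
Your proposal is correct and follows essentially the same route as the paper: rewrite \( e^{\theta_n}d\mu_n \) as a constant multiple of \( e^{h_n}\tau_n^{-1}d\tilde\mu_n \) with \( \tilde\mu_n=\iota_n\mu_n \), verify conditions \( (A_{[-1,1]}) \)--\( (D_{[-1,1]}) \) of Theorem~\ref{thm:pw8} for the triples \( (\tilde\mu_n,h_n,\tau_n) \) exactly as you do (using \( \iota_n\le1 \) for \( (A) \), \eqref{iota} and hypothesis \( (B) \) for \( (B_{[-1,1]}) \), compactness of \( \mathcal K \) for \( (C_{[-1,1]}) \), and property~4 of \( \tau_n \) for \( (D_{[-1,1]}) \)), and then read off \eqref{sapsad31} from \eqref{A8-1}--\eqref{A8-2} and the norm formula from \eqref{A8-2} together with \( |H_n(0)|^2=\prod_j a_{n,j} \). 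One small clarification: although you call \( \sigma_n=e^{h_n}\iota_n\mu_n \) the ``base measure'', the conditions you actually verify are for \( \tilde\mu_n=\iota_n\mu_n \) with \( h_n \) kept separate in the slot of condition \( (C_{[-1,1]}) \)---this is the correct split (and the one the paper uses), since absorbing \( h_n \) into the base measure would break \( (B_{[-1,1]}) \) when \( \{h_n\} \) does not converge.
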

\begin{proof}
Since monic orthogonal polynomials do not depend on the normalization of the measure of orthogonality, we have that
\[
T_n\left(e^{\theta_n}\mu_n\right)(z) = T_n\left(e^{h_n}\tau_n^{-1}\tilde\mu_n\right)(z), \quad \tilde\mu_n := \iota_n \mu_n. 
\]
Let us show that conditions \( (A_{[-1,1]}) - (D_{[-1,1]}) \) of Theorem~\ref{thm:pw8} are satisfied by the triples \( (\tilde\mu_n,h_n,\tau_n) \). We have already mentioned that the polynomials \( \tau_n(z) \) fulfill \( (D_{[-1,1]}) \). Moreover, condition \( (C) \) of Theorem~\ref{thm:vw1} is identical to condition \( (C_{[-1,1]}) \) of Theorem~\ref{thm:pw8}.  Since the Radon-Nikodym derivative of \( \tilde\mu_n \) with respect to \( \omega=\omega_{[-1,1]} \) is \( (\iota_n v_n)(x) \), the functions \( \iota_n(x) \) obey the first line of \eqref{iota}, and
\[
\big|\log(\iota_nv_n)(x) - \log v(x) \big| \leq \big|\log v_n(x) - \log v(x) | - \log\iota_n(x),
\]
condition \( (B_{[-1,1]}) \) of Theorem~\ref{thm:pw8} follows from condition \( (B) \) of Theorem~\ref{thm:vw1} and the second line of \eqref{iota}. Finally, condition \( (A) \) of Theorem~\ref{thm:vw1} implies condition \( (A_{[-1,1]}) \) of Theorem~\ref{thm:pw8} for the same measure \( \mu \) due to the upper bound in the first line of \eqref{iota}.  If $p_n(\sigma)(z)=\gamma_nz^n+\cdots$ denotes the $n$-th orthonormal polynomial with respect to the measure $\sigma$, then we can write
\[
T_n(\sigma)(z) = \gamma_n^{-1}p_n(\sigma)(z) \qandq \gamma_n^{-2}=\int T_n^2(\sigma)d\sigma.
\]
So, the first claim of the lemma is deduced from Theorem~\ref{thm:pw8}. To get the second one, we first observe that \eqref{sapsad29} implies \( |H_n(0)|^2 = \prod_j a_{n,j} \). Then, 
\[
\int_{-1}^1 T_n^2\left(e^{\theta_n}\mu_n\right)(x)e^{\theta_n(x)}d\mu_n(x)  = \frac1{|H_n(0)|^2} \int_{-1}^1 T_n^2\left(e^{h_n}\tau_n^{-1}\tilde\mu_n\right)(x)e^{h_n(x)}\frac{d\tilde\mu_n(x)}{\tau_n(x)}
\]
and we only need to apply \eqref{A8-2} to the last integral.
\end{proof}

\begin{lemma}
\label{lem:vw5}
Theorem~\ref{thm:vw1} holds on \( \Delta=[-1,1] \). 
\end{lemma}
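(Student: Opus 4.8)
The plan is to turn the two explicit identities supplied by Lemma~\ref{lem:vw4} into the clean statements of Theorem~\ref{thm:vw1}; by Lemma~\ref{lem:vw3} I may assume $\Delta=[-1,1]$ from the outset, so Lemma~\ref{lem:vw4} is available. Write $M_n(z)$ for the explicit Blaschke-type factor multiplying $G^2(e^{h_n}\iota_n\mu_n,\infty)/G^2(e^{h_n}\iota_n\mu_n,z)$ in \eqref{sapsad31}. The point is that, once one accounts for the auxiliary polynomials $H_n$ (through $\iota_n$) and the measures $\omega_n$, all the ``extra'' factors telescope: $M_n$ against the Szeg\H{o} function of $\iota_n$.

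The first ingredient is an elementary simplification of $M_n$. Since $z=\tfrac12(\phi(z)+\phi(z)^{-1})$ for $\phi=\phi_{[-1,1]}$, one has for any $a\notin[-1,1]$
\[
a-z=-\frac{(\phi(a)-\phi(z))\,(1-\phi(a)\phi(z))}{2\,\phi(a)\phi(z)},\qquad 2a\,\phi(a)=\phi(a)^2+1 .
\]
Because $\tau_n(0)=1$ we may write $\tau_n(z)=\prod_j(1-z/a_{n,j})$; substituting these identities, using that the zero set $\{a_{n,j}\}$ of the real polynomial $\tau_n$ is conjugation invariant (so $\prod_j(1-\overline{\phi(a_{n,j})}\phi(z))=\prod_j(1-\phi(a_{n,j})\phi(z))$) and that $\prod_j\tfrac{2a_{n,j}\phi(a_{n,j})}{\phi(a_{n,j})^2+1}=1$, a short computation collapses everything to $M_n(z)=(2\phi(z))^{-2n}\prod_j\big(1-\phi(a_{n,j})\phi(z)\big)^2$.

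The second ingredient is the factorisation of the Szeg\H{o} function of $e^{h_n}\iota_n\mu_n$. As $\iota_n$ is continuous and positive on $[-1,1]$ with $\log\iota_n\in L^1(\omega_{[-1,1]})$ (first line of \eqref{iota}), multiplicativity of the outer map \eqref{outer}, \eqref{SzegoFun} gives $G(e^{h_n}\iota_n\mu_n,z)=G(e^{h_n}\mu_n,z)\,\out_{[-1,1]}(\sqrt{\iota_n},z)$, and since $\sqrt{\iota_n}=e^{nV^{\omega_n}}|H_n|$, also $\out_{[-1,1]}(\sqrt{\iota_n},z)=\out_{[-1,1]}(e^{nV^{\omega_n}},z)\,\out_{[-1,1]}(|H_n|,z)$. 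I would then identify the two new factors. For the first, $\log|\out_{[-1,1]}(e^{nV^{\omega_n}},z)|=n(H_{[-1,1]}V^{\omega_n})(z)$ by \eqref{Dirichlet}, and the harmonic extension of $V^{\omega_n}|_{[-1,1]}$ equals $V^{\omega_n}(z)-\log|\phi(z)|$: the difference of the two sides is bounded and harmonic in $D_{[-1,1]}$, vanishes on $[-1,1]$, and its value at $\infty$, namely $\int V^{\omega_n}d\omega_{[-1,1]}-\log2$, is $0$ because $\omega_n$ is a probability measure supported on all of $[-1,1]$ and $V^{\omega_{[-1,1]}}$ equals the Robin constant $\log2$ there; matching the value at $\infty$ yields $\out_{[-1,1]}(e^{nV^{\omega_n}},z)=\phi(z)^{-n}\exp\!\big(-n\!\int\log(z-x)d\omega_n(x)\big)$ with $\out_{[-1,1]}(e^{nV^{\omega_n}},\infty)=2^n$. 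For the second, the classical formula $\out_{[-1,1]}(|x-b|,z)=\tfrac1{2|\phi(b)|}(1-\overline{\phi(b)}\phi(z))(1-\phi(b)\phi(z))$ for $b\notin[-1,1]$ (check: same boundary modulus $|x-b|$ on $[-1,1]$ via $|1-\bar c\,\zeta|=|\zeta-c|$ for $|\zeta|=1$; outer, positive at $\infty$ with value $e^{-V^{\omega_{[-1,1]}}(b)}$), multiplied over the zeros of $H_n$, together with the fact that the zeros of $\tau_n$ are precisely the zeros of $H_n$ and their complex conjugates, gives $\out_{[-1,1]}(|H_n|,z)=D_n\prod_j(1-\phi(a_{n,j})\phi(z))$ with $D_n=\out_{[-1,1]}(|H_n|,\infty)>0$.

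Substituting both ingredients into the first identity of Lemma~\ref{lem:vw4}, the factors $(2\phi(z))^{2n}$, $\prod_j(1-\phi(a_{n,j})\phi(z))^2$ and $D_n^2$ cancel in pairs, leaving $T_n^2(e^{\theta_n}\mu_n)(z)=(1+o_{\mathcal K}(1))\frac{G^2(e^{h_n}\mu_n,\infty)}{G^2(e^{h_n}\mu_n,z)}\exp\!\big(2n\!\int\log(z-x)d\omega_n(x)\big)$; since both $T_n(e^{\theta_n}\mu_n)$ and the right-hand side of \eqref{sapsad32} are analytic and nonvanishing in $D_{[-1,1]}$ and asymptotic to $z^n$ at $\infty$, extracting the analytic square root with the branch fixed at $\infty$ gives \eqref{sapsad32}. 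For the norm, \eqref{iota} together with condition~$(B)$ gives $\out_{[-1,1]}(\sqrt{\iota_n},\infty)=\exp(\tfrac12\!\int\log\iota_n d\omega_{[-1,1]})\to1$ and $G^2(e^{h_n}\iota_n\mu_n,\infty)=(1+o_{\mathcal K}(1))G^2(e^{h_n}\mu_n,\infty)$; feeding these into the companion norm identity of Lemma~\ref{lem:vw4} and carrying out the parallel bookkeeping (the explicit product over the $a_{n,j}$ again combines with $D_n$ and with the identity $2^nD_n=\out_{[-1,1]}(\sqrt{\iota_n},\infty)\to1$, while $2^{2n}/2^{2n-1}=2$ produces the constant~$2$) yields $\int_{-1}^1T_n^2(e^{\theta_n}\mu_n)e^{\theta_n}d\mu_n=2(1+o_{\mathcal K}(1))G^2(e^{h_n}\mu_n,\infty)$. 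The genuinely delicate points are the two identifications in the third paragraph — the potential-theoretic computation of $\out_{[-1,1]}(e^{nV^{\omega_n}},z)$, where one must use that $\omega_n$ is a probability measure supported on the whole of $[-1,1]$ so that the additive constant is exactly $0$, and the passage through the outer function of $|x-b|$ while keeping the conjugate symmetry of the zeros of $\tau_n$ straight; the remaining manipulations (the $\phi$-identities, the branch of the square root, and the constant tracking in the norm) are routine.
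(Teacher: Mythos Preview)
Your argument is correct and follows essentially the same route as the paper: both proofs factor \(G(e^{h_n}\iota_n\mu_n,z)=G(e^{h_n}\mu_n,z)\,\out_{[-1,1]}(e^{nV^{\omega_n}},z)\,\out_{[-1,1]}(|H_n|,z)\), identify the two new outer factors explicitly (you via the formula for \(\out_{[-1,1]}(|x-b|,z)\) and a harmonic-extension computation, the paper via the one-line ``boundary-modulus uniqueness'' principle), and then watch everything cancel against the Blaschke-type factor from Lemma~\ref{lem:vw4}. Your norm bookkeeping is slightly roundabout---the exact identity \(D_n^2\prod_j(2\phi(a_{n,j}))=1\), which is just \(\out_{[-1,1]}(|H_n|^2,\infty)=\prod_j(2\phi(a_{n,j}))^{-1}\), makes the constant \(2\) drop out without ever invoking \(\out_{[-1,1]}(\sqrt{\iota_n},\infty)\to1\) twice---but the conclusion is the same.
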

\begin{proof}
We will show that the right-hand side of \eqref{sapsad31} can be written in a form consistent with \eqref{sapsad32}. We readily get from \eqref{SzegoFun} and \eqref{outer} that
\[
G^2(e^{h_n}\iota_n\mu_n,z) / G^2(e^{h_n}\mu_n,z) = \Omega(\iota_n,z) = \Omega\big(e^{2nV^{\omega_n}},z\big) \Omega\big(|H_n|^2,z\big),
\]
where  \( \Omega \) is used  as a shorthand for \( \Omega_{[-1,1]} \). Let us show that
\[
\begin{cases}
\Omega\big(|H_n|^2,z\big) & \displaystyle = \tau_n(z)\phi^{\deg \tau_n}(z) \left( \prod_{j=1}^{\deg \tau_n} a_{n,j}\right)\prod_{j=1}^{\deg \tau_n}\frac{1-\overline{\phi(a_{n,j})}\phi(z)}{\phi(z)-\phi(a_{n,j})}, \medskip \\
\Omega\big(e^{V^{\omega_n}},z\big) & \displaystyle = \phi^{-1}(z)  \exp\left\{ \int \log(z-x)d\omega_n(x)\right\}.
\end{cases}
\]
Both equalities follow from the same general principle: if \( f(x) \) is a continuous function on \( [-1,1] \) and \( \Omega(z) \) is a holomorphic non-vanishing function in \( \overline\C\setminus[-1,1] \) such that \( |\Omega(z)| \) is continuous in the entire extended complex plane and \( |\Omega(x)| = f(x) \) on \( [-1,1] \), then \( \Omega(z) = \Omega(f,z) \). Continuity of \( |H_n(x)|^2 \) is obvious while continuity of \( V^{\omega_n}(x) \) follows from condition \( (D) \) and properties of logarithmic potentials. Recall that \( \deg \tau_n \) is an even integer and that \( 2z\phi(z) \to 1 \) as \( z\to\infty \). Now,  to prove the lemma it only remains to notice that the explicit representations given above yield
\[
\Omega\big(|H_n|^2,\infty\big) = \prod_{j=1}^{\deg \tau_n}(2\phi(a_{n,j}))^{-1} \qandq \Omega\big(e^{V^{\omega_n}},\infty\big) = 2. \qedhere
\]

\end{proof}

\subsection{Proof of Theorem~\ref{thm:vw2}}

Similarly to Theorem~\ref{thm:vw1}, we prove Theorem~\ref{thm:vw2} in four steps organized as separate lemmas.

Write \( \Delta(\mu) = [\alpha(\mu),\beta(\mu)] \), \( \Delta = [\alpha,\beta] \), and \( \Delta_n = [\alpha_n,\beta_n] \).  Recall condition (1) of Theorem~\ref{thm:vw2}. Let \( \{\delta_n\} \) be a sequence of positive numbers such that
\begin{equation}
\label{delta_n}
\delta_n = (\xi_n/n)^{2/(\varkappa+1)},
\end{equation}
where \( \xi_n\to 0 \) as \( n\to\infty \) and it will be specified  later in the proof of Lemma~\ref{lem:vw8}, see \eqref{ser20}. Set \( \Delta_n^* := [\alpha_n^*,\beta_n^*] \), where \( \alpha_n^* := \max\big\{\alpha_n-\delta_n,\alpha(\mu) \big\} \) and \( \beta_n^* := \min\big\{\beta_n+\delta_n,\beta(\mu) \big\} \). Our strategy will consist in applying Theorem~\ref{thm:vw1}  to obtain the asymptotics of orthogonal polynomials $P_n^*$ for measures reduced to  $\Delta_n^*$. Then, we show that asymptotics of the polynomials $T_n$ from Theorem~\ref{thm:vw2} coincides with that of $P_n^*$.

\begin{lemma}
\label{lem:vw6}
Recall that \( \theta_n(x) = 2n(V^{\omega_n}(x) + \kappa_n(x))+h_n(x) \), \( x\in\Delta(\mu) \). It holds that
\[
P_n^*(z) := T_n\left(e^{\theta_n}\mu_{|\Delta_n^*}\right)(z) = (1+o_{\mathcal K}(1)) F_n^*(z)
\]
locally uniformly in \( D_\Delta  \), where
\begin{equation}\label{ser21}
F_n^*(z) := \exp\left( n \int\log(z-x)d\omega_n(x)\right) \frac{G(e^{h_n}\mu_{|\Delta_n^*},\infty)}{G(e^{h_n}\mu_{|\Delta_n^*},z)}.
\end{equation}
\end{lemma}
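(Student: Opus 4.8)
The plan is to deduce Lemma~\ref{lem:vw6} from Theorem~\ref{thm:vw1} by transporting all the data to the fixed interval $\Delta$ via a linear rescaling. Let $l_n=l_{\Delta\to\Delta_n^*}$ be the increasing affine map carrying $\Delta=[\alpha,\beta]$ onto $\Delta_n^*=[\alpha_n^*,\beta_n^*]$; since $\Delta_n^*\subseteq\Delta(\mu)$ and $\alpha_n^*\to\alpha$, $\beta_n^*\to\beta$ (because $\alpha_n\to\alpha$, $\beta_n\to\beta$, $\delta_n\to0$), the maps $l_n$ converge to the identity uniformly on $\Delta(\mu)$. Following the computations in the proof of Lemma~\ref{lem:vw3}, I would pull the data back to $\Delta$: let $\widehat\mu_n$ be the pullback of the measure $e^{2n\kappa_n}\mu_{|\Delta_n^*}$, let $\widehat\omega_n$ be the pullback of $\omega_n$, and set $\widehat h_n:=h_n\circ l_n$. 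Because $e^{\theta_n}\mu_{|\Delta_n^*}=e^{2nV^{\omega_n}+h_n}\big(e^{2n\kappa_n}\mu_{|\Delta_n^*}\big)$, the covariance identities from that lemma (for $T_n$, for the potential $V^\omega$, and for the Szeg\H{o} functions $\out_\Delta(f,\cdot)$) reduce the assertion of Lemma~\ref{lem:vw6} --- first on $D_{\Delta_n^*}$, hence on $D_\Delta$ since every compact subset of $D_\Delta$ eventually lies in $D_{\Delta_n^*}$ --- to the conclusion of Theorem~\ref{thm:vw1} applied on $\Delta$ to the triples $(\widehat\mu_n,\widehat h_n,\widehat\omega_n)$ with external field $2nV^{\widehat\omega_n}$.

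It then remains to verify conditions $(A)$--$(D)$ for these triples, with the limiting measure in $(A)$ taken to be $\mu_{|\Delta}$. Condition $(A)$ holds since $e^{2n\kappa_n}\le1$ gives $\widehat\mu_n\le(\mu_{|\Delta_n^*})^{(l_n)}$ and the latter converges weak$^*$ to $\mu_{|\Delta}$. Condition $(C)$ holds because $\{h\circ l_n:h\in\mathcal K,\ n\in\N\}$ is uniformly bounded and equicontinuous ($\mathcal K$ is, and $l_n\to\mathrm{id}$), so its closure $\widehat{\mathcal K}$ is compact and depends only on $\mathcal K$ and the fixed geometric data, which is what keeps the error of the form $o_{\mathcal K}(1)$. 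For $(D)$, fix $\tau>0$ small enough that $n^{-\tau}$ dominates $\delta_n$ for large $n$ --- by \eqref{delta_n} and $\xi_n\to0$ one may take $\tau=1/(\varkappa+1)$. Then $[\alpha+n^{-\tau},\beta-n^{-\tau}]\subseteq l_n^{-1}(\Delta_n)$ for large $n$, and on this set $|w_{\Delta_n}(l_n(y))|\asymp|w_\Delta(y)|$, so \eqref{w_cond_1} gives $\widehat\omega_n'(y)=l_n'\,\omega_n'(l_n(y))\asymp|w_\Delta(y)|^{\varkappa_L}$. On the rest of $(\alpha,\beta)$ the density $\widehat\omega_n'$ vanishes on the two slivers $l_n^{-1}(\Delta_n^*\setminus\Delta_n)$, while on $l_n^{-1}(\Delta_n)$ it is controlled from above by \eqref{w_cond_1} away from a pushed endpoint and by \eqref{w_cond_2} near one (when $\alpha_n^*=\alpha_n-\delta_n$, the estimate $\omega_n'(x)\lesssim|x-\alpha_n|^{\widehat\varkappa_U}$ rules out a blow-up); altogether $\widehat\omega_n'(y)\lesssim|w_\Delta(y)|^{\varkappa_U'}$ on $(\alpha,\beta)$ for some $\varkappa_U'>-2$. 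Equicontinuity of $\{\widehat\omega_n'\}$ on compacts of $(\alpha,\beta)$ is inherited from that of $\{\omega_n'\}$ via $l_n\to\mathrm{id}$.

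The main obstacle I expect is condition $(B)$, i.e. the convergence $\|\log\widehat v_n-\log v\|_{L^1(\omega_\Delta)}\to0$, where $\widehat v_n,v$ are the Radon--Nikodym derivatives of $\widehat\mu_n,\mu_{|\Delta}$ with respect to $\omega_\Delta$. Unwinding the change of variables one gets, on $\Delta$,
\[
\log\widehat v_n(y)-\log v(y)=2n\kappa_n(l_n(y))+\big[\log\mu'(l_n(y))-\log\mu'(y)\big]+\log\frac{\beta_n^*-\alpha_n^*}{\beta-\alpha}.
\]
The last term is a constant tending to $0$; the middle term tends to $0$ in $L^1(\omega_\Delta)$ precisely because $\mu\in\mathrm{USz}(\Delta)$ and $l_n=l_{\Delta\to\Delta_n^*}$ with $\Delta_n^*\subseteq\Delta(\mu)$, $\Delta_n^*\to\Delta$, i.e. this is exactly \eqref{UnSzego}; and the first term is supported on the slivers $l_n^{-1}(\Delta_n^*\setminus\Delta_n)$. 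On the sliver near $\alpha$, condition $(1)$ yields $|2n\kappa_n(l_n(y))|\lesssim n\,|y-l_n^{-1}(\alpha_n)|^{\varkappa/2}$, while $d\omega_\Delta(y)\asymp|y-\alpha|^{-1/2}\,dy$ there, and since both $|y-\alpha|$ and $|y-l_n^{-1}(\alpha_n)|$ are $\lesssim\delta_n$ on this sliver,
\[
\int_{l_n^{-1}([\alpha_n^*,\alpha_n])}\big|2n\kappa_n(l_n(y))\big|\,d\omega_\Delta(y)\;\lesssim\;n\,\delta_n^{(\varkappa+1)/2}\;=\;\xi_n\;\longrightarrow\;0,
\]
the final identity being exactly the point of the exponent in \eqref{delta_n}; the sliver near $\beta$ is handled identically. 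This establishes $(B)$.

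With $(A)$--$(D)$ in place, Theorem~\ref{thm:vw1} gives the strong asymptotics of $T_n\big(e^{2nV^{\widehat\omega_n}}e^{\widehat h_n}\widehat\mu_n\big)$, and the scaling relations convert it into the asserted $P_n^*=(1+o_{\mathcal K}(1))F_n^*$, with one loose end: the rescaled output carries the Szeg\H{o} function $G\big(e^{h_n}e^{2n\kappa_n}\mu_{|\Delta_n^*},\cdot\big)$ rather than the $G\big(e^{h_n}\mu_{|\Delta_n^*},\cdot\big)$ appearing in \eqref{ser21}. These differ by the factor $\out_{\Delta_n^*}\big(e^{n\kappa_n},\cdot\big)$, and repeating the sliver estimate above --- now integrating against $\omega_{\Delta_n^*}$ --- shows that $\out_{\Delta_n^*}\big(e^{n\kappa_n},z\big)=1+o(1)$ locally uniformly in $D_{\Delta_n^*}$, which completes the argument.
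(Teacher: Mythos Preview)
Your proof is correct and follows essentially the same route as the paper: pull everything back to the fixed interval $\Delta$ via $l_n$, verify $(A)$--$(D)$ of Theorem~\ref{thm:vw1}, and then undo the rescaling. Two small remarks. First, the claim that $(\mu_{|\Delta_n^*})^{(l_n)}$ converges weak$^*$ to $\mu_{|\Delta}$ is not quite right in general (an atom of $\mu$ at $\alpha$ or $\beta$ can spoil it when $\alpha_n^*>\alpha$ or $\beta_n^*<\beta$); what you actually need, and what holds, is the $\limsup$ inequality for non-negative $f$, exactly as in $(A)$ --- the paper makes this distinction by observing that the term $\int_{\Delta\setminus\Delta_n^*} f\,d\mu$ is non-negative. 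Second, your explicit treatment of the ``loose end'' --- that Theorem~\ref{thm:vw1} outputs $G(e^{h_n}e^{2n\kappa_n}\mu_{|\Delta_n^*},\cdot)$ rather than $G(e^{h_n}\mu_{|\Delta_n^*},\cdot)$, and that the correction factor $\out_{\Delta_n^*}(e^{n\kappa_n},\cdot)$ tends to $1$ by the same sliver estimate \eqref{L1kappa} --- is a point the paper leaves implicit in the sentence ``Computations in Lemma~\ref{lem:vw3} now show that the above formula is equivalent to the statement of the lemma,'' and only spells out later in the proof of Lemma~\ref{lem:vw9}.
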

\begin{proof}
To apply  Theorem~\ref{thm:vw1}, we need to rescale the intervals \( \Delta_n^* \) to their limit \( \Delta \). To this end, let \( l_n(x) = a_nx+b_n \), \( a_n>0 \), be the linear function that maps \( \Delta \) onto \( \Delta_n^* \). Clearly, \( a_n\to1 \) and \( b_n\to0 \) as \( n\to\infty \). In the notation of Lemma~\ref{lem:vw3}, set \( \tilde \omega_n := \omega_n^{(l_n)} \). Then, it holds that
\[
\supp\tilde\omega_n = l_n^{-1}(\Delta_n)\subseteq \Delta \qandq d\tilde\omega_n(x)  = \tilde\omega_n^\prime(x)dx = a_n\omega_n^\prime(l_n(x))dx.
\]
Write \( l_n^{-1}(\Delta_n)=[\alpha_n^{(l)},\beta_n^{(l)}]\). Since \( a_n\to 1 \) as \( n\to\infty \), we get from the upper bound in assumption (3) of Theorem~\ref{thm:vw2} that
\[
\tilde\omega_n^\prime(x) \lesssim \big(x-\alpha_n^{(l)}\big)^{\varkappa_U/2}\big(\beta_n^{(l)}-x\big)^{\varkappa_U/2},  \quad x\in \big( \alpha_n^{(l)},\beta_n^{(l)} \big).
\]
If \(\alpha_n^{(l)}>\alpha \) for some index \( n \), then \( \alpha_n>\alpha_n^*\geq \alpha(\mu) \) and \( (x-\alpha_n^{(l)})^{\varkappa_U/2} \) can be replaced by \( (x-\alpha_n^{(l)})^{\widehat\varkappa_U} \) in the above estimate as required by assumption (3) of Theorem~\ref{thm:vw2}, where \( \widehat\varkappa_U>0 \). Similarly, if \( \beta_n^{(l)}<\beta \) for some \( n \), then \( (\beta_n^{(l)}-x)^{\varkappa_U/2} \) can again be replaced by \( (\beta_n^{(l)}-x)^{\widehat\varkappa_U} \). Either way, the upper bound in assumption \( (D) \) of Theorem~\ref{thm:vw1} is fulfilled. Similarly to the upper bound, we have that
\[
\tilde\omega_n^\prime(x) \gtrsim |w_{l_n^{-1}(\Delta_n)}(x)|^{\varkappa_L},  \quad x\in \big( \alpha_n^{(l)},\beta_n^{(l)} \big),
\]
by the lower bound in assumption (3) of Theorem~\ref{thm:vw2}. If \( l_n^{-1}(\Delta_n) =\Delta \) for all \( n \), the above inequality gives the desired lower bound in assumption \( (D) \) of Theorem~\ref{thm:vw1}. If at least one of the intervals \( l_n^{-1}(\Delta_n) \) is a proper subinterval of \( \Delta \), then the corresponding upper bound requires that \( \varkappa_L>0 \). It can be readily checked that
\[
|w_{l_n^{-1}(\Delta_n)}(x)| \gtrsim  |w_\Delta(x)|, \quad x\in \big(\alpha+2\big(\alpha_n^{(l)}-\alpha\big), \beta-2(\beta-\beta_n^{(l)}\big)\big).
\]
Notice that \( a_n( \alpha_n^{(l)}-\alpha) = \alpha_n-\alpha_n^* \leq \delta_n \) and similarly that \( a_n(\beta-\beta_n^{(l)}) \leq \delta_n \). It now  follows from \eqref{delta_n}  that there exists \( \tau>0 \) such that
\[
\tilde\omega_n^\prime(x) \gtrsim  |w_\Delta(x)|^{\varkappa_L}, \quad x\in(\alpha+n^{-\tau},\beta-n^{-\tau}).
\] 
Finally, equicontinuity of the functions \( \tilde\omega_n^\prime(x) \) on compact subsets of \( (\alpha,\beta) \) follows from the analogous conditions placed on the densities \( \omega_n^\prime(x) \). Altogether, all the requirements of assumption \( (D) \) of Theorem~\ref{thm:vw1} are satisfied.

Next, let \( \tilde h_n = h_n\circ l_n \). It follows from the locally uniform convergence to the identity of the functions \( l_n \) that  assumption $(C)$ of Theorem~\ref{thm:vw1} follows from  assumption (2) of Theorem~\ref{thm:vw2} with the compact subset of \( C(\Delta) \) being the closure of \( \cup_n\{h\circ l_n : h\in\mathcal K\} \). Finally, let
\[
d\tilde \mu_n(x) = e^{2n\tilde\kappa_n(x)}d(\mu_{|\Delta_n^*})^{(l_n)}(x), \quad \tilde \kappa_n = \kappa_n\circ l_n.
\]
Given a continuous function \( f \) on \( \Delta \), it holds that
\[
\int_\Delta fd(\mu^{l_n}-\mu) = \int_{\Delta\cap \Delta_n^*}( f\circ l_n^{-1}-f)d\mu + \int_{\Delta_n^*\setminus \Delta}  f\circ l_n^{-1} d\mu - \int_{\Delta\setminus\Delta_n^*} fd\mu.
\]
The first integral on the right-hand side above converges to zero due to uniform continuity of \( f \). The second one converges to zero because \( f \) is bounded and \( \cap_{n}\Delta_n^*\setminus \Delta=\emptyset\) so  \(\mu(\Delta_n^*\setminus \Delta)\to 0\) as \( n\to\infty \). The third integral is always non-negative. Hence, as \( \tilde\kappa_n(x) \leq 0 \), assumption $(A)$ of Theorem~\ref{thm:vw1} is clearly fulfilled. To verify assumption $(B)$, observe that
\[
\log\tilde v_n(x) = 2n\tilde \kappa_n(x) + \log v_{\Delta_n^*}(l_n(x)),
\]
where \( \tilde v_n \) is the Radon-Nikodym derivative of \( \tilde\mu_n \) with respect to \( \omega_\Delta \), see \eqref{szego}. We readily have that
\[
|\log v_\Delta(x) - \log v_{\Delta_n^*}(l_n(x))| = |\log\mu^\prime(x) - \log\mu^\prime(l_n(x)) - \log a_n|.
\]
Since \( a_n\to 1\), it follows from \eqref{UnSzego} that \( \| \log v_\Delta - \log v_{\Delta_n^*}(l_n)\|_{L^1(\omega_\Delta)} \to 0 \) as \( n\to\infty \). Furthermore, we have that
\[
\int_\Delta |\tilde\kappa_n(x)|d\omega_\Delta(x) = \int_{\Delta_n^*} |\kappa_n(x)| d\omega_{\Delta_n^*}(x)  \lesssim \int_{\Delta_n^*\setminus\Delta_n} |w_{\Delta_n}(x)|^\varkappa d\omega_{\Delta_n^*}(x),
\]
where we used assumption (1) of Theorem~\ref{thm:vw2}. Hence, it readily follows from the definition of \( \Delta_n^* \) that
\begin{equation}
\label{L1kappa}
2n\int_\Delta |\tilde\kappa_n(x)|d\omega_\Delta(x) \lesssim_\Delta n \int_0^{\delta_n} x^{(\varkappa+1)/2} d\omega_{[0,\delta_n]}(x) \lesssim_\Delta n \delta_n^{(\varkappa+1)/2} = \xi_n.
\end{equation}
Since $\lim_{n\to\infty}\xi_n=0$ by our assumptions, we get \( \| 2n\tilde \kappa_n \|_{L^1(\omega_\Delta)} \to 0 \) as \( n\to\infty \), which shows that condition $(B)$ of Theorem~\ref{thm:vw1} is also satisfied. Altogether, we have that the triples \( (\tilde\mu_n,\tilde h_n,\tilde\omega_n) \) satisfy all the conditions of Theorem~\ref{thm:vw1} and therefore
\[
T_n\left(e^{2nV^{\tilde\omega_n}+\tilde h_n}\tilde\mu_n\right)(z) = \left(1+o_{\mathcal K}(1)\right) \exp\left( n \int\log(z-x)d\tilde \omega_n(x)\right) \frac{G(e^{\tilde h_n}\tilde \mu_n,\infty)}{G(e^{\tilde h_n}\tilde\mu_n,z)}
\]
holds locally uniformly in \( D_\Delta  \). Computations in Lemma~\ref{lem:vw3} now show that the above formula is equivalent to the statement of the lemma.
\end{proof}

The next lemma provides a simple uniform estimate on the boundary behavior for the sequence of outer functions. Recall the definition of function $G$ given in \eqref{SzegoFun} and \eqref{outer}.
\begin{lemma}
\label{lem:vw7}
There exists a non-decreasing function \( \epsilon_\mu(t) \) such that \( \lim_{t\downarrow 0}\epsilon_\mu(t)=0 \) and
\[
\log G^{-2}\big(e^{h_n}\mu_{|\Delta_n},\beta_n+t\big) \leq \frac{\epsilon_\mu(t)}{\sqrt t}
\] 
for every \( t\in[0,\beta(\mu)-\beta_n] \) and every \( n \). Moreover, an analogous estimate holds with \( \beta_n+t \) replaced by \( \alpha_n-t \) for \( t\in[0,\alpha_n-\alpha(\mu)] \).
\end{lemma}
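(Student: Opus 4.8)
Throughout we may restrict to $n$ large enough that $\mu\in\mathrm{Sz}(\Delta_n)$ (true for $n\ge n_0$ by the definition of $\mathrm{USz}$); for the remaining finitely many indices the Szeg\H{o} function is still defined and the same argument applies, so they only add finitely many bounded quantities to the envelope below. The plan is to read $\log G^{-2}(e^{h_n}\mu_{|\Delta_n},\beta_n+t)$ off from \eqref{SzegoFun}--\eqref{outer} and control the three natural pieces of the integrand. Writing $v_n(x):=\pi e^{h_n(x)}\mu^\prime(x)\sqrt{(x-\alpha_n)(\beta_n-x)}$ for the Radon--Nikodym derivative of $e^{h_n}\mu_{|\Delta_n}$ with respect to $\omega_{\Delta_n}$, one has for $z=\beta_n+t>\beta_n$
\[
\log G^{-2}\big(e^{h_n}\mu_{|\Delta_n},\beta_n+t\big)=-w_{\Delta_n}(\beta_n+t)\int_{\Delta_n}\frac{\log v_n(x)}{\beta_n+t-x}\,d\omega_{\Delta_n}(x),
\]
and we split $\log v_n=(\log\pi+h_n)+\log\mu^\prime+\tfrac12\log((x-\alpha_n)(\beta_n-x))$. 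Using the identity $\int_{\Delta_n}\frac{d\omega_{\Delta_n}(x)}{\beta_n+t-x}=w_{\Delta_n}(\beta_n+t)^{-1}$ (the Cauchy transform of the arcsine distribution), the first bracket contributes at most $\|\log\pi+h_n\|_\infty\le C_0$, with $C_0$ depending only on $\mathcal K$. The third term contributes $\log\out_{\Delta_n}\big(((x-\alpha_n)(\beta_n-x))^{-1/2},\beta_n+t\big)$; a short computation with \eqref{phi-w} identifies this outer function as $\big(\phi_{\Delta_n}(\beta_n+t)\,w_{\Delta_n}(\beta_n+t)\big)^{-1}$, and since $w_{\Delta_n}(\beta_n+t)=\sqrt{(\beta_n+t-\alpha_n)t}$ with $\beta_n+t-\alpha_n$ bounded above and below, and $\phi_{\Delta_n}(\beta_n+t)$ is bounded away from $0$ uniformly for $n$ large and $t\in[0,\beta(\mu)-\beta_n]$, this term is at most $-\tfrac12\log t+C_1$. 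Finally, since $-\log\mu^\prime\le\theta:=|\log\mu^\prime|$ and the Cauchy kernel is positive on $\Delta_n$, the $\log\mu^\prime$-term is at most $w_{\Delta_n}(\beta_n+t)\int_{\Delta_n}\frac{\theta(x)}{\beta_n+t-x}\,d\omega_{\Delta_n}(x)$.

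The heart of the matter is the estimate of this last integral, and here $\mu\in\mathrm{USz}(\Delta)$ enters via Proposition~\ref{prop:sa3}(iii), with $\log\mu^\prime$ replaced by $|\log\mu^\prime|$ as allowed by the remark following that proposition. First, $\|\theta\|_{L^1(\omega_{\Delta_n})}$ is bounded uniformly in $n$: push $\omega_{\Delta_n}$ forward to $\omega_\Delta$ by the affine map $l_{\Delta\to\Delta_n}$ and use $\mu\in\mathrm{Sz}(\Delta)$ together with \eqref{UnSzego}. Now split $\int_{\Delta_n}\frac{\theta\,d\omega_{\Delta_n}}{\beta_n+t-x}$ at $\beta_n-\sqrt t$. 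On $[\alpha_n,\beta_n-\sqrt t]$ the kernel is $\le t^{-1/2}$, so that part is $\lesssim t^{-1/2}$ and, after multiplication by $w_{\Delta_n}(\beta_n+t)\lesssim\sqrt t$, is $\lesssim 1$. On $[\beta_n-\sqrt t,\beta_n]$ one has $\sqrt{x-\alpha_n}\gtrsim 1$, so, after the substitution $s=\beta_n-x$, this part is $\lesssim\int_0^{\sqrt t}\frac{\theta(\beta_n-s)}{(t+s)\sqrt s}\,ds$. Setting $\Theta_n(r):=\int_{\beta_n-r}^{\beta_n}\frac{\theta(x)\,dx}{\sqrt{\beta_n-x}}=\int_0^r\frac{\theta(\beta_n-s)}{\sqrt s}\,ds$, which is absolutely continuous, non-decreasing, and vanishes at $0$, integration by parts gives
\[
\int_0^{\sqrt t}\frac{\theta(\beta_n-s)}{(t+s)\sqrt s}\,ds=\frac{\Theta_n(\sqrt t)}{t+\sqrt t}+\int_0^{\sqrt t}\frac{\Theta_n(s)}{(t+s)^2}\,ds\le\Theta_n(\sqrt t)\Big(\tfrac1{\sqrt t}+\tfrac1t\Big),
\]
so the $\log\mu^\prime$-contribution to $\log G^{-2}$ is $\lesssim 1+\Theta_n(\sqrt t)\,t^{-1/2}$. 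By Proposition~\ref{prop:sa3}(iii), for every $\epsilon>0$ there is $d_\epsilon>0$ with $\Theta_n(\sqrt t)<\epsilon$ whenever $\dist(\beta,[\beta_n-\sqrt t,\beta_n])<d_\epsilon$; since $\beta_n\in[\beta_n-\sqrt t,\beta_n]$, this holds as soon as $|\beta_n-\beta|<d_\epsilon$, hence for all $n$ past some $N_\epsilon$, and for each of the finitely many remaining indices $\Theta_n(\sqrt t)\to0$ as $t\downarrow0$ because $\int_0^\rho\theta(\beta_n-s)s^{-1/2}\,ds<\infty$.

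Collecting the three pieces, $\sqrt t\,\log G^{-2}(e^{h_n}\mu_{|\Delta_n},\beta_n+t)\le\sqrt t\,(C_0+C_1-\tfrac12\log t)+C\big(\sqrt t+\Theta_n(\sqrt t)\big)$, which tends to $0$ as $t\downarrow0$ uniformly in $n$ by the previous paragraph; and for $t$ bounded away from $0$, $\log G^{-2}(e^{h_n}\mu_{|\Delta_n},\beta_n+t)$ is bounded uniformly in $n$, because $\|\log v_n\|_{L^1(\omega_{\Delta_n})}$ is (using \eqref{UnSzego} for the $\log\mu^\prime$ part and a scaling computation for the $\log((x-\alpha_n)(\beta_n-x))$ part) while $|w_{\Delta_n}(\beta_n+t)|/(\beta_n+t-x)$ is bounded there. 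The estimate at the left endpoint $\alpha_n$ is obtained in the same way, exchanging the roles of the endpoints. One may therefore take
\[
\epsilon_\mu(t):=\sup\Big\{\sqrt{t'}\,\big(\log G^{-2}(e^{h_n}\mu_{|\Delta_n},\beta_n+t')\big)^+,\ \sqrt{t'}\,\big(\log G^{-2}(e^{h_n}\mu_{|\Delta_n},\alpha_n-t')\big)^+\ :\ 0<t'\le t,\ n\Big\},
\]
which is non-decreasing, finite, tends to $0$ at $0$, and yields the asserted bound by construction. The only real obstacle is the near-endpoint integral $\int_0^{\sqrt t}\frac{\theta(\beta_n-s)}{(t+s)\sqrt s}\,ds$: a naive bound on the Cauchy kernel loses a non-integrable factor $1/s$, and the point is to integrate by parts so as to express everything through $\Theta_n(\sqrt t)$, the one quantity directly controlled by the uniform Szeg\H{o} condition; using the $t$-dependent window $\sqrt t$ rather than a fixed one is what makes the estimate uniform over all admissible $n$, including the finitely many indices not yet in the asymptotic regime.
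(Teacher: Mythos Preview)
Your proof is correct and follows the same route as the paper: decompose $\log v_n$ into the $h_n$ part, the $|w_{\Delta_n}|$ part, and the $\log\mu'$ part, split the last one at $\beta_n-\sqrt t$, and control the near-endpoint piece via Proposition~\ref{prop:sa3}(iii). Two cosmetic remarks: the integration by parts is unnecessary, since the cruder bound $1/(t+s)\le 1/t$ already gives $\int_0^{\sqrt t}\frac{\theta(\beta_n-s)}{(t+s)\sqrt s}\,ds\le\Theta_n(\sqrt t)/t$, which is what the paper uses; and when invoking Proposition~\ref{prop:sa3}(iii) you should also take $\sqrt t$ small so that \emph{both} endpoints of $[\beta_n-\sqrt t,\beta_n]$ lie within $d_\epsilon$ of $\beta$ (this is what the proof of that proposition actually delivers), though since you only use the conclusion as $t\downarrow 0$ this does not affect your argument.
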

\begin{proof}
By \eqref{outer} and \eqref{SzegoFun} that
\[
\log G^{-2}\big(e^{h_n}\mu_{|\Delta_n},\beta_n+t\big) \leq \frac{w_{\Delta_n}(\beta_n+t)}{2} \int_{\Delta_n} \frac{|h_n(x)|+|\log v_{\Delta_n}(x)|}{\beta_n+t-x}d\omega_{\Delta_n}(x).
\]
It follows from the Cauchy integral formula that
\[
\frac1{w_{\Delta_n}(z)} = \frac1{\pi\ic}\int_{\Delta_n} \frac1{x-z}\frac{dx}{w_{\Delta_n+}(x)} = \int_{\Delta_n} \frac{d\omega_{\Delta_n}(x)}{z-x},
\]
where we used the fact that \( w_{\Delta_n+}(x) = \ic |w_{\Delta_n}(x)| \) for \( x\in\Delta_n \). Since \( \mathcal K \) is a compact,  the functions \( |h_n(x)| \) are uniformly bounded. Therefore, we get that
\[
w_{\Delta_n}(\beta_n+t) \int_{\Delta_n} \frac{|h_n(x)|}{\beta_n+t-x}d\omega_{\Delta_n}(x) \lesssim_{\mathcal K} 1.
\]
Recall further that \(  v_{\Delta_n}(x) = \pi\mu^\prime(x)|w_{\Delta_n}(x)|\). We have that
\begin{align*}
w_{\Delta_n}(\beta_n+t) \int_{\Delta_n} \frac{|\log|w_{\Delta_n}(x)||}{\beta_n+t-x}d\omega_{\Delta_n}(x) & \lesssim_\Delta  \sqrt t + \sqrt t \int_{\beta_n-1}^{\beta_n} \frac{-\log(\beta_n-x)}{\beta_n+t-x} \frac{dx}{\sqrt{\beta_n-x}} \\
& \lesssim_\Delta \sqrt t + |\log t|.
\end{align*}
Moreover,
\begin{align*}
\int_{\Delta_n} \frac{|\log\mu^\prime(x)|}{\beta_n+t-x}d\omega_{\Delta_n}(x) 
\lesssim_{\Delta}
\frac{1}{\sqrt t} \int_{\alpha_n}^{\beta_n-\sqrt t} \frac{|\log\mu^\prime(x)|}{\sqrt{\beta_n-x}}dx+\frac1t \int_{\beta_n-\sqrt t}^{\beta_n} \frac{|\log\mu^\prime(x)|}{\sqrt{\beta_n-x}}dx\\
\lesssim 
\frac{\|\log\mu^\prime\|_{L^1(\omega_\Delta)}}{\sqrt t} + \frac1t \int_{\beta_n-\sqrt t}^{\beta_n} \frac{|\log\mu^\prime(x)|}{\sqrt{\beta_n-x}}dx,
 \end{align*}
where we used \eqref{UnSzego} for the last estimate. Set
\[
\epsilon(t) := \sup_{n>N_0} \int_{\beta_n-\sqrt t}^{\beta_n} \frac{|\log\mu^\prime(x)|}{\sqrt{\beta_n-x}}dx,
\]
where $N_0$ is sufficiently large to make sure that integrals converge. Clearly, this is a non-decreasing function of \( t \). We claim that \( \lim_{t\downarrow 0} \epsilon(t) =0 \). Indeed, assume to the contrary that there exist \( \epsilon_0>0 \), a sequence \( \{t_m\} \) decreasing to \( 0 \), and a set \( \{m_n\} \) such that
\begin{equation}
\label{contrapos}
\int_{\beta_{n_m}-\sqrt t_m}^{\beta_{n_m}} \frac{|\log\mu^\prime(x)|}{\sqrt{\beta_{n_m}-x}}dx \geq \epsilon_0.
\end{equation}
 If $\{m_n\}$ is bounded, it contains a constant sequence. However, \eqref{contrapos} cannot hold along this sequence as the integrals of a fixed integrable function over sets of decreasing measure must vanish. On the other hand, if \( \{n_m\} \) contains a strictly increasing sequence, \eqref{contrapos} contradicts Proposition~\ref{prop:sa3}(iii) since \( \mu\in \textrm{USz}(\Delta) \). Hence, altogether, 
\[
w_{\Delta_n}(\beta_n+t) \int_{\Delta_n}\frac{|\log\mu^\prime(x)|}{\beta_n+t-x}d\omega_{\Delta_n}(x) \lesssim_{\mu,\Delta} 1 + \frac{\epsilon(t)}{\sqrt t}.
\]
Collecting all the previous estimates gives us
\[
\log G^{-2}\big(e^{h_n}\mu_{|\Delta_n},\beta_n+t\big) \lesssim_{\Delta,\mu,\mathcal K} \frac{t+\sqrt t + \sqrt t|\log t| + \epsilon(t)}{\sqrt t}.
\]
Since the numerator above can be easily estimated by a non-decreasing function that has zero limit at zero, the claim of the lemma follows.
\end{proof}

Before stating the next lemma, we deduce the following estimate. If \( f \) is a function in the Hardy space \( H^2(\D) \), then  the Cauchy integral formula and Cauchy-Schwarz inequality  give
\[
|f(z)| \leq \int_\T \frac{|f(\eta)|}{|z-\eta|}\frac{|d\eta|}{2\pi} \leq \frac{ \|f\|_{L^2(\T)} }{\sqrt{1-|z|}}.
\]
Thus, if \( F \) is a function in the Hardy space \( H^2(\overline\C\setminus\Delta) \), then \( F\circ \phi_\Delta \) is a function in \( H^2(\D) \), see \eqref{phi-w}, and therefore
\begin{equation}
\label{HardyEst}
|F(\beta+\delta)| \leq \frac{\|F\|_{L^2(\omega_\Delta)}}{\sqrt{1-\phi_\Delta(\beta+\delta)}} \lesssim_\Delta \frac{\|F\|_{L^2(\omega_\Delta)}}{\sqrt[4]\delta}.
\end{equation}

\begin{lemma}
\label{lem:vw8}
The sequence \( \{\xi_n\} \) in the definition of \( \delta_n \) in \eqref{delta_n} can be chosen so that
\[
\int_{\Delta(\mu)\setminus\Delta_n^*} P_n^*(x)^2e^{\theta_n(x)}d\mu(x) \to 0 \qasq n\to\infty.
\]
\end{lemma}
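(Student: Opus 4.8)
\emph{Strategy and reductions.} Since \eqref{delta_n} leaves us free to pick the null sequence \(\{\xi_n\}\), the plan is to first prove, on \(\Delta(\mu)\setminus\Delta_n^*\), a pointwise upper bound for \(P_n^*(x)^2e^{\theta_n(x)}\) that decays in \(n\) faster than \(\delta_n^{-1/2}\) grows, and then integrate it against \(\mu\). By conjugate symmetry of all the objects involved it is enough to treat the right piece \((\beta_n^*,\beta(\mu)]\) of \(\Delta(\mu)\setminus\Delta_n^*\) (which is empty unless \(\beta_n+\delta_n<\beta(\mu)\)); the left piece \([\alpha(\mu),\alpha_n^*)\) is handled identically, and when \(\Delta_n^*\) already reaches an endpoint of \(\Delta(\mu)\) the corresponding side contributes nothing. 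Throughout write \(t:=x-\beta_n\), so \(t\geq\delta_n\) on \((\beta_n^*,\beta(\mu)]\).

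\emph{An auxiliary Hardy-space function.} The device I would use is \(\widetilde\Psi_n(z):=P_n^*(z)\exp\!\big(-n\int\log(z-x)\,d\omega_n(x)\big)\,G\big(e^{h_n}\mu_{|\Delta_n},z\big)\). Since \(\supp\omega_n=\Delta_n\), this is holomorphic and conjugate-symmetric on \(\overline\C\setminus\Delta_n\), equals \(G(e^{h_n}\mu_{|\Delta_n},\infty)\) at infinity, and lies in \(H^2(\overline\C\setminus\Delta_n)\): the factor \(P_n^*(z)\exp(-n\int\log(z-x)d\omega_n(x))\) is bounded and analytic on \(\overline\C\setminus\Delta_n\) (its modulus is \(|P_n^*|\,e^{nV^{\omega_n}}\), with \(V^{\omega_n}\) continuous by condition \((3)\), and the two factors cancel at \(\infty\)), while \(G(e^{h_n}\mu_{|\Delta_n},\cdot)\in H^2(\overline\C\setminus\Delta_n)\). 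On \(\Delta_n\) we have \(\kappa_n\equiv0\), hence \(\theta_n=2nV^{\omega_n}+h_n\) there and \(|\widetilde\Psi_{n\pm}(x)|^2=P_n^*(x)^2e^{\theta_n(x)}v_{\Delta_n}(x)\); integrating against \(\omega_{\Delta_n}\) and using \eqref{szego} gives \(\|\widetilde\Psi_n\|_{L^2(\omega_{\Delta_n})}^2\le\int_{\Delta_n^*}P_n^*(x)^2e^{\theta_n(x)}d\mu(x)\). By the ``moreover'' part of Theorem~\ref{thm:vw1}, unscaled exactly as in the proof of Lemma~\ref{lem:vw6}, the last integral equals \(2(1+o_{\mathcal K}(1))G^2(e^{h_n}\mu_{|\Delta_n^*},\infty)\), which is \(O_{\mathcal K}(1)\) because \(h_n\in\mathcal K\) and \(\mu\in\mathrm{USz}(\Delta)\); thus \(\|\widetilde\Psi_n\|_{L^2(\omega_{\Delta_n})}\lesssim_{\mathcal K}1\).

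\emph{Pointwise bound.} For a real \(x=\beta_n+t>\beta_n^*\) one has \(|\widetilde\Psi_n(x)|^2=P_n^*(x)^2e^{2nV^{\omega_n}(x)}G^2(e^{h_n}\mu_{|\Delta_n},x)\), so \(P_n^*(x)^2e^{\theta_n(x)}=|\widetilde\Psi_n(x)|^2\,G^{-2}(e^{h_n}\mu_{|\Delta_n},x)\,e^{h_n(x)}\,e^{2n\kappa_n(x)}\). I would bound the four factors: \eqref{HardyEst} applied to \(\widetilde\Psi_n\) on \(\overline\C\setminus\Delta_n\) gives \(|\widetilde\Psi_n(\beta_n+t)|^2\lesssim_{\mathcal K}t^{-1/2}\) (uniformly in \(n\), the \(\Delta_n\) being comparable); Lemma~\ref{lem:vw7} gives \(G^{-2}(e^{h_n}\mu_{|\Delta_n},\beta_n+t)\le e^{\epsilon_\mu(t)/\sqrt t}\) with \(\epsilon_\mu\) non-decreasing and \(\epsilon_\mu(0^+)=0\); \(h_n\in\mathcal K\) makes \(e^{h_n}\) bounded; and assumption \((1)\) of Theorem~\ref{thm:vw2}, with \(|w_{\Delta_n}(\beta_n+t)|\asymp\sqrt t\) since \(\beta_n-\alpha_n\) stays bounded away from \(0\), gives \(\kappa_n(\beta_n+t)\le-c\,t^{\varkappa/2}\) for \(t\le t_1\) and fixed \(c,t_1>0\). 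Hence, for \(\delta_n\le t\le t_1\), \(P_n^*(\beta_n+t)^2e^{\theta_n(\beta_n+t)}\lesssim_{\mathcal K,\mu}t^{-1/2}\exp(\epsilon_\mu(t)/\sqrt t-2cn\,t^{\varkappa/2})\), while for \(t>t_1\) (where \(x\) is bounded away from \(\Delta\)) the cruder bounds \(|\widetilde\Psi_n(x)|\lesssim1\), \(G^{-2}(e^{h_n}\mu_{|\Delta_n},x)\lesssim1\), \(\kappa_n(x)\le-c_1\) yield \(P_n^*(x)^2e^{\theta_n(x)}\lesssim_{\mathcal K,\mu}e^{-2c_1 n}\).

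\emph{Choice of \(\xi_n\) and the main obstacle.} I would fix a reference null sequence \(\rho_n:=(\epsilon_\mu(t_1)/(cn))^{2/(\varkappa+1)}\) and put \(\xi_n:=\min\{\epsilon_\mu(\rho_n)/c+1/\log n,\,1\}\), so that \(\xi_n\to0\) (which Lemma~\ref{lem:vw6} also needs) and \(\delta_n\le\rho_n\) for large \(n\); monotonicity of \(\epsilon_\mu\) then yields \(\epsilon_\mu(t)\le c\,n\,t^{(\varkappa+1)/2}\) for all \(t\in[\delta_n,t_1]\), so the exponent in the previous display is \(\le-cn\,t^{\varkappa/2}\le-cn\,\delta_n^{\varkappa/2}\) there. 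Consequently \(\int_{\beta_n^*}^{\beta(\mu)}P_n^*(x)^2e^{\theta_n(x)}d\mu(x)\lesssim_{\mathcal K,\mu}\mu(\Delta(\mu))(\delta_n^{-1/2}e^{-cn\delta_n^{\varkappa/2}}+e^{-2c_1 n})\), and since \(\delta_n^{-1/2}=(n/\xi_n)^{1/(\varkappa+1)}\), \(n\delta_n^{\varkappa/2}=n^{1/(\varkappa+1)}\xi_n^{\varkappa/(\varkappa+1)}\), and \(\xi_n\ge1/\log n\) makes \(\log(\delta_n^{-1/2})=O(\log n)\) negligible against \(n^{1/(\varkappa+1)}\xi_n^{\varkappa/(\varkappa+1)}\), this tends to \(0\); the left piece is the same with the analogous statement of Lemma~\ref{lem:vw7} at \(\alpha_n\). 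The delicate point is precisely this balancing: \(e^{2n\kappa_n}\) provides only the weak decay \(e^{-n\delta_n^{\varkappa/2}}\) (weak because \(\delta_n\to0\)), which must overcome the possible blow-up \(e^{\epsilon_\mu(t)/\sqrt t}\) of \(G^{-2}\) at the \emph{moving} endpoint \(\beta_n\); making \(\epsilon_\mu(\delta_n)\) small forces \(\delta_n\), hence \(\xi_n\), small, yet \(\xi_n\) must not shrink so fast that \(n\delta_n^{\varkappa/2}\) stops beating \(\log(\delta_n^{-1/2})\) (and must still tend to \(0\) for Lemma~\ref{lem:vw6}). Resolving this self-referential constraint with a single admissible \(\xi_n\), given that \(\epsilon_\mu(t)\to0\) carries no a priori rate, is the main obstacle; everything else is \eqref{HardyEst} together with the inputs already assembled in Lemmas~\ref{lem:vw6} and~\ref{lem:vw7}.
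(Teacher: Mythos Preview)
Your proposal is correct and follows essentially the same route as the paper: define the Hardy-space function \(P_n^*/F_n\) (your \(\widetilde\Psi_n\) is this times a bounded constant), bound its \(L^2(\omega_{\Delta_n})\)-norm via the orthogonality integral over \(\Delta_n^*\), apply \eqref{HardyEst} for the \(t^{-1/2}\) pointwise bound, invoke Lemma~\ref{lem:vw7} for \(G^{-2}\), and use condition~(1) for the exponential decay from \(\kappa_n\). Your explicit choice of \(\xi_n\) and your two-zone split \([\delta_n,t_1]\cup[t_1,T]\) differ cosmetically from the paper's \([\delta_n,T_n]\cup[T_n,T]\), but the balancing constraints you isolate (\(\xi_n\to0\), \(n\xi_n^\varkappa\to\infty\), \(\xi_n\gtrsim\epsilon_\mu(\rho_n)\)) are exactly those the paper imposes in \eqref{ser20}; if anything, you are more careful in tracking the polynomial factor \(\delta_n^{-1/2}\) against the exponential.
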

\begin{proof}
Let  \( l_n(x)=a_nx+b_n \), \( a_n>0 \), be the linear function that maps of $\Delta$ onto $\Delta_n^*$, as in the proof of Lemma~\ref{lem:vw6}. Due to the compactness of \( \mathcal K \), it holds that the sequence \( h_n-h_n\circ l_n \) converges to zero uniformly on \( \Delta \). The uniform Szeg\H{o} condition implies 
\begin{align}
\label{limitGs}
G\big(e^{h_n}\mu_{|\Delta_n^*},z\big) & = \big(1+o_{\mathcal K}(1)\big)G\big(e^{h_n}\mu_{|\Delta},z\big), \\ G\big(e^{h_n}\mu_{|\Delta_n},z\big) & = \big(1+o_{\mathcal K}(1)\big)G\big(e^{h_n}\mu_{|\Delta},z\big), \nonumber
\end{align}
locally uniformly in \( D_\Delta \).  Hence, the last asymptotic formula of Theorem~\ref{thm:vw1} pulled back to the intervals \( \Delta_n^* \) as in Lemma~\ref{lem:vw6} shows that
\begin{equation}
\label{kvhlk}
G^{-2}(e^{h_n}\mu_{|\Delta_n},\infty)\int_{\Delta_n^*} P_n^*(x)^2e^{\theta_n(x)}d\mu(x) = 2+o_{\mathcal K}(1).
\end{equation}
Using the fact that \( \kappa_n(x)\equiv 0 \) on \( \Delta_n \), we get that
\[
|F_{n\pm}(x)|^{-2} = G^{-2}(e^{h_n}\mu_{|\Delta_n},\infty)e^{\theta_n(x)}v_{\Delta_n}(x)
\]
almost everywhere on \( \Delta_n \) by \eqref{outer-b}, \eqref{szego} and \eqref{SzegoFun}, where
\[
F_n(z) = \exp\left( n \int\log(z-x)d\omega_n(x)\right) \frac{G(e^{h_n}\mu_{|\Delta_n},\infty)}{G(e^{h_n}\mu_{|\Delta_n},z)}
\]
(here we again restricted \( \mu \) to \( \Delta_n \) and not \( \Delta_n^* \) as in Lemma~\ref{lem:vw6}). Therefore, it holds that
\[
2+o_{\mathcal K}(1) \geq \int_{\Delta_n} \big|P_n^*(x)/F_{n\pm}(x)\big|^2 d\omega_{\Delta_n}(x),
\]
where we first reduce the interval of integration from \( \Delta_n^* \) to \( \Delta_n \) in \eqref{kvhlk} and then drop the singular part of \( \mu \). Since the potentials \( V^{\omega_n} \) are continuous in \( \C \),  each function \( P_n^*/F_n \) is a product of a Szeg\H{o} function and a bounded analytic function. As such, it belongs to \( H^2(D_{\Delta_n}) \). Hence, it follows from \eqref{HardyEst} that
\[
\big|(P_n^*/F_n)(x)\big|^2 \lesssim_{\Delta,\mathcal K} 1/\sqrt{x-\beta_n}, \quad x\in[\beta_n,\beta(\mu)].
\]
The compactness of \( \mathcal K \) as well as \eqref{limitGs} imply that \( G(e^{h_n}\mu_{|\Delta_n},\infty) \lesssim_{\mu,\mathcal K} 1 \).

Now, we are ready to estimate the integrals in the statement of the lemma. We only carry out the estimates on \( [\beta_n^*,\beta(\mu)] \) as the estimate on \( [\alpha(\mu),\alpha_n^*] \) can be done analogously. Assume that \( \beta_n^*<\beta(\mu) \) (otherwise we have nothing to prove). In this case \( \beta_n^*=\beta_n+\delta_n \). Using the bounds we just obtained, Lemma~\ref{lem:vw7} and condition (1) in Theorem~\ref{thm:vw2} give
\begin{multline*}
\int_{\beta_n^*}^{\beta(\mu)} P_n^*(x)^2e^{\theta_n(x)}d\mu(x) = \int_{\beta_n^*}^{\beta(\mu)} \left|\frac{P_n^*(x)}{F_n(x)}\right|^2 \frac{G^2(e^{h_n}\mu_{|\Delta_n},\infty)}{G^2(e^{h_n}\mu_{|\Delta_n},x)} e^{2n\kappa_n(x)+h_n(x)} d\mu(x) \\ \lesssim_{\Delta,\mu,\mathcal K} \int_{\beta_n^*}^{\beta(\mu)} \exp\left( -C_\Delta n(x-\beta_n)^{\varkappa/2} + \frac{\epsilon_\mu(x-\beta_n)}{\sqrt{x-\beta_n}} \right) \frac{d\mu(x)}{\sqrt{x-\beta_n}}.
\end{multline*}
We further get that 
\begin{align}\nonumber
\int_{\beta_n^*}^{\beta(\mu)} P_n^*(x)^2e^{\theta_n(x)}d\mu(x) &\lesssim_{\Delta,\mu,\mathcal K} \int_{\delta_n}^{\beta(\mu)-\beta_n} \exp\left( C \left( -n t^{\varkappa/2} + \frac{\epsilon_\mu(t)}{\sqrt{t}} \right) \right) d\mu(t+\beta_n) \\\label{ser11}
& \lesssim_{\Delta,\mu,\mathcal K} \max_{\delta_n\leq t \leq T}\exp\left( C \left( -n t^{\varkappa/2} + \frac{\epsilon_\mu(t)}{\sqrt{t}} \right) \right),
\end{align}
where \( T := \sup_n (\beta(\mu)-\beta_n) \) and $T>0$. Let \begin{equation} T_n := (2\epsilon_\mu(T)/n )^{2/(\varkappa+1)}.\label{ser2a} \end{equation}
When 
\begin{equation}\label{ser2}
 T_n\leq t \leq T 
 \end{equation}
we get \( \epsilon_\mu(t) \leq \epsilon_\mu(T)\)
since  \( \epsilon_\mu(t) \) is non-decreasing. Also, for such $t$, we have \( \epsilon_\mu(T) \leq (n/2)t^{(\varkappa+1)/2} \) by the choice of $T_n$ and combining these bounds gives
\begin{equation}\label{ser1}
\epsilon_\mu(t)\le (n/2)t^{(\varkappa+1)/2}\,.
\end{equation}
 Respectively, we get for these \( t \) that
\[
- n t^{\varkappa/2} + \epsilon_\mu(t)/\sqrt{t} \stackrel{\eqref{ser1}}{\leq} - (n/2) t^{\varkappa/2} \stackrel{\eqref{ser2}}{\leq} - (n/2) T_n^{\varkappa/2} \stackrel{\eqref{ser2a}}{=} - \big(n\epsilon_\mu^\varkappa(T)/2\big)^{1/(\varkappa+1)},
\]
and the right-hand side converges to \( - \infty \) as $n\to\infty$. Since \( \epsilon_\mu(T_n)\to 0 \), we can choose \( \xi_n \) in \eqref{delta_n} so that 
\begin{equation}\label{ser3}
2\epsilon_\mu(T_n)\leq \xi_n.
\end{equation}
Respectively, for
\begin{equation} 
\label{ser7}
\delta_n\leq t \leq T_n 
\end{equation}
 we have that \( \epsilon_\mu(t) \leq \epsilon_\mu(T_n)\) by monotonicity and  \(\epsilon_\mu(T_n) \leq (n/2)t^{(\varkappa+1)/2} \) by $t>\delta_n$, \eqref{ser3} and formula \eqref{delta_n}. Thus, combining these bounds gives 
\begin{equation}
\epsilon_\mu(t)\le (n/2)t^{(\varkappa+1)/2}\,.\label{ser6}
\end{equation}
Hence, we get for such \( t \) that
\[
-n t^{\varkappa/2} + \epsilon_\mu(t)/\sqrt{t} \stackrel{\eqref{ser6}}{\leq} - (n/2) t^{\varkappa/2} \stackrel{\eqref{ser7}}{\leq}  - (n/2) \delta_n^{\varkappa/2} \stackrel{\eqref{delta_n}}{=} -(1/2)(n\xi_n^\varkappa)^{1/(\varkappa+1)}. 
\]
Now, we choose positive sequence $\{\xi_n\}$ so that
\begin{equation}
\lim_{n\to\infty}\xi_n=0, \quad \lim_{n\to\infty}n\xi_n^\varkappa=+ \infty, \qandq 2\epsilon_\mu(T_n)\stackrel{\eqref{ser3}}{\leq} \xi_n.
\label{ser20}
\end{equation}
This finishes the proof of the lemma because the right-hand side of \eqref{ser11} converges to zero when $n\to\infty$.
\end{proof}

\begin{lemma}
\label{lem:vw9}
Theorem~\ref{thm:vw2} takes place.
\end{lemma}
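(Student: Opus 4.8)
The plan is to reduce everything to what has already been established: Lemma~\ref{lem:vw6} gives the asymptotics of the polynomials \( P_n^* = T_n(e^{\theta_n}\mu_{|\Delta_n^*}) \), Lemma~\ref{lem:vw8} shows that the mass of \( (P_n^*)^2 e^{\theta_n}\mu \) outside \( \Delta_n^* \) is negligible, and I would use the extremal property of orthogonal polynomials to carry these facts over to \( T_n := T_n(e^{\theta_n}\mu) \). Write \( m_n := \int T_n^2 e^{\theta_n}d\mu \) and \( m_n^* := \int (P_n^*)^2 e^{\theta_n}d\mu_{|\Delta_n^*} \). Since \( T_n \) minimizes \( \int Q^2 e^{\theta_n}d\mu \) and \( P_n^* \) minimizes \( \int Q^2 e^{\theta_n}d\mu_{|\Delta_n^*} \) over monic \( Q \) of degree \( n \), and \( e^{\theta_n}d\mu\ge 0 \), I would first record the sandwich
\[
m_n^* \le \int T_n^2 e^{\theta_n}d\mu_{|\Delta_n^*} \le m_n \le \int (P_n^*)^2 e^{\theta_n}d\mu = m_n^* + \int_{\Delta(\mu)\setminus\Delta_n^*}(P_n^*)^2 e^{\theta_n}d\mu = m_n^* + o_{\mathcal K}(1),
\]
the last equality being Lemma~\ref{lem:vw8}. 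Pulling back the norm part of Theorem~\ref{thm:vw1} as in Lemma~\ref{lem:vw6}, together with \eqref{limitGs}, gives \( m_n^* = 2(1+o_{\mathcal K}(1))G^2(e^{h_n}\mu_{|\Delta},\infty) \); since \( h_n \) ranges over the compact set \( \mathcal K \) and \( \mu\in\mathrm{USz}(\Delta) \), the factor \( G^2(e^{h_n}\mu_{|\Delta},\infty) \) stays bounded above and away from \( 0 \). Hence \( m_n = m_n^*(1+o_{\mathcal K}(1)) \), which already yields the second displayed identity of Theorem~\ref{thm:vw2} (the replacement of \( \mu_{|\Delta} \) by \( \mu_{|\Delta_n} \) being permitted by \eqref{limitGs}).

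Next I would extract the polynomial asymptotics. Because \( T_n-P_n^* \) has degree at most \( n-1 \) and \( P_n^* \) is \( L^2(e^{\theta_n}\mu_{|\Delta_n^*}) \)-orthogonal to it, Pythagoras gives \( \int T_n^2 e^{\theta_n}d\mu_{|\Delta_n^*} = m_n^* + \|T_n-P_n^*\|_{L^2(e^{\theta_n}\mu_{|\Delta_n^*})}^2 \), so the sandwich forces \( \|T_n-P_n^*\|_{L^2(e^{\theta_n}\mu_{|\Delta_n^*})}^2 \le m_n-m_n^* = o_{\mathcal K}(1) \). Restricting the integral to \( \Delta_n \), dropping the singular part of \( \mu \), and using the identity \( |F_{n\pm}(x)|^{-2} = G^{-2}(e^{h_n}\mu_{|\Delta_n},\infty)e^{\theta_n(x)}v_{\Delta_n}(x) \) valid a.e.\ on \( \Delta_n \) (from \eqref{outer-b}, \eqref{szego}, \eqref{SzegoFun}, and \( \kappa_n\equiv 0 \) on \( \Delta_n \)), with \( F_n \) as in the proof of Lemma~\ref{lem:vw8} and with \( G^{-2}(e^{h_n}\mu_{|\Delta_n},\infty) \) bounded, I would obtain
\[
\int_{\Delta_n}\big|(T_n-P_n^*)/F_{n\pm}\big|^2 d\omega_{\Delta_n} = o_{\mathcal K}(1).
\]

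It then remains to promote this \( L^2 \)-bound on \( \Delta_n \) to a locally uniform estimate in \( D_\Delta \). Mirroring the argument in Lemma~\ref{lem:vw8} for \( P_n^*/F_n \), the function \( T_n/F_n \) is a product of a normalized Szeg\H{o} function and a bounded analytic function on \( D_{\Delta_n} \) (here one uses that \( T_n \) is monic of degree \( n \), that \( \exp(-n\int\log(\cdot-x)d\omega_n) \) is analytic on \( D_{\Delta_n} \), non-vanishing there and of size \( |z|^{-n} \) at infinity, and that \( V^{\omega_n} \) is continuous), so \( (T_n-P_n^*)/F_n\in H^2(D_{\Delta_n}) \). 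Since \( \Delta_n\to\Delta \), every compact \( K\subset D_\Delta \) admits a \( c_K>0 \) with \( |\phi_{\Delta_n}(z)|\le 1-c_K \) on \( K \) for all large \( n \), so \eqref{HardyEst}, applied to \( (T_n-P_n^*)/F_n \) for points at bounded distance rather than near an endpoint, gives \( |(T_n-P_n^*)(z)/F_n(z)| \lesssim_K \|(T_n-P_n^*)/F_{n\pm}\|_{L^2(\omega_{\Delta_n})} = o_{\mathcal K}(1) \) uniformly on \( K \). Finally \eqref{limitGs} gives \( F_n = (1+o_{\mathcal K}(1))F_n^* \) and Lemma~\ref{lem:vw6} gives \( P_n^* = (1+o_{\mathcal K}(1))F_n^* \) with \( F_n^* \) non-vanishing, whence \( T_n = P_n^*(1+o_{\mathcal K}(1)) = (1+o_{\mathcal K}(1))F_n^* \) locally uniformly in \( D_\Delta \); a last application of \eqref{limitGs} rewrites \( F_n^* \) in terms of \( G(e^{h_n}\mu_{|\Delta},z) \), resp.\ \( G(e^{h_n}\mu_{|\Delta_n},z) \), which is the assertion of Theorem~\ref{thm:vw2}.

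The main obstacle I anticipate is the uniformity bookkeeping in the last two paragraphs: checking that \( (T_n-P_n^*)/F_n \) indeed lies in \( H^2(D_{\Delta_n}) \) and that the passage from the \( L^2(\omega_{\Delta_n}) \)-bound to a pointwise bound in \( D_\Delta \) is genuinely uniform in \( n \) and in \( h_n\in\mathcal K \); this rests on \( \Delta_n\to\Delta \), on the uniform two-sided bound for \( G(e^{h_n}\mu_{|\Delta_n},\infty) \), and on the uniform form of \eqref{HardyEst}. The extremal/Pythagorean sandwich itself is routine once Lemmas~\ref{lem:vw6} and~\ref{lem:vw8} are in place.
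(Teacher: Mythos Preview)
Your proposal is correct and follows essentially the same route as the paper: the extremal sandwich plus Pythagoras give \( \|T_n-P_n^*\|_{L^2(e^{\theta_n}\mu_{|\Delta_n^*})}^2=o_{\mathcal K}(1) \), and this is then promoted to a locally uniform pointwise bound via an \( H^2 \)/Cauchy estimate, after which Lemma~\ref{lem:vw6} and \eqref{limitGs} finish the job.

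The only noteworthy difference is in the promotion step. The paper carries it out on \( \Delta_n^* \): it writes the Cauchy integral for \( (P_n-P_n^*)/(\Omega_{\kappa_n}F_n^* w_{\Delta_n^*}) \), where the extra outer factor \( \Omega_{\kappa_n}=\Omega_{\Delta_n^*}(e^{-n\kappa_n};\cdot) \) is inserted to absorb the nonzero \( \kappa_n \) on \( \Delta_n^*\setminus\Delta_n \), and then uses \eqref{L1kappa} to check that \( \Omega_{\kappa_n}\to 1 \) locally uniformly. You instead restrict the \( L^2 \)-estimate further to \( \Delta_n \) (where \( \kappa_n\equiv 0 \)), pair with \( F_n \) rather than \( F_n^* \), and invoke the abstract Hardy bound \eqref{HardyEst} on \( D_{\Delta_n} \); the passage \( F_n\leftrightarrow F_n^* \) is then handled by \eqref{limitGs}. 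Your variant is slightly cleaner in that it avoids introducing \( \Omega_{\kappa_n} \) altogether; the paper's variant keeps everything on the single interval \( \Delta_n^* \). Either way the content is the same.
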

\begin{proof}
Let \( F_n^*(z) \) be as in \eqref{ser21} and
\[
\Omega_{\kappa_n}(z) := \Omega_{\Delta_n^*}\big(e^{-n\kappa_n};z),
\]
which is an outer function in \( H^2(D_{\Delta_n^*}) \) whose traces satisfy \( |\Omega_{\kappa_n\pm}(x)|^2 = e^{-2n\kappa_n(x)} \) almost everywhere on \( \Delta_n^* \), see \eqref{outer-b}. Thus, it holds that
\[
|(\Omega_{\kappa_n}F_n^*)_\pm(x)|^{-2} = G_n^{-2}e^{\theta_n(x)}v_{\Delta_n^*}(x)
\]
almost everywhere on \( \Delta_n^* \) due to \eqref{outer-b}, \eqref{szego} and \eqref{SzegoFun}, where \( G_n := G(e^{h_n}\mu_{|\Delta_n^*},\infty) \). For brevity, put \( P_n = T_n(e^{\theta_n}\mu) \). Then, it follows from the Cauchy integral formula that
\begin{align*}
\frac{P_n(z)-P_n^*(z)}{ (\Omega_{\kappa_n}F_n^*)(z)w_{\Delta_n^*}(z)} &= \frac1{2\pi\ic}\oint_{\Gamma_n} \frac{P_n(s)-P_n^*(s)}{z-s} \frac{1}{(\Omega_{\kappa_n}F_n^*)(s)} \frac{ds}{w_{\Delta_n^*}(s)} \\
& = \int_{\Delta_n^*} \frac{P_n(x)-P_n^*(x)}{z-x} \re\left( \frac1{(\Omega_{\kappa_n}F_n^*)_+(x)} \right) d\omega_{\Delta_n^*}(x),
\end{align*}
where \( \Gamma_n \) is any counter-clockwise oriented Jordan curve that separates \( \Delta_n^* \) and \( z \) and we used the following facts
\[
F_{n-}^*(x) = \overline{F_{n+}^*(x)} \qandq w_{\Delta_n^*+}(x) = - w_{\Delta_n^*-}(x) = \ic|w_{\Delta_n^*}(x)|, \quad x\in\Delta_n^*,
\]
as well as \eqref{arcsine}. Since the absolute value of the real part does not exceed the absolute value of a complex number, we get from the Cauchy-Schwarz inequality that
\begin{align}\nonumber
\left|\frac{P_n(z)-P_n^*(z)}{ (\Omega_{\kappa_n}F_n^*)(z)}\right|^2 & \leq G_n^{-2}\frac{|w_{\Delta_n^*}(z)|^2}{\dist(z,\Delta_n^*)^2} \int_{\Delta_n^*} \big(P_n(x)-P_n^*(x) \big)^2e^{\theta_n(x)} v_{\Delta_n^*}(x)d\omega_{\Delta_n^*}(x) \\\label{ser22}
& \leq G_n^{-2}\frac{|w_{\Delta_n^*}(z)|^2}{\dist(z,\Delta_n^*)^2}  \int_{\Delta_n^*} \big(P_n(x)-P_n^*(x)\big)^2 e^{\theta_n(x)}d\mu(x).
\end{align}
It follows from the compactness of \( \mathcal K \) and \eqref{limitGs} that the constants \( G_n^{-2} \) are uniformly bounded above. Because \( P_n^* \) is the \( n \)-th monic orthogonal polynomials with respect to \( e^{\theta_n}\mu_{|\Delta_n^*} \) and \( P_n \) is a monic polynomials of degree \( n \), we have that
\[
\int_{\Delta_n^*} P_n(x)P_n^*(x)e^{\theta_n(x)}d\mu(x) = \int_{\Delta_n^*} P_n^*(x)^2 e^{\theta_n(x)}d\mu(x),
\]
from which we easily deduce that
\begin{equation}
\label{ser23a}
\int_{\Delta_n^*} \big(P_n(x)-P_n^*(x)\big)^2 e^{\theta_n(x)}d\mu(x) =\int_{\Delta_n^*} P_n(x)^2e^{\theta_n(x)}d\mu(x) - \int_{\Delta_n^*} P_n^*(x)^2e^{\theta_n(x)}d\mu(x).
\end{equation}
Recall that for the \( n\)-th monic orthogonal polynomial $P_n$ it holds that
\begin{equation}
\label{ser23}
\int_{\Delta(\mu)} P_n^2(x) e^{\theta_n(x)}d\mu(x) \leq \int_{\Delta(\mu)} P^2(x)e^{\theta_n(x)}d\mu(x)
\end{equation}
for any monic polynomial \( P \) of degree \( n \), and $P_n^*$ in particular. Therefore, we obtain from \eqref{ser23a}, simple majorization, and \eqref{ser23} that
\begin{align*}
\int_{\Delta_n^*} \big(P_n(x)-P_n^*(x)\big)^2 e^{\theta_n(x)}d\mu(x) & \leq \int_{\Delta(\mu)} P_n(x)^2e^{\theta_n(x)}d\mu(x) - \int_{\Delta_n^*} P_n^*(x)^2e^{\theta_n(x)}d\mu(x) \\
& \leq  \int_{\Delta(\mu)} P_n^*(x)^2e^{\theta_n(x)}d\mu(x) - \int_{\Delta_n^*} P_n^*(x)^2e^{\theta_n(x)}d\mu(x) \\
& = \int_{\Delta(\mu)\setminus\Delta_n^*} P_n^*(x)^2e^{\theta_n(x)}d\mu(x) \to  0
\end{align*}
as \( n\to\infty \), where the last conclusion was shown in Lemma~\ref{lem:vw8}. Since the intervals \( \Delta_n^* \) converge to \( \Delta \), we get from \eqref{ser22}, the above estimates, and Lemma~\ref{lem:vw6} that
\[
P_n(z) = P_n^*(z) + o_{\mathcal K}(1) (\Omega_{\kappa_n} F_n^*)(z) = \big( 1+ o_{\mathcal K}(1) \big) F_n^*(z)
\]
locally uniformly in \( D_\Delta \), where the last equality also used the fact that \( \Omega_{\kappa_n}(z) = 1+o(1) \) locally uniformly in the complement of \( \Delta \) as is clear from \eqref{outer}, \eqref{delta_n}, and \eqref{L1kappa}. The first claim of Theorem~\ref{thm:vw2} now follows from \eqref{limitGs}. The second claim of the theorem is a consequence of \eqref{limitGs} for \( \Delta_n \) and of \eqref{kvhlk} if we observe that
\begin{multline*}
\int_{\Delta_n^*}P_n^*(x)^2e^{\theta_n(x)}d\mu(x) \leq \int_{\Delta_n^*} P_n(x)^2e^{\theta_n(x)}d\mu(x) \leq \int_{\Delta(\mu)} P_n(x)^2e^{\theta_n(x)}d\mu(x) \\
 \leq \int_{\Delta(\mu)}P_n^*(x)^2e^{\theta_n(x)}d\mu(x) = \int_{\Delta_n^*}P_n^*(x)^2e^{\theta_n(x)}d\mu(x) + o(1),
\end{multline*}
where we used  Lemma~\ref{lem:vw8}  for the last equality. As we just mentioned, the last claim of the theorem follows from the second line of \eqref{limitGs}.
\end{proof}

\section{Strong Asymptotics of OPs with Reciprocal Polynomial Weights}
\label{sec:pw}

In this section, we describe strong asymptotics of polynomials orthogonal with respect to a sequence of measures  that are ratios of certain bounded perturbations of a Szeg\H{o} measure and polynomials of growing degrees. Below, we mostly follow \cite{St00} and generalize some results obtained in the pioneering work \cite{CYLL98}. Theorem~\ref{thm:pw8} further below was used in a crucial way in Lemma~\ref{lem:vw4} on the way to proving Theorem~\ref{thm:vw1}. Theorem~\ref{thm:pw8} itself is a straightforward consequence of Theorem~\ref{thm:pw1} that we state in the next subsection.

\subsection{Orthogonality on \( \T \)}

Let \( \{ (\sigma_n,g_n,W_n) \} \) be triples of a finite positive Borel measure on \( \T \), a continuous real-valued function on \( \T \), and a monic polynomial of degree \( n \) with all its zeros inside the unit disk. For each \( n \) we then define an inner product on the unit circle by
\[
\langle f,k\rangle_{\T,n} = \int f(\xi)\overline{k(\xi)} \, \frac{e^{g_n(\xi)}d\sigma_n(\xi)}{|W_n(\xi)|^2}.
\]
We are interested in orthonormal polynomials \( \phi_n(z) \), \( \deg\phi_n=n \), satisfying
\begin{equation}
\label{ortho-circle}
\begin{cases}
\langle \phi_n,\xi^m\rangle_{\T,n}=0, &  m\in\{0,1,\ldots,n-1\}, \smallskip \\
\langle \phi_n,\phi_n\rangle_{\T,n} =1,
\end{cases}
\end{equation}
and normalized to have a positive leading coefficient, i.e., \( \phi_n(z) = \alpha_n z^n + \ldots \), \( \alpha_n>0 \) (when \( g_n=0 \) and \( \sigma_n=\sigma \), this is exactly the setting considered in \cite{St00}). 

In this section, we let the symbol \( * \) denote the transformation
\[
p^{\ast}(z)=z^n\overline{p(1/{\bar{z}})}
\]
defined on the set of polynomials of degree at most \( n \). In particular, if we denote the zeros of \( W_n(z) \) by \( b_{n,j} \), \( j\in\{1,2,\ldots,n\} \), which all belong to the unit disk, then
\[
W_n(z)=\prod_{j=1}^n (z-b_{n,j}) \qandq W_n^*(z)=\prod_{j=1}^n (1-\overline{b_{n,j}}z).
\]
Write $d\sigma_n=\upsilon_ndm+d\sigma_n^s$, where the measures \( d\sigma_n^s \) are singular to $dm(\xi)=(2\pi)^{-1}|d\xi|$, the normalized Lebesgue measure on \( \T \). In this section, we shall assume the following:
\emph{
\begin{itemize}
\item[$(A_\T)$] there exists a finite measure $d\sigma=\upsilon dm+d\sigma^s$, where \( d\sigma^s \) is singular to $dm$, such that for any non-negative continuous function \( f \) on \( \T \) it holds that
\[
\limsup_{n\to\infty} \int fd\sigma_n \leq \int fd\sigma;
\]
\item[$(B_\T)$] the densities \( \upsilon_n \) and \( \upsilon \) have  logarithms integrable with respect to \( dm \) and  \( \|\log \upsilon_n- \log \upsilon\|_{L^1(\T)}\to 0 \) as \( n\to\infty \); \smallskip
\item[$(C_\T)$] the functions \( g_n \) belong to \( \mathcal E \), a fixed compact subset of \( C(\T) \); \smallskip
\item[$(D_\T)$] the zeros \( \{b_{n,j}\} \) satisfy \( \sum_{j=1}^n (1-|b_{n,j}|)\to \infty \) as \( n\to\infty \).
\end{itemize}
}
It is known, see \cite[Section~II.2]{Ga}, that the condition $(D_\T)$ is equivalent to
\begin{equation}
\label{Blaschke}
\frac{W_n(z)}{W_n^*(z)} = o(1)
\end{equation}
locally uniformly in \( \D \).
To describe the results, let us introduce the Szeg\H{o} function of a measure \( d\sigma=\upsilon dm+d\sigma^s \), which is given by
\[
D(\sigma,z):=\exp\left(\frac 12\int\frac{\xi+z}{\xi-z}\log\upsilon(\xi)dm(\xi)\right)
\]
and is independent of the singular part \( \sigma^s \). When \( \log\upsilon \) is integrable with respect to \( dm \), the function \( D(\sigma,z) \) is analytic in \( \overline\C\setminus\T \), in fact, it is an outer function in both \( \D \) and \( \overline\C\setminus\overline \D \), its values inside and outside of the unit disk are related via the identity
\[
D^{-1}(\sigma,z) = \overline{D(\sigma,1/\bar z)}, \quad z\not\in \T,
\]
and it has a non-tangential limit on \( \T \) (taken within $\D$) that satisfies  $|D(\sigma,\xi)|^2=\upsilon(\xi)$ for almost every $\xi$ on $\T$.

\begin{theorem}
\label{thm:pw1}
With the above definitions, assume that the orthogonality measures in \eqref{ortho-circle} satisfy conditions \( (A_\T)-(D_\T) \). Then,
\begin{equation}
\label{A1-1}
 \frac{\phi_n^*(z)}{W_n^*(z)}D_n(z)  = 1 + o_\mathcal{E}(1) \qandq \frac{\phi_n(z)}{\phi_n^*(z)} = o_\mathcal{E}(1)
 \end{equation}
locally uniformly in \( \D \) as \( n\to\infty \), where \( D_n(z) = D(e^{g_n}\sigma_n,z) \). In particular, we have that
\begin{equation}
\label{A1-2}
\alpha_n D_n(0) = 1 + o_\mathcal{E}(1) \qasq n\to\infty.
\end{equation}
\end{theorem}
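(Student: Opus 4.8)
The plan is to deduce \eqref{A1-1} and \eqref{A1-2} from the extremal characterization of $\phi_n$ together with elementary Hilbert space geometry on the disk. Since $\mathcal E$ is a compact subset of $C(\T)$, it suffices to establish the two conclusions along an arbitrary subsequence $\Lambda\subseteq\N$ for which $g_n\to g$ uniformly on $\T$ with $g\in\mathcal E$, the $o_{\mathcal E}(1)$ statements following from the usual subsequential argument. Along $\Lambda$, conditions $(A_\T)$ and $(B_\T)$ give $\log|D_n|^2=g_n+\log\upsilon_n\to g+\log\upsilon=\log|D|^2$ in $L^1(\T)$, where $D:=D(e^g\sigma,\cdot)$; convergence of the corresponding Poisson integrals, together with the normalization $D_n(0)>0$ pinning down the harmonic conjugate, then yields $D_n\to D$ locally uniformly in $\D$. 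Two further observations will be used repeatedly: first, the Szeg\H{o} function of the varying measure $d\tilde\sigma_n:=e^{g_n}d\sigma_n/|W_n|^2$ is precisely $D_n(z)/W_n^*(z)$ — indeed $|W_n|=|W_n^*|$ on $\T$, and $W_n^*$ is outer in $\D$ with $W_n^*(0)=1$, so the factor $-\log|W_n|$ in the symbol of $\tilde\sigma_n$ contributes exactly $1/W_n^*$ to its Szeg\H{o} factorization; second, condition $(D_\T)$ is equivalent to $W_n/W_n^*=o(1)$ locally uniformly in $\D$, see \eqref{Blaschke}.

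Next I record the two-sided bound on $\alpha_nD_n(0)$. Writing $\phi_n=\alpha_n\Phi_n$ with $\Phi_n$ monic of degree $n$, extremality of $\Phi_n$ and Jensen's inequality — using $\tilde\sigma_n\ge\tilde v_n\,dm$, $\int_\T\log|\Phi_n|^2\,dm\ge0$ (Jensen's formula for the disk), and $\int_\T\log\tilde v_n\,dm=\log D_n(0)^2$ — give $\alpha_n^{-2}=\min_{\Phi\ \mathrm{monic}}\int|\Phi|^2d\tilde\sigma_n\ge D_n(0)^2$, hence $\alpha_nD_n(0)\le1$ for every $n$. For the compactness setup, note that $\phi_n^*/W_n^*$ is analytic in $\D$ because all zeros of $W_n^*$ have modulus $>1$, and that, using $|\phi_n^*|=|\phi_n|$ and $|D_n|^2=e^{g_n}\upsilon_n$ a.e.\ on $\T$,
\[
\int_\T\Bigl|\frac{\phi_n^*(\xi)D_n(\xi)}{W_n^*(\xi)}\Bigr|^2dm(\xi)=\int_\T|\phi_n(\xi)|^2\,\frac{e^{g_n(\xi)}\upsilon_n(\xi)}{|W_n(\xi)|^2}\,dm(\xi)\le\int|\phi_n|^2d\tilde\sigma_n=1 .
\]
Thus $G_n:=\phi_n^*D_n/W_n^*$ lies in the closed unit ball of $H^2(\D)$, with $G_n(0)=\alpha_nD_n(0)$.

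The crux is the matching upper bound $\liminf_\Lambda\alpha_nD_n(0)\ge1$, equivalently $\alpha_n^{-2}\le(1+o(1))D_n(0)^2$. Using the reversed form of the extremal problem, $\alpha_n^{-2}=\min\{\int|Q|^2d\tilde\sigma_n:\deg Q\le n,\ Q(0)=1\}$ (the $*$-operation preserves $\int_\T|\cdot|^2d\tilde\sigma_n$ and maps monic polynomials of degree $n$ bijectively onto $\{Q:\deg Q\le n,\ Q(0)=1\}$), it is enough to produce test polynomials $Q_n$ of degree $\le n$ with $Q_n(0)=1$ and $\int|Q_n|^2d\tilde\sigma_n\le(1+o(1))D_n(0)^2$. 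The scheme is to replace $e^{g_n}\upsilon_n$ by a slightly larger symbol obtained by approximating $g+\log\upsilon$ from above in $L^1(\T)$ by a trigonometric polynomial — here $(A_\T)$, $(B_\T)$ and $(C_\T)$ enter, and the replacement costs only a factor $1+\varepsilon$ in the geometric mean — so that the reciprocal of the corresponding regularized Szeg\H{o} function is bounded and analytic in a fixed neighborhood of $\overline\D$; one then takes for $Q_n$ an appropriate degree‑$\le n$ polynomial distilled from $W_n^*$ times that reciprocal, and estimates $\int|Q_n|^2d\tilde\sigma_n$ by splitting it into its absolutely continuous part (controlled by the construction, producing $(1+o(1))D_n(0)^2$) and its singular part (bounded, via $(A_\T)$, by dominating $|Q_n/W_n|^2e^{g_n}$ by a fixed continuous function). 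I expect this to be the main obstacle: $W_n^*$ may have exponentially large coefficients, so reconciling the degree constraint with the required accuracy, and making the singular‑part estimate uniform over $\mathcal E$, requires care; the constructions of \cite{CYLL98} and \cite{St00} provide the template.

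Granting the upper bound, $\alpha_nD_n(0)\to1$ along $\Lambda$, which is \eqref{A1-2}. Since $G_n$ lies in the unit ball of $H^2(\D)$ with $\widehat G_n(0)=G_n(0)=\alpha_nD_n(0)$ real,
\[
\|G_n-1\|_{H^2(\D)}^2=\|G_n\|_{H^2}^2-2\,\mathrm{Re}\,\widehat G_n(0)+1\le 2-2\alpha_nD_n(0)\longrightarrow0 ,
\]
so $G_n\to1$ locally uniformly in $\D$ (point evaluations being bounded on $H^2(\D)$), which is the first relation in \eqref{A1-1}. For the second relation, apply the $*$-operation to the first: for $z$ in a compact subset of $\overline\C\setminus\overline\D$ the point $1/\bar z$ ranges over a compact subset of $\D$, whence $\phi_n^*(1/\bar z)=(1+o(1))W_n^*(1/\bar z)/D_n(1/\bar z)$, and inserting this into $\phi_n(z)=z^n\overline{\phi_n^*(1/\bar z)}$ together with $\overline{W_n^*(1/\bar z)}=z^{-n}W_n(z)$ and $\overline{D_n(1/\bar z)}=1/D_n(z)$ gives $\phi_n(z)=(1+o(1))W_n(z)D_n(z)$ locally uniformly in $\overline\C\setminus\overline\D$. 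Combined with \eqref{Blaschke} (condition $(D_\T)$), a $*$-reflection argument in the manner of \cite{St00} then yields $\phi_n/\phi_n^*=o_{\mathcal E}(1)$ locally uniformly in $\D$, completing the proof of \eqref{A1-1}.
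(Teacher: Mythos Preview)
Your Jensen-inequality upper bound $\alpha_nD_n(0)\le1$ and the subsequent $H^2$ computation
\[
\|G_n-1\|_{H^2}^2\le 2-2\alpha_nD_n(0)
\]
are correct and match the paper exactly (Lemmas~\ref{lem:pw5}--\ref{lem:pw6}). The two substantive steps, however, are not established.

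\textbf{The lower bound.} You do not prove $\liminf\alpha_nD_n(0)\ge1$; you sketch a test-polynomial approach, flag the difficulties (degree budget versus size of $W_n^*$, uniform control of the singular part), and defer to \cite{CYLL98,St00}. The paper takes a different route that sidesteps these issues entirely. It introduces the companion polynomial $\psi_n$ so that $\psi_n^*/\phi_n^*$ interpolates the Carath\'eodory function $F(e^{g_n}\sigma_n,\cdot)$ at the zeros of $zW_n$; a pointwise bound from \cite{St00} together with \eqref{Blaschke} gives $F(e^{g_n}\sigma_n,z)-\psi_n^*(z)/\phi_n^*(z)=o_{\mathcal E}(1)$ in $\D$. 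Setting $\lambda_n=\re(\psi_n^*/\phi_n^*)=|W_n/\phi_n|^2$ on $\T$, this yields that $\lambda_n\,dm$ and $e^{g_n}d\sigma_n$ agree asymptotically on continuous test functions (Lemma~\ref{lem:pw3}). The key is then a relative-entropy estimate: the variational formula \eqref{aper1} with a single fixed $f_\epsilon$ transfers $S(m|e^{g_n}\sigma_n)$ to $S(m|\lambda_n m)$ up to $\epsilon$, giving $\int\log\lambda_n\,dm\le\int(g_n+\log\upsilon_n)\,dm+\epsilon$. Since $\log|W_n^*/\phi_n^*|$ is harmonic in $\overline\D$ with boundary values $\tfrac12\log\lambda_n$, the mean-value property gives $-\log\alpha_n=\tfrac12\int\log\lambda_n\,dm$, and the lower bound follows. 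No test polynomials are constructed.

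\textbf{The second relation in \eqref{A1-1}.} Your $*$-reflection step does not close. From the first relation you correctly extract $\phi_n(z)=(1+o(1))W_n(z)D_n(z)$ locally uniformly for $|z|>1$. But $\phi_n/\phi_n^*$ in $\D$ requires either $\phi_n$ in $\D$ or $\phi_n^*$ outside $\overline\D$, and neither is obtained by reflecting what you have: the $*$-map sends information about $\phi_n^*$ in $\D$ to information about $\phi_n$ outside, and vice versa, so you are missing exactly the piece you need. Invoking \eqref{Blaschke} does not help, since the zeros of $\phi_n$ are not those of $W_n$. The paper's argument (Lemma~\ref{lem:pw7}) is genuinely different: it writes $\phi_nD_n/W_n^*$ via the Cauchy integral on $\T$, uses the $L^2(\T)$ convergence $\phi_n^*D_n/W_n^*\to1$ already proved to reduce to $\int B_nU_n\,(1-z\bar\xi)^{-1}dm$ with $B_n=W_n/W_n^*$ and $U_n=D_n/\overline{D_n}$, and then shows this tends to zero by splitting Fourier coefficients and proving that the family $\{U_n(\xi)/(1-z\bar\xi)\}$ is precompact in $L^2(\T)$ (here $(B_\T)$ and $(C_\T)$ are used via Kolmogorov's theorem for the Hilbert transform). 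The uniformity over $\mathcal E$ comes from this compactness, not from a reflection identity.
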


\subsection{Proof of Theorem~\ref{thm:pw1}}

As in the previous sections, we shall prove Theorem~\ref{thm:pw1} in a sequence of steps that we organize as separate lemmas. For the first step we define the Caratheodory function of a measure \( \sigma \) by
\[
F(\sigma,z):=\int \frac{\xi+z}{\xi-z}d\sigma(\xi).
\]
Further, for each orthonormal polynomial \( \phi_n(z) \) we define its companion polynomial  as a polynomial \( \psi_n(z) \) of degree at most \( n \) such that \( \psi_n^*(z) \) interpolates  $\phi_n^*(z) F(e^{g_n}\sigma_n,z)$ at the zeros of $zW_n(z)$, see \cite[Equation~(3.6)]{St00}.


\begin{lemma}
\label{lem:pw2}
Under the conditions of Theorem~\ref{thm:pw1}, we have
\[
F(e^{g_n}\sigma_n,z)- \frac{\psi^*_n(z)}{\phi^*_n(z)} = o_\mathcal{E}(1) \qasq n\to\infty
\]
locally uniformly in \( \D \). Moreover, 
\begin{equation}\label{zar1}
F(e^{g_n}\sigma_n,0)=\frac{\psi^*_n(0)}{\phi^*_n(0)}\,.
\end{equation}
\end{lemma}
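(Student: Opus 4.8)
Write \( F_n := F(e^{g_n}\sigma_n,\cdot) \) and \( D_n := D(e^{g_n}\sigma_n,\cdot) \). The plan is to turn the claim into an exact formula for the error and then estimate the three factors appearing in it. By the defining interpolation property, \( \phi_n^*(z)F_n(z)-\psi_n^*(z) \) vanishes at the zeros of \( zW_n(z) \), all of which lie in \( \D \); since \( F_n \) is holomorphic in \( \D \), this forces \( \phi_n^*(z)F_n(z)-\psi_n^*(z) = zW_n(z)A_n(z) \) with \( A_n \) holomorphic in \( \D \). Evaluating at \( z=0 \) (an interpolation node) gives \eqref{zar1} at once. As the zeros of \( \phi_n \) lie in \( \overline\D \), the polynomial \( \phi_n^* \) is zero-free in \( \overline\D \), so it remains to prove \( zW_n A_n/\phi_n^* = o_{\mathcal E}(1) \) locally uniformly in \( \D \). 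A residue computation on a circle \( |\xi|=\rho \) with \( \max_j|b_{n,j}|<\rho<1 \) — in which the \( \psi_n^* \)-contribution drops out because its integrand is \( O(\xi^{-2}) \) at infinity, so its residues inside and at infinity sum to zero — yields
\[
A_n(z) = \int_\T \frac{\phi_n^*(\xi)F_n(\xi)}{W_n(\xi)(\xi-z)}\,dm(\xi),
\]
the passage \( \rho\to1 \) being handled by substituting the Herglotz representation of \( F_n \) and applying Fubini.

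Next I would exploit the unit-circle symmetries \( \overline{\phi_n(\xi)}=\xi^{-n}\phi_n^*(\xi) \) and \( \overline{W_n(\xi)}=\xi^{-n}W_n^*(\xi) \) on \( \T \) (so that \( \phi_n^*/W_n=\overline{\phi_n/W_n^*} \) there) together with \( (\xi-z)^{-1}=\overline{\xi(1-\bar z\xi)^{-1}} \) on \( \T \) to rewrite \( A_n(z)=\int_\T\overline{g_{n,z}(\xi)}F_n(\xi)\,dm(\xi) \), where \( g_{n,z}(\xi):=\phi_n(\xi)\,\xi\big(W_n^*(\xi)(1-\bar z\xi)\big)^{-1} \) is holomorphic in \( \overline\D \) and vanishes at \( \xi=0 \). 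Pairing the Fourier series of \( \overline{g_{n,z}} \) (only negative frequencies) with the Taylor series of the Carathéodory function \( F_n \) gives \( A_n(z)=2\,\overline{\int_\T g_{n,z}(\eta)\,e^{g_n(\eta)}d\sigma_n(\eta)} \). Rewriting the integral as an inner product \eqref{ortho-circle}, one identifies \( \int_\T g_{n,z}\,e^{g_n}d\sigma_n = \big\langle \phi_n,\ W_n^*(\cdot)(\cdot-z)^{-1}\big\rangle_{\T,n} \); since \( W_n^*(\xi)(\xi-z)^{-1} \) differs from a polynomial of degree \( \le n-1 \) by \( W_n^*(z)(\xi-z)^{-1} \), the orthogonality of \( \phi_n \) collapses this to \( \overline{W_n^*(z)}\,\big\langle\phi_n,(\cdot-z)^{-1}\big\rangle_{\T,n} \). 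Altogether
\[
F_n(z)-\frac{\psi_n^*(z)}{\phi_n^*(z)}=\frac{2zW_n(z)W_n^*(z)}{\phi_n^*(z)}\,\overline{\big\langle\phi_n,(\xi-z)^{-1}\big\rangle_{\T,n}}.
\]
To bound the inner product I would subtract from \( (\xi-z)^{-1} \) the polynomial of degree \( \le n-1 \) interpolating it at the \( n \) zeros of \( W_n^* \): the remainder is \( W_n^*(\xi)\big(W_n^*(z)(\xi-z)\big)^{-1} \), which is annihilated by \( \phi_n \), so the Cauchy–Schwarz inequality in \( L^2\big(e^{g_n}d\sigma_n/|W_n|^2\big) \) together with \( |W_n^*|=|W_n| \) on \( \T \) gives
\[
\big|\langle\phi_n,(\xi-z)^{-1}\rangle_{\T,n}\big|\le \frac{1}{|W_n^*(z)|}\Big(\int_\T\frac{e^{g_n}d\sigma_n}{|\xi-z|^2}\Big)^{1/2},
\]
and the last integral is bounded, uniformly on compact subsets of \( \D \) and over \( g_n\in\mathcal E \), because \( e^{g_n} \) is uniformly bounded on \( \T \) and \( \sigma_n(\T) \) is bounded by \( (A_\T) \). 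Hence \( |F_n(z)-\psi_n^*(z)/\phi_n^*(z)|\lesssim_K |W_n(z)|/|\phi_n^*(z)| \) for \( z \) in a compact \( K\subset\D \).

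The final — and, I expect, the main — step is the lower bound \( |W_n^*(z)|\lesssim_K|\phi_n^*(z)| \) on \( K \). Here I would use that \( \Psi_n:=(\phi_n^*/W_n^*)\,D_n \) is an \emph{outer} function in \( H^2(\D) \) (a product of outer functions, since \( \phi_n^* \) has no zeros in \( \D \)) with \( \|\Psi_n\|_{H^2}\le1 \) — this follows from \( \langle\phi_n,\phi_n\rangle_{\T,n}=1 \), \( |\phi_n^*|=|\phi_n| \), \( |W_n^*|=|W_n| \) and \( |D_n|^2=e^{g_n}\upsilon_n \) on \( \T \) — and with \( \Psi_n(0)=\alpha_n D_n(0) \) bounded above and below: the Szeg\H{o} extremal characterization of \( \alpha_n^{-2} \) gives \( \alpha_n D_n(0)\le1 \) via Jensen's inequality, the test polynomial \( W_n \) gives \( \alpha_n^{-2}\le\int e^{g_n}d\sigma_n\lesssim1 \), and \( D_n(0) \) is controlled above and below using \( (B_\T) \) and \( (C_\T) \). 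From the Poisson representation of the outer function \( \Psi_n \), writing \( \log|\Psi_n|=\log^+|\Psi_n|-\log^-|\Psi_n| \) and using \( \int_\T\log^+|\Psi_n|\,dm\le\|\Psi_n\|_{H^2}\le1 \) together with \( \int_\T\log|\Psi_n|\,dm=\log\Psi_n(0) \), one obtains \( |\Psi_n(z)|\gtrsim_K1 \); since also \( |D_n(z)|\lesssim_K1 \) by the same kind of Poisson estimate applied to \( \log(e^{g_n}\upsilon_n) \), this yields \( |W_n^*(z)|\lesssim_K|\phi_n^*(z)| \). Combining everything, \( |W_n(z)|/|\phi_n^*(z)|\lesssim_K |W_n(z)/W_n^*(z)|=o(1) \) locally uniformly in \( \D \) by \eqref{Blaschke}, which is equivalent to \( (D_\T) \); as all the constants depend only on \( K \) and on \( \mathcal E \)-uniform quantities, the error is \( o_{\mathcal E}(1) \). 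The chief obstacle is precisely this last step — distilling the two-sided control of \( \phi_n^* \) in \( \D \) from orthonormality plus the Szeg\H{o} extremal principle and the outer-function machinery — while a secondary technical nuisance is that \( F_n \) lies only in \( \bigcap_{p<1}H^p(\D) \), so the residue calculus and the interchange of limits must be justified in the Smirnov class.
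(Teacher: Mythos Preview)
Your argument is correct and self-contained, but it takes a genuinely different route from the paper. The paper's proof is two lines: it quotes from Stahl~\cite{St00} the ready-made estimate
\[
\left|F(e^{g_n}\sigma_n,z)-\frac{\psi_n^*(z)}{\phi_n^*(z)}\right|\le 2\sqrt{2}\,\left|z\,\frac{W_n(z)}{W_n^*(z)}\right|\frac{|e^{g_n}\sigma_n|}{(1-|z|)^{3/2}},
\]
then bounds the total mass via $(A_\T)$ and $(C_\T)$ and invokes \eqref{Blaschke}. The point is that Stahl's inequality already carries the Blaschke product \(W_n/W_n^*\) on the right, so no information about \(\phi_n^*\) beyond its nonvanishing is needed. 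Your derivation, by contrast, produces the factor \(|W_n(z)|/|\phi_n^*(z)|\), and you must then manufacture the lower bound \(|\phi_n^*(z)|\gtrsim_K|W_n^*(z)|\) separately. Your outer-function argument for that step---\(\Psi_n=(\phi_n^*/W_n^*)D_n\) is outer, \(\|\Psi_n\|_{H^2}\le1\), \(\Psi_n(0)=\alpha_nD_n(0)\) is bounded below by the test polynomial \(W_n\) and \((B_\T),(C_\T)\), whence a Poisson estimate on \(\log^-|\Psi_n|\)---is valid and, importantly, does not presuppose \eqref{A1-2}, so there is no circularity. The trade-off: your proof explains the mechanism from scratch, while the paper's is much shorter but outsources the substance to \cite{St00}.

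Two small wording issues. In your interpolation step it is the interpolating \emph{polynomial}, not the remainder \(W_n^*(\xi)\big(W_n^*(z)(\xi-z)\big)^{-1}\), that is annihilated by \(\phi_n\); the remainder is exactly what survives and is then bounded by Cauchy--Schwarz. And in the residue step you should take \(\rho\) with \(\max(|z|,\max_j|b_{n,j}|)<\rho<1\), so that the pole at \(\xi=z\) is enclosed as well; otherwise the \(\psi_n^*\)-contribution does not drop out as claimed.
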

\begin{proof}
Combining formulas (3.23), (3.24), and (3.26) from \cite{St00}, we get
\[
\left|F(e^{g_n}\sigma_n,z)- \frac{\psi^*_n(z)}{\phi^*_n(z)} \right| \leq 2\sqrt 2\left|z\frac{W_n(z)}{W_n^*(z)}\right| \frac{|e^{g_n}\sigma_n|}{(1-|z|)^{3/2}}
\]
for \( z\in \D \), where \( |\mu| \) denotes the total mass of the measure \( \mu \). Taking $f=1$ in $(A_\T)$, we get
\begin{equation}
\label{zar2}
\limsup_{n\to \infty}|\sigma_n|\le|\sigma| \qandq \limsup_{n\to \infty}|e^{g_n}\sigma_n|\lesssim_{\mathcal E} |\sigma|,
\end{equation}
where the last bound follows from $(C_\T)$. Applying \eqref{Blaschke} finishes the proof of the lemma.
\end{proof}

Notice that Jensen's inequality and $(C_\T)$ yield that
\begin{equation}
\label{zar3}
\liminf_{n\to\infty} |\sigma_n|\ge \liminf_{n\to\infty} \exp\left(\int \log\upsilon_n dm\right)=\exp\left(\int \log\upsilon dm\right)>-\infty
\end{equation}
and $(B_\T$) gives a lower bound
\begin{equation}
\label{zar4}
\liminf_{n\to\infty} |e^{g_n}\sigma_n|\gtrsim_{\mathcal{E}}\exp\left(\int \log\upsilon dm\right).
\end{equation}

Put
\begin{equation}
\label{lam1}
\lambda_n(z) = \re\left( \frac{\psi^*_n(z)}{\phi^*_n(z)} \right), \quad |z|\leq1,
\end{equation}
which is a harmonic function in some neighborhood of \( \overline \D \). It has been shown in \cite[Equation~(3.9)]{St00} that
\begin{equation}
\label{lam2}
\frac{\psi^*_n(z)}{\phi^*_n(z)} = \int \frac{\xi+z}{\xi-z} \left| \frac{W_n(\xi)}{\phi_n(\xi)} \right|^2 dm(\xi).
\end{equation}
Hence, \( \lambda_n(z) \) is a real part of the Caratheodory function of an absolutely continuous measure with strictly positive density and therefore is a strictly positive harmonic function in \( \overline\D \).  It readily follows from the second claim of Lemma~\ref{lem:pw2} that
\begin{equation}
\label{ls-masses}
|\lambda_nm| = \lambda_n(0) = F(e^{g_n}\sigma_n,0) = |e^{g_n}\sigma_n|.
\end{equation}

\begin{lemma}
\label{lem:pw3} 
Let $\mathcal Z$ be a compact set in $C(\T)$. Under the conditions of Theorem~\ref{thm:pw1}, it holds that for any \( \epsilon>0 \) there exists \( N_{\mathcal E,\mathcal Z}(\epsilon) \) such that
\[
\left|\int he^{g_n}d\sigma_n-\int h\lambda_ndm\right| \leq \epsilon
\]
for all \( n \geq N_{\mathcal E,\mathcal Z}(\epsilon) \) and each \( h\in\mathcal Z \).
\end{lemma}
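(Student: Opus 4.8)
The plan is to regard $e^{g_n}d\sigma_n$ and $\lambda_n\,dm$ as two positive measures on $\T$ carrying the \emph{same} total mass, whose Poisson integrals become asymptotically equal, and then to upgrade convergence of Poisson integrals to weak-$*$ convergence, uniformly over the compact (hence bounded and equicontinuous) family $\mathcal Z$.

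First I would assemble the ingredients. Since $\re\frac{\xi+z}{\xi-z}=\frac{1-|z|^2}{|\xi-z|^2}$ for $\xi\in\T$, the function $\re F(e^{g_n}\sigma_n,z)$ is the Poisson integral of $e^{g_n}d\sigma_n$, while \eqref{lam1}--\eqref{lam2} show that $\lambda_n(z)$ is the Poisson integral of $\lambda_n\,dm$. Taking real parts in Lemma~\ref{lem:pw2} gives
\[
\nu_n(z):=\re F(e^{g_n}\sigma_n,z)-\lambda_n(z)=o_{\mathcal E}(1)
\]
locally uniformly in $\D$; equivalently, for the signed measure $\varrho_n:=e^{g_n}d\sigma_n-\lambda_n\,dm$ one has $\nu_n(z)=\int_\T\frac{1-|z|^2}{|\xi-z|^2}\,d\varrho_n(\xi)$, and this tends to $0$ locally uniformly in $\D$. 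Moreover, by \eqref{ls-masses} the positive and negative parts of $\varrho_n$ carry the same mass, which by \eqref{zar2} (take $f\equiv1$ in $(A_\T)$ and use $(C_\T)$) is bounded above by a constant $C_0$ independent of $n$; hence $\|\varrho_n\|\le 2C_0$ in total variation.

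Next I would fix $r\in(0,1)$ and, for $h\in C(\T)$, write $h_r(\xi):=u_h(r\xi)$, where $u_h$ denotes the harmonic (Poisson) extension of $h$ to $\D$. Using the elementary identity $|r\xi-\eta|=|\xi-r\eta|$ for $\xi,\eta\in\T$ together with Fubini's theorem one obtains $\int_\T h_r\,d\varrho_n=\int_\T h(\eta)\,\nu_n(r\eta)\,dm(\eta)$, so that $\bigl|\int_\T h_r\,d\varrho_n\bigr|\le\|h\|_{C(\T)}\,\sup_{|z|=r}|\nu_n(z)|$. Combined with the trivial estimate $\bigl|\int_\T(h-h_r)\,d\varrho_n\bigr|\le\|h-h_r\|_{C(\T)}\,\|\varrho_n\|\le 2C_0\,\|h-h_r\|_{C(\T)}$ this yields
\[
\Bigl|\int h\,e^{g_n}d\sigma_n-\int h\,\lambda_n\,dm\Bigr|\le 2C_0\,\|h-h_r\|_{C(\T)}+\|h\|_{C(\T)}\,\sup_{|z|=r}|\nu_n(z)|.
\]

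To finish, I would use that $\mathcal Z$, being compact in $C(\T)$, is bounded and equicontinuous; the latter forces $\sup_{h\in\mathcal Z}\|h-h_r\|_{C(\T)}\to0$ as $r\uparrow1$, the rate being controlled by the common modulus of continuity of $\mathcal Z$. Given $\epsilon>0$, I would first choose $r$ close enough to $1$ that the first term above is $<\epsilon/2$ for every $h\in\mathcal Z$, and then, since $\{|z|=r\}$ is a compact subset of $\D$ on which $\nu_n\to0$, choose $N_{\mathcal E,\mathcal Z}(\epsilon)$ so that the second term is $<\epsilon/2$ for all $n\ge N_{\mathcal E,\mathcal Z}(\epsilon)$ and all $h\in\mathcal Z$. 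The only step that is not purely routine is this last one—upgrading locally uniform convergence of Poisson integrals to \emph{uniform} weak-$*$ convergence over $\mathcal Z$—and it is made to work precisely by the uniform total-variation bound $\|\varrho_n\|\le 2C_0$ together with the equicontinuity of $\mathcal Z$; everything else (the Fubini step, continuity of harmonic extensions up to the boundary, and the mass identities) is standard.
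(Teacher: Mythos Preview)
Your argument is correct. Both you and the paper start from Lemma~\ref{lem:pw2} and the mass identity \eqref{ls-masses}, but the routes diverge after that. The paper reads off the Taylor coefficients of $F(e^{g_n}\sigma_n,\cdot)$ and $\psi_n^*/\phi_n^*$ at the origin, which are precisely the Fourier coefficients $\int \xi^j\,d\varrho_n$, $j\in\Z$; Lemma~\ref{lem:pw2} forces these to vanish one by one, hence $\int h\,d\varrho_n\to0$ for every trigonometric polynomial $h$, and uniformity over $\mathcal Z$ is then obtained by a finite $\epsilon$-net of trigonometric polynomials together with the mass bound \eqref{ls-masses}. You instead take real parts in Lemma~\ref{lem:pw2} to get locally uniform convergence of the Poisson integral $\nu_n$, and then exploit the Fubini/symmetry identity $\int h_r\,d\varrho_n=\int h\,\nu_n(r\,\cdot)\,dm$ to test $\varrho_n$ against the Poisson smoothing $h_r$; compactness of $\mathcal Z$ enters through Arzel\`a--Ascoli as equicontinuity, giving $\sup_{h\in\mathcal Z}\|h-h_r\|_\infty\to0$ directly, without selecting a finite net. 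Your approach is slightly more streamlined in that a single parameter $r$ replaces the finite family $\{h_1,\ldots,h_{K(\epsilon)}\}$, while the paper's approach makes the role of the Fourier coefficients (and hence of the Caratheodory structure) more explicit. Both rely in exactly the same way on the uniform total-variation bound $\|\varrho_n\|\le 2C_0$ coming from \eqref{ls-masses} and \eqref{zar2}.
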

\begin{proof}
We use a standard approximation argument. Comparing the Taylor coefficients  of functions $F(e^{g_n}\sigma_n,z)$ and $\psi^*_n(z)/\phi^*_n(z)$ at $z=0$ and applying Lemma~\ref{lem:pw2} gives
\[
\int h(\xi) \big(e^{g_n}d\sigma_n-\lambda_ndm\big)  = o_{\mathcal E,h}(1)
\]
as \( n\to\infty \), where \( h(\xi) = \xi^j \), $j\in \Z$. Hence, this claim remains true for each trigonometric polynomial $h$. Now, given $\epsilon>0$, we can use compactness and denseness of trigonometric polynomials in $C(\T)$ to find a finite collection of trigonometric polynomials \( \{h_1,h_2,\ldots,h_{K(\epsilon)}\}\) such that for each $h\in \mathcal{Z}$ there is $k\in\{1,2,\ldots,K(\epsilon)\}$ for which\
\[
\|h-h_k\|_\infty<\frac{\epsilon}{3M_{\mathcal E}},
\]
where \( \|e^{g_n}\sigma_n\| \leq M_{\mathcal E} \). Then, 
\[
 \left|\int (h-h_k)e^{g_n}d\sigma_n\right| \leq \frac\epsilon3 \qandq \left|\int (h-h_k)\lambda_ndm\right|\leq \frac\epsilon3
\]
by \eqref{ls-masses}. As just observed, for each \( h_k \) one can find a natural number $N_{\mathcal E,h_k}(\epsilon)$ such that 
\[
\left| \int  h_k\big(e^{g_n}d\sigma_n-\lambda_ndm\big) \right|\le \frac\epsilon3
\]
for $n\ge N_{\mathcal E,h_k}(\epsilon)$. Taking $N_{\mathcal E,\mathcal Z}(\epsilon)=\max_{1\leq k\leq K(\epsilon)} N_{\mathcal E,h_k}(\epsilon)$ yields the desired claim.
\end{proof}

For the next step, we shall need the mutual entropy of two measures. Let $\mu$ and $\nu$ be two measures on $\T$ such that $\mu$ is absolutely continuous with respect to $\nu$. The entropy $S(\mu|\nu)$ is defined as 
\[
S(\mu|\nu)=-\int \log\left(\frac{d\mu}{d\nu}\right)d\mu.
\]
It is known that $S(\mu|\nu)\le \log |\nu|$  and the following representation holds, see \cite[Lemma~2.3.3,  p.137]{Simon}:
\begin{equation}
\label{aper1}
S(\mu|\nu) = \inf_f\left(  \int fd\nu-\int (1+\log f)d\mu \right),
\end{equation}
where the infimum is taken over all  positive continuous functions on \( \T \). Moreover, see \cite[Example~2.3.2]{Simon}, if \( \mu=m \) and \( d\nu= \nu^\prime dm + d\nu^s \), then
\begin{equation}
\label{aper2}
S(m|\nu) = - \int\log\left(\frac{dm}{d\nu}\right)dm = \int \log\nu^\prime dm.
\end{equation}

\begin{lemma}
\label{lem:pw4}
Under the conditions of Theorem~\ref{thm:pw1}, it holds that for any \( \epsilon>0 \) there exists \( N_{\mathcal E}(\epsilon) \),
that depends on \( \mathcal E \) but not a particular choice of \( \{g_n \}\subset \mathcal E \), such that 
\[
\int \log\lambda_n dm \le \int \big( g_n+\log\upsilon_n\big) dm + \epsilon, \quad n\geq N_{\mathcal E}(\epsilon).
\]
\end{lemma}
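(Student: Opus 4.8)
\emph{Plan.} The statement will be read as an upper bound on a mutual entropy. Write $\lambda_n m$ for the finite, absolutely continuous measure on $\T$ whose density with respect to $m$ is $\lambda_n$. By \eqref{lam1}, \eqref{lam2} and the discussion following them, $\lambda_n$ is a strictly positive continuous function on $\T$, so $m\ll\lambda_n m$, $\tfrac{dm}{d(\lambda_n m)}=\lambda_n^{-1}$ $m$-a.e., and hence
\[
\int\log\lambda_n\,dm=-\int\log\Bigl(\tfrac{dm}{d(\lambda_n m)}\Bigr)\,dm=S(m\,|\,\lambda_n m)\in\R .
\]
The key tool will be the variational formula \eqref{aper1}, which for \emph{every} positive continuous $f$ on $\T$ gives $S(m\,|\,\lambda_n m)\le\int f\lambda_n\,dm-\int(1+\log f)\,dm$. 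The idea is to feed into this inequality a test function built from the \emph{limit} measure $\sigma$ rather than from $\sigma_n$, and then to transport the resulting estimate back to $\sigma_n$ and $\upsilon_n$ via conditions $(A_\T)$ and $(B_\T)$. Routing through $\sigma$ is exactly what lets us keep the family of test functions precompact, which is the point where Lemma~\ref{lem:pw3} must be invoked uniformly in $n$.

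\emph{Construction of test functions.} Fix $\epsilon>0$. By \eqref{aper2} one has $S(m\,|\,\sigma)=\int\log\upsilon\,dm$, which is finite by $(B_\T)$; hence \eqref{aper1} furnishes a \emph{fixed} positive continuous function $\tilde f$ on $\T$ with
\[
\int\tilde f\,d\sigma-\int(1+\log\tilde f)\,dm\le\int\log\upsilon\,dm+\tfrac{\epsilon}{4}.
\]
For each $n$ take the test function $f_n:=e^{-g_n}\tilde f$. The set $\mathcal Z_0:=\{\,e^{-g}\tilde f:g\in\mathcal E\,\}$ is the image of the compact set $\mathcal E$ under the continuous map $g\mapsto e^{-g}\tilde f$ of $C(\T)$ into itself, hence is a compact subset of $C(\T)$ consisting of positive functions, and $f_n\in\mathcal Z_0$ for all $n$. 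Applying \eqref{aper1} and then Lemma~\ref{lem:pw3} with this $\mathcal Z_0$, there is $N_1$ — depending only on $\mathcal E$ and $\tilde f$ (i.e.\ on $\mathcal E$, $\sigma$ and $\epsilon$), not on the particular sequence $\{g_n\}\subset\mathcal E$ — such that for $n\ge N_1$,
\[
S(m\,|\,\lambda_n m)\le\int f_n\lambda_n\,dm-\int(1+\log f_n)\,dm\le\int f_ne^{g_n}\,d\sigma_n-\int(1+\log f_n)\,dm+\tfrac{\epsilon}{4}.
\]
Now $f_ne^{g_n}=\tilde f$ identically, so $\int f_ne^{g_n}\,d\sigma_n=\int\tilde f\,d\sigma_n$, and $-\int(1+\log f_n)\,dm=-1-\int\log\tilde f\,dm+\int g_n\,dm$. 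Therefore
\[
\int\log\lambda_n\,dm\le\Bigl(\int\tilde f\,d\sigma_n-\int(1+\log\tilde f)\,dm\Bigr)+\int g_n\,dm+\tfrac{\epsilon}{4},\qquad n\ge N_1.
\]

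\emph{Passage to the limit data.} By $(A_\T)$ applied to the single non-negative function $\tilde f$, $\limsup_n\int\tilde f\,d\sigma_n\le\int\tilde f\,d\sigma$, so $\int\tilde f\,d\sigma_n\le\int\tilde f\,d\sigma+\tfrac{\epsilon}{4}$ for $n\ge N_2$; and by $(B_\T)$, $\int\log\upsilon_n\,dm\to\int\log\upsilon\,dm$, so $\int\log\upsilon\,dm\le\int\log\upsilon_n\,dm+\tfrac{\epsilon}{4}$ for $n\ge N_3$. Both $N_2$ and $N_3$ depend only on the ambient data, not on $\{g_n\}$. Chaining the displayed bound with the choice of $\tilde f$, with $(A_\T)$, and with $(B_\T)$ gives, for $n\ge N_{\mathcal E}(\epsilon):=\max(N_1,N_2,N_3)$,
\[
\int\log\lambda_n\,dm\le\int\log\upsilon\,dm+\tfrac{\epsilon}{2}+\int g_n\,dm+\tfrac{\epsilon}{4}\le\int(g_n+\log\upsilon_n)\,dm+\epsilon,
\]
which is the assertion of the lemma, with a threshold depending only on $\mathcal E$ (and the fixed data of Theorem~\ref{thm:pw1}). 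The only genuine difficulty is the uniformity demanded by Lemma~\ref{lem:pw3}: a direct use of near-optimal test functions for $S(m\,|\,e^{g_n}\sigma_n)$ fails, since such functions behave like $1/\upsilon_n$ and must be forced small on the (varying, possibly dense) supports of the singular parts $\sigma_n^s$, which destroys equicontinuity; replacing $\sigma_n$ by $\sigma$ through $(A_\T)$ and $\upsilon$ by $\upsilon_n$ through $(B_\T)$ circumvents this, at the only cost of the harmless correction $f_n=e^{-g_n}\tilde f$ needed to absorb the varying $g_n$ while staying in a compact family.
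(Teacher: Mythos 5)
Your argument is correct and is essentially the paper's own proof: both fix a near-optimal test function for $S(m|\sigma)$ via \eqref{aper1}--\eqref{aper2}, use the perturbed test functions $e^{-g_n}f_\epsilon$ in the variational bound for $\int\log\lambda_n\,dm=S(m|\lambda_n m)$, invoke Lemma~\ref{lem:pw3} on the compact family $\{e^{-g}f_\epsilon:g\in\mathcal E\}$, and transfer back through $(A_\T)$ and $(B_\T)$. The only difference is the bookkeeping order in which $(B_\T)$ is applied, which is immaterial.
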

\begin{proof}
It follows from \eqref{aper1} and \eqref{aper2}, applied with \( \mu=m \) and \( \nu=\sigma \), as well as the condition \( (B_\T) \) that there exists a positive continuous function \( f_\epsilon \) and a natural number \( N_1(\epsilon) \) such that
\[
\int f_\epsilon d\sigma - \int (1+\log f_\epsilon)dm \leq \int\log\upsilon dm +  \frac\epsilon4 \leq \int\log\upsilon_n dm +  \frac\epsilon2
\]
for all \( n\geq N_1(\epsilon) \). Hence, we get again from \eqref{aper1} and \eqref{aper2}, applied this time with \( \mu=m \) and \( d\nu=\lambda_ndm \), that
\begin{align}
\int \log\lambda_n dm &\leq \int (e^{-g_n}f_\epsilon) \lambda_n dm - \int \big(1+\log(e^{-g_n}f_\epsilon) \big) dm \nonumber \\
&\leq \int\big( g_n+\log\upsilon_n\big) dm + \int (e^{-g_n}f_\epsilon)\lambda_n dm - \int f_\epsilon d\sigma +  \frac\epsilon2
\label{A4-1}
\end{align}
for all \( n\geq N_1(\epsilon) \). Furthermore, it follows from the condition \( (A_\T) \) that there exists a natural number \( N_2(\epsilon) \) such that
\[
\int f_\epsilon d\sigma_n \leq \int f_\epsilon d\sigma +  \frac\epsilon4
\]
for all \( n\geq N_2(\epsilon) \). Thus, we get from Lemma~\ref{lem:pw3}, applied with \( \mathcal Z=\{e^{-g}f_\epsilon:g\in\mathcal E\} \), that there exists \( N_{\mathcal E}(\epsilon) \geq \max\{ N_1(\epsilon),N_2(\epsilon) \} \)  such that
\begin{equation}
\label{A4-2}
\int (e^{-g_n}f_\epsilon)\lambda_n dm \leq \int (e^{-g_n}f_\epsilon) e^{g_n}d\sigma_n +  \frac\epsilon4 \leq \int f_\epsilon d\sigma +  \frac\epsilon2
\end{equation}
for all \( n\geq N_{\mathcal E}(\epsilon) \). Clearly, inequalities \eqref{A4-1} and \eqref{A4-2} yield the desired claim.
\end{proof}

\begin{lemma}
\label{lem:pw5}
Under the conditions of Theorem~\ref{thm:pw1}, \eqref{A1-2} takes place.
\end{lemma}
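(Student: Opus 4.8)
The plan is to identify the leading coefficient $\alpha_n$ with the geometric mean of $\lambda_n$ over $\T$, and then to squeeze $\int\log\lambda_n\,dm$ between $\int(g_n+\log\upsilon_n)\,dm$ and $\int(g_n+\log\upsilon_n)\,dm+\epsilon$. Since $\int(g_n+\log\upsilon_n)\,dm=2\log D_n(0)$ directly from the definition of the Szeg\H{o} function, this sandwich immediately forces $\log\bigl(\alpha_nD_n(0)\bigr)\in[-\epsilon/2,0]$ for all large $n$, which is \eqref{A1-2}.

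First I would record the exact identity $\log\alpha_n=-\tfrac12\int\log\lambda_n\,dm$. Recall that $\phi_n^*(0)=\overline{\alpha_n}=\alpha_n$ and $W_n^*(0)=1$, and that $|\phi_n|=|\phi_n^*|$, $|W_n|=|W_n^*|$ on $\T$. By \eqref{lam2}, $\lambda_n$ is the real part of the Carath\'eodory function of the absolutely continuous measure $|W_n/\phi_n|^2\,dm$, whose density is continuous and strictly positive (since $\phi_n$ has no zeros on $\T$), so $\lambda_n(\xi)=\bigl|W_n^*(\xi)/\phi_n^*(\xi)\bigr|^2$ for every $\xi\in\T$. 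Because the zeros of $\phi_n$ and of $W_n$ all lie in $\D$, the rational function $W_n^*/\phi_n^*$ is analytic and zero-free in a neighbourhood of $\overline\D$; hence $\log|W_n^*/\phi_n^*|$ is harmonic there, and the mean value property gives
\[
-\log\alpha_n=\log\bigl|W_n^*(0)/\phi_n^*(0)\bigr|=\int_\T\log\bigl|W_n^*/\phi_n^*\bigr|\,dm=\tfrac12\int\log\lambda_n\,dm .
\]

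Next I would squeeze $\int\log\lambda_n\,dm$. The inequality $\int\log\lambda_n\,dm\le\int(g_n+\log\upsilon_n)\,dm+\epsilon$ for $n\ge N_{\mathcal E}(\epsilon)$ is exactly Lemma~\ref{lem:pw4}. For the reverse inequality, I would use the normalization $\langle\phi_n,\phi_n\rangle_{\T,n}=1$, drop the singular part of $\sigma_n$, and use $|\phi_n/W_n|^2=1/\lambda_n$ on $\T$ to obtain
\[
1=\int\frac{|\phi_n|^2e^{g_n}\,d\sigma_n}{|W_n|^2}\ \ge\ \int\frac{e^{g_n}\upsilon_n}{\lambda_n}\,dm\ \ge\ \exp\!\left(\int\bigl(g_n+\log\upsilon_n-\log\lambda_n\bigr)\,dm\right),
\]
by Jensen's inequality; here $g_n$ is bounded on the compact $\mathcal E$, $\log\upsilon_n\in L^1(\T)$ by $(B_\T)$, and $\log\lambda_n\in L^\infty(\T)$ since $\lambda_n$ is continuous and positive on $\overline\D$, so the integrand is integrable. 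Thus $\int\log\lambda_n\,dm\ge\int(g_n+\log\upsilon_n)\,dm$. Combining the two bounds with the identity from the second step yields, for $n\ge N_{\mathcal E}(\epsilon)$,
\[
0\ \le\ -\log\bigl(\alpha_nD_n(0)\bigr)=\tfrac12\left(\int\log\lambda_n\,dm-\int(g_n+\log\upsilon_n)\,dm\right)\ \le\ \tfrac\epsilon2 ,
\]
i.e. $\alpha_nD_n(0)=1+o_{\mathcal E}(1)$. The only substantive analytic work, the one-sided comparison of $\int\log\lambda_n\,dm$ with $\int(g_n+\log\upsilon_n)\,dm$, has already been carried out in Lemma~\ref{lem:pw4}; within the present lemma the points demanding care are merely the boundary identification $\lambda_n|_\T=|W_n/\phi_n|^2$ and the location of the zeros of $\phi_n$ inside $\D$, both of which are needed to legitimize the mean value computation and the application of Jensen's inequality.
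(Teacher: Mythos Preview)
Your proposal is correct and follows essentially the same argument as the paper: both establish the identity $\log\alpha_n=-\tfrac12\int\log\lambda_n\,dm$ via the mean-value property applied to $\log|W_n^*/\phi_n^*|$, invoke Lemma~\ref{lem:pw4} for the upper bound on $\int\log\lambda_n\,dm$, and obtain the matching lower bound from the normalization $\langle\phi_n,\phi_n\rangle_{\T,n}=1$ combined with Jensen's inequality. The only cosmetic difference is that you organize the final sandwich as $0\le-\log(\alpha_nD_n(0))\le\epsilon/2$, whereas the paper writes the two inequalities separately.
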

\begin{proof}
 It follows directly from \eqref{lam1} and \eqref{lam2}, see also \cite[Equation~(3.8)]{St00}, that
\[
\left|\frac{W_n^*(\xi)}{\phi_n^*(\xi)}\right|^2 = \left|\frac{W_n(\xi)}{\phi_n(\xi)}\right|^2 = \lambda_n(\xi), \quad |\xi|=1.
\]
As \( W_n^*(z)/\phi_n^*(z) \) is an analytic and non-vanishing function in the closed unit disk, the logarithm of its absolute value is harmonic there. Thus, we get from the mean-value property that
\[
\log\alpha_n = -\log\left|\frac{W_n^*(0)}{\phi_n^*(0)}\right| = - \frac12\int\log\lambda_ndm,
\]
where one also needs to recall that \( \phi_n^*(0) = \alpha_n>0 \) and \( W_n^*(0)=1 \).  Hence, it follows from Lemma~\ref{lem:pw4} and the very definition of \( D_n(z) \) that
\[
\log\alpha_n \geq -\frac12\int \big( g_n+\log\upsilon_n\big) dm + o_{\mathcal E}(1) = - \log D_n(0) + o_{\mathcal E}(1). 
\]
On the other hand, we get from \eqref{ortho-circle} that
\[
0 = \log \int \left|\frac{\phi_n}{W_n}\right|^2 e^{g_n}d\sigma_n \geq \log \int \left|\frac{\phi_n}{W_n}\right|^2 e^{g_n} \upsilon_ndm.
\]
Therefore,  Jensen's inequality gives
\[
0 \geq \int \log \frac{e^{g_n}\upsilon_n}{\lambda_n} dm = 2\log\big(\alpha_nD_n(0) \big),
\]
which finishes the proof of the lemma.
\end{proof}

\begin{lemma}
\label{lem:pw6}
Under the conditions of Theorem~\ref{thm:pw1}, it holds that
\[
\int\left|\frac{\phi_n^*}{W_n^*}D_n-1\right|^2dm = o_{\mathcal E}(1)
\]
as \( n\to\infty \), where \( D_n(\xi) \) denotes above the non-tangential boundary values of \( D_n(z) \) on \( \T \) taken from within \( \D \). In particular, the first asymptotic formula in \eqref{A1-1} takes place.
\end{lemma}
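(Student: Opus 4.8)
\emph{Sketch of proof.} The plan is to run the classical Szeg\H{o} extremality argument in the present varying setting. Set
\[
f_n(z):=\frac{\phi_n^*(z)}{W_n^*(z)}\,D_n(z).
\]
Since every zero $b_{n,j}$ of $W_n$ lies in $\D$, the rational function $\phi_n^*/W_n^*$ is holomorphic and bounded on $\overline\D$; and since the absolutely continuous part of $e^{g_n}\sigma_n$ with respect to $dm$ is $e^{g_n}\upsilon_n$, with $g_n$ bounded by $(C_\T)$ and $\upsilon_n\in L^1(\T)$, the Szeg\H{o} function $D_n=D(e^{g_n}\sigma_n,\cdot)$ is outer in $\D$ and belongs to $H^2(\D)$. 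Hence $f_n\in H^2(\D)$. I would then expand
\[
\int\bigl|f_n-1\bigr|^2\,dm=\int|f_n|^2\,dm-2\,\re\!\int f_n\,dm+1
\]
and estimate the two nontrivial terms.

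For the middle term, the mean value property of $H^2$ functions gives $\int f_n\,dm=f_n(0)=\alpha_nD_n(0)$, using $\phi_n^*(0)=\alpha_n$ and $W_n^*(0)=1$; this is real and positive and equals $1+o_{\mathcal E}(1)$ by Lemma~\ref{lem:pw5}. For the first term I would use that $|p^*(\xi)|=|p(\xi)|$ on $\T$, so $|\phi_n^*|=|\phi_n|$ and $|W_n^*|=|W_n|$ there, together with the boundary identity $|D_n(\xi)|^2=e^{g_n(\xi)}\upsilon_n(\xi)$ a.e.\ on $\T$, to obtain
\[
\int|f_n|^2\,dm=\int\frac{|\phi_n(\xi)|^2}{|W_n(\xi)|^2}\,e^{g_n(\xi)}\upsilon_n(\xi)\,dm(\xi)\ \le\ \int\frac{|\phi_n(\xi)|^2}{|W_n(\xi)|^2}\,e^{g_n(\xi)}\,d\sigma_n(\xi)=1,
\]
where the inequality simply discards the nonnegative contribution of the singular part $\sigma_n^s$ and the last equality is the normalization in \eqref{ortho-circle}. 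Combining the two estimates gives $\int|f_n-1|^2\,dm\le 1-2(1+o_{\mathcal E}(1))+1=o_{\mathcal E}(1)$, and since the left-hand side is nonnegative this is exactly the first assertion of the lemma. For the ``in particular'' statement I would invoke the pointwise Hardy-space bound $|g(z)|\le\|g\|_{L^2(\T)}/\sqrt{1-|z|}$ for $g\in H^2(\D)$, the same estimate underlying \eqref{HardyEst}, applied to $g=f_n-1\in H^2(\D)$: on each compact $K\subset\D$ this gives $\sup_{z\in K}|f_n(z)-1|\lesssim_K\|f_n-1\|_{L^2(\T)}\to 0$, which is precisely the first formula in \eqref{A1-1}.

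I do not expect a genuine obstacle in this lemma. Its only substantive input, $\alpha_nD_n(0)=1+o_{\mathcal E}(1)$, is Lemma~\ref{lem:pw5}, which is already in hand, and the companion bound $\int|f_n|^2\,dm\le1$ is elementary, with constants independent of $g_n$. The single point to watch is that all the error terms are genuinely $o_{\mathcal E}(1)$, i.e.\ uniform over $\{g_n\}\subset\mathcal E$; this is automatic because each of the two bounds being combined is uniform over $\mathcal E$ (one through Lemma~\ref{lem:pw5}, the other trivially).
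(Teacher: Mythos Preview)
Your proof is correct and follows essentially the same route as the paper's: expand the $L^2$-norm, bound the quadratic term by $1$ via the orthonormality relation and the boundary identity $|D_n|^2=e^{g_n}\upsilon_n$, evaluate the cross term by the mean-value property as $\alpha_nD_n(0)$, and conclude from Lemma~\ref{lem:pw5}. The paper is slightly terser (it only notes $f_n-1\in H^2(\D)$ at the end rather than upfront), but the argument is the same.
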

\begin{proof}
Denote the integral in the statement of the lemma by \( I \). Then, we have that
\[
I = \int \left|\frac{\phi_n^*}{W_n^*}D_n\right|^2dm + 1 -2\int\re \left(\frac{\phi_n^*}{W_n^*}D_n\right)dm.
\]
Since \( |D_n(\xi)|^2=e^{g_n(\xi)}\upsilon_n(\xi) \) for almost every \( |\xi|=1 \), the mean-value property for harmonic functions yields that
\[
I = \int \left|\frac{\phi_n}{W_n}\right|^2 e^{g_n}\upsilon_ndm + 1 - 2\re \left(\frac{\phi_n^*(0)}{W_n^*(0)}D_n(0)\right) \leq 2 -2\alpha_nD_n(0).
\]
The first claim of the lemma now follows from \eqref{A1-2}. In particular, we have shown that the functions \( 1-\phi_n^*D_n/W_n^* \) belong to the Hardy space \( H^2(\D) \). Thus, the second claim of the lemma now follows from the first and the Cauchy integral formula for functions in \( H^2(\D) \).
\end{proof}

\begin{lemma}
\label{lem:pw7}
Under the conditions of Theorem~\ref{thm:pw1}, the second asymptotic formula in \eqref{A1-1} takes place.
\end{lemma}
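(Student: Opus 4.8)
The statement to prove is that $\phi_n/\phi_n^* = o_\mathcal{E}(1)$ locally uniformly in $\D$, which finishes Theorem~\ref{thm:pw1}. Since $\phi_n$ has all of its zeros in $\D$, the ratio $B_n := \phi_n/\phi_n^*$ is a finite Blaschke product; hence $|B_n|\le 1$ on $\D$ and $\{B_n\}$ is a normal family, so it is enough to show that every locally uniform subsequential limit of $\{B_n\}$ vanishes identically. Writing $u_n := \phi_n D_n/W_n^*$ and $u_n^* := \phi_n^* D_n/W_n^*$ one has $B_n = u_n/u_n^*$, and $u_n^* \to 1$ locally uniformly in $\D$ by Lemma~\ref{lem:pw6}. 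As $|D_n|$ is bounded above and below on compact subsets of $\D$ uniformly in $n$ (this follows from $(C_\T)$, $(B_\T)$, and positivity of the Poisson kernel), the claim is equivalent to $u_n\to 0$, equivalently to $\phi_n/W_n^*\to 0$, locally uniformly in $\D$. Note $u_n\in H^2(\D)$ with $\|u_n\|_{H^2}^2=\int_\T|\phi_n/W_n|^2 e^{g_n}\upsilon_n\,dm\le\int_\T|\phi_n/W_n|^2 e^{g_n}d\sigma_n=1$ by the normalization in \eqref{ortho-circle}.

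\textbf{Reflecting Lemma~\ref{lem:pw6}.} Applying the $*$-transformation and the conjugate-symmetry $D_n^{-1}(z)=\overline{D_n(1/\bar z)}$ to the $L^2(\T)$-estimate of Lemma~\ref{lem:pw6}, and using $\phi_n^*(\xi)=\xi^n\overline{\phi_n(\xi)}$, $W_n^*(\xi)=\xi^n\overline{W_n(\xi)}$ on $\T$, one rewrites it as $\int_\T\bigl|(\phi_n/W_n)\overline{D_n}-1\bigr|^2dm\to 0$. Equivalently, the function $\phi_n/(W_nD_n)$ — which is holomorphic in $\overline\C\setminus\overline\D$ since its only poles, those of $1/W_n$, lie inside $\D$, and which has boundary $L^2(\T)$-norm at most $1$ by the normalization above — has boundary traces tending to $1$ in $L^2(\T)$, hence $\phi_n/(W_nD_n)\to 1$ locally uniformly in $\overline\C\setminus\overline\D$. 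The difficulty is that this is only an \emph{exterior} statement: because $D_n$ is not conjugate-symmetric, reflecting it back through $*$ merely reproduces part of Lemma~\ref{lem:pw6} rather than giving the interior behaviour of $\phi_n$.

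\textbf{The interior estimate.} To handle the interior one must use orthogonality directly, following \cite{St00} via the companion polynomials $\psi_n$ of Lemma~\ref{lem:pw2}. Combining a bilinear identity of the type used there between $\phi_n,\phi_n^*,\psi_n,\psi_n^*$ (whose right-hand side carries, up to a bounded constant, the factor $z^nW_n(z)W_n^*(z)$) with $\psi_n^*/\phi_n^*\to F(e^{g_n}\sigma_n,\cdot)$ (Lemma~\ref{lem:pw2}), with $\phi_n^*\sim W_n^*/D_n$, and with the lower bound $\re F(e^{g_n}\sigma_n,z)\gtrsim_{\mathcal E}1$ on compact subsets of $\D$ (it is the Poisson integral of the positive measure $e^{g_n}\sigma_n$, whose mass is bounded below by Jensen's inequality and $(B_\T)$, $(C_\T)$), one obtains
\[
\frac{\phi_n}{\phi_n^*}+\frac{\psi_n}{\psi_n^*}=\frac{W_n}{W_n^*}\cdot z^n\cdot\bigl(\text{locally bounded}\bigr)=o_\mathcal{E}(1)
\]
locally uniformly in $\D$, since $z^n\to 0$ and $W_n/W_n^*\to 0$ there by the Blaschke condition $(D_\T)$, equivalently \eqref{Blaschke}. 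Since $\psi_n/\psi_n^*$ is again a Blaschke product, one then pins down $\phi_n/\phi_n^*=o_\mathcal{E}(1)$ by separating it from $\psi_n/\psi_n^*$ — e.g.\ evaluating at $0$, where $\alpha_nD_n(0)=1+o_\mathcal{E}(1)$ (Lemma~\ref{lem:pw5}) and the analogous normalization for $\psi_n$ apply. All error terms along the way are $o_\mathcal{E}(1)$, which gives the second formula in \eqref{A1-1}.

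\textbf{The main obstacle.} The crux is precisely this passage from the boundary/exterior asymptotics of $\phi_n$ to a locally uniform \emph{interior} estimate. A straightforward reflection is unavailable — it exchanges $\D$ with its exterior and replaces $D_n$ by $\overline{D_n}$ — so one is forced into the second-kind machinery, and one must carefully track the unbounded factors $W_n^*$ and $D_n$: their growth cancels against the normalization $\int_\T|\phi_n/W_n|^2e^{g_n}d\sigma_n=1$ and against the Blaschke condition, but this cancellation is delicate and is where the scheme of \cite{St00} does its real work.
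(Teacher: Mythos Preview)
Your reduction in the first paragraph is fine and matches the paper: by Lemma~\ref{lem:pw6} it suffices to show that $\phi_n D_n/W_n^*\to 0$ locally uniformly in $\D$. Your ``reflecting'' paragraph is also an honest observation: reflection only reproduces the exterior statement.

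The gap is in your ``interior estimate''. You invoke an unnamed bilinear identity to obtain
\[
\frac{\phi_n}{\phi_n^*}+\frac{\psi_n}{\psi_n^*}=o_{\mathcal E}(1)
\]
and then claim to ``separate'' the two summands by evaluating at~$0$. But knowing that a sum of two functions, each bounded by~$1$ in modulus, tends to zero does \emph{not} force either summand to tend to zero; it only forces $\phi_n/\phi_n^*\to -\psi_n/\psi_n^*$. Evaluation at a single point gives one scalar relation between two unknown sequences, not two. You would need an independent argument that $\psi_n/\psi_n^*\to 0$ (or that $\phi_n(0)/\phi_n^*(0)\to 0$), and nothing you have written supplies one. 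You also assert that $\psi_n/\psi_n^*$ is a Blaschke product, i.e.\ that all zeros of $\psi_n$ lie in~$\D$; this is not obvious for the companion polynomial and you give no justification. Finally, the bilinear identity itself is never stated; the right-hand side you describe (``the factor $z^nW_n(z)W_n^*(z)$'') does not match the standard identities in \cite{St00}, so even the $o_{\mathcal E}(1)$ conclusion is not established.

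The paper proceeds entirely differently and does not use the companion polynomials here at all. It writes $\phi_nD_n/W_n^*$ via the Cauchy integral on~$\T$, factors the integrand as $B_nU_n\cdot\overline{(\phi_n^*D_n/W_n^*)}$ with $B_n=W_n/W_n^*$ and $U_n=D_n/\overline{D_n}$, uses Lemma~\ref{lem:pw6} to replace $\phi_n^*D_n/W_n^*$ by~$1$ up to an $L^2$-small error, and is left with $\int_\T B_n(\xi)U_n(\xi)(1-z\bar\xi)^{-1}dm(\xi)$. This is then shown to be $o_{\mathcal E}(1)$ by a Fourier-coefficient argument: the low modes of $B_n$ vanish by \eqref{Blaschke}, while the high negative modes of $U_n(\xi)(1-z\bar\xi)^{-1}$ are uniformly small because the family $\{U_n(\xi)(1-z\bar\xi)^{-1}\}$ is precompact in $L^2(\T)$ (using $(B_\T)$, $(C_\T)$, and Kolmogorov's theorem for the Hilbert transform). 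This is the real mechanism, and your sketch does not reach it.
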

\begin{proof}
It follows from the first asymptotic formula in \eqref{A1-1} that
\[
\frac{\phi_n(z)}{\phi_n^*(z)} = \frac{W_n^*(z)}{\phi_n^*(z)}  \frac{\phi_n(z)}{W_n^*(z)} = (1 +o_{\mathcal E}) \frac{\phi_n(z)D_n(z)}{W_n^*(z)}.
\]
Thus, it is sufficient for us to study the behavior of \( \phi_nD_n/W_n^* \) in the unit disk. Since these functions have integrable traces on \( \T \) and \( dm(\xi) = d\xi/(2\pi\ic\xi) \), it follows from the Cauchy integral formula that
\[
\frac{\phi_n(z)D_n(z)}{W_n^*(z)} = \int \frac{\phi_n(\xi)D_n(\xi)}{W_n^*(\xi)} \frac{dm(\xi)}{1-z\overline\xi} = \int B_n(\xi)U_n(\xi) \frac{\phi_n(\xi)\overline{D_n(\xi)}}{W_n(\xi)} \frac{dm(\xi)}{1-z\overline\xi},
\]
where \( B_n(z) = W_n(z)/W_n^*(z) \) and \( U_n(\xi) = D_n(\xi)/\overline{D_n(\xi)} \). Since  \( |B_n(\xi)U_n(\xi)| = 1 \) for \( |\xi|=1 \), it follows from the Cauchy-Schwarz inequality that
\[
\left| \int  \left(\frac{\phi_n(\xi)\overline{D_n(\xi)}}{W_n(\xi)} - 1 \right) B_n(\xi)U_n(\xi)\frac{dm(\xi)}{1-z\overline\xi}\right| \leq \frac{\|\phi_n\overline{D_n}/W_n -1 \|_{L^2(\T)}}{\sqrt{1-|z|^2}}.
\]
Thus, we deduce from the first claim of Lemma~\ref{lem:pw6} that
\[
\frac{\phi_n(z)D_n(z)}{W_n^*(z)} = o_{\mathcal E}(1) + \int B_n(\xi)U_n(\xi) \frac{dm(\xi)}{1-z\overline\xi},
\]
where \( o_{\mathcal E}(1) \) holds locally uniformly in the unit disk.\footnote{When \( U_n(\xi)=U(\xi) \), the last integral above can be understood as \( (\mathcal T_UB_n)(z) \), where \( \mathcal T_U \) is a Toeplitz operator with symbol \( U \). Since \( \mathcal T_U^* = \mathcal T_{\overline U} \) and the Blaschke products \( B_n(z) \) converge weakly to zero in \( H^2(\D) \) by \eqref{Blaschke}, the functions \( (\mathcal T_UB_n)(z) \) also converge weakly to zero in \( H^2(\D) \), which finishes the proof of the lemma in this case. This observation somewhat simplifies the arguments given in \cite[pages 182-188]{St00}.} Let \( \{b_{n,k} \}\) be the Fourier coefficients of \( B_n(\xi) \) and \( \{u_{n,k}(z) \}\) be the Fourier coefficients of \( U_n(\xi)/(1-z\overline\xi) \). Then,
\[
\int B_n(\xi)U_n(\xi) \frac{dm(\xi)}{1-z\overline\xi} = \sum_{k=0}^\infty b_{n,k}u_{n,-k}(z).
\]
Since both \( B_n(\xi) \) and \( U_n(\xi) \) are unimodular functions, 
\[
\sum_{k\in \Z}|b_{n,k}|^2=1, \quad \sum_{k\in \Z}|u_{n,k}|^2=\int \frac{dm(\xi)}{|1-z\bar{\xi}|^2}=\frac{1}{1-|z|^2}\,.
\]
The Cauchy-Schwarz inequality gives
\begin{eqnarray*}
\left| \int B_n(\xi)U_n(\xi) \frac{dm(\xi)}{1-z\overline\xi} \right| & \displaystyle \leq 
\left|\sum_{k=0}^{K} b_{n,k}u_{n,-k}(z)\right|+\left|\sum_{k=K+1}^{\infty} b_{n,k}u_{n,-k}(z)\right|
\\ & \displaystyle \leq \sqrt{\sum_{k=0}^K \frac{|b_{n,k}|^2}{1-|z|^2}} + \sqrt{\sum_{k=K+1}^\infty |u_{n,-k}(z)|^2}
\end{eqnarray*}
for any natural number \( K \). Asymptotic formula \eqref{Blaschke} yields that for any \( \epsilon>0 \) and any \( K \), there exists \( N_K(\epsilon) \) such that the first sum above is bounded by \( \epsilon/(1-|z|^2)^{1/2} \) for all \( n\geq N_K(\epsilon) \). Thus, to prove the lemma we are only left to argue that there exists \( K=K_{\mathcal E,z}(\epsilon) \), which is locally uniformly bounded with respect to \( z \), such that the second sum is bounded by \( \epsilon \). To prove this claim, it is enough to show that the functions (as functions in variable $\xi$)
\[
\frac1{1-z\overline\xi}\frac{D(e^g,\xi)}{\overline{D(e^g,\xi)}} \frac{D(\upsilon_n,\xi)}{\overline{D(\upsilon_n,\xi)}}
\]
form a precompact set in \( L^2(\T) \) for \( g\in\mathcal E \), \( n\in\N \), and \( z \) in any fixed compact subset of \( \D \). Let \( \mathcal H \) be the Hilbert transform. The functions above can be written as
\[
\big(1-z\overline\xi\big)^{-1} e^{2\ic(\mathcal H g)(\xi)} e^{2\ic(\mathcal H \log\upsilon_n)(\xi)}.
\]
Consider the following subsets of \( L^2(\T) \):
\[
\begin{cases}
S_1 & = \big\{\big(1-z\overline\xi\big)^{-1}:~ z\in \text{compact in }\D\big\}, \smallskip \\
S_2 & =\left\{ e^{2\ic(\mathcal H g)(\xi)}: g\in \mathcal{E}\right\}, \smallskip \\
S_3 & =\left\{e^{2\ic(\mathcal H \log\upsilon_n)(\xi)}: n\in \N\right\}.
\end{cases}
\]
The compactness of $S_1$ in $L^2(\T)$ is trivial. Since \( \mathcal E \) is compact in \( C(\T) \) it is also compact in \( L^2(\T) \). The operator \( \mathcal H \) is a bounded on \( L^2(\T) \). Hence, \( \mathcal H\mathcal E \) is a compact subset of real-valued functions in \( L^2(\T) \). Given two real numbers $f$ and $g$, we have
\begin{equation}
\label{2-2}
|e^{\ic f}-e^{\ic h}|^2=4\sin^2((f-h)/2) \leq 4|f-h|^p, \quad p\in(0,2].
\end{equation}
Therefore, $S_2$ is compact in \( L^2(\T) \). Finally, since \( \log\upsilon_n \to \log\upsilon \) in \( L^1 (\T) \) due to our condition \( (B_\T) \), it holds that \( \mathcal H\log\upsilon_n \to \mathcal H\log\upsilon \) in \( L^p (\T) \) for any \( p\in (0,1) \) by Kolmogorov's theorem. Using \eqref{2-2} with any such \( p \) yields that the functions \( \exp(2\ic\mathcal H \log\upsilon_n) \) converge to \( \exp(2\ic\mathcal H \log\upsilon ) \) in \( L^2(\T) \), which shows that $S_3$ is precompact in $L^2(\T)$. Since $S_1,S_2$, and $S_3$ are bounded in $L^\infty(\T)$, the product set $S_1S_2S_3$ is  precompact in $L^2(\T)$ as claimed. That finishes the proof of the lemma.
\end{proof}

\subsection{Orthogonality on \( [-1,1] \)}

We will now translate the results of Theorem~\ref{thm:pw1} to the case of polynomials orthonormal on the interval \( [-1,1] \). Here, we look at triples \( (\tilde\mu_n,h_n,\tau_n) \), where \( \tilde\mu_n \) is a finite positive Borel measure, \( h_n \) is a continuous function, and \( \tau_n \) is a polynomial of degree at most $2n$ with real coefficients that does not vanish on \( [-1,1] \) and is normalized to have value \( 1 \) at \( 0 \).  For each \( n \) we define an inner product on \( [-1,1] \) by
\[
\langle f,g\rangle_{[-1,1],n} = \int f(x)\overline{g(x)} \, \frac{e^{h_n(x)}d\tilde\mu_n(x)}{\tau_n(x)}.
\]
We are interested in the orthonormal polynomials with the above varying weights. That is, we study polynomials \( p_n(z) \), \( \deg p_n=n \), satisfying orthogonality relations
\begin{equation}
\label{ortho-interval}
\begin{cases}
\langle p_n,x^m\rangle_{[-1,1],n}=0, &  m\in\{0,1,\ldots,n-1\}, \smallskip \\
\langle p_n,p_n\rangle_{[-1,1],n} =1,
\end{cases}
\end{equation}
which are normalized to have positive leading coefficient that we denote by \( \gamma_n \), i.e., \( p_n(z) = \gamma_n z^n + \text{lower degree terms} \), \( \gamma_n>0 \).

The assumptions we made about the polynomials \( \tau_n \) can be equivalently stated in the following way. Let \( \{a_{n,1},a_{n,2},\ldots,a_{n,2n} \} \) be a conjugate-symmetric multi-set (points \( a_{n,j} \) can coincide and be either real or come in complex-conjugate pairs) such that \( a_{n,j}\notin[-1,1] \) (these points can be equal to \( \infty\)). Then,
\[
\tau_n(x) = \prod_{j=1}^{2n}\left(1-\frac{x}{a_{n,j}}\right),
\]
where we understand \( x/a_{n,j} \) as  \( 0 \) when \( a_{n,j} = \infty \). Below, we assume the following:
\emph{
\begin{itemize}
\item[$(A_{[-1,1]})$] there exists a finite measure $\mu$ on $[-1,1]$ such that for any non-negative continuous function \( f \) on \( [-1,1] \) it holds that
\[
\limsup_{n\to\infty} \int fd\tilde\mu_n \leq \int fd\mu;
\]
\item[$(B_{[-1,1]})$] the Radon-Nikodym derivatives of \( \tilde\mu_n \) and \( \mu \) with respect to \( \omega=\omega_{[-1,1]} \), say \( \tilde v_n \) and \( v \), see \eqref{arcsine} and \eqref{szego}, satisfy \( \|\log \tilde v_n- \log v\|_{L^1(\omega)}\to 0 \) as \( n\to\infty \); \smallskip
\item[$(C_{[-1,1]})$] the functions \( h_n \) belong to \( \mathcal K \), a fixed compact subset of \( C[-1,1] \); \smallskip
\item[$(D_{[-1,1]})$] the zeros \( \{a_{n,j}\} \) of the polynomials \( \tau_n \) satisfy \( \sum_{j=1}^{2n} (1-|\phi(a_{n,j})|)\to \infty \) as \( n\to\infty \), where \( \phi(z)=\phi_{[-1,1]}(z) \), see \eqref{phi-w}.
\end{itemize}
}

The next result is proven exactly as \cite[Theorem~1]{St00}. Nevertheless, we provide most of the details for the reader's convenience.

\begin{theorem} 
\label{thm:pw8} 
With the above definitions, assume that the triples $(\tilde\mu_n,h_n,\tau_n)$ satisfy conditions \( (A_{[-1,1]})-(D_{[-1,1]}) \). Let \( p_n(z) \) be as in \eqref{ortho-interval}. Then,
\begin{equation}
\label{A8-1}
2\widetilde G_n^2(z) \, \frac{p_n^2(z)}{\tau_n(z)} \, \prod_{j=1}^{2n} \frac{\phi(z)-\phi(a_{n,j})}{1-\overline{\phi(a_{n,j})}\phi(z)} = 1 + o_{\mathcal K}(1)
\end{equation}
locally uniformly in $\overline{\C}\setminus [-1,1]$ as \( n\to\infty \), where \( \widetilde G_n(z)=G(e^{h_n}\tilde\mu_n,z) \) and the meaning of \( o_{\mathcal K}(1) \) is the same as in Theorem~\ref{thm:vw1}. In particular, we have that
\begin{equation}
\label{A8-2}
\widetilde G_n^2(\infty)\frac{\gamma^2_n}{2^{2n-1}} \, \prod_{j: a_{n,j}\neq \infty} \big(2a_{n,j}\phi(a_{n,j})\big) = 1 + o_{\mathcal K}(1) \qasq n\to\infty.
\end{equation}
\end{theorem}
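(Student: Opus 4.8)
The plan is to deduce Theorem~\ref{thm:pw8} from the circle version, Theorem~\ref{thm:pw1}, by the classical Szeg\H{o} correspondence between orthogonality on \( [-1,1] \) and symmetric orthogonality on \( \T \), following \cite{St00}. Introduce the Joukowski map \( z=\tfrac12(w+w^{-1}) \), whose inverse on \( D_{[-1,1]} \) is \( w=\phi(z) \), and lift the data: let \( \sigma_n \) be the \( \T \)-measure obtained by symmetrizing (i.e. making invariant under \( \xi\mapsto\bar\xi \)) the pull-back of \( \tilde\mu_n \), set \( g_n(\xi):=h_n\big(\tfrac12(\xi+\xi^{-1})\big) \), and \( W_n(w):=\prod_{j=1}^{2n}\big(w-\phi(a_{n,j})\big) \), a monic polynomial of degree \( 2n \) with all zeros in \( \D \). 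Starting from the elementary identity \( 1-z/a=(w-\phi(a))(1-\phi(a)w)\big/(2a\phi(a)w) \) (valid by continuity also for \( a=\infty \)) and using conjugate-symmetry of the multiset \( \{\phi(a_{n,j})\} \), one obtains
\[
\tau_n(z)=\frac{W_n(w)W_n^*(w)}{w^{2n}\prod_{j=1}^{2n}\big(2a_{n,j}\phi(a_{n,j})\big)},\qquad \prod_{j=1}^{2n}\frac{\phi(z)-\phi(a_{n,j})}{1-\overline{\phi(a_{n,j})}\phi(z)}=\frac{W_n(w)}{W_n^*(w)},
\]
so that the left-hand side of \eqref{A8-1} equals \( 2\widetilde G_n^2(z)p_n^2(z)w^{2n}W_n^*(w)^{-2}\prod_{j}\big(2a_{n,j}\phi(a_{n,j})\big) \).

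Next I would check that the lifted triples \( (\sigma_n,g_n,W_n) \) satisfy \( (A_\T)-(D_\T) \) (with the \( n \)-th lifted polynomial having degree \( 2n \)). Condition \( (A_\T) \) is immediate from \( (A_{[-1,1]}) \), since weak-\( * \) upper bounds pass to pull-backs; \( (C_\T) \) holds with \( \mathcal E:=\{h\circ(\tfrac12(\cdot+\cdot^{-1})):h\in\mathcal K\} \), a compact subset of \( C(\T) \); \( (D_\T) \) is literally \( (D_{[-1,1]}) \), as the zeros of \( W_n \) are exactly the \( \phi(a_{n,j}) \); and \( (B_\T) \) follows from \( (B_{[-1,1]}) \) once one records that under the Joukowski map \( \omega_{[-1,1]} \) corresponds to \( dm \), and the density \( \tilde v_n \) of \( \tilde\mu_n \) with respect to \( \omega_{[-1,1]} \) corresponds to the density \( \upsilon_n \) of \( \sigma_n \) with respect to \( dm \) up to a fixed positive constant, so \( \|\log\upsilon_n-\log\upsilon\|_{L^1(\T)}\to0 \). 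Because \( \sigma_n \), \( g_n \), and \( |W_n|^2 \) are all invariant under \( \xi\mapsto\bar\xi \) (the last again by conjugate-symmetry of \( \{\phi(a_{n,j})\} \)), the orthonormal polynomials \( \phi_{2n} \) of the lifted weight inherit the Szeg\H{o} symmetry; the Geronimus/Szeg\H{o} relations then express the orthonormal \( p_n \) through \( \phi_{2n} \) and \( \phi_{2n}^* \), and the interval Szeg\H{o} function \( \widetilde G_n(z) \) is related to the circle Szeg\H{o} function \( D_{2n}(w):=D(e^{g_n}\sigma_n,w) \) up to an explicit power of \( w \) and constants coming from the \( 2 \)-to-\( 1 \) map.

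Then I would invoke Theorem~\ref{thm:pw1} for \( \phi_{2n} \): \( \phi_{2n}^*(w)D_{2n}(w)/W_n^*(w)=1+o_{\mathcal E}(1) \) and \( \phi_{2n}(w)/\phi_{2n}^*(w)=o_{\mathcal E}(1) \) locally uniformly in \( \D \), and \( \alpha_{2n}D_{2n}(0)=1+o_{\mathcal E}(1) \). Substituting the Szeg\H{o} relation for \( p_n \) (the subdominant term \( \phi_{2n} \) drops out in \( \D \)), squaring, and inserting the two displayed identities converts this into \eqref{A8-1} on \( D_{[-1,1]} \); letting \( z\to\infty \), i.e. \( w\to0 \), so that \( p_n(z)\sim\gamma_n2^{-n}w^{-n} \), \( W_n^*(w)\to1 \), \( \widetilde G_n(z)\to\widetilde G_n(\infty) \) and \( 2a\phi(a)\to1 \) as \( a\to\infty \), yields \eqref{A8-2}. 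The \( o_{\mathcal E}(1) \) terms pass to \( o_{\mathcal K}(1) \) terms in the sense of Theorem~\ref{thm:vw1} because they are traces of functions holomorphic on the relevant compacta, so \( L^2 \)- or \( C \)-control upgrades to local uniform control.

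I expect the main obstacle to be the bookkeeping inside the Szeg\H{o} correspondence: pinning down the exact power of \( w \) and the multiplicative constants in the identities \( p_n\leftrightarrow(\phi_{2n},\phi_{2n}^*) \) and \( \widetilde G_n\leftrightarrow D_{2n} \) so that every prefactor in \eqref{A8-1} and \eqref{A8-2} matches, and verifying that the symmetry of the lifted weight is genuinely preserved — including the factor \( |W_n|^2 \) and the degenerate case \( a_{n,j}=\infty \), which places a zero of \( W_n \) at the origin — so that Theorem~\ref{thm:pw1} really applies to the \( \phi_{2n} \) attached to \( p_n \). None of these steps is deep (which is why the paper remarks the result ``is proven exactly as \cite[Theorem~1]{St00}''), but keeping every constant straight is where the care lies.
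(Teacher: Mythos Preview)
Your proposal is correct and follows essentially the same route as the paper: lift the data to \(\T\) via the Joukowski map, verify that \((A_{[-1,1]})\)--\((D_{[-1,1]})\) translate into \((A_\T)\)--\((D_\T)\), use the Szeg\H{o}/Geronimus relation expressing \(p_n^2\) through \(\phi_{2n}\) and \(\phi_{2n}^*\), identify \(\widetilde G_n(z)\) with \(D_{2n}(\zeta)\), and then invoke Theorem~\ref{thm:pw1}. Two small clean-ups: under the Joukowski map the density correspondence is exact, \(\upsilon_n=\tilde v_n\circ J\) with no extra constant, and likewise \(\widetilde G_n(z)=D_{2n}(\zeta)\) on the nose (no stray power of \(w\)); with those in hand the bookkeeping you flag reduces to the single displayed identity at the end of the paper's proof.
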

\begin{proof}
The results of this theorem follow from Theorem~\ref{thm:pw1} after we connect orthogonality on \( [-1,1] \) to the orthogonality on the circle by the Joukovski map
\[
J(z) = \big( z+z^{-1} \big)/2.
\]
To translate conditions \( (A_{[-1,1]})-(D_{[-1,1]}) \) into conditions \( (A_\T)-(D_\T) \), set \( b_{2n,j} = \phi(a_{n,j}) \), \( j\in\{1,2,\ldots,2n \} \). That is, \( J(b_{2n,j})=a_{n,j} \). Then
\[
\tau_n(z) = \prod_{a_{n,j}\neq\infty} \frac{(\zeta-b_{2n,j})(1-\zeta b_{2n,j})}{2a_{n,j}b_{2n,j}\zeta} = \frac{W_{2n}(\zeta)W_{2n}^*(\zeta)}{c_n\zeta^{2n}},
\]
\( z=J(\zeta) \), where \( c_n=\prod_{a_{n,j}\neq\infty}2a_{n,j}b_{2n,j} \), \( W_{2n}(\zeta) = \prod_{j=1}^{2n}(\zeta-b_{2n,j}) \), and we used the conjugate symmetry of the multi-set \( \{ b_{2n,1},b_{2n,2},\ldots,b_{2n,2n}\} \). Clearly, conditions \((D_{[-1,1]}) \) and \((D_\T)\) are equivalent to each other. Define \( \mathcal E=\{h\circ J:h\in\mathcal K\} \) and let \( g_{2n}=h_n\circ J \). It trivially holds that condition \( (C_{[-1,1]}) \) implies condition \( (C_\T) \). Every measure $\mu$ defined on $[-1,1]$ can be mapped to a measure $\sigma$ on $\T$ by the formula
\[
2\sigma(A)=\mu(J(A_+)) + \mu(J(A_-)),
\]
where $A$ is  any Borel subset in \( \T \), \( A_+=A\cap\{e^{\ic t}: t\in[0,\pi)\} \), and \( A_-=A\setminus A_+ \). For example, the mapping of the arcsine law $\omega$ results in the normalized Lebesgue measure $m$ on the circle. More generally, if one has a function  \( v \) that is integrable with respect to \( \omega \), the measure \(vd\omega\) is mapped to  \(\upsilon dm$, where \( \upsilon= v\circ J \). We use this map to define measures \( \sigma_{2n} \) on \( \T \) that correspond to the measures $\tilde\mu_n$ on \( [-1,1] \).  This gives $ d\sigma_{2n} = \upsilon_{2n} dm + d\sigma_{2n}^s $ and $d\sigma = \upsilon dm + d\sigma^s$, where  \( \upsilon_{2n}=\tilde v_n \circ J$ and $\upsilon=v \circ  J \). Notice that condition \( (B_{[-1,1]}) \) implies  \( (B_\T) \). Similarly, condition \( (A_{[-1,1]}) \) implies condition \( (A_\T) \). 

Let now \( \phi_{2n} \) be the polynomials satisfying orthogonality relations \eqref{ortho-circle} with the above defined \( (\sigma_{2n},g_{2n}, W_{2n} ) \). It is well-known, see \cite[Theorem~11.5]{Szego} or \cite[Lemma~4.13]{St00}, that
\[
p_n^2(z) = \frac{\phi_{2n}^*(\zeta)^2}{2c_n\zeta^{2n}}\frac{(1+\phi_{2n}(\zeta) / \phi_{2n}^*(\zeta))^2}{1+\phi_{2n}(0)/\phi_{2n}^*(0)} \qandq \gamma_n^2 = 2^{2n-1}\frac{\alpha_{2n}^2}{c_n}\left(1+\frac{\phi_{2n}(0)}{\phi_{2n}^*(0)}\right),
\]
where \( z=J(\zeta) \). It can be readily verified that \( \widetilde G_n(z) = D_{2n}(\zeta) \), see \cite[Lemma~4.3]{St00}. Thus, asymptotic formulae \eqref{A8-1} and \eqref{A8-2} follow from \eqref{A1-1} and \eqref{A1-2}, where one needs to use the identity
\[
\frac{\widetilde G_n^2(z)}{\tau_n(z)} \cdot \prod_{j=1}^{2n}\frac{\phi(z)-\phi(a_{n,j})}{1-\overline{\phi(a_{n,j})}\phi(z)}  = c_n\left(\frac{\zeta^nD_{2n}(\zeta)}{W_{2n}^*(\zeta)}\right)^2. \qedhere
\]
\end{proof}

\small

\bibliographystyle{plain}

\bibliography{st_ang}

\begin{thebibliography}{10}

\bibitem{Ang19}
A.~Angelesco.
\newblock Sur deux extensions des fractions continues alg\'ebraiques.
\newblock {\em Comptes Rendus de l'Academie des Sciences, Paris}, 168:262--265,
  1919.

\bibitem{Ap88}
A.I. Aptekarev.
\newblock Asymptotics of simultaneously orthogonal polynomials in the
  {A}ngelesco case.
\newblock {\em Mat. Sb.}, 136(178)(1):56--84, 1988.
\newblock English transl. in {\it {M}ath. {USSR} {S}b.} 64, 1989.

\bibitem{ap10}
A.I. Aptekarev.
\newblock Strong asymptotics of polynomials of simultaneous orthogonality for
  {N}ikishin systems.
\newblock {\em Mat. Sb.}, 190(5):3--44, 1999.

\bibitem{ser3}
A.I. Aptekarev, S.A. Denisov, and M.L. Yattselev.
\newblock Self-adjoint {J}acobi matrices on trees and multiple orthogonal
  polynomials.
\newblock {\em Trans. Amer. Math. Soc.}, 373(2):875--917, 2020.

\bibitem{ApDenYa21}
A.I. Aptekarev, S.A. Denisov, and M.L. Yattselev.
\newblock Jacobi matrices on trees generated by {A}ngelesco systems:
  asymptotics of coefficients and essential spectrum.
\newblock {\em J. Spectr. Theory}, 11(4):1511--1597, 2021.

\bibitem{ApLLRocha05}
A.I. Aptekarev, G.~L\'opez Lagomasino, and I.A. Rocha.
\newblock Ratio asymptotic of {H}ermite-{P}ad\'e orthogonal polynomials for
  {N}ikishin systems.
\newblock {\em Sb. Math.}, 196:1089--1107, 2005.

\bibitem{ap11}
A.I. Aptekarev and V.G. Lysov.
\newblock Systems of {M}arkov functions generated by graphs and the asymptotics
  of their {H}ermite-{P}ad\'{e} approximants.
\newblock {\em Mat. Sb.}, 201(2):29--78, 2010.

\bibitem{ap12}
P.M. Bleher and A.B.J. Kuijlaars.
\newblock Random matrices with external source and multiple orthogonal
  polynomials.
\newblock {\em Int. Math. Res. Not.}, (3):109--129, 2004.

\bibitem{MR4191495}
A.~Carbotti and G.E. Comi.
\newblock A note on {R}iemann-{L}iouville fractional {S}obolev spaces.
\newblock {\em Commun. Pure Appl. Anal.}, 20(1):17--54, 2021.

\bibitem{CYLL98}
B.~de~la Calle~Ysern and G.~L\'opez Lagomasino.
\newblock Strong asymptotics of orthogonal polynomials with respect to varying
  measures and {H}ermite-{P}ad\'e approximants.
\newblock {\em J. Comp. Appl. Math.}, 99:91--109, 1998.

\bibitem{ser2}
S.A. Denisov and M.L. Yattselev.
\newblock Spectral theory of {J}acobi matrices on trees whose coefficients are
  generated by multiple orthogonality.
\newblock {\em Adv. Math.}, 396:Paper No. 108114, 79, 2022.

\bibitem{MR1009162}
N.~Dunford and J.T. Schwartz.
\newblock {\em Linear operators. {P}art {I}}.
\newblock Wiley Classics Library. John Wiley \& Sons, Inc., New York, 1988.
\newblock General theory, With the assistance of William G. Bade and Robert G.
  Bartle, Reprint of the 1958 original, A Wiley-Interscience Publication.

\bibitem{Duren}
P.~Duren.
\newblock {\em Theory of ${H}^p$ Spaces}.
\newblock Dover Publications, Inc., New York, 2000.

\bibitem{FidLopezLLSor10}
U.~{Fidalgo Prieto}, A.~L\'opez, G.~L\'opez Lagomasino, and V.N. Sorokin.
\newblock Mixed type multiple orthogonal polynomials for two {N}ikishin
  systems.
\newblock {\em Constr. Approx.}, 32:255--306, 2010.

\bibitem{ap0}
U.~Fidalgo~Prieto and G.~L\'{o}pez~Lagomasino.
\newblock On perfect {N}ikishin systems.
\newblock {\em Comput. Methods Funct. Theory}, 2(2):415--426, 2002.

\bibitem{Forster}
O.~Forster.
\newblock {\em Lectures on {R}iemann surfaces}, volume~81 of {\em Graduate
  Texts in Mathematics}.
\newblock Springer-Verlag, New York, 1991.
\newblock Translated from the 1977 German original by Bruce Gilligan, Reprint
  of the 1981 English translation.

\bibitem{Ga}
J.~Garnett.
\newblock {\em Bounded analytic functions}, volume~96 of {\em Pure and Applied
  Mathematics}.
\newblock Academic Press, Inc. [Harcourt Brace Jovanovich, Publishers], New
  York-London, 1981.

\bibitem{Grakh81}
A.A. Gonchar and E.A. Rakhmanov.
\newblock On convergence of simultaneous {P}ad\'e approximants for systems of
  functions of {M}arkov type.
\newblock {\em Trudy Mat. Inst. Steklov}, 157:31--48, 1981.
\newblock English transl. in {\it {P}roc. {S}teklov {I}nst. {M}ath.} 157, 1983.

\bibitem{ap9}
A.A. Gonchar, E.A. Rakhmanov, and V.N. Sorokin.
\newblock On {H}ermite-{P}ad\'{e} approximants for systems of functions of
  {M}arkov type.
\newblock {\em Mat. Sb.}, 188(5):33--58, 1997.

\bibitem{MR1544927}
G.H. Hardy and J.E. Littlewood.
\newblock Some properties of fractional integrals. {I}.
\newblock {\em Math. Z.}, 27(1):565--606, 1928.

\bibitem{ap1}
C.~Hermite.
\newblock Sur la fonction exponentielle.
\newblock {\em C. R. Acad. Sci. Paris}, 77:18--24, 1873.

\bibitem{Ismail}
M.E.H. Ismail.
\newblock {\em Classical and Quantuum Orthogonal Polynomials in One Variable},
  volume~98 of {\em Encyclopedia of Mathematics and its Applications}.
\newblock Cambridge University Press, 2005.

\bibitem{ap4}
V.A. Kaljagin.
\newblock A class of polynomials determined by two orthogonality relations.
\newblock {\em Mat. Sb. (N.S.)}, 110(152)(4):609--627, 1979.

\bibitem{ap7}
G.L. Lopes.
\newblock Asymptotic behavior of the ratio of orthogonal polynomials and
  convergence of multipoint {P}ad\'{e} approximants.
\newblock {\em Mat. Sb. (N.S.)}, 128(170)(2):216--229, 287, 1985.

\bibitem{ap8}
G.L. Lopes.
\newblock Comparative asymptotics for polynomials that are orthogonal on the
  real axis.
\newblock {\em Mat. Sb. (N.S.)}, 137(179)(4):500--525, 57, 1988.

\bibitem{ser1}
D.S. Lubinsky and W.~Van~Assche.
\newblock Simultaneous {G}aussian quadrature for {A}ngelesco systems.
\newblock {\em Jaen J. Approx.}, 8(2):113--149, 2016.

\bibitem{ap2}
K.~Mahler.
\newblock Perfect systems.
\newblock {\em Compositio Math.}, 19:95--166, 1968.

\bibitem{Nevai}
P.~Nevai.
\newblock {\em Orthogonal Polynomials}.
\newblock Number 213. Mem. Amer. Math. Soc., Providence, RI, 1979.

\bibitem{ap3}
E.M. Niki\v{s}in.
\newblock A system of {M}arkov functions.
\newblock {\em Vestnik Moskov. Univ. Ser. I Mat. Mekh.}, (4):60--63, 103, 1979.

\bibitem{ap5}
E.M. Niki\v{s}in.
\newblock Simultaneous {P}ad\'{e} approximants.
\newblock {\em Mat. Sb. (N.S.)}, 113(155)(4(12)):499--519, 637, 1980.

\bibitem{ap6}
J.~Nuttall.
\newblock Asymptotics of diagonal {H}ermite-{P}ad\'{e} polynomials.
\newblock {\em J. Approx. Theory}, 42(4):299--386, 1984.

\bibitem{Ransford}
T.~Ransford.
\newblock {\em Potential theory in the complex plane}, volume~28 of {\em London
  Mathematical Society Student Texts}.
\newblock Cambridge University Press, Cambridge, 1995.

\bibitem{GonLL24}
L.G.~Gonz\'alez Ricardo and G.~L\'opez Lagomasino.
\newblock Strong asymptotics of multi-level {H}ermite-{P}ad\'e polynomials.
\newblock {\em J. Math. Anal. Appl.}, 531(1), 2024.
\newblock Paper number 127801.

\bibitem{RoydenFitzpatrick}
H.L. Royden and P.M. Fitzpatrick.
\newblock {\em Real Analysis}.
\newblock Pearson Modern Classics. Pearson, New York, fourth edition, 2018.

\bibitem{SamkoKilbasMarichev}
S.G. Samko, A.A. Kilbas, and O.I. Marichev.
\newblock {\em Integrals and Derivatives of Fractional Order and Some of Their
  Applications}.
\newblock ``Nauka i Tekhnika'', Minsk, 1987.
\newblock Edited and with a preface by S. M. Nikolski\u{\i}.

\bibitem{Simon}
B.~Simon.
\newblock {\em Orthogonal Polynomials on the Unit Circle, Vol. I}, volume~54 of
  {\em Colloquium Publications}.
\newblock Amer. Math. Soc., Providence, RI, 2005.

\bibitem{Simon2}
B.~Simon.
\newblock {\em Szeg\H{o}'s Theorem and its Descendants: Spectral Theory for
  $L^2$ Pertubations of Orthogonal Polynomials}.
\newblock M.B. Porter Lectures. Princeton University Press, Princeton and
  Oxford, 2011.

\bibitem{St00}
H.~Stahl.
\newblock Strong asymptotics for orthogonal polynomials with varying weights.
\newblock {\em Acta Sci. Math. (Szeged)}, 66(1--2):147--192, 2000.

\bibitem{StahlTotik}
H.~Stahl and V.~Totik.
\newblock {\em General Orthogonal Polynomials}, volume~43 of {\em Encycl.
  Math.}
\newblock Cambridge University Press, Cambridge, 1992.

\bibitem{Szego}
G.~Szeg\H{o}.
\newblock {\em Orthogonal polynomials}.
\newblock American Mathematical Society Colloquium Publications, Vol. XXIII.
  American Mathematical Society, Providence, R.I., fourth edition, 1975.

\bibitem{Totik}
V.~Totik.
\newblock {\em Weighted Approximation with Varying Weights}, volume 1300 of
  {\em Lecture {N}otes in {M}ath.}
\newblock Springer-Verlag, Berlin, 1994.

\bibitem{Ya16}
M.~Yattselev.
\newblock Strong asymptotics of {H}ermite-{P}ad\'e approximants for {A}ngelesco
  systems.
\newblock {\em Canad. J. Math.}, 68(5):1159--1200, 2016.
\newblock \url{http://dx.doi.org/10.4153/CJM-2015-043-3}.

\bibitem{uYa1}
M.L. Yattselev.
\newblock Uniformity of strong asymptotics in {A}ngelesco systems.
\newblock Submitted for publication.

\end{thebibliography}

\end{document}